\let\over\@@over\makeatother
\numberwithin{equation}{section}
\theoremstyle{plain} 
\newtheorem{theorem}{Theorem}[section] 
\newtheorem{proposition}[theorem]{Proposition} 
\newtheorem{corollary}[theorem]{Corollary}
\newtheorem{lemma}[theorem]{Lemma} 
\theoremstyle{remark}
\newtheorem{remark}[theorem]{Remark}
\theoremstyle{definition}
\newtheorem{definition}[theorem]{Definition}
\newtheorem{example}[theorem]{Example}
\newcommand{\be}{\begin{equation}}
\newcommand{\ee}{\end{equation}}%
\newcommand{\bse}{\begin{subequations}}
\newcommand{\ese}{\end{subequations}}
\newcommand{\dist}{\operatorname{dist}}
\newcommand{\realpart}{\operatorname{Re}}
\newcommand{\imagpart}{\operatorname{Im}}
\newcommand{\ar}{\operatorname{arg}}
\newcommand{\range}{\operatorname{rng}}
\newcommand{\kernel}{\operatorname{ker}}
\newcommand{\cod}{\operatorname{codim}}
\newcommand{\dimension}{\operatorname{dim}}
\newcommand{\id}{\operatorname{id}}
\newcommand{\LV}{\left|}
\newcommand{\RV}{\right|}
\newcommand{\LB}{\left[}
\newcommand{\RB}{\right]}
\newcommand{\LC}{\left(}
\newcommand{\RC}{\right)}
\newcommand{\p}{\partial}
\newcommand{\R}{\mathbb{R}} 
\newcommand{\placeholder}{\,\cdot\,}
\newcommand{\maps}{\colon}
\newcommand{\n}[2][]{#1\lVert #2 #1\rVert}
\newcommand{\abs}[2][]{#1\lvert #2 #1\rvert}
\newcommand{\dell}{\partial}
\newcommand{\loc}{{\mathrm{loc}} }
\newcommand\A{\mathscr A}    
\newcommand\B{\mathscr B}    
\newcommand\F{\mathscr F}    
\newcommand\Vrho{\mathcal{V}}
\newcommand\Nconf{N_{\mathrm{c}}}
\newcommand\Nvel{N_{\mathrm{v}}}
\newcommand\Wspace{\mathscr W}
\newcommand\Xspace{\mathscr X}
\newcommand\Yspace{\mathscr Y}
\newcommand\fullPspace{\mathbb{P}}
\newcommand\Pspace{\mathcal P}
\newcommand{\cm}{{\mathscr C}}  
\newcommand{\km}{{\mathscr K}}            
\newcommand\fluidD{\mathscr{D}}
\newcommand{\Func}{\mathcal{F}}
\newcommand{\confD}{\mathcal{D}}
\newcommand{\fullparam}{\Lambda}
\newcommand{\param}{\lambda}
\newcommand{\proj}{P}
\newcommand{\imagimag}{{\mathrm{ii}} }        
\newcommand{\realimag}{{\mathrm{ri}} }    
\newcommand{\realreal}{{\mathrm{rr}} }        
\newcommand{\imagreal}{{\mathrm{ir}} }
\begin{document}

\title[Global vortex desingularization]{Desingularization and global continuation for hollow vortices}

\date{\today}

\author[R. M. Chen]{Robin Ming Chen}
\address{Department of Mathematics, University of Pittsburgh, Pittsburgh, PA 15260} 
\email{mingchen@pitt.edu}  

\author[S. Walsh]{Samuel Walsh}
\address{Department of Mathematics, University of Missouri, Columbia, MO 65211} 
\email{walshsa@missouri.edu} 

\author[M. H. Wheeler]{Miles H. Wheeler}
\address{Department of Mathematical Sciences, University of Bath, Bath BA2 7AY, United Kingdom}
\email{mw2319@bath.ac.uk}

\begin{abstract}
A hollow vortex is a region of constant pressure bounded by a vortex sheet and suspended inside a perfect fluid; it can therefore be interpreted as a spinning bubble of air in water.  This paper gives a general method for desingularizing non-degenerate steady point vortex configurations into collections of steady hollow vortices.  Our machinery simultaneously treats the translating, rotating, and stationary regimes.  Through global bifurcation theory, we further obtain maximal curves of solutions that continue until the onset of a singularity.  As specific examples, we give the first existence theory for co-rotating hollow vortex pairs and stationary hollow vortex tripoles, as well as a new construction of Pocklington's classical co-translating hollow vortex pairs.  All of these families extend into the non-perturbative regime, and we obtain a rather complete characterization of the limiting behavior along the global bifurcation curve.
\end{abstract}

\maketitle

\setcounter{tocdepth}{1}
\tableofcontents

\section{Introduction}
\label{introduction section}

Consider an ideal fluid lying in an exterior planar domain $\fluidD$ whose boundary is the disjoint union of Jordan curves $\Gamma_1,\ldots, \Gamma_M$.  Assume the motion of the fluid obeys the incompressible Euler equations with the flow in the interior of $\fluidD$ irrotational.   A collection of \emph{hollow vortices} is a solution to this system such that each $\Gamma_k$ is a free boundary along which the pressure is constant and around which there is a nonzero circulation $\gamma_k$.  The complement of $\fluidD$ can then be understood physically as $M$ bubbles of vacuum suspended in the fluid and circumscribed by vortex sheets.

Hollow vortices are one of the classical models of localized vorticity and of basic importance to free streamline theory. While still far less understood than vortex patches, they have enjoyed considerable renewed interest among applied mathematicians in recent years.  A number of authors have obtained rigorous existence results for configurations such as a single rotating hollow vortex (H-state) \cite{crowdy2021hstates}, co-translating pairs \cite{pocklington1896,crowdy2013translating}, von K\'arm\'an vortex streets and vortex arrays \cite{baker1976structure,crowdy2011analytical,traizet2015hollow}, and hollow vortices in the presence of an ambient straining flow \cite{llewellyn2012structure} or shear \cite{zannetti2016hollow}. Interestingly, most of the solutions are given in terms of explicit conformal mappings, while \cite{traizet2015hollow} instead exploits connections to problems in minimal surfaces. Compressible analogues for many of these configurations have also been investigated \cite{moore1987compressible,ardalan1995steady,leppington2006field}.  

Our purpose in this paper is to develop a complete theory of desingularizing steady point vortex configurations into collections of hollow vortices.  Vortex desingularization has been widely applied to construct vortex patches and other related solutions~\cite{turkington1983steady,wan1988desingularizations,smets2010desingularization,cao2014regularization,hmidi2017pairs,cao2021pairs,hassainia2022multipole,garcia2022global,davila2022helices,cao2023sheets}. Many of these results are tailored to specific configurations, and almost all are local in that the solutions are only shown to exist in a neighborhood of the point vortex configuration. Here, we treat translating, rotating, and stationary vortices in a single framework.  Moreover, through analytic global bifurcation theory, we are able to continue the hollow vortex family to a maximal $C^0$ curve of solutions and characterize the singularities that can develop at its extreme. This is both the first local and the first global existence result for general configurations of hollow vortices.

\subsection{Governing equations}

Let $(x,y) \in \mathbb{R}^2$ denote points in the physical domain, which we may identify in the usual way with a complex number $z \colonequals  x + iy \in \mathbb{C}$.  A solution is called \emph{steady} provided it is independent of time when viewed in a reference frame that is either translating at a fixed velocity $c \in \mathbb{C}$ or rotating with a fixed angular velocity $\Omega \in \mathbb{R}$. Without loss of generality, we always assume either $c = 0$ or $\Omega = 0$.  Note that this also includes the stationary case $c = \Omega = 0$.  Thanks to the rotation invariance of the system, moreover, it is always possible to take $c \in \mathbb{R}$.

\begin{figure}
  \centering
  \includegraphics[scale=1.1]{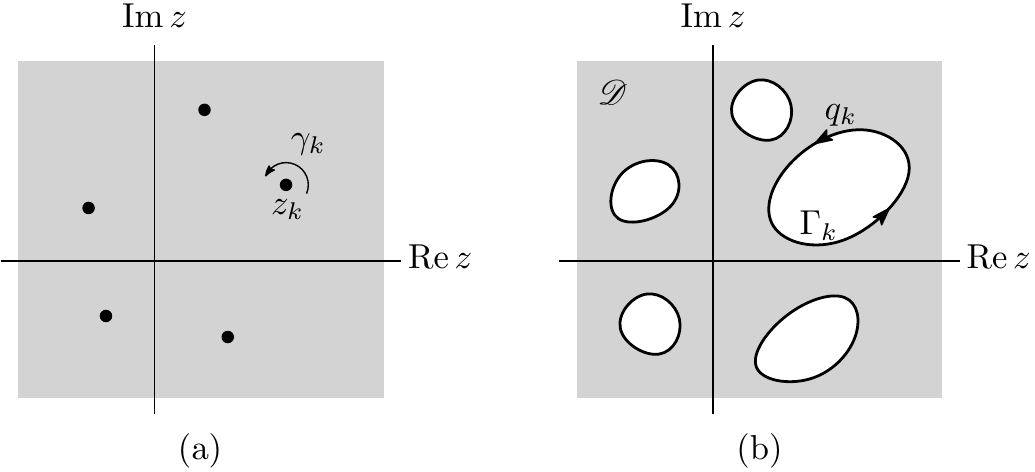}
  \caption{(a) A steady configuration of point vortices with locations $z_k$ and circulations $\gamma_k$. (b) A steady configuration of hollow vortices with boundaries $\Gamma_k$ along which the fluid velocity has constant magnitude $q_k$. The circulation remains $\gamma_k$.}
  \label{point vortex figure}
\end{figure}

Incompressibility and irrotationality imply that the (complexified) velocity field is antiholomorphic\footnote{If the velocity field is denoted $(\mathfrak{u}, \mathfrak{v}) \in \mathbb{R}^2$, some authors define the complex velocity by $\mathfrak{u} - i \mathfrak{v}$, as this is a holomorphic function.  To keep the complexification of $\mathbb{R}^2$ vectors consistent, however, in this paper we prefer to instead call $\mathfrak{u} + i \mathfrak{v}$ the complex velocity field, which is thus antiholomorphic.}  on $\fluidD$.  Thus we may introduce a  complex velocity potential 
\begin{subequations}
\label{intro hollow vortex problem}
\begin{equation}
\label{intro definition w}
  w = w(z) \colon {\fluidD} \to \mathbb{C} \qquad \textrm{holomorphic}
\end{equation}
such that $\overline{\partial_z w}$ is the velocity field, which must admit a continuous extension to $\partial\fluidD$. As $\fluidD$ is not simply connected, $w$ will be multivalued and the circulation around $\Gamma_k$ is given by
\begin{equation}
\label{circulation condition}
	\int_{\Gamma_k} \partial_z w \, dz = \gamma_k \in \R.
\end{equation}

Shifting to the appropriate moving reference frame, we obtain the relative complex velocity potential
\[
	W = W(z) \colonequals  w + i \frac{\Omega}{2} |z|^2 - c z. 
\]
A distinguishing feature of the rotating case $\Omega \neq 0$ is that the corresponding $W$ is not holomorphic.  On each vortex boundary, we impose the \emph{kinematic condition} that the relative velocity is purely tangential:
\begin{equation}
  \label{intro kinematic condition}
  \imagpart{W} = m_k \qquad \textrm{on } \Gamma_k,
\end{equation}
for some constant fluxes $(m_1,\ldots, m_M) \in \mathbb{R}^M$.  Finally, the \emph{dynamic condition} states that the pressure is continuous across the free boundaries.  For hollow vortices, each connected component of $\mathbb{C} \setminus \fluidD$ is a vacuum region at constant pressure, and so through Bernoulli's law, this becomes a requirement on the magnitude of the relative velocity field:
\begin{equation}
  \label{intro dynamic condition}
  |\partial_z W|^2 = q_k^2 \qquad \textrm{on } \Gamma_k,
\end{equation}
\end{subequations}
for some constant vector $q = (q_1, \ldots, q_M) \in \mathbb{R}^M$.  Note that here we can easily incorporate conservative body forces such as gravity by adding an appropriate function of $z$ to the right hand side of \eqref{intro dynamic condition}.

\subsection{Steady point vortices}
\label{point vortex section}

To study flows with extremely concentrated vorticity, one can imagine contracting each vortex boundary $\Gamma_k$ to a point $z_k$ while maintaining the circulation.  Intuitively, one expects the limiting system to be a collection of \emph{point vortices}, that is, to exhibit vorticity of the form $\sum_k \gamma_k \delta_{z_k}$,  where $\delta_{z_k}$ is the unit mass Dirac measure centered at $z_k$; see~Figure~\ref{point vortex figure}(a).  However, this does not constitute a weak solution of the Euler equations.  Near $z_k$, the conjugate of the velocity takes the form $(2\pi i)^{-1}\gamma_k (\placeholder - z_k)^{-1} + v_{\mathcal{H}}$ for a locally holomorphic function $v_{\mathcal{H}}$.  But the two-dimensional vorticity equation mandates that the vorticity is transported, which is not possible here as the vector field diverges as we approach $z_k$. 

Nonetheless, one can find an appropriate weakening of the Euler equations that does admit solutions of this form.  Physically, the vortices should not be able to self-advect, which suggests that the position of the center $z_k$ should be transported not by the full velocity, but only the non-singular part $\overline{v_{\mathcal{H}}}$.  This leads to the classical point vortex model of Helmholtz--Kirchhoff, which has been the subject of countless works in fluid mechanics and mathematics.  It can be stated rather elegantly as a system of complex ODEs.  Consider first the dynamical problem in the stationary frame.  At time $t \in \mathbb{R}$, let the vortex centers be located  $z_1(t), \ldots, z_M(t) \in \mathbb{C}$.  Then the positions of the vortex centers evolve according to 
\begin{equation}
  \label{helmholtz kirchhoff model}
  \partial_t \overline{z_k} = \sum_{j \neq k} \frac{\gamma_j}{2 \pi i} \frac{1}{z_j - z_k} \qquad \textrm{for } k = 1, \ldots, M.
\end{equation}
There are many ways to justify \eqref{helmholtz kirchhoff model} rigorously.  It arises, for instance, as the effective equation governing the limit of a sequence of solutions to the full Euler system where the support of the vorticity shrinks to the centers $\{ z_1, \ldots, z_M\}$; see, for example, \cite{marchioro1993vortices,marchioro1994book}.  The results we obtain in the present paper provide another justification in terms of hollow vortices.  
  
Continuing our earlier convention, we say a collection of point vortices is \emph{translating} provided there exists some wave speed $c \in \mathbb{R}$ such that $\partial_t \overline{z_k} = c$ for $k = 1, \ldots, M$. The point vortices are said to be \emph{rotating} about the origin with angular velocity $\Omega \in \mathbb{R}$ if $\partial_t \overline{z_k} = -i \Omega \overline{z_k}$ for $k = 1, \ldots, M.$ When combined with \eqref{helmholtz kirchhoff model}, both scenarios can be unified into the system of algebraic equations
\begin{equation}
  \label{definition steady point vortex}
  \begin{aligned}
    \mathcal{V}_k(\gamma_1,\, \ldots, \,  \gamma_M, \, z_1, \, \ldots, \, z_M, \, c, \, \Omega) 
 	 \colonequals  \sum_{j \neq k} \frac{\gamma_j}{2 \pi i} \frac{1}{z_j - z_k} - c + i \Omega \overline{z_k}
    = 0
 \end{aligned}
\end{equation}
for $k=1, \ldots, M$.  Observe that if $\Omega \neq 0$, then making the change of variables $z \mapsto z + i c/\Omega$ shows that \eqref{definition steady point vortex} corresponds to a rigidly rotating configuration centered at $z = -ic/\Omega$.  Thus we will assume without loss of generality that either $c = 0$ or $\Omega = 0$.  If both vanish, the configuration is said to be \emph{stationary}.  Finally, a collection of point vortices will be called \emph{steady} or in \emph{relative equilibrium} provided it satisfies \eqref{definition steady point vortex}, as this means it is time-independent in the moving frame. For more on point vortex equilibria we refer the reader to~\cite{newton2001nvortex,aref2003crystals,aref2007playground,newton2014postaref} and the references therein.

The idea of vortex desingularization is to reverse the limiting process described above: beginning at a steady point vortex configuration, we pry open each vortex center to obtain a steady hollow vortex solution to the full Euler equations. This is a local bifurcation result in the sense that the vortex boundary $\Gamma_k$ approximates a streamline of the point vortex system near $z_k$.  By means of global bifurcation theory, however, the family of hollow vortices can be extended far outside the perturbative regime.

\subsection{Conformal variables}\label{conformal variables section}

\begin{figure}
  \centering
  \includegraphics[scale=1.1]{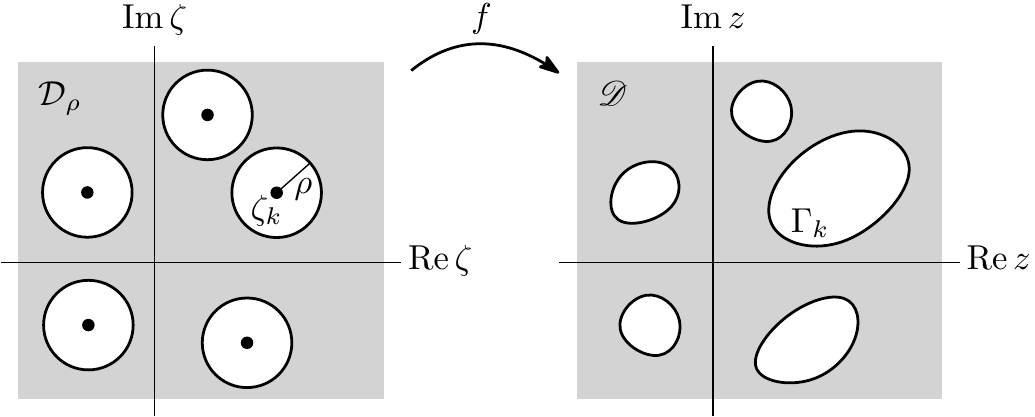}
  \caption{The conformal mapping $f$ maps the conformal domain $\mathcal D_\rho$ in the $\zeta$-plane  to the physical fluid domain $\mathscr D$ in the $z$-plane.}
  \label{conformal figure}
\end{figure}

One of the central challenges in vortex desingularization is to find a formulation of the problem that encapsulates both the Euler equations \eqref{intro hollow vortex problem} and the Helmhotz--Kirchhoff system \eqref{definition steady point vortex}.  In particular, it must allow the vortex boundaries to contract smoothly to points so that the (analytic) implicit function theorem can be applied.  

As a first step in that direction, our approach is to describe the fluid domain $\fluidD$ in the $z$-plane as the image under a conformal mapping of a geometrically simpler set.  Denote by $\zeta = \xi + i \eta$ the variables in this conformal plane, and for $\rho_1, \ldots, \rho_M > 0$ let
\begin{equation}
  \label{conformal domain definition} 
   \confD_{\rho_1,\ldots, \rho_M}(\zeta_1, \ldots, \zeta_M) \colonequals  \mathbb{C} \setminus \overline{B_{\rho_1}(\zeta_{1}) \cup \cdots \cup B_{\rho_M}(\zeta_{M})}.
\end{equation} 
Here, $\zeta_1, \ldots, \zeta_{M}$ are the pre-images of the vortex centers $z_1, \ldots, z_M$, respectively, and $\partial B_{\rho_k}(\zeta_k)$ is the pre-image of $\Gamma_k$.    Note that $\confD_{\rho_1, \ldots, \rho_M}$ falls in the broader class of \emph{circular domains}.  By an analogue of the Riemann mapping theorem due to Koebe \cite{koebe1918abhandlungen}, any $M$-connected domain can be mapped conformally to a circular domain, and  this description is unique up to normalization; see Section~\ref{circular domains section}.  To simplify the analysis, in our construction we will take all the radii to be the same, that is, $\rho_1 = \cdots = \rho_M \equalscolon  \rho$.  We then write $\confD_\rho$ for the corresponding domain in the $\zeta$-plane; see Figure~\ref{conformal figure}. Let
\[
	\fullparam = \left(\gamma_1,\, \ldots, \,  \gamma_M, \, \zeta_1, \, \ldots, \, \zeta_M, \, c, \, \Omega \right) \in \mathbb{R}^M \times \mathbb{C}^M \times \mathbb{R} \times \mathbb{R} \equalscolon  \fullPspace
\]
denote the complete set of parameters describing the point vortex system in this conformal domain.  Thus \eqref{definition steady point vortex} can be written as $\mathcal{V}(\fullparam) = 0$ where $\mathcal{V} = (\mathcal{V}_1,\ldots, \mathcal{V}_M) \colon \fullPspace \to \mathbb{C}^M$.

For the hollow vortex problem, we will work with two unknowns: 
\[
	f = f(\zeta) \in C^{\ell+\alpha}(\overline{\confD_\rho}), \qquad w = w(\zeta) \in C^{\ell+\alpha}(\overline{\confD_\rho}),
\]
where $\ell \geq 1$ and $\alpha \in (0,1)$ are fixed but arbitrary.  Here $f$ is the conformal mapping that defines the geometry of the hollow vortices, and we now view $w$ as the complex velocity potential in the $\zeta$-plane.  The fluid domain in the $z$-plane is then $\fluidD \colonequals  f(\confD_\rho)$, with $\Gamma_k = f(\partial B_\rho(\zeta_k))$, and the complex conjugate of the velocity field is 
\[
	\partial_z w= \frac{{\partial_\zeta w}}{{\partial_\zeta f}} \in C^{\ell-1+\alpha}(\overline{\confD_\rho}).
\]
Observe that in order for this reformulation to be valid, it is necessary that $f$ is both conformal and injective on $\overline{\confD_\rho}$.  Clearly, we must also have that $w$ is holomorphic on $\confD_\rho$ with $\partial_z w$ single-valued. We further require that $f$ is asymptotic to the identity map as $\zeta \to \infty$.  Under these conditions, the kinematic \eqref{intro kinematic condition} and dynamic conditions \eqref{intro dynamic condition} on $\Gamma_k$ can be restated in terms of $(w,f)$ on $ \partial B_{\rho}(\zeta_k)$; see Section~\ref{formulation section}.  

Lastly, from \eqref{definition steady point vortex} we can derive the leading-order asymptotics of the point vortex velocity field in a neighborhood of each $\zeta_k$.  In particular, one finds that the Bernoulli constants $q_k$ diverge like $1/\rho$ as $\rho \searrow 0$.  We therefore introduce normalized constants $Q = (Q_1, \ldots, Q_M) \in \mathbb{R}^M$ defined by 
\begin{equation}
  \label{q_k Q_k relation}
   \rho^2 q_k^2 - \frac{\gamma_k^2}{4 \pi^2 } \equalscolon  \rho Q_k, 
\end{equation}
which will satisfy $Q = O(\rho)$ in the limit $\rho \searrow 0$.

\subsection{Statement of results}
\label{statement section}

Let us now describe the main contributions of the paper.  We begin by demonstrating the vortex desingularization and global continuation procedure on the three important examples of steady point vortices shown in Figure~\ref{point vortex examples figure}.  

\begin{figure}
  \centering
  \includegraphics[scale=1.1]{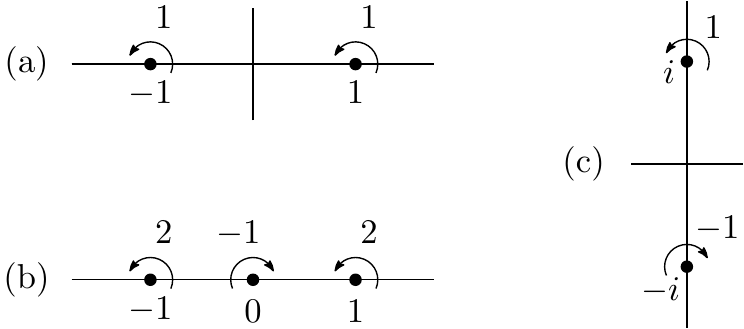}
  \caption{The basic point vortex equilibria which are desingularized in Theorems~\ref{intro rotating theorem}--\ref{intro pocklington theorem} below. (a) A symmetric pair of same signed vortices, steadily rotating about the origin with angular velocity $\Omega=\frac 1{4\pi}$. (b) A symmetric tripole, which is stationary. (c) A symmetric pair of opposite signed vortices, which steadily translate along the $x$-axis with velocity $c=\frac 1{4\pi}$.}
  \label{point vortex examples figure}
\end{figure}

First, consider a pair of point vortices with same-signed vortex strength, which will result in a rotating configuration. One such solution is
\[
	\fullparam_{\textup{r}}^0 \colonequals  \Big(\gamma_1 = 1, \, \gamma_2=1,\,  \zeta_1 = 1, \, \zeta_2 = -1, \, c = 0, \, \Omega = \frac{1}{4\pi}\Big),
\]
as can be easily verified from \eqref{definition steady point vortex}.  Our first theorem states that there exists a smooth curve of co-rotating hollow vortices that emanates from $\fullparam_{\textup{r}}^0$.  For reasons that will be elaborated upon later, only a subset of the components of $\fullparam$ will vary along the hollow vortex curve.  In the case of the co-rotating pair, we can exploit symmetries to keep all but $\Omega$ fixed. Our techniques can also treat asymmetric pairs for which $\gamma_1 \neq \gamma_2$, with a slightly larger set of varying parameters.

\begin{figure}
  \centering
  \includegraphics[scale=1.1]{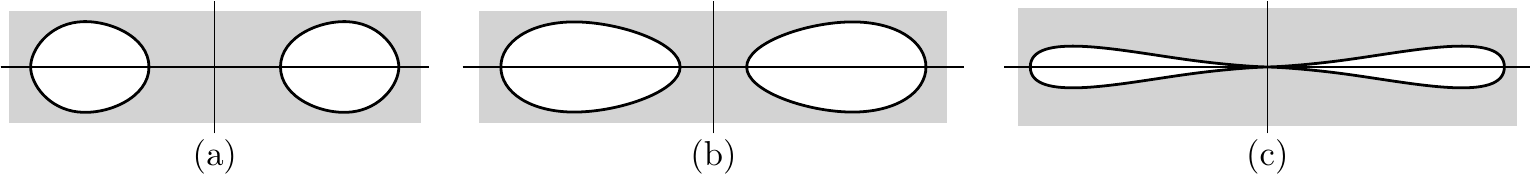}
  \caption{Scenario for the rotating pair predicted numerically in \cite{nelson2021corotating}, which involves both conformal and velocity degeneration.} 
  \label{pair pinching figure}
\end{figure}

As we follow the hollow vortex family to its extreme, one or more limiting singularities develop.  The first possibility is that the conformal description of the domain degenerates or the vortex boundaries (self-)intersect.  This \emph{conformal degeneracy} alternative is captured by the blowup of the quantity 
\begin{equation}
  \label{definition conformal blowup norm}
  \Nconf(f) \colonequals  \sup_{\partial \confD_{\rho}}   | \partial_\zeta f|  + \sup_{\substack{ \zeta, \zeta^\prime \in \partial\confD_{\rho} \\ \zeta \neq \zeta^\prime}} \frac{|\zeta - \zeta^\prime|}{|f(\zeta)-f(\zeta^\prime)|} + \frac{1}{\displaystyle \min_{j \neq k} \dist{(\Gamma_j, \Gamma_k)}}.
\end{equation}
The second term above is a chord-arc measure of the vortex boundaries, while the third gauges the distance between the vortices in the physical domain.  
Blowup of the first term in \eqref{definition conformal blowup norm} could indicate a loss of boundary regularity, for instance the development of an inward-pointing corner. Because the conformal description of $\fluidD$ is unique modulo a certain class of automorphisms, the breakdown we observe is not specific to the choice of coordinates but an intrinsic feature of the solutions.

The second alternative is \emph{velocity field degeneration}, wherein the relative velocity along the boundary becomes unbounded in magnitude or limits to stagnation.  This scenario is equivalent to the blowup of the quantity
\begin{equation}
  \label{definition velocity blowup norm}
  \Nvel(f,w) \colonequals  \sup_{\partial \confD_{\rho}} \left(  |U| + \frac{1}{|U|} \right),
\end{equation}  
where $U \colonequals  \partial_z W = \partial_\zeta W/\partial_\zeta f$ is the conjugate of the relative velocity field in the conformal domain.  Notice that due to the dynamic boundary condition \eqref{intro dynamic condition}, if stagnation occurs at a point on $\Gamma_k$, then it occurs at \emph{every} point on $\Gamma_k$.  However, there is no prohibition against having stagnation points in the interior, and indeed, these are expected to be present.  

For the co-rotating pair, there is only one other possibility: blowup of the (excess) angular momentum, which is given by
\begin{equation}
\label{definition angular momentum}
	L(f,w) \colonequals  \imagpart{\int_{\fluidD} z \partial_z \left(  w - w^0 \right) \, dz} = \imagpart{ \int_{\confD_\rho} f \overline{\partial_\zeta f} \partial_\zeta \left(  w - w^0 \right)  \, d\zeta }, 
\end{equation}
where $w^0$ is the potential for the corresponding point vortex configuration.  Our first result is then as follows.

\begin{theorem}[Rotating pair]
\label{intro rotating theorem}
There exists a family $\cm_{\textup{r}}$ of co-rotating hollow vortex pairs admitting the $C^0$ parameterization
\[
	\cm_{\textup{r}} = \{ \left(f(s), w(s), Q(s), \Omega(s), \rho(s) \right) : s \in [0,\infty) \},
\]
where the remaining components of $\fullparam$ are fixed to their values in $\fullparam_{\textup{r}}^0$.  The curve $\cm_{\textup{r}}$ exhibits the following properties.
\begin{enumerate}[label=\rm(\alph*)]
	\item The curve bifurcates from the configuration
		\[
			f(0) = \id, \quad w(0) = \frac{1}{2\pi i} \log{\Big( (\placeholder-1)(\placeholder+1) \Big)}, \quad Q(0) = 0, \quad \Omega(0) = \frac{1}{4\pi}, \quad \rho(0) = 0,
		\]
		which corresponds to the co-rotating pair of point vortices in Figure~\ref{point vortex examples figure}(a).
	\item  The solutions on $\cm_{\textup{r}}$ are hollow vortices in that $\rho(s) > 0$ for $s \neq 0$.  Moreover, in the limit $s \to \infty$, one of the following alternatives occurs.
		\begin{enumerate}[label=\rm(\roman*)]
			\item \textup{(Conformal degeneracy)} The unique conformal description of the fluid domain degenerates in that
        \begin{equation}
          \label{conformal degeneracy alternative}
          \limsup_{s \to \infty} \Nconf(f(s)) = \infty ,
        \end{equation}
			where $\Nconf$ is the quantity defined by \eqref{definition conformal blowup norm}.
			\item \textup{(Velocity degeneracy)} The fluid velocity degenerates along a vortex boundary in that
        \begin{equation}
          \label{velocity degeneracy alternative}
           \limsup_{s \to \infty} \Nvel(f(s),w(s))  = \infty,
        \end{equation}
			where $\Nvel$ is the quantity defined in \eqref{definition velocity blowup norm}.
		\item \textup{(Angular momentum blowup)} The excess angular momentum \eqref{definition angular momentum} is unbounded in that
      \begin{equation}
        \label{angular momentum alternative}
        \limsup_{s \to \infty} L(f(s), w(s))  =  \infty \quad \textrm{and} \quad \limsup_{s \to \infty} |\Omega(s)| = \infty.
      \end{equation}
	\end{enumerate}
	\item \label{intro rotating symmetry part} For each solution on $\cm_{\textup{r}}$, the fluid domain is even with respect to the real and imaginary axes, the relative horizontal velocity is even over the imaginary axis and odd over the real axis, while the relative vertical velocity is odd with respect to both real and the imaginary axes.
	\item At each point on $\cm_{\textup{r}}$, the curve admits a local real-analytic reparameterization.
\end{enumerate}
\end{theorem}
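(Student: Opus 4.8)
\subsection*{Proof proposal}

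The plan is to realize $\cm_{\textup{r}}$ as the image of a global bifurcation curve for an operator equation $\mathcal{F}(f,w,Q,\Omega,\rho) = 0$ posed in Hölder spaces built on the conformal domain $\confD_\rho$. First I would recast the kinematic condition \eqref{intro kinematic condition}, the dynamic condition \eqref{intro dynamic condition}, and the conformality, normalization at infinity, circulation \eqref{circulation condition}, and single-valuedness requirements on $(w,f)$ as the vanishing of a map $\mathcal{F}$ between Banach spaces modeled on $C^{\ell+\alpha}(\overline{\confD_\rho})$. The crucial structural point is that, after the rescaling implicit in the normalization \eqref{q_k Q_k relation}, this map extends analytically down to $\rho = 0$, where it collapses onto the Helmholtz--Kirchhoff system $\mathcal{V}(\fullparam) = 0$; the co-rotating point-vortex pair $\fullparam_{\textup{r}}^0$ thereby furnishes a genuine solution of $\mathcal{F} = 0$ at $\rho = 0$. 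To secure part (c), I would work throughout in subspaces respecting the reflection symmetries across the two coordinate axes, so that these symmetries are automatically inherited by every solution and the free parameters reduce to exactly $(Q,\Omega)$ together with $\rho$.

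Next I would produce the local curve by the analytic implicit function theorem. This requires the Fréchet derivative of $\mathcal{F}$ at the point-vortex configuration, restricted to the symmetric spaces, to be an isomorphism. This is precisely where the non-degeneracy of $\fullparam_{\textup{r}}^0$ as a solution of \eqref{definition steady point vortex} is used: it renders the linearized point-vortex block invertible, while elliptic (Schauder) theory handles the essentially decoupled free-boundary block. Analyticity of $\mathcal{F}$ then yields a locally unique real-analytic solution curve through $\rho = 0$, giving part (a) and the germ of part (d).

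The global statement (b) would follow by feeding this setup into the analytic global bifurcation machinery (in the tradition of Dancer and Buffoni--Toland), which produces a maximal $C^0$ curve $\cm_{\textup{r}}$, real-analytic after local reparameterization, completing part (d). Along $\cm_{\textup{r}}$ one of two abstract alternatives holds: either the curve forms a closed loop, or it leaves every compact subset of the open set $\mathcal{O}$ on which $\mathcal{F}$ is Fredholm and the formulation is valid. I would exclude the loop using the local uniqueness near $\fullparam_{\textup{r}}^0$, which also confines $\rho$ to a compact subinterval of $(0,\infty)$ at large $s$ by preventing the curve from re-approaching the bifurcation point. It then remains to convert ``escape from compacta'' into the trichotomy (i)--(iii), which I would do by contradiction. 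Supposing $\Nconf(f)$ and $\Nvel(f,w)$ stay bounded, boundedness of $\Nconf$ keeps $f$ uniformly conformal and injective with chord-arc boundaries a definite distance apart, while boundedness of $\Nvel$ keeps the boundary velocity bounded and bounded away from stagnation; together with the fixed circulations these feed into uniform Schauder estimates, pinning $(f,w,Q,\rho)$ in a precompact set. The residual non-compactness is then carried entirely by the rotation, so a convergent subsequence can only be obstructed through the angular velocity, yielding alternative (iii).

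The principal obstacle I anticipate is exactly this last step in the rotating regime. Because $\fluidD$ is unbounded and $W$ fails to be holomorphic when $\Omega \neq 0$, the far-field behavior of the potential is genuinely coupled to the angular velocity, and it is here that the excess angular momentum $L$ of \eqref{definition angular momentum} must enter: a momentum-type identity should link $L$ and $\Omega$ so that unboundedness of $|\Omega|$ forces blowup of $L$ (and conversely control of $\Omega$ controls the far field of $w - w^0$), which is what makes the \emph{conjunction} in \eqref{angular momentum alternative} the correct packaging of the rotating non-compactness. Establishing this coupling, and simultaneously making the elliptic estimates uniform down to small $\rho$ so that no degeneration hides in the rescaling, is the delicate part; everything else reduces to the standard interplay between the geometric and velocity norms already encoded in $\Nconf$ and $\Nvel$.
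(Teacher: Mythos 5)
Your overall architecture --- a conformal reformulation, a local analytic implicit-function-theorem argument at the point-vortex configuration in symmetry-adapted spaces with $(Q,\Omega)$ as the compensating parameters, analytic global bifurcation, a priori estimates reducing the abstract alternatives, and a momentum-type identity for the rotating residual case --- matches the paper's. But there are two genuine gaps. The first is the functional setting: you pose $\mathcal{F}$ on ``H\"older spaces built on the conformal domain $\confD_\rho$'' and then assert that the map ``extends analytically down to $\rho=0$.'' Those spaces depend on $\rho$ and collapse as the boundary circles shrink to points, so analyticity in $\rho$ --- and hence the implicit function theorem at $\rho=0$ --- is not even well posed in that setting. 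The paper's essential device is to replace $(f,w)$ by real densities on $M$ fixed copies of the unit circle via the layer-potential ans\"atze $f=\id+\rho^2\mathcal{Z}^\rho\mu$ and $w=w^0+\rho\mathcal{Z}^\rho\nu$, with the powers of $\rho$ and the rescaled Bernoulli constants chosen precisely so that the kinematic and dynamic operators are real analytic in $(\mu,\nu,Q,\Omega,\rho)$ on a \emph{fixed} space built from $\mathring{C}^{\ell+\alpha}(\mathbb{T})^M$, including at $\rho=0$ where they reduce exactly to $\mathcal{V}=0$. Without this (or an equivalent fixed-space reformulation) the desingularization step does not go through.

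The second gap is alternative (iii). You correctly identify that an identity linking $L$ and $\Omega$ is needed and flag it as the principal obstacle, but you do not supply it; this identity (the paper's Proposition~\ref{momentum identity proposition}, obtained by evaluating $\int_{\partial\fluidD^R} zU^2\,dz$ two ways and using the Bernoulli condition) is the one genuinely new estimate required for the rotating case, and the conjunction of $L\to\infty$ with $|\Omega|\to\infty$ follows only once it is in hand together with uniform control of the moment of inertia and of $|\fluidD^c|$ under bounded $\Nconf$ and $\Nvel$. Two smaller miswirings: the positive lower bound on $\rho(s)$, and with it the exclusion of the closed-loop alternative, does not come from local uniqueness near the bifurcation point but from the fixed circulation combined with the isoperimetric inequality and the bound $q_k\le\Nvel$; and one cannot first conclude that $(f,w,Q,\rho)$ is precompact from bounded $\Nconf,\Nvel$ alone and then isolate the residual non-compactness in $\Omega$, because the Schauder constants themselves depend on $|\Omega|$. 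The correct logic is to assume all three of $\Nconf$, $\Nvel$, $|\Omega|$ bounded, contradict every abstract alternative of the global bifurcation theorem, and then upgrade $|\Omega|\to\infty$ to $L\to\infty$ via the identity.
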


The above theorem represents the first rigorous construction of large, co-rotating hollow vortices.  Numerical work by Nelson, Krishnamurthy, and Crowdy \cite{nelson2021corotating} suggests that in the limit, the two vortex cores pinch off and merge: the vortex boundaries approach one another, while developing a cusp at the incipient points of contact which also tends to stagnation. A sequence of vortex boundaries are sketched in Figure~\ref{pair pinching figure}. This would correspond to both conformal and velocity field degeneration occurring simultaneously. 

It bears mentioning that some careful analysis is needed to winnow the alternatives down to just \eqref{conformal degeneracy alternative}, \eqref{velocity degeneracy alternative}, and \eqref{angular momentum alternative}, as a priori there are many other ways for degeneracy to manifest.  For example, we might imagine that in the limit the pre-images of the vortex boundaries come into contact in the conformal domain. However, the first two terms in $\Nconf(f)$ and the maximum modulus principle allow us to control $|\partial_\zeta f|$ on $\overline{\confD_\rho}$ from above and below, and thus the distance between vortices in the physical domain and their pre-images in the conformal domain are comparable.  Likewise, the argument principle and a continuity argument allow us to infer injectivity of $f(s)$ in the interior of $\confD_{\rho(s)}$ from its injectivity on the boundary.  A nontrivial uniform regularity result is also needed to reduce blowup to unboundedness in $L^\infty$, rather than in higher regularity H\"older or Sobolev norms.  The fact that $|\Omega(s)| \to \infty$ occurs in conjunction with the blowup of the excess angular momentum is a consequence of a novel identity for hollow vortices; see Proposition~\ref{momentum identity proposition}.

Next consider the stationary point vortex tripole 
\[ 
	\fullparam_{\textup{s}}^0 \colonequals  \Big(\gamma_1 = 2,\,  \gamma_2 = -1, \,  \gamma_3=2, \, \zeta_1 = 1,\,  \zeta_2=0, \, \zeta_3=-1, \, c=0,\, \Omega=0\Big)
\]
depicted in Figure~\ref{point vortex examples figure}(b).  Here, we take the varying component of $\fullparam$ to be just the circulation $\gamma_2$ around the central vortex.  A second application of the vortex desginularization and continuation machinery then yields the following result.

\begin{theorem}[Stationary tripole]
\label{intro tripole theorem}
There exists a family $\cm_{\textup{s}}$ of stationary hollow vortex tripoles admitting the $C^0$ parameterization
\[
	\cm_{\textup{s}} = \{ \left(f(s), w(s), Q(s), \gamma_2(s), \rho(s) \right) : s \in [0,\infty) \},
\]
with the remaining components of $\fullparam_{\textup{s}}$ fixed to their values in $\fullparam_{\textup{s}}^0$.  The curve $\cm_{\textup{s}}$ exhibits the following properties.
\begin{enumerate}[label=\rm(\alph*)]
	\item The curve bifurcates from the configuration
		\[
			f(0) = \id, \quad w(0) = \frac{1}{2\pi i} \log{\left(\frac{(\placeholder - 1)^2 (\placeholder + 1)^2}{\placeholder } \right)}, \quad Q(0) = 0, \quad \gamma_2(0) = -1, \quad \rho(0) = 0,
		\]
		which corresponds to the stationary point vortex tripole in Figure~\ref{point vortex examples figure}(b).  
	\item For all $s > 0$, we have $\rho(s) > 0$, and in the limit $s \to \infty$ either conformal degeneracy \eqref{conformal degeneracy alternative} or velocity degeneracy \eqref{velocity degeneracy alternative} occurs:
		\[
			\limsup_{s \to \infty} \Big( \Nconf(f(s))+\Nvel(f(s),w(s)) \Big) =  \infty.
		\]
	\item \label{intro stationary symmetry part} For each solution on $\cm_{\textup{s}}$, the fluid domain is even with respect to the real and imaginary axes, the relative horizontal velocity is even over the imaginary axis and odd over the real axis, while the relative vertical velocity is odd over the imaginary axis and even over the real axis.
	\item At each point on $\cm_{\textup{t}}$, the curve admits a local real-analytic reparameterization.
\end{enumerate}
\end{theorem}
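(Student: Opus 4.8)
The plan is to obtain Theorem~\ref{intro tripole theorem} as an application of the same desingularization-and-continuation machinery used for the rotating pair in Theorem~\ref{intro rotating theorem}, with the varying external parameter now taken to be the central circulation $\gamma_2$ rather than the angular velocity. First I would record the reformulation of the hollow vortex system \eqref{intro hollow vortex problem} in the conformal variables $(f,w)$ on $\confD_\rho$ as the vanishing of a single real-analytic operator equation $\F(f,w,Q,\gamma_2,\rho)=0$ between suitable scales of symmetric H\"older spaces, together with the elementary verification that the point vortex tripole $\fullparam_{\textup{s}}^0$ solves $\Vrho=0$. The crux of the \emph{local} statement (a) is a non-degeneracy check: linearizing at $\rho=0$, I would confirm that the derivative of $\F$ in the unknowns $(f,w,Q,\gamma_2)$ is an isomorphism, so that the analytic implicit function theorem produces a local analytic curve through the bifurcation configuration parameterized by $\rho$. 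Here the reflection symmetries are essential: restricting to the subspace of fields even about both axes pins the positions $\zeta_k$ and forces $\gamma_1=\gamma_3=2$, so that $\gamma_2$ is genuinely the only free external parameter and the Fredholm index bookkeeping closes.

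For the global statement I would invoke the analytic global bifurcation theorem (in the Dancer--Buffoni--Toland form) to continue the local curve to a maximal $C^0$ curve $\cm_{\textup{s}}$ admitting a local real-analytic reparameterization, which gives part (d) at once. The abstract theorem furnishes a soft dichotomy: either a working norm of $(f,w,Q,\gamma_2,\rho)$ blows up, or the curve approaches the boundary of the admissible open set, meaning loss of conformality or injectivity. The real work is to \emph{winnow} this down to blowup of $\Nconf$ or $\Nvel$ alone, which I would do by contradiction, assuming both $\Nconf(f(s))$ and $\Nvel(f(s),w(s))$ stay bounded along a sequence $s\to\infty$ and then controlling every other quantity. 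Boundedness of the chord-arc and derivative terms in \eqref{definition conformal blowup norm}, together with the maximum modulus principle applied to the holomorphic function $\partial_\zeta f$, bounds $|\partial_\zeta f|$ above and below on $\overline{\confD_\rho}$, so the argument principle upgrades boundary injectivity to interior injectivity and keeps $f$ in the admissible class. Because the outer circulations $\gamma_1=\gamma_3=2$ are fixed and nonzero, estimating $\gamma_1$ through \eqref{circulation condition} by the product of $\sup_{\Gamma_1}|U|\le\Nvel$ and the arc length $\le 2\pi\rho\,\Nconf$ forces $\rho$ to stay bounded below; conversely, the distance term in \eqref{definition conformal blowup norm} together with the two-sided bound on $|\partial_\zeta f|$ keeps the pre-image disks $B_\rho(\pm1)$ and $B_\rho(0)$ from touching, so $\rho$ stays bounded away from $1/2$. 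With $\rho$ trapped in a compact subinterval of $(0,1/2)$ and $|U|$ bounded above and below, the same circulation estimate applied to $\gamma_2=\int_{\Gamma_2}U\,dz$ bounds $|\gamma_2|$, and then \eqref{q_k Q_k relation} bounds $Q$. A uniform Schauder-type regularity estimate finally promotes these $L^\infty$ bounds to bounds in the strong topology, contradicting norm blowup and ruling out the loss-of-compactness alternative; the lower bound on $\rho$ along the curve also yields $\rho(s)>0$ for $s>0$, establishing (b).

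For the symmetry statement (c) I would build the two reflection symmetries into the function spaces from the outset, check that $\F$ is equivariant under both reflections and that $\fullparam_{\textup{s}}^0$ lies in the symmetric subspace, and then propagate the symmetry along all of $\cm_{\textup{s}}$ by the uniqueness in the implicit function theorem and in the analytic global theorem. Since the tripole is stationary we have $W=w$, with no moving-frame term $i\frac{\Omega}{2}|z|^2-cz$ present, so the prescribed parities of $f$ and $w$ translate directly into the stated even/odd behavior of the horizontal and vertical relative velocities; the difference from the rotating case in part~(c) arises solely from the absence of this term.

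I expect the main obstacle to be the winnowing in the second step. Unlike the rotating pair there is no angular-momentum escape valve here, so the entire burden falls on showing that a runaway of the free parameter $\gamma_2$ or of the auxiliary unknowns $\rho$ and $Q$ cannot occur without triggering $\Nconf$ or $\Nvel$ blowup. The two delicate points are the two-sided control of $\rho$, obtained from the fixed nonzero outer circulations on one side and from physical-to-conformal disk separation on the other, and the uniform elliptic regularity needed to reduce blowup in the strong topology to $L^\infty$ blowup of the geometric and velocity quantities.
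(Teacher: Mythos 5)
Your proposal is correct and follows essentially the same route as the paper: a symmetric reformulation in which the linearized operator becomes an isomorphism (Corollary 6.6 in the paper), analytic global continuation, and then elimination of the $|\gamma_2|$ alternative by the circulation estimate $|\gamma_2|=|\int_{\Gamma_2}U\,dz|\le 2\pi\rho\,\Nconf\Nvel$ once $\rho$ is trapped via the fixed outer circulations and the fixed conformal centers. The supporting steps you outline — two-sided control of $|\partial_\zeta f|$ by the maximum modulus principle, injectivity via the argument principle, and uniform Schauder regularity to upgrade $L^\infty$ control to the strong topology — are exactly the ingredients the paper uses in its general continuation theorem.
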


As a final concrete example, we investigate the classical case of a translating pair of point vortices:
\[
	\fullparam_{\textup{t}}^0 \colonequals \Big(\gamma_1 = 1,\,  \gamma_2 = -1, \, \zeta_1 = i, \, \zeta_2 = -i, \, c = \frac{1}{4\pi}, \, \Omega = 0  \Big).
\]
Explicit families translating hollow vortex pairs were first discovered by Pocklington \cite{pocklington1896} and then later revisited by Crowdy, Llewellyn Smith, and Freilich \cite{crowdy2013translating}.  For our family, the varying parameter will be solely the wave speed $c$. 

\begin{theorem}[Pocklington vortices]
\label{intro pocklington theorem}
There exists a family $\cm_{\textup{t}}$ of translating pairs of hollow vortices admitting the $C^0$ parameterization
\[
	\cm_{\textup{t}} = \{ \left(f(s), w(s), Q(s), c(s), \rho(s) \right) : s \in [0,\infty) \},
\]
with the remainder of the components of $\fullparam_{\textup{t}}$ fixed to their values in $\fullparam_{\textup{t}}^0$.  The curve $\cm_{\textup{t}}$ exhibits the following properties.
\begin{enumerate}[label=\rm(\alph*)]
	\item The curve bifurcates from the configuration
		\[
			f(0) = \id, \quad w(0) = \frac{1}{2\pi i} \log{\left(\frac{\placeholder - i}{\placeholder + i} \right)}, \quad Q(0) = 0, \quad c(0) = \frac{1}{4\pi}, \quad \rho(0) = 0,
		\]
		which corresponds to the co-translating point vortex pair in Figure~\ref{point vortex examples figure}(c).
	\item For all $s > 0$, we have $\rho(s) > 0$, and in the limit $s \to \infty$ either conformal degeneracy \eqref{conformal degeneracy alternative} or velocity degeneracy \eqref{velocity degeneracy alternative} occurs:
		\[
			\limsup_{s \to \infty} \Big( \Nconf(f(s))+\Nvel(f(s),w(s)) \Big) =  \infty.
		\]
	\item \label{intro pocklington symmetry part} For each solution on $\cm_{\textup{t}}$, the fluid domain is even with respect to the real and imaginary axes, the relative horizontal velocity is even over the imaginary axis and odd over the real axis, while the relative vertical velocity is odd over the imaginary axis and odd over the real axis.

	\item At each point on $\cm_{\textup{t}}$, the curve admits a local real-analytic reparameterization.
\end{enumerate}
\end{theorem}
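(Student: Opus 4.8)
The plan is to derive Theorem~\ref{intro pocklington theorem} as a third application of the general desingularization and global continuation scheme, already carried out for the rotating pair and the stationary tripole, now specialized to the co-translating configuration $\fullparam_{\textup{t}}^0$. Working in the conformal variables $(f,w)$ of Section~\ref{conformal variables section}, I would first recast the kinematic condition \eqref{intro kinematic condition} and dynamic condition \eqref{intro dynamic condition} on the circles $\partial B_\rho(\zeta_k)$ as an abstract operator equation between H\"older spaces, using the renormalization \eqref{q_k Q_k relation} of the Bernoulli constants so that the system extends smoothly down to $\rho = 0$, where it reduces to the point vortex equations $\mathcal{V} = 0$. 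A direct computation from \eqref{definition steady point vortex} confirms that $\fullparam_{\textup{t}}^0$ is an equilibrium. The essential structural simplification in this case is that $\Omega = 0$, so the relative potential $W = w - cz$ is holomorphic; in particular the alternative \eqref{angular momentum alternative} that appears for the rotating pair—which requires $|\Omega(s)| \to \infty$—cannot occur since $\Omega \equiv 0$, leaving only the two alternatives listed in part~(b).

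Second, I would impose the two reflection symmetries the configuration enjoys: reflection across the real axis, under which the two vortices exchange places and their circulations reverse, maps $\fullparam_{\textup{t}}^0$ to itself, while reflection across the imaginary axis, combined with the potential sign reversal $w \mapsto -w$ that flips all circulations without moving the fluid domain, is a second symmetry of the full system. Restricting the function spaces for $(f,w)$ to the subspace invariant under both operations forces the parities recorded in part~\ref{intro pocklington symmetry part}, renders the fluid domain even across both coordinate axes, and—most importantly—freezes every component of $\fullparam$ except the wave speed $c$: the centers remain pinned to $\pm i$, the circulations to $\pm 1$, and $\Omega$ to $0$, while $c$ must be allowed to vary since the finite vortex cores translate at a corrected speed as $\rho$ grows.

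Third comes the local-to-global analysis. Nondegeneracy of the equilibrium $\fullparam_{\textup{t}}^0$ together with the Fredholm structure of the linearized hollow vortex operator at $\rho = 0$ permits an application of the analytic implicit function theorem, producing a local real-analytic curve through the bifurcation configuration in part~(a) and establishing the local real-analytic reparameterization in part~(d). I would then invoke the analytic global bifurcation theorem to extend this branch to a maximal $C^0$ curve $\cm_{\textup{t}}$ parameterized by $s \in [0,\infty)$; that $\rho(s) > 0$ for every $s > 0$ follows from local uniqueness near the point vortex limit, since any later return to $\rho = 0$ would force the curve to retrace the original branch.

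The main obstacle, exactly as in the first two theorems, is to winnow the abstract alternatives supplied by global bifurcation theory down to the clean dichotomy of conformal degeneracy \eqref{conformal degeneracy alternative} versus velocity degeneracy \eqref{velocity degeneracy alternative}, ruling out the many spurious ways a limit could fail. Here I would use the maximum modulus principle to bound $|\partial_\zeta f|$ from above and below throughout $\overline{\confD_\rho}$, so that distances in the physical and conformal domains stay comparable and the pre-images of the vortex boundaries cannot collide unless $\Nconf$ already blows up; the argument principle together with a continuity argument to promote boundary injectivity of $f(s)$ to injectivity on the interior; and uniform Schauder estimates to reduce an apparent loss of control in higher H\"older norms to mere $L^\infty$ blowup of $\partial_\zeta f$ or of $U$ and $1/U$. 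Because the momentum identity of Proposition~\ref{momentum identity proposition} is needed only when $\Omega \not\equiv 0$, this winnowing is in fact somewhat cleaner here than in the rotating case of Theorem~\ref{intro rotating theorem}.
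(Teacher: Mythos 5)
Your overall architecture matches the paper's: formulate the problem in the symmetry-restricted spaces of Corollary~\ref{small translating pair corollary} (so that the only varying parameter is $c$), run the local analytic implicit function theorem and then the global continuation machinery of Theorem~\ref{homoclinic global ift}, and winnow the abstract alternatives using the maximum modulus principle, the argument principle, and the uniform Schauder estimates. However, there is one genuine gap in your treatment of the limiting alternatives, and it concerns precisely the step that distinguishes the translating case.

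The general continuation result, Theorem~\ref{intro global continuation theorem}, yields the alternative \eqref{general continuation blowup alternative}, whose third term is blowup of the \emph{varying parameter} $|\param(s)|$. For the Pocklington family the varying parameter is the wave speed, so the raw output of the global theory is $\limsup_{s\to\infty}\bigl(\Nconf(s)+\Nvel(s)+|c(s)|\bigr)=\infty$, and part (b) of Theorem~\ref{intro pocklington theorem} only follows once one shows that $|c(s)|$ cannot blow up while $\Nconf$ and $\Nvel$ remain bounded. You instead argue that because $\Omega\equiv 0$ the angular momentum alternative \eqref{angular momentum alternative} "cannot occur, leaving only the two alternatives in part (b)." This misreads the structure: \eqref{angular momentum alternative} is not a generic alternative to be excluded here; it is the translation, via Proposition~\ref{momentum identity proposition}, of the generic parameter-blowup alternative $|\Omega(s)|\to\infty$ in the rotating case, where $\Omega$ happens to be the varying parameter. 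In the translating case the analogous danger is $|c(s)|\to\infty$, and nothing in your proposal addresses it. The paper closes this gap with Proposition~\ref{wave speed proposition}, an a priori bound $|c|\le M\sup_{\partial\fluidD}|U|\le M\Nvel$ obtained by applying the calculus of residues to $U(\zeta)/(\zeta-\zeta_k)$ over $\partial\confD_{\rho_1,\ldots,\rho_M}$ and using that $\partial_\zeta w/\partial_\zeta f$ is holomorphic and $O(1/\zeta)$ at infinity. Without this estimate (or a substitute), your conclusion in part (b) would have to carry the extra alternative $\limsup|c(s)|=\infty$, and the theorem as stated would not be proved.
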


Theorems~\ref{intro rotating theorem}, \ref{intro tripole theorem}, and \ref{intro pocklington theorem} are specific applications of a general machinery that allows \emph{any} non-degenerate steady point vortex configuration to be desingularized and continued to a global $C^0$ curve of hollow vortices.  We will postpone the rigorous definition of non-degeneracy until Section~\ref{local bifurcation point vortex section}.  For now, it suffices to say that it is a condition on the rank of the Jacobian of the steady point vortex system \eqref{definition steady point vortex} that ensures all nearby solutions can be parameterized by some subcollection $\param^\prime$ of the parameters.  An assumption of this type is universal in vortex desingularization arguments, though often made implicitly.  When it holds, we may perform a generalized form of the decomposition seen in the examples, writing the full set of parameters as $\fullparam = (\param, \param^\prime)$, where $\param$ will be allowed to vary along the hollow vortex solution curve while $\param^\prime$ will remain fixed.  

Our first theorem in the general setting is the following vortex desingularization result.  

\begin{theorem}[Vortex desingularization]
\label{intro local desingularization theorem}
	Let $\fullparam_0 = (\param_0, \param_0^\prime)$ be a non-degenerate steady point vortex configuration.  There exists $\rho_1 > 0$ and a family $\cm_\loc$ of hollow vortices that admits the real-analytic parameterization 
\[
	\cm_{\loc} = \left\{  (f^\rho, w^\rho, Q^\rho, \param^\rho, \rho) : \quad \rho \in [0, \rho_1) \right\},
\]
and bifurcates from the point vortex configuration $\fullparam_0$ in that 
\[
		f^0 = \id, \quad w^0 = \sum_{k} \frac{\gamma_k}{2\pi i} \log{(\placeholder - \zeta_k)}, \quad Q^0 = 0, \quad (\param^0,\param_0^\prime) = \fullparam_0.
\]
\end{theorem}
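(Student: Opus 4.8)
The plan is to cast the full hollow vortex problem as a single nonlinear operator equation $\mathcal{F}(f, w, Q, \param, \rho) = 0$ between fixed Banach spaces and to apply the analytic implicit function theorem, treating $\rho$ as the bifurcation parameter and $\rho = 0$ as the (singular) point vortex limit. The first task is to make $\rho = 0$ a regular value. Since all the conformal radii equal $\rho$, I would rescale each boundary circle $\partial B_\rho(\zeta_k)$ to a fixed unit circle via $\zeta = \zeta_k + \rho e^{i\theta}$, so that the kinematic condition \eqref{intro kinematic condition} and dynamic condition \eqref{intro dynamic condition}, pulled back through $f$ as in the reformulation of Section~\ref{formulation section}, become equations on the $\rho$-independent space $\bigsqcup_k S^1$ of H\"older class $C^{\ell+\alpha}$. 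The circulation constraints \eqref{circulation condition} are built in by writing $w = \sum_k \frac{\gamma_k}{2\pi i}\log(\placeholder - \zeta_k) + \tilde w$ with $\tilde w$ a single-valued holomorphic remainder, and the normalization $f \to \id$ at infinity fixes the conformal gauge.

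The essential device for crossing $\rho = 0$ is the rescaling \eqref{q_k Q_k relation}. Multiplying the dynamic condition by $\rho^2$ turns it into $\rho^2 |\p_\zeta W|^2 / |\p_\zeta f|^2 = \gamma_k^2/(4\pi^2) + \rho Q_k$, and since the dominant singular term of $\p_\zeta w$ on $\partial B_\rho(\zeta_k)$ is $\frac{\gamma_k}{2\pi i}(\zeta - \zeta_k)^{-1}$, of modulus $|\gamma_k|/(2\pi\rho)$, the left side limits to $\gamma_k^2/(4\pi^2)$ as $\rho \searrow 0$, exactly cancelling the leading term on the right and leaving an equation that is regular, indeed real-analytic, at $\rho = 0$. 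At $\rho = 0$ the candidate solution is $f = \id$, $w = w^0$, $Q = 0$, $\param = \param_0$; I would verify directly that it solves the rescaled system, and, crucially, that the leading-order angular variation of the kinematic condition reduces precisely to the steady point vortex equation $\Vrho(\fullparam_0) = 0$. This is the mechanism linking the two problems: the $O(1)$ solvability condition for desingularization is exactly the Helmholtz--Kirchhoff relative equilibrium condition \eqref{definition steady point vortex}.

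With $\mathcal{F}$ shown to be real-analytic in a neighborhood of $(\id, w^0, 0, \param_0, 0)$ -- using that the relevant superposition and composition maps are analytic on H\"older spaces and that the denominators $\p_\zeta f$ and $|U|$ stay bounded away from $0$ near the base point -- the heart of the argument is to show that the partial Fr\'echet derivative $D_{(f,w,Q,\param)}\mathcal{F}$ at $\rho = 0$ is an isomorphism. I expect this linearized operator to split into an infinite-dimensional part acting on the function perturbations $(\dot f, \dot w)$ and a finite-dimensional part carried by $(Q, \param)$. On each circle the leading linearization of the kinematic and dynamic conditions is a Riemann--Hilbert / Fourier-multiplier problem that is invertible modulo a finite-dimensional cokernel corresponding to a few low Laurent modes (those tied to the logarithmic terms and the residual conformal automorphisms). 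The role of $Q$ is to absorb the constant obstructions in the dynamic condition, while the non-degeneracy hypothesis -- the rank condition on the Jacobian of $\Vrho$ from Section~\ref{local bifurcation point vortex section} -- is precisely what guarantees that varying the reduced parameters $\param$ spans the remaining cokernel directions of the kinematic condition. Establishing this invertibility, and in particular correctly matching the cokernel of the continuous part against the span of $\{ \partial_\param \Vrho \}$, is the main obstacle; everything else is either a routine regularity check or follows from the analytic implicit function theorem, which then delivers the unique real-analytic branch $\rho \mapsto (f^\rho, w^\rho, Q^\rho, \param^\rho)$ on some interval $[0,\rho_1)$ with the asserted value at $\rho = 0$. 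A final short continuity argument, using the uniform control of $\p_\zeta f$ afforded by the first two terms of $\Nconf$ together with the argument principle, shows that $f^\rho$ remains conformal and injective for small $\rho > 0$, so that the branch consists of genuine hollow vortices.
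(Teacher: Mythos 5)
Your setup (rescaling to fixed circles, the $\rho Q_k$ renormalization of the Bernoulli constants, recovering $\mathcal{V}(\fullparam_0)=0$ as the $O(1)$ solvability condition, and the closing injectivity argument via the argument principle) all track the paper, which implements the fixed-domain reformulation slightly differently via layer-potential densities $f = \id + \rho^2\mathcal{Z}^\rho\mu$, $w = w^0 + \rho\mathcal{Z}^\rho\nu$. But there is a genuine gap at what you yourself identify as the heart of the argument: the linearized operator $D_{(f,w,Q,\param)}\mathcal{F}$ at $\rho=0$ is \emph{never} an isomorphism, so the standard analytic implicit function theorem cannot be applied. The reason is that the point vortex map $\mathcal{V}$ satisfies the universal identities \eqref{pv identities} (consequences of translation and rotation invariance); differentiating $\phi^0(\mathcal{V}(\fullparam),\fullparam)\equiv 0$ at a zero of $\mathcal{V}$ shows $\range D\mathcal{V}(\fullparam_0)\subset\kernel\phi^0_Z(0,\fullparam_0)$, which has codimension $n\ge 1$ ($n=1$ in the translating and rotating cases, $n=3$ in the stationary case) \emph{no matter which} subcollection $\param$ of parameters you allow to vary. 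Non-degeneracy in Definition~\ref{definition nondegenerate configuration} asserts only that the codimension equals this forced minimum, not that it is zero. Since Lemma~\ref{kernel and range lemma} shows $\cod\range D_u\F(u^0,0)=\cod\range D_\param\mathcal{V}(\fullparam_0)$, your plan of ``matching the cokernel of the continuous part against the span of $\{\partial_\param\mathcal{V}\}$'' necessarily leaves $n$ residual cokernel directions that cannot be absorbed by $(Q,\param)$.

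The missing ingredient is the content of Section~\ref{hollow vortex identities section}: one must derive nonlinear identities satisfied by the full hollow vortex operator $\F$ itself (the maps $\phi_{\rm t},\phi_{\rm r},\phi_{\rm s}$ of \eqref{hv phi functions}, obtained from formal variations of the Hamiltonian $E-cp-\Omega L$ under translations and rotations/scalings), verify that they generalize \eqref{pv identities} and that $D_{(A,B)}\phi$ is surjective (Theorem~\ref{hollow vortex identities theorem}), and then invoke the \emph{implicit function theorem with identities} (Lemma~\ref{local ift phi theorem}) rather than the ordinary one. That lemma shows the residual cokernel is harmless precisely because the identities force the corresponding components of $\F$ to vanish automatically on the zero set. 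Without this step your argument cannot close, and it is the main new structural input of the local desingularization theorem.
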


The curve $\cm_\loc$ is obtained by using the (local) implicit function theorem which also gives detailed descriptions of the hollow vortex geometry and velocity field. Specifically, we find that the streamlines in a neighborhood of a vortex are asymptotically circular.  The presence of the other vortices is felt at the next order of development as an effective ambient straining flow.   That is, the conformal map and the complex conjugate of the velocity field take the form
\begin{equation}
\label{informal asymptotics}
  \begin{aligned}
    f^\rho(\zeta) & = \zeta + \rho^4 \frac{8  \pi i \overline{S_k}}{ \gamma_k} \frac{1}{\zeta - \zeta_k} + O(\rho^5) \\
    \partial_\zeta w^\rho(\zeta) & = \frac{\gamma_k}{2\pi i} \frac{1}{\zeta-\zeta_k} + 2S_k (\zeta-\zeta_k) - 2 \rho^4 \overline{S_k} \frac{1}{(\zeta-\zeta_k)^3} +O(\rho^5)
  \end{aligned}
\end{equation}
on the pre-image of the $k$-th vortex boundary $\partial B_\rho(\zeta_k)$, where
\[
	S_k \colonequals  -\frac{1}{2} \sum_{j \neq k} \frac{\gamma_j}{2\pi i} \frac{1}{(\zeta_j - \zeta_k)^2}.
\]
The leading-order terms in \eqref{informal asymptotics} agree with the explicit formulas of Llewellyn Smith and Crowdy \cite{llewellyn2012structure} for a single hollow vortex suspended in a velocity field that limits to $2S_k z$ as $z \to \infty$.  For the precise statement of Theorem~\ref{intro local desingularization theorem}, see Theorem~\ref{hollow vortex local bifurcation theorem}.

A major strength of Theorem~\ref{intro local desingularization theorem} is its generality: we are able treat in a unified way translating, stationary, and rotating vortices.  Traizet \cite{traizet2015hollow}, for example, constructs small solutions using a remarkable correspondence between the hollow vortex problem and certain minimal surface equations \cite{traizet2014classification}.  This equivalence, however, only applies to the stationary case and further supposes that the Bernoulli constants are all the same. Several authors have discovered explicit families of hollow vortices. This includes the classical, 19th century work of Pocklington \cite{pocklington1896} where relevant conformal mappings are expressed via Jacobi elliptic functions as well as more modern work based on Schottky--Klein prime functions \cite{crowdy2013translating,crowdy2014hollow,crowdy2021hstates}.  Not all hollow vortex configurations are expected to have explicit formulas of this type, however, even relatively simple configurations such as a rotating pair~\cite{nelson2021corotating}. Nevertheless, through Theorem~\ref{intro local desingularization theorem} (and Theorem~\ref{intro global continuation theorem} below), we are able to infer the rigorous existence of (global) curves of hollow vortices as soon as the non-degeneracy of the point vortex equilibrium is known. Our argument is quite robust, and in particular can be readily adapted to periodic configurations such as vortex streets or arrays, or to allow configurations consisting of both point vortices and hollow vortices. Of course the solutions we find are far less explicit; the local curves are obtained using a fixed point argument, while the global curves are obtained using non-constructive global bifurcation theory.

There is also a large and rapidly growing literature on the closely related question of desingularizing point vortices into vortex patches.  These are solutions to the Euler equations in the entire plane, where the vortex cores are modeled as regions of constant vorticity rather than vacuum.  In some respects, this situation is analytically simpler than the hollow vortex system: the fluid domain is simply connected and the vortex boundaries are streamlines but not lines of constant pressure. The latter fact means that while the kinematic condition \eqref{intro kinematic condition} is imposed on $\Gamma_k$, one is relieved of the need to satisfy the dynamic condition \eqref{intro dynamic condition}, which is fully nonlinear. 
On the other hand, for vortex patches one must study the flow both inside and outside the vortex regions, which complicates the use of conformal mappings. 
The earliest rigorous results on desingularization into steady vortex patches are due to Turkington, who used variational methods~\cite{turkington1983steady,turkington1985corotating}; also see \cite{keady1985streets,burton1987rings,smets2010desingularization,cao2021pairs} and the references therein. More recently, Hmidi and Mateu~\cite{hmidi2017pairs} developed a novel desingularization of the contour dynamics equations for symmetric pairs, allowing for the direct application of the implicit function theorem. This method has since been extended to other specific configurations~\cite{garcia2020streets,garcia2021choreography,hassainia2021asymmetric}, and very recently to general configurations~\cite{hassainia2022multipole}. It is worth mentioning that many of these works also treat other models such as the generalized surface quasi-geostrophic equations.

Our last result analytically continues the local curve $\cm_\loc$ to obtain a maximal curve $\cm$ that may contain solutions drastically different from the initial point vortex configuration.  In the limit along $\cm$, it is once again possible that either conformal degeneracy \eqref{conformal degeneracy alternative} or velocity degeneracy \eqref{velocity degeneracy alternative} occur.  However, in general there is a third alternative: the varying parameters may be unbounded.

\begin{theorem}[Global continuation]
\label{intro global continuation theorem}
Let $\fullparam_0 = (\param_0, \param_0^\prime)$ be a non-degenerate steady point vortex configuration and $\cm_\loc$ the local curve of hollow vortices furnished by Theorem~\ref{intro local desingularization theorem}.  Assume that at least one of the circulations is part of $\param_0^\prime$.  There exists a curve $\cm$ of hollow vortices admitting the $C^0$ parameterization
  \begin{equation}
    \label{global curve parameterization}
    \cm = \{ (f(s), w(s), Q(s), \param(s), \rho(s)) : s \in [0,\infty) \}
  \end{equation}
with $\cm_\loc \subset \cm$.  The curve $\cm$ is locally real analytic and exhibits the limiting behavior
  \begin{equation}
    \label{general continuation blowup alternative}
    \limsup_{s \to \infty} \Big( \Nconf(s) + \Nvel(s) + |\lambda(s)| \Big) = \infty.
  \end{equation}
\end{theorem}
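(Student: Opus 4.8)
\section*{Proof strategy}

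The plan is to recast the full hollow vortex system as a single real-analytic operator equation and then invoke the analytic global bifurcation theorem of Dancer and Buffoni--Toland. Write the unknown as $\kappa = (f, w, Q, \param, \rho)$ in a Banach space $\Xspace$ built from the H\"older spaces $C^{\ell+\alpha}(\overline{\confD_\rho})$ carrying $(f,w)$ together with the finite-dimensional factors for $(Q,\param,\rho)$; here only the varying subcollection $\param$ appears, the complementary $\param_0^\prime$ being held fixed. (A mild technical point is that $\confD_\rho$ itself depends on $\rho$; as in the local theorem one first pulls everything back to a fixed reference circular domain so that the spaces are $\rho$-independent.) Collecting the kinematic condition \eqref{intro kinematic condition}, the dynamic condition in its desingularized form via the normalization \eqref{q_k Q_k relation}, the circulation \eqref{circulation condition} and flux constraints, and the conformal normalizations, I obtain a real-analytic map $\F \maps \mathcal{O} \to \Yspace$ on an open set $\mathcal{O} \subset \Xspace$ consisting of those $\kappa$ for which $f$ is conformal and injective, the relative velocity is nonvanishing, and $\rho > 0$; that is, $\mathcal{O}$ is precisely the region on which $\Nconf(f)$ and $\Nvel(f,w)$ are finite and $\rho$ is positive. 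Theorem~\ref{intro local desingularization theorem} supplies a real-analytic arc $\cm_\loc$ of solutions issuing from the point vortex configuration at $\rho = 0 \in \partial\mathcal{O}$.

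Two structural inputs are needed before the abstract machine applies. First, $D\F(\kappa)$ must be Fredholm of index $1$ at every solution, with one-dimensional kernel spanned by the curve's tangent. The linearization is a Riemann--Hilbert/elliptic boundary value problem on $\confD_\rho$ coupled to the finite-dimensional parameters; its principal part is invertible and the remaining lower-order and finite-rank terms are compact, so $D\F$ is Fredholm. For $\rho$ small, the implicit-function-theorem argument behind Theorem~\ref{intro local desingularization theorem}---whose solvability in the $(f,w,Q,\param)$ directions rests on the non-degeneracy of the point vortex equilibrium---shows that the derivative in those state directions is an isomorphism, so $D\F$ has index $1$ there; homotopy invariance of the index propagates this along the connected solution set. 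The one-dimensionality of $\kernel D\F$ and the transversality of $\cm_\loc$ are read off from the same local analysis.

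The second, and most demanding, input is \emph{properness}: $\F$ must be proper on closed bounded subsets of $\mathcal{O}$, so that any sequence of solutions bounded in $\Xspace$ and bounded away from $\partial\mathcal{O}$ has a convergent subsequence. This rests on a uniform regularity (Schauder-type) estimate: $L^\infty$ control furnished by bounds on $\Nconf$ and $\Nvel$---that is, control of $\partial_\zeta f$, the chord-arc and separation quantities, and of $U$ and $1/U$---must be bootstrapped to bounds in the full $C^{\ell+\alpha}$ norm, after which the compact embedding $C^{\ell+\alpha}\hookrightarrow C^{\ell'+\alpha'}$ and Arzel\`a--Ascoli yield compactness. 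I expect this uniform regularity step to be the main obstacle, as it requires estimating the free-boundary elliptic/Riemann--Hilbert problem with constants depending only on $\Nconf$, $\Nvel$, $|\param|$, and a lower bound for $\rho$.

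With these ingredients the analytic global bifurcation theorem produces a maximal curve $\cm \supset \cm_\loc$ with the continuous parameterization \eqref{global curve parameterization}, locally real-analytically reparameterizable, obeying the usual dichotomy: either $\cm$ is a closed loop, or $\kappa(s)$ eventually leaves every compact subset of $\mathcal{O}$. The loop alternative is excluded since $\cm$ emanates from the boundary point $\rho=0$, where the local theorem shows the solution set is the one-sided arc $\cm_\loc$; a periodic orbit would have to return to this boundary configuration, which is impossible within $\mathcal{O}$. It remains to convert ``leaves every compact subset'' into \eqref{general continuation blowup alternative}. Arguing by contraposition, suppose $\Nconf(s)$, $\Nvel(s)$, and $|\param(s)|$ stay bounded. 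The uniform regularity estimate then bounds $\|\kappa(s)\|_\Xspace$, so it suffices to bound $\rho(s)$ away from $0$ and $\infty$. An upper bound holds because bounded $|\param|$ keeps the conformal centers separated, and large $\rho$ would force the disks $B_\rho(\zeta_k)$---hence the physical vortices---to collide, blowing up the separation term of $\Nconf$. For the lower bound I use the hypothesis that some circulation $\gamma_{k_0}$ lies in $\param_0^\prime$ and is thus fixed and nonzero: bounded $\Nconf$ controls $|\partial_\zeta f|$ and hence $\operatorname{length}(\Gamma_{k_0}) \lesssim \rho$, while $\partial_z w = \partial_z W - i\tfrac{\Omega}{2}\overline{z} + c$ gives $\sup_{\Gamma_{k_0}}|\partial_z w| \le q_{k_0} + C$, so \eqref{circulation condition} yields
\[
	0 < |\gamma_{k_0}| = \Big| \int_{\Gamma_{k_0}} \partial_z w \, dz \Big| \le \big( q_{k_0} + C \big)\, \operatorname{length}(\Gamma_{k_0}) \lesssim (q_{k_0}+C)\,\rho .
\]
Hence $\rho \searrow 0$ would force $q_{k_0} \to \infty$, i.e.\ $\Nvel \to \infty$, contrary to assumption; so $\rho$ is bounded below. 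Consequently $\{\kappa(s)\}$ stays in a compact subset of $\mathcal{O}$, contradicting the dichotomy and establishing \eqref{general continuation blowup alternative}. This also makes transparent why the fixed nonzero circulation is the precise hypothesis that ties the $\rho\searrow0$ boundary case to velocity degeneracy.
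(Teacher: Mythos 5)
Your overall architecture --- reformulate as a real-analytic operator equation on a fixed domain, run analytic global bifurcation, exclude the closed loop, and convert ``leaves every compact set'' into \eqref{general continuation blowup alternative} via uniform regularity plus a circulation-based lower bound on $\rho$ --- matches the paper, and several of your estimates are essentially the ones used there. In particular, your bound $|\gamma_{k_0}| \le (q_{k_0}+C)\operatorname{length}(\Gamma_{k_0}) \lesssim (q_{k_0}+C)\rho\, \Nconf$ is the same mechanism as the paper's Green's-formula/isoperimetric inequality \eqref{general rho lower bound}, and correctly identifies why a fixed circulation in $\param_0^\prime$ is the right hypothesis. The paper works with layer-potential densities $(\mu,\nu)$ on $\mathbb{T}^M$ rather than $(f,w)$ pulled back to a reference domain, and it treats properness and Fredholmness failures as alternatives to be excluded a posteriori (Theorem~\ref{homoclinic global ift}) rather than verified up front, but these are organizational rather than substantive differences.

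There is, however, a genuine gap in your ``first structural input.'' You assert that $D\F$ is Fredholm of index $1$ with one-dimensional kernel, on the grounds that the local theorem shows the derivative in the state directions is an isomorphism. It does not: the local theorem is proved with the implicit function theorem \emph{with identities} (Lemma~\ref{local ift phi theorem}) precisely because $D_{(f,w,Q,\param)}\F$ at the point vortex configuration is injective but \emph{not} surjective --- by Lemma~\ref{kernel and range lemma} its cokernel has dimension $n = \cod\range{D_\param\mathcal{V}(\fullparam_0)} \ge 1$ ($n=1$ for translating/rotating, $n=3$ for stationary). Moreover this deficiency does not disappear along the curve: the hollow vortex identities of Theorem~\ref{hollow vortex identities theorem} give a map $\phi$ with $\phi(\F(u,\rho),u,\rho)=0$ and $\phi(0,u,\rho)=0$, so differentiating at any solution yields $D_{(A,B)}\phi(0,u,\rho)\circ D\F(u,\rho)=0$; since $D_{(A,B)}\phi$ is surjective onto $\mathbb{R}^n$, the range of $D\F$ at \emph{every} solution lies in a fixed subspace of codimension $n$, and the full derivative (including the $\rho$ direction) has index at most $1-n \le 0$. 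The standard Dancer/Buffoni--Toland theorem therefore does not apply to $\F$ as you have set it up, and homotopy invariance of the index cannot rescue an index that is wrong at the starting point. The missing ingredient is the derivation of the identities themselves (the Hamiltonian/variational computation of Section~\ref{hollow vortex identities section}, yielding \eqref{hv phi functions}) and the augmented operator $\mathscr{H}(Z_1,w,\lambda)=\mathscr{G}(w,\lambda)-Z_1$ of Corollary~\ref{global ift with identities corollary}, which restores surjectivity on $\mathscr{Z}_1\times\mathscr{W}$ and whose zero set is then shown to lie in $\{Z_1=0\}$ by analyticity and maximality. Without this step your proof does not get off the ground at the linear level, even though the compactness and blowup analysis that follows is sound.
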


For vortex patches, by contrast, the only available global bifurcation results treat either a single rotating patch~\cite{hassainia2020global} --- for which no desingularization is necessary --- or translating and rotating pairs~\cite{garcia2022global}. By combining some of the ideas in the present paper with \cite{hassainia2022multipole} and \cite{garcia2022global}, however, one might hope to prove a vortex-patch analogue of Theorem~\ref{intro global continuation theorem}. We recall that there are known explicit hollow vortex solutions for a single rotating vortex~\cite{crowdy2021hstates} and a translating pair~\cite{pocklington1896,crowdy2013translating}, but \emph{not} for a rotating pair~\cite{nelson2021corotating}.

\subsection{Plan of the paper}
\label{outline section}

Let us now outline the structure of the argument.  The conformal domain $\confD_\rho$ provides a canonical representation for the fluid domain $\fluidD$, but it too is varying with the parameters $\rho$ and $\fullparam$ and hence not a suitable functional analytic setting.  Instead, we consider the traces of the unknowns on the $M$ boundary components of $\confD_\rho$.  Through layer potentials, $f$ and $w$ can be represented via real-valued densities $\mu, \nu \in C^{\ell+\alpha}(\mathbb{T})^M$ defined on $\mathbb{T}$, the unit circle in the complex plane.  This change of unknowns fixes the domain at the price of rendering the problem highly nonlocal. The corresponding system can then be written as the abstract operator equation
\[
	\F(\mu,\nu,Q,\param, \rho) = 0,
\]
where $\F$ is a real-analytic mapping found by expressing the kinematic and dynamic conditions in terms of the densities $\mu$ and $\nu$.  This novel formulation captures both the point vortex system when $\rho = 0$ and the hollow vortex problem when $\rho > 0$.  Moreover, its linearization at a steady point vortex configuration has a block diagonal structure that causes the kernel and cokernel of the linearized hollow vortex and point vortex systems to have the same dimensions.  

As in~\cite{hassainia2022multipole}, non-degeneracy becomes crucial at this stage. Recall that \eqref{definition steady point vortex} can be written $\mathcal{V}(\fullparam) = 0$.  It is well known that the components of $\mathcal{V}$ satisfy two linear equations \eqref{pv identities}, and consequently its Jacobian $D_\fullparam \mathcal{V}$ will never be full rank; see the discussion in Section~\ref{local bifurcation point vortex section}. A non-degenerate steady configuration $\fullparam_0$ is one for which this lack of surjectivity can be attributed precisely to the identities \eqref{pv identities}.  In that case, we observe that one can systematically split the parameters $\fullparam$ into $(\param,\param^\prime)$, so that a modified implicit function argument can be applied to solve \eqref{definition steady point vortex} for $\param = \param(\param^\prime)$ locally. This procedure is detailed in Sections~\ref{abstract local bifurcation section} and \ref{local bifurcation point vortex section}.

To apply the same reasoning to the hollow vortex problem requires finding the identities responsible for the deficiency of the linearized operator $D_{(\mu,\nu,Q,\param)} \F$ at the steady point vortex; this is the subject of Section~\ref{hollow vortex identities section}.  The time-dependent point vortex system \eqref{helmholtz kirchhoff model} is Hamiltonian, and the identities for $\mathcal{V}$ can be understood as generated by its  invariance with respect to translation and rotation in space.   We therefore derive a formal Hamiltonian formulation for the time-dependent hollow vortex problem, and carry out an analogous (though far more complicated) procedure.  The result is a set of identities that directly generalize those for the point vortex system.  To the best of our knowledge, these have not been previously reported and are thus of independent interest.

In Section~\ref{local section}, we are then able to prove Theorem~\ref{intro local desingularization theorem} through an implicit function theorem argument.  Key to the analysis is the block diagonal structure of the linearized operator, which allows the dynamic and kinematic conditions on each vortex boundary to be decoupled at leading order.  For the co-rotating pair, stationary tripole, and translating pair, we can take advantage of additional symmetries to reduce the number of parameters that vary along the resulting bifurcation curve.  This argument is given in Section~\ref{desingularization with symmetry section}.

Finally, in Section~\ref{global section}, we extend the local curve $\cm_\loc$ to the non-perturbative regime using techniques from analytic global bifurcation theory.  Some background material on this subject is collected in Section~\ref{global ift section} for the reader's convenience.  Applying the general machinery yields a global curve $\cm$, but gives a considerably larger set of alternatives for the limiting behavior along it than what is claimed in Theorem~\ref{intro global continuation theorem}. In particular, it includes the possibilities that $\mu$ or $\nu$ is unbounded in $C^{\ell+\alpha}(\mathbb{T})^M$; the parameters are unbounded; the zero-set of $\F$ is not locally pre-compact; and that the formulation degenerates in that $f$ loses injectivity. We therefore establish linear and nonlinear a priori estimates for the relevant class of Riemann--Hilbert problems that imply local properness of $\F$ and allow us to derive uniform regularity bounds controlling the $C^{\ell+\alpha}$ norm of the densities in terms of $\Nconf$ and $\Nvel$.  Shifting between the local and nonlocal formulations at opportune times substantially simplifies this analysis.   Through these estimates, we ultimately confirm that either conformal degeneracy \eqref{conformal degeneracy alternative}, velocity degeneracy \eqref{velocity degeneracy alternative}, or blowup of $|\lambda|$ must occur, proving Theorem~\ref{intro global continuation theorem}.  The general theory is then applied to the construct global families of co-rotating hollow vortex pairs, stationary hollow vortex tripoles, and Pocklington vortices.  By exploiting concrete symmetries for these cases, the limiting behavior along the solution curves can be refined down to the alternatives enumerated in Theorems~\ref{intro rotating theorem}, \ref{intro tripole theorem}, and \ref{intro pocklington theorem}.  A priori estimates on the wave speed, excess angular momentum, and circulations play a key role for this part of the argument.  Several of these bounds appear to be new and may have broader applications.

\subsection*{Notation}

Here we lay out some notational conventions for the remainder of the paper.  Let $\mathbb{T}$ be the unit circle in the complex plane.  For $\ell \geq 0$ and $\alpha \in (0,1)$, we denote by $C^{\ell+\alpha}(\mathbb{T})$ the space of real-valued H\"older continuous functions of order $\ell$, exponent $\alpha$, and having domain $\mathbb{T}$.  

Every $\varphi \in C^{\ell+\alpha}(\mathbb{T})$ admits a unique power series representation 
\[
	\varphi(\tau) = \sum_{m \in \mathbb{Z}} \widehat \varphi_m \tau^m, \qquad \textrm{where} \quad \widehat\varphi_m \colonequals  \frac{1}{2\pi} \int_{\mathbb{T}} \varphi(\tau) \overline{\tau^m} \, d\theta.
\] 
Here and elsewhere, we write $\tau = e^{i\theta}$ when parameterizing $\mathbb{T}$ by arc length.  Note that because these are real-valued functions, the coefficients must obey $\widehat\varphi_{-m} = \overline{\widehat\varphi_m}$.  For $m \geq 0$, let $\proj_m$ denote the projection
\begin{equation}
  \label{definition projection operator}
  \proj_m \colon C^{\ell+\alpha}(\mathbb{T}) \to C^{\ell+\alpha}(\mathbb{T}), \qquad 
  (P_m\varphi)(\tau) \colonequals  
    \left\{ \begin{aligned}
      \widehat\varphi_m \tau^m + \widehat\varphi_{-m} \tau^{-m} & \qquad  \textrm{if }m \neq 0 \\
       \widehat\varphi_0 & \qquad  \textrm{if } m = 0.
    \end{aligned}
    \right.
\end{equation}
and set
\[ 
	\proj_{\leq m} \colonequals  \proj_0 + \cdots + \proj_m, \qquad \proj_{> m} \colonequals  1-\proj_{\leq m}.
\]
We will often work with the space
\[
	\mathring{C}^{\ell+\alpha}(\mathbb{T}) \colonequals  \proj_{> 0} C^{\ell+\alpha}(\mathbb{T}),
\]
which corresponds to elements of $C^{\ell+\alpha}(\mathbb{T})$ normalized to have mean $0$.

Finally, throughout the paper, we will use the Wirtinger derivative operators
\[
	\partial_z \colonequals  \frac{1}{2} \left( \partial_x - i \partial_y \right), \qquad
	\partial_{\overline z} \colonequals  \frac{1}{2} \left( \partial_x + i \partial_y \right).
\]
In particular, a function $f = f(z,\bar z)$ is holomorphic precisely when $\partial_{\bar z} f = 0$ and antiholomorphic provided $\partial_z f = 0$.  When there is no risk of confusion, these may also be denoted using subscripts, thus $f_z = \partial_z f$ and so on.  Primes will be reserved for denoting Wirtinger derivatives of functions with domain $\mathbb{T}$.

\section{Tools from bifurcation theory}
\label{bifurcation theory section}

\subsection{An abstract local bifurcation result}
\label{abstract local bifurcation section}

In our analysis of steady point vortices, we will find that the linearized operator $D\mathcal{V}(\fullparam_0)$ at a steady configuration $\fullparam_0$ is not an isomorphism due to certain structural features of the system \eqref{definition steady point vortex}.  An abstract prototype for this situation can be formulated as follows.

Let $\mathscr{W}$, $\mathscr{V}$, and $\mathscr{Z}$ be Banach spaces, $\mathcal{O} \subset \mathscr{W} \times \mathscr{V}$ an open set containing a point $(w_0, v_0)$. Suppose that $\mathscr{G} = \mathscr{G}(w,v) \colon \mathcal{O} \to \mathscr{Z}$ is real analytic with
\begin{equation}
\label{ift with identity hypothesis G}
	\mathscr{G}(w_0, v_0) = 0, \qquad \kernel{\mathscr{G}_w(w_0,v_0)}= \{ 0 \}, \qquad \cod\range{\mathscr{G}_w(w_0,v_0)} \equalscolon  n
\end{equation}
 for some $n \geq 1$.  Thus we cannot directly apply the implicit function theorem to conclude the existence of nearby solutions.  However, let us assume that the lack of surjectivity is the result of a set of $n$ identities satisfied by the full nonlinear operator $\mathscr{G}$.  These can be expressed in terms of a real-analytic mapping
\[
	\phi = \phi(Z,w,v) \colon \mathscr{Z} \times \mathscr{W} \times \mathscr{V} \to \mathbb{R}^n, 
\]
satisfying
\begin{subequations}
\label{phi assumptions}
\begin{align}
	\phi(\mathscr{G}(w,v), w,v) & = 0 \label{phi 0 at G} \\
	\phi(0,w,v) & = 0 \label{phi 0 at 0} \\
	\phi_Z(0,w_0,v_0) & \textrm{ is surjective} \label{phi surjective}
\end{align}
\end{subequations}
for all $(w,v)$ a neighborhood of $(w_0, v_0)$.

The following lemma, which is a mild generalization of the implicit function theorem, appears in~\cite[lemma~2.6]{hassainia2022multipole}. For completeness and later reference we include a short proof below. 
\begin{lemma}[Implicit function theorem with identities]
\label{local ift phi theorem}
There exists a neighborhood $\mathcal{V} \subset \mathscr{V}$ of $v_0$, a neighborhood $\mathcal{W} \subset \mathscr{W}$ of $w_0$, and a real-analytic mapping $\tilde w \colon \mathcal{V} \to \mathscr{W}$ such that 
\[
	\mathscr{G}(\tilde w(v),v) = 0 \qquad \textrm{for all } v \in \mathcal{V}.
\]
Moreover, if $(w,v) \in \mathscr{G}^{-1}(0) \cap \mathcal{W} \times \mathcal{V}$, then $w = \tilde w(v)$.
\end{lemma}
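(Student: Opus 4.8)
The plan is to carry out a Lyapunov--Schmidt-type reduction and then show that the identities encoded in $\phi$ automatically dispose of the resulting finite-dimensional obstruction. First I would observe that, since $\range \mathscr{G}_w(w_0,v_0)$ has finite codimension $n$ by \eqref{ift with identity hypothesis G}, it is automatically closed and complemented; fix a continuous projection $Q \colon \mathscr{Z} \to \mathscr{Z}$ onto it, so that $\mathscr{Z} = \range \mathscr{G}_w(w_0,v_0) \oplus \mathscr{Z}_1$ with $\mathscr{Z}_1 \colonequals (I-Q)\mathscr{Z}$ an $n$-dimensional space. Because $\mathscr{G}_w(w_0,v_0)$ is injective with range exactly $Q\mathscr{Z}$, the operator $Q\mathscr{G}_w(w_0,v_0) \colon \mathscr{W} \to Q\mathscr{Z}$ is a continuous bijection between Banach spaces, hence an isomorphism. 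The analytic implicit function theorem applied to the projected equation $Q\mathscr{G}(w,v)=0$ then furnishes neighborhoods $\mathcal{W}, \mathcal{V}$ and a real-analytic map $\tilde w \colon \mathcal{V} \to \mathscr{W}$ with $\tilde w(v_0)=w_0$ such that $\tilde w(v)$ is the \emph{unique} solution of $Q\mathscr{G}(w,v)=0$ in $\mathcal{W}$. This already settles the uniqueness clause, since any $(w,v) \in \mathscr{G}^{-1}(0) \cap (\mathcal{W}\times\mathcal{V})$ satisfies $Q\mathscr{G}(w,v)=0$ and is therefore equal to $\tilde w(v)$.

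It remains to show that the complementary equation is satisfied automatically, i.e. that $g(v) \colonequals \mathscr{G}(\tilde w(v),v)$ --- which by construction lies in $\mathscr{Z}_1$ --- vanishes for $v$ near $v_0$. The decisive observation is that $\phi_Z(0,w_0,v_0)$ annihilates $\range \mathscr{G}_w(w_0,v_0)$. Indeed, differentiating the identity \eqref{phi 0 at G} in $w$ at $(w_0,v_0)$ and using $\mathscr{G}(w_0,v_0)=0$ gives $\phi_Z(0,w_0,v_0)\mathscr{G}_w(w_0,v_0) + \phi_w(0,w_0,v_0)=0$, while differentiating \eqref{phi 0 at 0} in $w$ yields $\phi_w(0,w_0,v_0)=0$; subtracting produces $\phi_Z(0,w_0,v_0)\mathscr{G}_w(w_0,v_0)=0$. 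Since $\phi_Z(0,w_0,v_0)$ is surjective onto $\mathbb{R}^n$ by \eqref{phi surjective} and vanishes on the codimension-$n$ subspace $\range \mathscr{G}_w(w_0,v_0)$, its restriction to the complement $\mathscr{Z}_1$ is a surjection between $n$-dimensional spaces, hence an isomorphism.

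With this in hand, I would consider for each $v \in \mathcal{V}$ the real-analytic map $h_v \colon \mathscr{Z}_1 \to \mathbb{R}^n$ defined by $h_v(Z_1) \colonequals \phi(Z_1, \tilde w(v), v)$. Both $Z_1=0$ and $Z_1=g(v)$ are roots of $h_v$: the former by \eqref{phi 0 at 0}, and the latter by \eqref{phi 0 at G} since $g(v)=\mathscr{G}(\tilde w(v),v)$. Writing $0 = h_v(g(v)) - h_v(0) = \big(\int_0^1 Dh_v(s\,g(v))\,ds\big)\,g(v)$ and noting that the averaged derivative converges to the isomorphism $\phi_Z(0,w_0,v_0)|_{\mathscr{Z}_1}$ as $(v,g(v)) \to (v_0,0)$ --- using $g(v_0)=0$ together with the continuity of $\tilde w$ and of $Dh_v$ --- I conclude that for $v$ in a possibly smaller neighborhood of $v_0$ this averaged derivative is invertible, forcing $g(v)=0$. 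Hence $\mathscr{G}(\tilde w(v),v)=0$ on $\mathcal{V}$, completing the existence part.

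The heart of the matter, and the step I expect to require the most care to state correctly, is the cokernel computation in the second paragraph: it is exactly the compatibility of the two identities \eqref{phi 0 at G}--\eqref{phi 0 at 0} that forces $\phi_Z(0,w_0,v_0)$ to ``see'' only the obstruction space $\mathscr{Z}_1$, thereby converting the $n$ missing dimensions of surjectivity into an invertible finite-dimensional map. The remaining ingredients --- closedness and complementedness of a finite-codimensional range, and the uniform invertibility of the averaged derivative --- are routine applications of the open mapping and (inverse/implicit) function theorems in the real-analytic category.
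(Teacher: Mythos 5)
Your proposal is correct, and it is at bottom the same Lyapunov--Schmidt-with-identities argument as the paper's, but the two executions differ in ways worth noting. The paper chooses $\mathscr{Z}_1$ as an $n$-dimensional subspace on which $\phi_Z(0,w_0,v_0)$ restricts to an isomorphism and then augments the unknown, applying the implicit function theorem to $\mathscr{H}(Z_1,w,v) \colonequals \mathscr{G}(w,v)-Z_1$; your projected equation $Q\mathscr{G}(w,v)=0$ is the mirror image of this, and your defect $g(v)=(I-Q)\mathscr{G}(\tilde w(v),v)$ is exactly the paper's $\tilde Z_1(v)$. To kill the defect, the paper applies the implicit function theorem a second time, to $\psi=\phi|_{\mathscr{Z}_1\times\mathcal{O}}$, to conclude that $Z_1=0$ is the unique local root of $\psi(\placeholder,w,v)$; you instead use the integral mean value theorem on $h_v$, which is equally valid and arguably more elementary. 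The one place where your write-up is genuinely more complete is the cokernel identification: the paper simply asserts that \eqref{ift with identity hypothesis G} and \eqref{phi surjective} give $\range{\mathscr{G}_w(w_0,v_0)}=\kernel{\phi_Z(0,w_0,v_0)}$, whereas this equality in fact also requires the inclusion $\range{\mathscr{G}_w(w_0,v_0)}\subset\kernel{\phi_Z(0,w_0,v_0)}$, which you correctly extract by differentiating \eqref{phi 0 at G} and \eqref{phi 0 at 0} in $w$ and comparing; only then does the codimension count close the argument. So your route buys a slightly more self-contained proof of the key linear-algebraic fact, at the cost of a hands-on continuity estimate for the averaged derivative that the paper's second implicit-function-theorem application packages away.
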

\begin{proof}
  In view of \eqref{phi surjective}, there exists an $n$-dimensional subspace $\mathscr{Z}_1 \subset \mathscr{Z}$ such that 
  \[
    \phi_Z(0,w_0,v_0)|_{\mathscr{Z}_1} \colon \mathscr{Z}_1 \to \mathbb{R}^n \qquad \textrm{is an isomorphism}.
  \]
  Consider then the augmented map
  \[
    \mathscr{H} \colon \mathscr{Z}_1 \times \mathcal{O} \to \mathscr{Z} \qquad \mathscr{H}(Z_1, w,v) \colonequals  \mathscr{G}(w,v) - Z_1.
  \]
  Notice that \eqref{ift with identity hypothesis G} and \eqref{phi surjective} together imply that $\range{\mathscr{G}_w(w_0,v_0)} = \kernel{\phi_Z(0,w_0,v_0)}$.  Thus by \eqref{phi surjective} and our choice of $\mathscr{Z}_1$, the linearized operator
  \[
    \mathscr{H}_{(Z_1,w)}(0,w_0,v_0)(\dot Z_1, \dot w) = \mathscr{G}_w(w_0,v_0)\dot w-\dot Z_1 
  \]
  is invertible $\mathscr{Z}_1 \times \mathscr{W} \to \mathscr{Z}$.  Applying the implicit function theorem, we infer the existence of real-analytic mappings $\tilde Z_1 \colon \mathcal{V} \to \mathscr{Z}_1$ and $\tilde w \colon \mathcal{V} \to \mathscr{W}$ defined on a neighborhood $\mathcal{V} \subset \mathscr{V}$ of $v_0$ and such that in some neighborhood of $(0,w_0,v_0)$ it holds that
  \[
    \mathscr{H}( Z_1,  w, v) = 0 \qquad \textrm{if and only if} \qquad (Z_1, w) = (\tilde Z_1(v), \tilde w(v)).
  \]

  We claim that in fact $\tilde Z_1$ must vanish identically, and hence $\mathscr{G}(\tilde w(v), v) = 0$.  To this end, consider the restriction 
  \[
    \psi \colonequals \phi|_{\mathscr{Z}_1 \times \mathcal{O}} \colon \mathscr{Z}_1 \times \mathcal{O} \to \mathbb{R}^n.
  \]
  Since $\psi_{Z_1}(0,w_0,v_0) = \psi_Z(0,w_0,v_0)$ is invertible by construction, the implicit function theorem and \eqref{phi 0 at 0} imply that, in a neighborhood of $(0,w_0,v_0)$, 
  \[
    \psi(Z_1, w, v) = 0 \qquad \textrm{if and only if} \qquad Z_1 = 0. 
  \]
  But from \eqref{phi 0 at G}, we have that
  \[
    0 = \phi(\mathscr{G}(\tilde w(v), v), \tilde w(v), v) = \phi(\tilde Z_1(v), \tilde w(v), v) = \psi( \tilde Z_1(v), \tilde w(v), v).
  \]
  Perhaps shrinking $\mathcal{V}$, it follows then that $\tilde Z_1(v) \equiv 0$.  The proof is therefore complete.
\end{proof}

\subsection{Local bifurcation of steady point vortices}
\label{local bifurcation point vortex section} 
We now apply the abstract result Lemma~\ref{local ift phi theorem} to study the existence of steady point vortex configurations.  Recall that 
\[
	\fullparam = \left(\gamma_1,\, \ldots, \,  \gamma_M, \, \zeta_1, \, \ldots, \, \zeta_M, \, c, \, \Omega \right) \in \mathbb{R}^M \times \mathbb{C}^M \times \mathbb{R} \times \mathbb{R} \equalscolon  \fullPspace
\]
denotes the complete set of physical parameters describing the system, where $\zeta_k$ will be the preimage of the center $z_k$ in the conformal domain.  We view $\fullPspace$ as a $(3M+2)$-dimensional vector space over $\mathbb{R}$.  Comparing the Helmholtz--Kirchhoff model \eqref{helmholtz kirchhoff model} to \eqref{definition steady point vortex}, we arrive at a rather complicated algebraic restriction on what parameters $\fullparam$ can possibly represent a steady vortex configuration.  Written abstractly, it takes the form $\mathcal{V}(\fullparam) = 0$, where 
\[
	\mathcal{V} = (\mathcal{V}_1, \ldots, \mathcal{V}_M) \colon \mathbb{P} \to \mathbb{C}^M
\]
is the mapping
\begin{equation}
  \label{dWdzeta point vortex algebra}
  \mathcal{V}_k(\fullparam) \colonequals  \sum_{j \neq k} \frac{\gamma_j}{2\pi i} \frac{1}{\zeta_k - \zeta_j} - c + i \Omega \overline{\zeta_k} \qquad \textrm{for } k = 1, \ldots, M.
\end{equation}
Thus $\fullparam_0 \in \mathbb{P}$ is steady provided that $\mathcal{V}(\fullparam_0) = 0$.

The components of $\mathcal{V}$ satisfy two identities:
\begin{subequations}
\label{pv identities}
\begin{align}
	\sum_k \gamma_k \mathcal{V}_k  & = -c \sum_k \gamma_k + i \Omega \sum_k \gamma_k \overline{\zeta_k} \label{pv translation identity} \\
	\sum_k \gamma_k \zeta_k \mathcal{V}_k & = \frac{1}{2\pi i} \sum_{j < k} \gamma_j \gamma_k - c \sum_k \gamma_k \zeta_k + i \Omega \sum_k \gamma_k |\zeta_k|^2. \label{pv rotation identity}
\end{align}
\end{subequations}
These can be verified directly from \eqref{dWdzeta point vortex algebra}, but we will see that they can also be thought of as the consequence of the translation invariance and rotation invariance of the system, respectively \cite{oneil1987stationary}.  In principle, \eqref{pv identities} restricts the maximum rank of the Jacobian of $\mathcal{V}$.  To say a steady point vortex configuration $\fullparam_0$ is non-degenerate means essentially that $D\mathcal{V}(\fullparam_0)$ fails to be surjective precisely due to \eqref{pv identities}.  One can then find a family of steady configurations bifurcating from $\fullparam_0$ with some appropriate subset of the parameters $\param$  determined by the remaining parameters $\param^\prime$.   How this is accomplished will depend on the particular application.  The identities in \eqref{pv identities} correspond to four real equations, which can potentially lead to $\range{D\mathcal{V}(\fullparam_0)}$ having codimension $4$.  Often, this can be reduced by imposing symmetries, so that we only need a subset of the identities to explain the lack of surjectivity.  

\begin{definition}[Non-degeneracy] \label{definition nondegenerate configuration}
Let $\fullparam_0 \in \mathbb{P}$ be a steady point vortex configuration.
\begin{enumerate}[label=(\roman*)]
\item \label{def translating} $\fullparam_0$ represents a \emph{non-degenerate translating configuration} provided $c\neq0$, $\Omega=0$, and
\begin{equation}
\label{codimension translating case}
  \begin{aligned}
    & \cod \range D \mathcal{V}(\fullparam_0)\big|_{\mathbb{P} \cap \{ \Omega = 0 \}\phantom{,c=0}} & = 1.
  \end{aligned}
\end{equation}
\item \label{def rotating} $\fullparam_0$ represents a \emph{non-degenerate rotating configuration} provided $c = 0$, $\Omega \neq 0$, and
\begin{equation}
\label{codimension rotating case}
  \begin{aligned}
    & \cod \range D \mathcal{V}(\fullparam_0)\big|_{\mathbb{P} \cap \{ c = 0 \}\phantom{\Omega=0,}} & = 1.
  \end{aligned}
\end{equation}
\item \label{def stationary} $\fullparam_0$ represents a \emph{non-degenerate stationary configuration} provided $c = \Omega = 0$, and
\begin{equation}
\label{codimension stationary case}
  \begin{aligned}
    & \cod \range D \mathcal{V}(\fullparam_0)\big|_{\mathbb{P} \cap \{ c=0, \Omega = 0 \}} & = 3.
  \end{aligned}
\end{equation}
\end{enumerate}
\end{definition}

The dimension counts in \eqref{codimension translating case}--\eqref{codimension stationary case} are motivated by \eqref{pv identities} as follows.  Notice that if we assume $\Omega = 0$, then the right-hand side of \eqref{pv translation identity} is purely real.  Thus the range of $D \mathcal{V}(\fullparam_0)$ restricted to $\mathbb{P} \cap \{ \Omega = 0 \}$ has codimension at least $1$.  In \eqref{codimension translating case}, we ask that this is the only source of degeneracy.  Likewise, for rotating configurations, the right-hand side of \eqref{pv rotation identity} is purely imaginary, so the minimal codimension of $\range{D\mathcal{V}(\fullparam_0)}$ when restricted to this subspace is again $1$.  Finally, in the stationary case, the right-hand side of \eqref{pv translation identity} and the real part of the right-hand side of \eqref{pv rotation identity} vanish, hence the minimal codimension is $3$, as required in \eqref{codimension stationary case}.

Having this in mind, we introduce the mappings
\begin{equation}
\label{pv phi functions}
  \begin{aligned}
    \phi_{\rm t}^0 &: \mathbb{C}^M \times \mathbb{P} \to \mathbb{R}   &\quad ({V}, \fullparam) & \mapsto \imagpart{\sum_k \gamma_k {V}_k }  \\
    \phi_{\rm r}^0 &: \mathbb{C}^M \times \mathbb{P} \to \mathbb{R}  &\quad ({V}, \fullparam) & \mapsto \realpart{\sum_k \gamma_k \zeta_k {V}_k }  \\
    \phi_{\rm s}^0 &: \mathbb{C}^M \times \mathbb{P} \to \mathbb{C} \times \mathbb{R} & \quad ({V}, \fullparam) & \mapsto \Big( \sum_k \gamma_k {V}_k , ~ \realpart{\sum_k \gamma_k \zeta_k {V}_k } \Big).
  \end{aligned}
\end{equation}
As discussed above, these are associated to the identities that cause the degeneracy of a translating, rotating, and stationary point vortex configuration, respectively.  Abusing notation slightly, we will occasionally  write $\phi^0$ to stand for $\phi_{\rm t}^0$, $\phi_{\rm r}^0$, or $\phi_{\rm s}^0$.  The next lemma shows that $\phi^0$ satisfies the hypotheses \eqref{phi assumptions} of the higher codimension implicit function theorem recorded in Lemma~\ref{local ift phi theorem}
\begin{lemma}
\label{Dphi pv range lemma}
	Let $\fullparam_0$ be a non-degenerate steady point vortex configuration that is either translating, rotating, or stationary, and let $\phi^0$ be the corresponding map from \eqref{pv phi functions}.  Then $D_\fullparam \phi^0(0,\fullparam_0)$ is surjective.
\end{lemma}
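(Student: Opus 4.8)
The plan is to read off surjectivity from elementary linear algebra after computing the relevant linearization explicitly. By the hypothesis \eqref{phi surjective} of Lemma~\ref{local ift phi theorem}, what is actually needed here is surjectivity of the derivative of $\phi^0$ in its first argument $V$, evaluated at $(0,\fullparam_0)$. The key simplification is that each map in \eqref{pv phi functions} is already $\mathbb{R}$-linear in $V$ — it is the real or imaginary part of the $\mathbb{C}$-bilinear pairing $\langle a, V\rangle \colonequals \sum_k a_k V_k$ — so this derivative is simply the constant map $V\mapsto\phi^0(V,\fullparam_0)$. It therefore suffices to show this map is onto its target, which is $\mathbb{R}$, $\mathbb{R}$, and $\mathbb{C}\times\mathbb{R}$ in the translating, rotating, and stationary cases, respectively.

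Introduce the fixed complex coefficient vectors $a\colonequals(\gamma_k)_k$ and $b\colonequals(\gamma_k\zeta_k)_k$, so that the three linearizations are $\imagpart\langle a,\cdot\rangle$, $\realpart\langle b,\cdot\rangle$, and $\big(\langle a,\cdot\rangle,\ \realpart\langle b,\cdot\rangle\big)$. A single $\mathbb{C}$-linear functional $\langle v,\cdot\rangle$ is onto $\mathbb{C}$ — equivalently its real and imaginary parts are $\mathbb{R}$-linearly independent and each is individually nonzero — as soon as $v\neq 0$; this is witnessed by testing against $V=\bar v$ and $V=i\bar v$, which yield $\langle v,\bar v\rangle=\sum_k|v_k|^2>0$ and $\langle v,i\bar v\rangle=i\sum_k|v_k|^2$. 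Since the circulations are nonzero we have $a\neq 0$, which immediately settles the translating case. For the rotating case we need $b\neq 0$: non-degeneracy forces $M\geq 2$ (a lone rotating vortex must sit at the origin, where its linearization is onto $\mathbb{C}$ and hence of codimension zero, violating \eqref{codimension rotating case}), and because the centers $\zeta_k$ are distinct at most one of them vanishes, so at least one entry $\gamma_k\zeta_k$ is nonzero.

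The substantive case is the stationary one, which demands surjectivity onto $\mathbb{C}\times\mathbb{R}\cong\mathbb{R}^3$, i.e.\ $\mathbb{R}$-independence of the three functionals $\realpart\langle a,\cdot\rangle$, $\imagpart\langle a,\cdot\rangle$, $\realpart\langle b,\cdot\rangle$. The first two are independent because $a\neq 0$. For the third, I would observe that any dependence $\realpart\langle b,\cdot\rangle=\alpha\,\realpart\langle a,\cdot\rangle+\beta\,\imagpart\langle a,\cdot\rangle$ can be rewritten, with $\kappa=\alpha+i\beta$, as $\realpart\langle b-\bar\kappa a,\cdot\rangle\equiv 0$, and a real functional $\realpart\langle c,\cdot\rangle$ vanishes identically only when $c=0$ (test against $\bar c$). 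Hence a dependence forces $b=\bar\kappa a$, that is $\gamma_k(\zeta_k-\bar\kappa)=0$ for every $k$; since the circulations are nonzero this would place all centers at the single point $\bar\kappa$, impossible once $M\geq 2$ by distinctness. As non-degeneracy again forces $M\geq 2$ (codimension $3$ requires $\dim_{\mathbb{R}}\mathbb{C}^M=2M\geq 3$), the three functionals are independent and the map is onto $\mathbb{R}^3$.

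I expect the main obstacle to be exactly this stationary step: confirming that no linear dependence among the three functionals survives. The computation above reduces it cleanly to the geometric statement that at least two vortices of nonzero circulation occupy distinct sites, which the standing assumption $\gamma_k\neq 0$ together with the dimension count built into Definition~\ref{definition nondegenerate configuration} supplies. The translating and rotating cases, by contrast, reduce to the nonvanishing of a single coefficient vector and are immediate. Throughout, the role of non-degeneracy is limited to furnishing the lower bound $M\geq 2$ and pinning down the target dimension; the surjectivity itself is purely a matter of the explicit algebra of the pairings $\langle a,\cdot\rangle$ and $\langle b,\cdot\rangle$.
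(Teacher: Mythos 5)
Your proof is correct and follows essentially the same route as the paper's: both reduce the claim to elementary linear algebra for the pairings $\sum_k \gamma_k V_k$ and $\sum_k \gamma_k \zeta_k V_k$ (which are already linear in $V$, so the linearization is the map itself), with the stationary case settled by the observation that nonzero circulations and distinct centers rule out any real-linear dependence among the three component functionals — the paper does this by exhibiting the explicit $3\times 4$ minor in $(V_1,V_2)$ and checking that three of its columns span $\mathbb{R}^3$, while you argue coordinate-freely by testing against conjugate vectors. If anything your write-up is the more complete one, since you correctly interpret the statement as surjectivity in the first argument $V$ (as hypothesis \eqref{phi surjective} demands), and you supply the justifications that $M \geq 2$ and that $(\gamma_k\zeta_k)_k \neq 0$ in the rotating and stationary cases, points the paper dismisses as immediate or leaves implicit.
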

\begin{proof}
For the translating and rotating point vortex configurations, this follows immediately from the definition of $\phi^0$.  Consider then the stationary case.  To prove that $D_V \phi_{\rm s}^0(0,\fullparam_0)$ is surjective, it suffices to consider the minor of the Jacobian $D_{(V_1,V_2)} \phi_{\rm s}^0(0,\fullparam_0)$.  Identifying the domain $\mathbb{C}^2 \cong \mathbb{R}^4$ and codomain $\mathbb{C} \times \mathbb{R} \cong \mathbb{R}^3$, this can be written as 
\[
	D_{(V_1,V_2)} \phi_{\rm s}^0(0,\fullparam_0) \begin{pmatrix} \dot V_1 \\ \dot V_2 \end{pmatrix} 
		= 
	\begin{pmatrix}
		\gamma_1^0 & 0  & \gamma_2^0 & 0 \\
		0 & \gamma_1^0 & 0 & \gamma_2^0 \\
		\gamma_1^0 \realpart \zeta_1^0 & -\gamma_1^0 \imagpart \zeta_1^0   & \gamma_2^0 \realpart \zeta_2^0 & -\gamma_2^0 \imagpart \zeta_2^0 
	\end{pmatrix}
	\begin{pmatrix}
		\realpart{\dot{{V}}_1} \\
		\imagpart{\dot{{V}}_1}\\
		\realpart{\dot{{V}}_2} \\
		\imagpart{\dot{{V}}_2}
	\end{pmatrix}.
\]
First note that none of the vortex strengths can vanish.  Thus, if $\realpart{\zeta_1^0} \neq \realpart{\zeta_2^0}$, then the first three columns span $\mathbb{R}^3$.  On the other hand, if the real parts are the same, then because the vortex centers are distinct, we must have that $\imagpart{\zeta_1^0} \neq \imagpart{\zeta_2^0}$.  In that case, the last three columns span $\mathbb{R}^3$.   
\end{proof}

Suppose now that $\fullparam_0$ is a non-degenerate steady configuration in the sense of Definition~\ref{definition nondegenerate configuration}.  Let $\mathbb{P}_0$ stand for $\mathbb{P} \cap \{ \Omega = 0\}$, $\mathbb{P} \cap \{ c = 0 \}$, or $\mathbb{P} \cap \{ c=\Omega = 0\}$, according to whether $\fullparam_0$ is translating, rotating, or stationary, respectively.  Set $n \colonequals  \cod\range{D\mathcal{V}(\fullparam_0)|_{\mathbb{P}_0}}$.  Then there exists a $(2M-n)$-dimensional subspace $\Pspace$ of $\mathbb{P}_0$ with
\begin{equation}
  \label{definition non-degnerate subspace}
  \cod\range{D \mathcal{V}(\fullparam_0)\big|_{\mathcal{P}}} = n, \qquad \dimension\kernel{D\mathcal{V}(\fullparam_0)\big|_{\mathcal{P}}} = 0.
\end{equation}
Say $\mathcal{P}^\prime$ is the direct complement of $\mathcal{P}$ in $\mathbb{P}_0$, so that we can identify $\mathcal{P} \times \mathcal{P}^\prime \cong \mathbb{P}_0$.  Equivalently, we imagine partitioning the parameters into two: a $(2M-n)$-dimensional vector $\param$ and an $(M+2+n)$-dimensional vector $\param^\prime$.    In view of Definition~\ref{definition nondegenerate configuration} and Lemma~\ref{Dphi pv range lemma}, an appeal to Lemma~\ref{local ift phi theorem} yields the following.

\begin{proposition}
\label{point vortex manifold proposition}
Suppose that $\fullparam_0 = (\param_0,\param_0^\prime) \in \mathcal{P} \times \mathcal{P}^\prime$ is a non-degenerate steady vortex configuration in the sense of Definition~\ref{definition nondegenerate configuration} with $\mathcal{P}$ satisfying \eqref{definition non-degnerate subspace}.   There exists a neighborhood $\mathcal{N}^0 \subset \mathcal{P}^\prime$ of $\param_0^\prime$ and a manifold $\mathscr{M}_\loc^0$ of steady point vortex configurations of the same type as $\fullparam_0$.  In particular, $\mathscr{M}_\loc^0$ takes the form of a graph:
\[
	\mathscr{M}_\loc^0 \colonequals  \left\{ (\param(\param^\prime), \param^\prime) \in \mathcal{P} \times \mathcal{P}^\prime : \param^\prime \in \mathcal{N}^0 \right\} \subset \mathcal{V}^{-1}(0),
\]
and the coordinate map $\param^\prime \mapsto \param(\param^\prime)$ is real analytic $\mathcal{N}^0 \to \mathcal{P}$, with $\param(\param_0^\prime) = \param_0$.  In a neighborhood of $\fullparam_0$, the manifold $\mathscr{M}_\loc^0$ contains all steady point vortex configurations of the same type.
\end{proposition}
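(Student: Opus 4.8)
The plan is to deduce this proposition as a direct application of the abstract implicit function theorem with identities, Lemma~\ref{local ift phi theorem}. I would fill in the abstract data as follows: take $\mathscr{W} = \mathcal{P}$, $\mathscr{V} = \mathcal{P}^\prime$, and $\mathscr{Z} = \mathbb{C}^M$ regarded as a real vector space of dimension $2M$, with base point $(w_0,v_0) = (\param_0, \param_0^\prime)$. The nonlinear operator $\mathscr{G}$ is the restriction of $\mathcal{V}$ to $\mathbb{P}_0$ under the identification $\mathcal{P} \times \mathcal{P}^\prime \cong \mathbb{P}_0$, and the identity map $\phi$ is taken to be $\phi_{\rm t}^0$, $\phi_{\rm r}^0$, or $\phi_{\rm s}^0$ according to whether $\fullparam_0$ is translating, rotating, or stationary. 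Since each $\mathcal{V}_k$ is a rational function of the $\zeta_j$ and their conjugates, it is real-analytic away from vortex collisions, so $\mathscr{G}$ is real-analytic near the base point, where the centers are distinct.

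First I would verify the hypotheses \eqref{ift with identity hypothesis G}. The equation $\mathscr{G}(\param_0, \param_0^\prime) = 0$ is just $\mathcal{V}(\fullparam_0) = 0$, which holds because $\fullparam_0$ is steady. The triviality of the kernel and the codimension count $\cod\range{\mathscr{G}_w(w_0,v_0)} = n$ are precisely the two properties of $\mathcal{P}$ recorded in \eqref{definition non-degnerate subspace}.

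Next I would check the three conditions \eqref{phi assumptions} on $\phi^0$. Condition \eqref{phi 0 at 0} is immediate, as each $\phi^0$ is real-linear in its first slot and hence vanishes there at the origin. Condition \eqref{phi surjective}, that $D_V \phi^0(0,\fullparam_0)$ is surjective, is exactly the content of Lemma~\ref{Dphi pv range lemma}. The one computation worth carrying out is \eqref{phi 0 at G}, namely $\phi^0(\mathcal{V}(\fullparam),\fullparam) = 0$ for $\fullparam \in \mathbb{P}_0$ near $\fullparam_0$, and this is where the structural identities \eqref{pv identities} enter. On $\mathbb{P}_0 = \mathbb{P} \cap \{\Omega = 0\}$ the right-hand side of \eqref{pv translation identity} reduces to the real quantity $-c\sum_k \gamma_k$, so its imaginary part vanishes and $\phi_{\rm t}^0(\mathcal{V}(\fullparam),\fullparam) = 0$. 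On $\mathbb{P} \cap \{c = 0\}$ the right-hand side of \eqref{pv rotation identity} is purely imaginary, since both $(2\pi i)^{-1}\sum_{j<k}\gamma_j\gamma_k$ and $i\Omega\sum_k \gamma_k|\zeta_k|^2$ are imaginary, so its real part vanishes and $\phi_{\rm r}^0(\mathcal{V}(\fullparam),\fullparam) = 0$. In the stationary case both identities specialize simultaneously, giving $\phi_{\rm s}^0(\mathcal{V}(\fullparam),\fullparam) = 0$.

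With the hypotheses in hand, Lemma~\ref{local ift phi theorem} directly supplies a neighborhood $\mathcal{N}^0 \subset \mathcal{P}^\prime$ of $\param_0^\prime$, a neighborhood $\mathcal{W} \subset \mathcal{P}$ of $\param_0$, and a real-analytic map $\param \colon \mathcal{N}^0 \to \mathcal{P}$ with $\mathcal{V}(\param(\param^\prime),\param^\prime) = 0$ and $\param(\param_0^\prime) = \param_0$. Defining $\mathscr{M}_\loc^0$ as the graph of $\param$ then yields the asserted real-analytic manifold inside $\mathcal{V}^{-1}(0)$, and the uniqueness clause of Lemma~\ref{local ift phi theorem} --- that any zero of $\mathscr{G}$ in $\mathcal{W} \times \mathcal{N}^0$ lies on the graph --- gives the final assertion that $\mathscr{M}_\loc^0$ contains every nearby steady configuration of the same type (these being exactly the zeros of $\mathcal{V}$ lying in $\mathbb{P}_0$). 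Since the real content is entirely contained in the two preceding lemmas, I do not expect a genuine obstacle; the only point requiring care is correctly tracking which of the identities \eqref{pv identities} survives on each restricted subspace $\mathbb{P}_0$ when verifying \eqref{phi 0 at G}.
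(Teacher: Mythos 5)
Your proposal is correct and follows exactly the route the paper takes: the paper's proof is the single sentence ``In view of Definition~\ref{definition nondegenerate configuration} and Lemma~\ref{Dphi pv range lemma}, an appeal to Lemma~\ref{local ift phi theorem} yields the following,'' and you have simply made explicit the verification of each hypothesis that this citation leaves implicit. In particular, your check of \eqref{phi 0 at G} via the restriction of the identities \eqref{pv identities} to each subspace $\mathbb{P}_0$ is the correct (and only nontrivial) detail, and it matches the discussion surrounding \eqref{pv phi functions}.
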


For the hollow vortex problem, we will ultimately group the parameters in $\param$ with the unknowns describing the geometry of the fluid domain and the velocity field; these will vary along the family of solutions to be constructed.  On the other hand, the parameters collected in $\param^\prime$ will be fixed along the entire curve.

Definition~\ref{definition nondegenerate configuration} can be directly verified for many of the point vortex equilibria appearing in the literature. Below we content ourselves with the three examples related to Theorems~\ref{intro rotating theorem}--\ref{intro pocklington theorem}.

\begin{example}[Translating pair] 
\label{translating pair example}
Consider first the case of two point vortices ($M = 2$) that are translating ($\Omega = 0$) parallel to the real axis.  Grouping the parameters into
\[
	\param \colonequals  (\gamma_1,\zeta_2) \qquad \param^\prime \colonequals  (\gamma_2, \zeta_1, c, \Omega) 
\]
we see that $\fullparam_0 \colonequals  (\param_0, \param_0^\prime)$ defined by the values 
\[
	(\gamma_1^0, \gamma_2^0) = (1,-1), \qquad (\zeta_1^0,\zeta_2^0) = ( i, -i), \qquad (c,\Omega) = \left(\frac{1}{4\pi}, 0 \right)
\]
is a steady translating configuration. Moreover, identifying the domain $\mathbb{R} \times \mathbb{C} \cong \mathbb{R}^3$ and codomain $\mathbb{C}^2 \cong \mathbb{R}^4$, we see that
\[
	D_\param \mathcal{V}(\fullparam_0) = 
		-\frac{1}{8\pi}
		\begin{pmatrix}
			0 & 0 &  1  \\
			0 & 1 &  0  \\
			2  & 0 & 1  \\
			0& 1 &  0 
		\end{pmatrix},
\]
  which has rank $3$.  The codimension is thus $1$, and so it is non-degenerate in the sense of Definiton~\ref{definition nondegenerate configuration}\ref{def translating}. Applying Lemma~\ref{local ift phi theorem}, we can locally solve \eqref{dWdzeta point vortex algebra} for $\param = \param(\param')$.
\end{example}

\begin{example}[Rotating pair] \label{rotating pair example} Suppose now that we have a pair ($M = 2$) of point vortices having the same signed circulating, which will result in a rotating configuration ($c = 0$).  We take
\[
	\param = (\realpart{\zeta_1}, \zeta_2), \quad \param^\prime \colonequals  (\gamma_1, \gamma_2, \imagpart{\zeta_1}, c, \Omega)
\]
Then $\fullparam_0 = (\param_0, \param_0^\prime)$ is a steady configuration for
\[
	(\gamma_1^0, \gamma_2^0) \colonequals  ( 1, 1), \qquad (\zeta_1^0, \zeta_2^0) \colonequals  (1, -1), \qquad (c, \Omega) = \left(0, \frac{1}{4\pi} \right).
\]
Identifying the domain $\mathbb{R} \times \mathbb{C} \cong \mathbb{R}^3$ and codomain $\mathbb{C}^2 \cong \mathbb{R}^4$, we see that the Jacobian at $\fullparam_0$ is given by
\[
	D_\param \mathcal{V}(\fullparam_0) = 
		\frac{1}{8\pi}
		\begin{pmatrix}
			0 & 0 & 1 \\
			3 & -1 & 0 \\
			0 & 0 & 1 \\
			-1 & 3 & 0
		\end{pmatrix}.
\]
This clearly has three-dimensional range, and hence $\cod{\range{D_\param \mathcal{V}(\fullparam_0)}} = 1$.  Recalling \eqref{codimension rotating case}, we conclude that $\fullparam_0$ is non-degenerate in the sense of Definition~\ref{definition nondegenerate configuration}\ref{def rotating}
\end{example}

\begin{example}[Stationary tripole] \label{stationary tripole example} Finally, we consider a stationary ($c = \Omega = 0$) configuration of three point vortices ($M = 3$), grouping the remaining parameters as
\[
	\param = (\gamma_1, \zeta_3), \qquad \param^\prime = (\gamma_2, \gamma_3, \zeta_1, \zeta_2, c, \Omega).
\]
One solution of this form is $\fullparam_0 = (\param_0, \param_0^\prime)$ for
\[ 
	(\gamma_1^0, \gamma_2^0, \gamma_3^0) \colonequals  (2,-1,2),\qquad (\zeta_1^0, \zeta_2^0, \zeta_3^0) \colonequals  (1,0,-1), \qquad (c,\Omega) = (0,0).
\]
We then compute that
\[
	D_\param \mathcal{V}(\fullparam_0) 
		= \frac{1}{4\pi} \begin{pmatrix}
			0 & 0 & 1 \\
			0 & -1 & 0 \\
			0 & 0 & 4 \\
			2 & -4 & 0 \\
			0 & 0 & 1 \\
			1 & -1 & 0
		\end{pmatrix},
\]
where we have again identified the domain $\mathbb{R} \times \mathbb{C} \cong \mathbb{R}^3$ and codomain $\mathbb{C}^3 \cong \mathbb{R}^6$ in the usual way.  The columns above are clearly linearly independent, hence the range of the Jacobian above is three dimensional, meaning the codimension is $3$ as required by \eqref{codimension stationary case}.   Thus this is a non-degenerate stationary configuration as in Defintion~\ref{definition nondegenerate configuration}\ref{def stationary}.  
\end{example}

\subsection{Global implicit function theorem}
\label{global ift section}

The continuation argument underlying Theorems~\ref{intro rotating theorem}, \ref{intro tripole theorem}, \ref{intro pocklington theorem}, and \ref{intro global continuation theorem} is based on analytic global bifurcation theory, which was first introduced by Dancer \cite{dancer1973bifurcation,dancer1973globalstructure} in the 1970s, and then refined by Buffoni and Toland \cite{buffoni2003analytic}.  Here we give a version of these results that is stated in the form of a global implicit function theorem.  Following the philosophy from \cite{chen2018existence}, we have avoided making certain compactness assumptions a priori, preferring instead to treat their violation as potential alternatives for the limiting behavior along the solution curve.  

\begin{theorem}[Global implicit function theorem] \label{homoclinic global ift} 
Let $\mathscr{W}$ and $\mathscr{Z}$ be Banach spaces, $\mathcal{O} \subset \mathscr{W} \times \mathbb{R}$ an open set containing a point $(w_0, \param_0)$. Suppose that  $\mathscr{G} \colon \mathcal{O} \to \mathscr{Z}$ is real analytic and satisfies 
  \begin{equation}
    \mathscr{G}(w_0, \lambda_0) = 0, \qquad \mathscr{G}_w(w_0, \lambda_0) \colon \mathscr{W} \to \mathscr{Z} \quad \textrm{is an isomorphism}.  \label{homoclinic global ift assumptions} 
  \end{equation}
 Then there exist a curve $\mathscr{K}$ that admits the global $C^0$ parameterization  
 \[ \mathscr{K} \colonequals  \left\{ (w(s), \param(s)) : s \in \mathbb{R}  \right \} \subset \mathscr{G}^{-1}(0) \cap \mathcal{O},\]
  and satisfies the following.  
  \begin{enumerate}[label=\rm(\alph*)]
  \item \label{K well behaved} At each $s \in \mathbb{R}$, the linearized operator $\mathscr{G}_w(w(s), \param(s)) \colon \mathscr{W} \to \mathscr{Z}$ is Fredholm index $0$.
  \item \label{K alternatives} One of the following alternatives holds as $s \to \infty$ and $s \to -\infty$.
    \begin{enumerate}[label=\rm(A\arabic*$'$)]
    \item  \label{K blowup alternative}
      \textup{(Blowup)}  The quantity 
      \begin{align}
        \label{K blowup}
        N(s)\colonequals  \n{w(s)}_{\mathscr{W}} + |\param(s)| +\frac 1{\dist((w(s),\param(s)), \, \dell \mathcal{O})}
      \end{align}
    \item \label{K loss of compactness alternative} \textup{(Loss of compactness)} There exists a sequence $s_n \to \pm\infty$ with $\sup_n N(s_n) < \infty$, but $( w(s_n), \param(s_n) )$ has no convergent subsequence in $\mathscr{W} \times \mathbb{R}$.  
      \item \label{K loss of fredholmness alternative} \textup{(Loss of Fredholmness)}    There exists a sequence $s_n \to \pm\infty$
        with $\sup_{n} N(s_n) < \infty$ and so that $(w(s_n), \param(s_n)) \to (w_*, \param_*) \in \mathcal{W}$ in $\mathscr{W} \times \mathbb{R}$, however $\mathscr{G}_w(w_*, \param_*)$ is not semi-Fredholm.  
   \item \label{K loop} \textup{(Closed loop)} There exists $T > 0$ such that $(w(s+T), \param(s+T)) = (w(s), \param(s))$ for all $s \in (0,\infty)$. 
        \end{enumerate}
          \item \label{K reparam} Near each point $(w(s_0),\param(s_0)) \in \mathscr{K}$, we can locally reparametrize $\mathscr{K}$ so that $s\mapsto (w(s),\param(s))$ is real analytic.
 \item \label{K maximal part} The curve $\mathscr{K}$ is maximal in the sense that, if $\mathscr{J} \subset \mathscr{G}^{-1}(0) \cap \mathcal{W}$ is a locally real-analytic curve containing $(w_0,\param_0)$ and along which  $\mathscr G_w$ is Fredholm index 0, then $\mathscr{J} \subset \mathscr{K}$. 
  \end{enumerate}
\end{theorem}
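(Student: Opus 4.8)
The plan is to obtain $\mathscr{K}$ as the maximal real-analytic continuation of the local solution curve, following the analytic global bifurcation theory of Dancer and Buffoni--Toland \cite{dancer1973bifurcation,buffoni2003analytic} recast in the ``global implicit function theorem'' form of \cite{chen2018existence}. First I would apply the analytic implicit function theorem at $(w_0,\lambda_0)$: since $\mathscr{G}_w(w_0,\lambda_0)$ is an isomorphism by \eqref{homoclinic global ift assumptions}, there is a unique germ of a real-analytic solution curve through $(w_0,\lambda_0)$, which seeds the parametrization. Because $\mathscr{G}_w(w_0,\lambda_0)$ is Fredholm of index $0$ and the index is stable under small perturbations and locally constant on the connected set of semi-Fredholm operators, $\mathscr{G}_w(w(s),\lambda(s))$ remains Fredholm of index $0$ so long as the continuation passes only through semi-Fredholm points; this yields part~\ref{K well behaved}.

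The continuation engine is a local structure theorem at any solution $(w_*,\lambda_*)$ where $\mathscr{G}_w$ is Fredholm of index $0$. There a Lyapunov--Schmidt reduction replaces $\mathscr{G}=0$ in a neighborhood by a finite-dimensional real-analytic equation whose zero set is, by the Weierstrass preparation and Puiseux theorems, a one-dimensional real-analytic variety, that is, a finite union of analytic arcs through $(w_*,\lambda_*)$, each admitting a real-analytic reparametrization. This handles turning points and occasional singular points and gives directly the local real-analytic reparametrization of part~\ref{K reparam}. I would then select a distinguished arc at each stage and glue these arcs into a single connected curve, extending it as far as possible in both directions. Taking the union of all such analytic continuations (equivalently, a Zorn's lemma argument over locally real-analytic index-$0$ curves containing $(w_0,\lambda_0)$) produces a maximal curve, which is precisely the maximality assertion \ref{K maximal part}, and reparametrizing the resulting maximal open parameter interval onto $\mathbb{R}$ supplies the global $C^0$ parametrization.

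It then remains to identify the limiting behavior, part~\ref{K alternatives}, which I would establish by a dichotomy as $s\to+\infty$ (the case $s\to-\infty$ being identical). Suppose the blowup alternative \ref{K blowup alternative} fails, so that $N(s)$ from \eqref{K blowup} stays bounded along some sequence $s_n\to\infty$. If $(w(s_n),\lambda(s_n))$ has no convergent subsequence we are in the loss-of-compactness alternative \ref{K loss of compactness alternative}; otherwise I pass to a limit $(w(s_n),\lambda(s_n))\to(w_*,\lambda_*)$, which lies in $\mathcal{O}$ because the term $1/\dist((w,\lambda),\partial\mathcal{O})$ is controlled by $N$, and which solves $\mathscr{G}=0$ by continuity. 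If $\mathscr{G}_w(w_*,\lambda_*)$ is not semi-Fredholm we are in the loss-of-Fredholmness alternative \ref{K loss of fredholmness alternative}. In the only remaining case $\mathscr{G}_w(w_*,\lambda_*)$ is semi-Fredholm, so by stability of the index together with part~\ref{K well behaved} it is in fact Fredholm of index $0$, and the local structure theorem above applies at $(w_*,\lambda_*)$.

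The crux, and the step I expect to be the main obstacle, is to conclude from this last case that $\mathscr{K}$ must be a closed loop, alternative \ref{K loop}. Extending the curve through the Fredholm-index-$0$ point $(w_*,\lambda_*)$ would contradict the maximality of the parameter interval, so the continuation cannot lead to genuinely new solutions; since the curve accumulates at $(w_*,\lambda_*)$ and the solution set there is a finite union of analytic arcs, it must re-enter every neighborhood of $(w_*,\lambda_*)$ infinitely often along one of these finitely many arcs, and the real-analytic rigidity then forces it to reconnect with an earlier portion of itself, producing a period $T$ and the periodicity of \ref{K loop}. Making this reconnection precise, in particular ruling out that the curve spirals toward $(w_*,\lambda_*)$ without ever closing, is the delicate point where real-analyticity (rather than mere smoothness) is indispensable, and is exactly where I would lean on the distinguished-arc bookkeeping of Buffoni--Toland \cite{buffoni2003analytic} and the accumulation analysis of \cite{chen2018existence}.
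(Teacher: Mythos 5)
Your proposal is correct and follows essentially the same route as the paper, which itself does not prove Theorem~\ref{homoclinic global ift} in detail but derives it as "a straightforward adaptation of" the analytic global implicit function theorem of \cite{chen2018existence} (with a sketch in the appendix of \cite{chen2020global}), i.e.\ exactly the Dancer/Buffoni--Toland continuation via Lyapunov--Schmidt reduction, the analytic structure theorem, distinguished-arc bookkeeping, and the dichotomy argument for the alternatives that you outline. The one delicate point you flag --- ruling out spiraling and forcing the closed-loop alternative when all other alternatives fail --- is indeed the step handled by the distinguished-arc machinery of \cite{buffoni2003analytic} and \cite{chen2018existence}, which is precisely where the paper defers as well.
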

This theorem follows from a straightforward adaptation of \cite[Theorem 6.1]{chen2018existence}.  A sketch of the argument can be found in \cite[Appendix B]{chen2020global}.  Note that here we have slightly rephrased \ref{K loss of fredholmness alternative} to be the loss of semi-Fredholmness, which by continuity of the index is equivalent to the index ceasing to be $0$ as stated in \cite{chen2020global}.

Assume that the space $\mathscr{V}$ admits the decomposition $\mathscr{V} = \mathbb{R} \times \mathscr{V}^\prime$, and let elements of $\mathscr{V}$ be accordingly denoted $(\param, \param^\prime)$.  We will think of $\param \in \mathbb{R}$ as the varying direction with the components in $\mathscr{V}^\prime$ fixed.    Under the hypotheses of Lemma~\ref{local ift phi theorem}, there then exists a curve $\mathscr{K}_\loc \subset \mathscr{W} \times \mathbb{R}$ admitting the real-analytic parameterization
\[
	\mathscr{K}_\loc = \{ ( \tilde w(\param,\param_0^\prime), \lambda,\lambda_0^\prime) : |\lambda - \lambda_0| \ll 1 \} \subset \mathscr{G}^{-1}(0)
\]
where $v_0 = (\lambda_0, \lambda_0^\prime)$.  

We now argue as in the proof of Theorem~\ref{homoclinic global ift} to extend this curve globally.  Because $\lambda^\prime$ will be fixed to $\lambda_0^\prime$ throughout this procedure, it will be convenient to suppress the dependence of all quantities on it.  Thus, $\mathcal{O}$ will now be the corresponding open subset of $\mathscr{W} \times \mathbb{R}$,  $\mathscr{G}$ is viewed as a a mapping $\mathscr{G} = \mathscr{G}(w,\lambda) \colon \mathcal{O} \to \mathscr{Z}$, and $\mathscr{K}_\loc \subset \mathcal{O}$. 

\begin{corollary}[Global implicit function theorem with identities]
\label{global ift with identities corollary}
There exists a curve $\mathscr{K} \subset \mathcal{O}$ of solutions that admits the global $C^0$ parameterization
\[
	\mathscr{K} = \{ (w(s), \lambda(s)) : s \in \mathbb{R} \} \subset \mathscr{G}^{-1}(0) 
\]
with $\mathscr{K}_\loc \subset \mathscr{K}$ and satisfying the following.
\begin{enumerate}[label=\rm(\alph*)]
\item \label{K alternatives with identities} As $s \to \infty$ and $s \to -\infty$, one of the alternatives \ref{K blowup alternative}, \ref{K loss of compactness alternative}, \ref{K loss of fredholmness alternative}, or \ref{K loop} occurs.     
\item \label{K reparam with identities} Near each point $(w(s_0),\lambda(s_0)) \in \mathscr{K}$, we can locally reparametrize $\mathscr{K}$ so that $s\mapsto (w(s),\lambda(s))$ is real analytic.
\item \label{K maximal part with identities} The curve $\mathscr{K}$ is maximal in the sense that, if $\mathscr{J} \subset \mathscr{G}^{-1}(0) \cap \mathcal{W}$ is a locally real-analytic curve containing $(w_0,\lambda_0)$ and along which  $\mathscr G_w$ is semi-Fredholm, then $\mathscr{J} \subset \mathscr{K}$. 
\end{enumerate}
\end{corollary}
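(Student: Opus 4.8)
The plan is to deduce the corollary from the unconstrained global implicit function theorem, Theorem~\ref{homoclinic global ift}, applied to the augmented map already used in the proof of Lemma~\ref{local ift phi theorem}. Since $\phi_Z(0,w_0,v_0)$ is surjective by \eqref{phi surjective}, I fix an $n$-dimensional subspace $\mathscr{Z}_1\subset\mathscr{Z}$ on which it restricts to an isomorphism, and (freezing $\lambda'=\lambda_0'$ and suppressing it) introduce
\[
  \mathscr{H}\colon \mathscr{Z}_1\times\mathcal{O}\to\mathscr{Z}, \qquad \mathscr{H}(Z_1,w,\lambda)\colonequals \mathscr{G}(w,\lambda)-Z_1,
\]
with $(Z_1,w)\in\mathscr{Z}_1\times\mathscr{W}$ the unknown and $\lambda\in\mathbb{R}$ the bifurcation parameter. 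As computed in the proof of Lemma~\ref{local ift phi theorem}, $\mathscr{H}(0,w_0,\lambda_0)=0$ and the linearization $\mathscr{H}_{(Z_1,w)}(0,w_0,\lambda_0)\colon(\dot Z_1,\dot w)\mapsto\mathscr{G}_w(w_0,\lambda_0)\dot w-\dot Z_1$ is an isomorphism $\mathscr{Z}_1\times\mathscr{W}\to\mathscr{Z}$. Thus $\mathscr{H}$ meets the hypotheses \eqref{homoclinic global ift assumptions}, and Theorem~\ref{homoclinic global ift} furnishes a maximal, locally real-analytic curve $\tilde{\mathscr{K}}=\{(Z_1(s),w(s),\lambda(s)):s\in\mathbb{R}\}\subset\mathscr{H}^{-1}(0)$ along which $\mathscr{H}_{(Z_1,w)}$ is Fredholm index $0$ and which obeys the four alternatives of Theorem~\ref{homoclinic global ift}\ref{K alternatives}.

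The crux is to show $Z_1(s)\equiv 0$, so that $\mathscr{K}\colonequals \{(w(s),\lambda(s)):s\in\mathbb{R}\}$ lands in $\mathscr{G}^{-1}(0)$. Near the base point this is exactly Lemma~\ref{local ift phi theorem}: the local solution $\tilde w(\lambda)$ satisfies $\mathscr{G}(\tilde w(\lambda),\lambda)=0$, so $(0,\tilde w(\lambda),\lambda)$ solves $\mathscr{H}=0$ and, by the uniqueness clause of the implicit function theorem, coincides with the local branch of $\tilde{\mathscr{K}}$; hence $Z_1\equiv 0$ on an open sub-arc containing $s=0$. To propagate this globally I would invoke the real-analytic structure rather than the identity $\phi$ a second time. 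Let $E\colonequals \{s:Z_1(s)=0\}$, which is closed by continuity and contains a neighborhood of $0$. Putting $s^+\colonequals \sup\{s>0:[0,s]\subset E\}$ and assuming $s^+<\infty$, continuity gives $Z_1(s^+)=0$ and $[0,s^+]\subset E$. By Theorem~\ref{homoclinic global ift}\ref{K reparam} the curve admits a local real-analytic reparameterization $t\mapsto(Z_1,w,\lambda)$ near $s^+$; its $\mathscr{Z}_1$-valued component $Z_1$ is then real analytic in $t$ and vanishes on the nondegenerate sub-interval corresponding to $[s^+-\delta,s^+]$, so by unique continuation it vanishes on the entire analytic neighborhood. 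This forces $[0,s^++\delta']\subset E$, contradicting the definition of $s^+$; hence $s^+=\infty$, and the mirror-image argument gives $E=\mathbb{R}$.

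It remains to transfer the conclusions for $\mathscr{H}$ into the stated properties of $\mathscr{K}$. The essential bookkeeping is that, as $\mathscr{Z}_1$ is finite dimensional, $\mathscr{H}_{(Z_1,w)}(0,w,\lambda)\colon(\dot Z_1,\dot w)\mapsto\mathscr{G}_w(w,\lambda)\dot w-\dot Z_1$ is a finite-rank perturbation of $(\dot Z_1,\dot w)\mapsto\mathscr{G}_w(w,\lambda)\dot w$; hence $\mathscr{H}_{(Z_1,w)}(0,w,\lambda)$ is (semi-)Fredholm if and only if $\mathscr{G}_w(w,\lambda)$ is, with $\ind \mathscr{H}_{(Z_1,w)}(0,w,\lambda)=\ind\mathscr{G}_w(w,\lambda)+n$. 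In particular $\mathscr{H}_{(Z_1,w)}$ being Fredholm index $0$ along $\tilde{\mathscr{K}}$ is equivalent to $\mathscr{G}_w$ being Fredholm index $-n$ along $\mathscr{K}$, consistent with \eqref{ift with identity hypothesis G}, and semi-Fredholmness of the two is equivalent. This gives the local real-analytic reparameterization \ref{K reparam with identities} and the maximality \ref{K maximal part with identities}: any locally analytic $\mathscr{J}\subset\mathscr{G}^{-1}(0)$ through $(w_0,\lambda_0)$ along which $\mathscr{G}_w$ is semi-Fredholm lifts to $\{(0,w,\lambda):(w,\lambda)\in\mathscr{J}\}\subset\mathscr{H}^{-1}(0)$, which is locally analytic with $\mathscr{H}_{(Z_1,w)}$ semi-Fredholm, hence is contained in $\tilde{\mathscr{K}}$ by Theorem~\ref{homoclinic global ift}\ref{K maximal part}. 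For the alternatives \ref{K alternatives with identities}, because $Z_1\equiv 0$ the blowup quantity \eqref{K blowup} for $\mathscr{H}$ reduces to $\n{w(s)}_{\mathscr{W}}+|\lambda(s)|+\dist((w(s),\lambda(s)),\partial\mathcal{O})^{-1}$, since the distance from the interior point $(0,w,\lambda)$ to $\partial(\mathscr{Z}_1\times\mathcal{O})=\mathscr{Z}_1\times\partial\mathcal{O}$ equals $\dist((w,\lambda),\partial\mathcal{O})$; the compactness and Fredholmness alternatives descend verbatim under the identification above, yielding precisely \ref{K blowup alternative}--\ref{K loop}.

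The main obstacle is the propagation step $Z_1\equiv 0$: one must ensure that the vanishing established locally from the identity $\phi$ survives passage through the singular points of $\tilde{\mathscr{K}}$, where several branches of $\mathscr{H}=0$ may meet and where the pointwise transversality of $\phi_Z(0,\cdot)|_{\mathscr{Z}_1}$ can degenerate, so that one cannot simply re-run Lemma~\ref{local ift phi theorem} away from the base point. The device that bypasses this is exactly the local real-analytic reparameterizability of Theorem~\ref{homoclinic global ift}\ref{K reparam}, which converts the propagation into a unique-continuation statement for a real-analytic $\mathbb{R}^n$-valued function. A secondary, more clerical difficulty is the faithful matching of Fredholm indices and of the blowup quantity $N$ between the augmented and reduced problems, which must be carried out carefully to recover the alternatives as stated.
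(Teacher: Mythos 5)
Your proposal is correct and follows essentially the same route as the paper: you apply Theorem~\ref{homoclinic global ift} to the same augmented operator $\mathscr{H}(Z_1,w,\lambda)=\mathscr{G}(w,\lambda)-Z_1$, show $Z_1\equiv 0$ along the resulting curve, and project to obtain $\mathscr{K}$. Your connectedness-plus-unique-continuation argument for propagating $Z_1\equiv 0$ and your finite-rank-perturbation bookkeeping of the (semi-)Fredholm indices are exactly the details the paper compresses into ``local real-analyticity and maximality imply that $Z_1$ vanishes identically'' and ``the rest of the properties are inherited.''
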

\begin{proof}
Let $\mathscr{H} : \mathscr{Z}_1 \times \mathcal{O} \to \mathscr{Z}$ be given as in the proof of Lemma~\ref{local ift phi theorem}.  By the same argument, we have that $D_{(Z_1,w)} \mathscr{H}(0,w_0,\lambda_0)$ is an isomorphism $\mathscr{Z}_1 \times \mathscr{W} \to \mathscr{Z}$, and so we can apply Theorem~\ref{homoclinic global ift} to infer the existence of a solution curve 
\[
	\tilde{\mathscr{K}} = \{ (Z_1(s), w(s), \lambda(s)) : s \in \mathbb{R} \}  \subset \mathscr{Z}_1 \times \mathscr{W} \times \mathbb{R}.
\]  
that is globally $C^0$ and locally real analytic.  But, in the proof of Lemma~\ref{local ift phi theorem}, it was also shown that in a neighborhood of $(0,w_0, \lambda_0)$, the zero-set of $\mathscr{H}$ lies in the subspace $\{ Z_1 = 0\}$.  Local real-analyticity and maximality imply that $Z_1(\placeholder)$ vanishes identically.  Setting $\mathscr{K}$ to be the projection of $\tilde{\mathscr{K}}$ into $\mathscr{W} \times \mathbb{R}$, we see that $\mathscr{K} \subset \mathscr{G}^{-1}(0)$.  Local uniqueness shows $\mathscr{K}_\loc \subset \mathscr{K}$, and the rest of the properties claimed above are inherited from the corresponding statements about $\tilde{\mathscr{K}}$.
\end{proof}

\section{Tools from complex analysis}
\label{complex analysis section}

\subsection{Circular domains} 
\label{circular domains section}

Multiply connected subsets of $\mathbb{C}$ can be represented canonically in a variety of ways.  A \emph{circular domain} is a subset of $\mathbb{C}$ whose boundary is composed of a finite disjoint union of circles.  The set $\confD_{\rho_1, \ldots,\rho_m}(\fullparam)$ introduced in \eqref{conformal domain definition} is one example.  The following deep theorem of Koebe generalizes the Riemann mapping theorem to multiply connected domains. Here it is stated in a slightly less general form that is best suited to our purposes.

\begin{theorem}[Koebe, \cite{koebe1918abhandlungen}]
  \label{koebe theorem}
  For every $M$-connected domain $\fluidD \subset \mathbb{C}$, there exists  an $M$-connected circular domain $\confD \subset \mathbb{C}$ and an injective conformal mapping $f \maps \fluidD \to \mathbb{C}$ such that $f(\fluidD) = \confD$.  Moreover, this mapping and domain are unique provided we require in addition that
  \begin{equation}\label{koebe theorem normalization}
    f(z) = z+O\left(\frac{1}{z}\right) \textrm{ as } z \to \infty.
  \end{equation}
\end{theorem}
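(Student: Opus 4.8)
The plan is to establish existence by Koebe's osculation (iterative rounding) method and uniqueness by repeated Schwarz reflection. Throughout I would pass to the Riemann sphere $\widehat{\mathbb{C}} = \mathbb{C} \cup \{\infty\}$ and, consistent with the normalization \eqref{koebe theorem normalization}, assume $\infty \in \fluidD$ (after a preliminary M\"obius change of variables if necessary), so that $\widehat{\mathbb{C}} \setminus \fluidD$ is a disjoint union of $M$ nondegenerate compact connected pieces $K_1, \ldots, K_M$ with $\Gamma_j = \partial K_j$.

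For existence, the idea is to make the boundary components round one at a time. Fixing an index $j$, the set $\widehat{\mathbb{C}} \setminus K_j$ is simply connected since $K_j$ is compact and connected, so the Riemann mapping theorem furnishes a conformal map $g_j$ of $\widehat{\mathbb{C}} \setminus K_j$ onto the exterior of a round disk, normalized so that $g_j(\zeta) = \zeta + O(1/\zeta)$ near $\infty$. Applying $g_j$ to $\fluidD$ renders $\Gamma_j$ circular while perturbing the remaining components. I would then cycle $j = 1, \ldots, M, 1, \ldots$, compose, and obtain a sequence of univalent maps $f_n \colon \fluidD \to \widehat{\mathbb{C}}$ each normalized as $f_n(z) = z + O(1/z)$. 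Such maps form a normal family by the area theorem, so a subsequence converges locally uniformly to a univalent $f$; it then remains to show that $f$ carries every $\Gamma_k$ to a genuine circle, whereupon $\confD \colonequals f(\fluidD)$ is the desired circular domain.

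This last point is the technical heart of the construction and the step I expect to be the main obstacle. The route I would take is to attach to the intermediate domains a monotone capacity-type functional --- for instance a total logarithmic capacity or mapping-radius quantity for the complementary pieces --- show that each rounding step cannot decrease it, and observe that the normalization at $\infty$ together with univalence bounds it from above a priori. Monotonicity and boundedness then force the per-cycle increments to tend to zero, so that at the subsequential limit each further osculation acts asymptotically as the identity; one concludes that if a limiting boundary component were not already a circle, a further rounding step would strictly increase the functional, a contradiction. The delicate part is identifying the correct functional and making this strict-increase dichotomy quantitative and uniform along the iteration, for which one controls the derivatives $g_j'$ near the boundary via the Koebe distortion theorem.

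For uniqueness, suppose $f_1, f_2$ are two normalized maps taking $\fluidD$ onto circular domains $\confD_1, \confD_2$. Then $g \colonequals f_2 \circ f_1^{-1} \colon \confD_1 \to \confD_2$ is conformal, fixes $\infty$ with $g(\zeta) = \zeta + O(1/\zeta)$, and by Carath\'eodory's theorem extends to a homeomorphism of closures carrying each boundary circle of $\confD_1$ onto a boundary circle of $\confD_2$. I would extend $g$ by the Schwarz reflection principle across each such circle; iterating reflections in the circles of $\confD_1$ generates a classical Schottky group and extends $g$ conformally to the complement of its limit set $\Lambda$. Since $\Lambda$ is totally disconnected with empty interior and Lebesgue measure zero, the locally bounded map $g$ extends across $\Lambda$ by Riemann's removable singularity theorem to a conformal automorphism of $\widehat{\mathbb{C}}$, that is, a M\"obius transformation. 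Finally, the expansion $g(\zeta) = \zeta + O(1/\zeta)$ at $\infty$ forces $g$ to be affine with unit linear part and vanishing constant term, so $g = \mathrm{id}$; hence $f_1 = f_2$ and $\confD_1 = \confD_2$.
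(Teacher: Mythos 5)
The paper offers no proof of this statement: Theorem~\ref{koebe theorem} is quoted as a classical result with pointers to Koebe's original paper and to Goluzin, so there is nothing internal to compare against. Your sketch follows the classical route (osculation for existence, reflection for uniqueness), and the uniqueness half is essentially right. Its one imprecision is the removability step: Lebesgue measure zero of the limit set $\Lambda$ does not by itself make $\Lambda$ removable for bounded holomorphic functions (a segment has planar measure zero and is not removable), and Riemann's theorem concerns isolated singularities. The standard repair is to show that the sum of the diameters of the level-$n$ disks produced by the iterated reflections tends to zero, so that $\Lambda$ has zero length; a compact set of zero one-dimensional Hausdorff measure is removable for functions continuous across it and holomorphic off it (Painlev\'e), and $g$ does extend continuously by the nesting of the reflected disks. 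With that fix the Schottky argument closes and $g$ is M\"obius, hence the identity by the normalization at infinity.

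The existence half contains a genuine gap, which you have in effect flagged yourself. The convergence of the osculation scheme to an honestly \emph{circular} domain is the hard content of Koebe's theorem, and your proposal leaves it at the level of a strategy: you posit a monotone capacity-type functional that increases under each rounding step and is stationary exactly at circular configurations, but you neither identify this functional nor prove the monotonicity or the strict-increase dichotomy on which the whole argument rests. Two further points go unaddressed. First, non-degeneration: a priori the complementary continua $K_i^{(n)}$ could shrink to points along the iteration, in which case the subsequential limit would fail to be $M$-connected and could not be a circular domain with $M$ boundary circles; one needs a uniform lower bound on their capacities or diameters, which in the classical proof comes from distortion estimates for the normalized maps and their inverses. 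Second, passing to the limit: even granted that the per-cycle increments of your functional vanish, concluding that the limit boundary components are circles requires continuity of the osculation step under Carath\'eodory kernel convergence of the domains. The classical treatment (Goluzin, Chapter~V, \S 6) avoids the variational detour entirely by measuring the deviation of each $K_i^{(n)}$ from a circle directly and showing, via the Koebe distortion theorem applied to the near-identity maps $g_j$, that this deviation contracts geometrically over each full cycle; circularity of the limit and non-degeneracy then follow together. As written, your existence argument is a plausible outline of the correct method rather than a proof.
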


See also \cite[Chapter V \S 6]{goluzin1969geometric}.  In particular, Theorem~\ref{koebe theorem} shows that the onset of conformal degeneracy is fundamental rather than an artifact of our choice of conformal domain. 

For the set $\confD_\rho(\zeta_1,\ldots, \zeta_M)$ to actually be a circular domain, the distance between any two vortex centers must be at least $2\rho$.  This restriction forces us to work within a certain subset of parameter space.  It will also be convenient for the implicit function theorem argument to allow $\rho \leq 0$, with the convention that $\confD_\rho = \confD_{|\rho|}$ for $\rho < 0$, and $\confD_0 \colonequals  \mathbb{C} \setminus \{ \zeta_1, \ldots, \zeta_M\}$.  We therefore introduce the open sets
\begin{equation}
  \label{definition of U neighborhood}
  \mathcal{U}_\delta \colonequals  \big\{ (\rho,\fullparam) \in \mathbb{R} \times \mathbb{P} : \min_{j\neq k} |\zeta_j - \zeta_k| > 2|\rho|+\delta \big\}, \qquad \mathcal{U} \colonequals  \bigcup_{\delta \geq 0} \mathcal{U}_\delta.
\end{equation}

Finally, let us note that by selecting one of the centers $\zeta_k$ and applying a spherical inversion through $\partial B_{\rho_k}(\zeta_k)$, we can transform $\confD_\rho$ to a (bounded) circular domain lying in the interior of $B_{\rho_k}(\zeta_k)$.  The Green's function for the Laplacian on sets of this form can be determined explicitly in terms of Shlottky--Klein prime functions, and hence these types of circular domains are often preferred by authors making use of such techniques; see, for example, \cite{crowdy2020book}.  As we will instead perform vortex desingularization via the implicit function theorem, it is simpler to formulate the problem so that the conformal mapping $f$ is near identity.  Because we anticipate the streamlines of the small hollow vortices being asymptotically circular, using $\confD_{\rho_1,\ldots, \rho_M}$ as the conformal domain with $\zeta_k \approx z_k$ is a natural choice.

\subsection{Injectivity and conformality}
\label{injectivity conformality section}

While the methods from the previous subsection allow us to represent mappings $f \in C^{\ell+\alpha}(\overline{\confD_\rho},\mathbb C)$ which are holomorphic in $\confD_\rho$ and satisfy the normalization \eqref{koebe theorem normalization}, it remains to check when such mappings are univalent conformal mappings onto their images.

First we give a simple sufficient condition for the derivative $\partial_\zeta f$ to be non-vanishing.
\begin{lemma}\label{lem basic nondeg f_zeta}
  Suppose that $f \in C^2(\overline{\confD_\rho},\mathbb C)$ is holomorphic with $\partial_\zeta f \ne 0$ on $\partial \confD_\rho$. If $f$ satisfies the normalization condition \eqref{koebe theorem normalization} and the integral condition
  \begin{align}
    \label{eqn no winding f_zeta}
    \frac 1{2\pi i}\int_{\partial \confD_\rho} \frac{\partial_\zeta^2 f}{\partial_\zeta f}\, d\zeta = 0, 
  \end{align}
  then $\partial_\zeta f \ne 0$ in $\confD_\rho$.
  \begin{proof}
    As $\partial_\zeta f$ has no poles in $\confD_\rho$, this follows from Cauchy's argument principle, i.e.~from calculating the left hand side of \eqref{eqn no winding f_zeta} using the calculus of residues.
  \end{proof}
\end{lemma}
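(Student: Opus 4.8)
The plan is to apply the argument principle to the holomorphic function $g \colonequals \partial_\zeta f$ on the (unbounded) circular domain $\confD_\rho$, reading the integral in \eqref{eqn no winding f_zeta} as the logarithmic residue $\frac{1}{2\pi i}\int_{\partial\confD_\rho} (g'/g)\, d\zeta$. Since $f$ is holomorphic, $g$ is holomorphic on $\confD_\rho$ and so has no poles there; by hypothesis it is also nonvanishing on $\partial\confD_\rho$, so the quotient $g'/g = \partial_\zeta^2 f / \partial_\zeta f$ is meromorphic on $\confD_\rho$ with poles exactly at the zeros of $g$, each simple with residue equal to the order of the corresponding zero. The upshot will be that the left-hand side of \eqref{eqn no winding f_zeta} equals the total number $Z$ of zeros of $\partial_\zeta f$ in $\confD_\rho$, counted with multiplicity, which is a nonnegative integer; since the hypothesis forces this integral to vanish, we get $Z = 0$ and the conclusion follows.

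To make this rigorous I would first address the fact that $\confD_\rho$ is unbounded and contains a neighborhood of $\infty$. I would introduce a large disk $B_R(0)$ enclosing all the excised disks $\overline{B_\rho(\zeta_k)}$ and apply the residue theorem on the bounded domain $B_R(0) \cap \confD_\rho$, whose positively oriented boundary consists of $\partial B_R(0)$ traversed counterclockwise together with the circles $\partial B_\rho(\zeta_k)$ traversed clockwise. Converting the clockwise circles to the counterclockwise orientation, this yields
\[
  \frac{1}{2\pi i}\int_{\partial B_R(0)} \frac{g'}{g}\, d\zeta - \frac{1}{2\pi i}\sum_k \int_{\partial B_\rho(\zeta_k)} \frac{g'}{g}\, d\zeta = Z_R,
\]
where the circles on the left are now counterclockwise and $Z_R$ is the number of zeros of $g$ in $B_R(0)\cap\confD_\rho$.

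The one genuine subtlety — and the step I expect to require the most care — is controlling the contribution from $\partial B_R(0)$ as $R \to \infty$ and keeping the orientations straight. Here the normalization \eqref{koebe theorem normalization} is essential: writing $f(\zeta) = \zeta + a_1/\zeta + O(1/\zeta^2)$ gives $g = \partial_\zeta f = 1 + O(1/\zeta^2)$ and $\partial_\zeta^2 f = O(1/\zeta^3)$, so $g'/g = O(1/\zeta^3)$ and the integral over $\partial B_R(0)$ is $O(1/R^2) \to 0$. In particular $g(\infty) = 1 \neq 0$, so $g$ has neither zero nor pole near $\infty$, and $Z_R$ stabilizes to the total count $Z$ once $R$ is large. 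Letting $R \to \infty$ and recognizing that the positively oriented boundary $\partial\confD_\rho$ traverses each $\partial B_\rho(\zeta_k)$ clockwise (the fluid domain lying \emph{outside} the disks), the surviving sum is precisely $\frac{1}{2\pi i}\int_{\partial\confD_\rho}(g'/g)\,d\zeta$. Hence the left-hand side of \eqref{eqn no winding f_zeta} equals $Z \geq 0$, and since the hypothesis sets it to zero we conclude $Z = 0$, i.e.\ $\partial_\zeta f \neq 0$ throughout $\confD_\rho$.
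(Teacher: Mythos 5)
Your proof is correct and follows exactly the route the paper intends: the paper's own proof is a one-line appeal to Cauchy's argument principle and the calculus of residues applied to $\partial_\zeta f$, and your write-up simply supplies the details it leaves implicit (truncation at $\partial B_R(0)$, the decay $\partial_\zeta^2 f/\partial_\zeta f = O(1/\zeta^3)$ from the normalization, and the clockwise orientation of $\partial\confD_\rho$ as the boundary of an exterior domain). No gaps.
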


Next, we give a sufficient condition for $f$ to be injective.
\begin{lemma}\label{lem basic injective}
  In the setting of Lemma~\ref{lem basic nondeg f_zeta}, suppose further that $f|_{\partial \confD_\rho}$ is injective, and that none of the simple curves $\Gamma_k \colonequals  f(\partial B_\rho(\zeta_k))$ enclose one another. Then $f$ is globally injective.
  \begin{proof}
    Using the Jordan curve theorem and basic properties of the winding number, there is a unique unbounded domain $\fluidD$ with boundary $\partial \fluidD = \bigcup_k \Gamma_k$, given by
    \begin{align*}
      \fluidD \colonequals   \left\{ z \in \mathbb C : \frac 1{2\pi i}\sum_{k=1}^M \int_{\Gamma_k} \frac{d\tilde z}{\tilde z - z} = 0 \right\}.
    \end{align*}
    Changing variables inside the integral, we see that $z \in \fluidD$ if and only if 
    \begin{align*}
      \frac 1{2\pi i}\int_{\partial \confD_\rho} \frac{\partial_\zeta f(\zeta)}{f(\zeta)-z}\, d\zeta = 0.
    \end{align*}
    Using Cauchy's argument principle and the calculus of residues, this simplifies to $N(z) - 1 = 0$, where $N(z)$ is the number of roots $\zeta \in \confD_\rho$ of the equation $f(\zeta)=z$ and the $-1$ comes from the residue at infinity. Thus $z \in \fluidD$ if and only if $N(z)=1$, i.e.~$f$ is bijective $\confD_\rho \to \fluidD$.
  \end{proof}
\end{lemma}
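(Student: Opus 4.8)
The plan is to count, for a generic target point $z$, the number of preimages of $z$ under $f$ in $\confD_\rho$ by means of the argument principle, and to show that this count is exactly one on the unbounded region cut out by the boundary curves and zero elsewhere. Bijectivity onto that region then yields global injectivity immediately.

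First I would set up the target domain. Since each $\partial B_\rho(\zeta_k)$ is a circle and $f|_{\partial\confD_\rho}$ is injective, each image $\Gamma_k = f(\partial B_\rho(\zeta_k))$ is a simple closed curve, and the $\Gamma_k$ are pairwise disjoint. By the Jordan curve theorem each $\Gamma_k$ separates the plane, and since by hypothesis none of them encloses another, the complement $\mathbb{C} \setminus \bigcup_k \Gamma_k$ has a single unbounded component $\fluidD$. I would characterize $\fluidD$ as the set of points about which the total winding number of the $\Gamma_k$ vanishes, i.e.\ $\fluidD = \{ z : \frac{1}{2\pi i} \sum_k \int_{\Gamma_k} (\tilde z - z)^{-1}\, d\tilde z = 0 \}$, using the standard fact that the winding number is locally constant and vanishes on the unbounded component.

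Next I would transport this integral into the conformal domain. Changing variables $\tilde z = f(\zeta)$ in each contour integral converts the winding-number condition into $\frac{1}{2\pi i} \int_{\partial\confD_\rho} \frac{\partial_\zeta f(\zeta)}{f(\zeta) - z}\, d\zeta = 0$, where $\partial \confD_\rho$ carries its orientation as the boundary of $\confD_\rho$ and $f$ is orientation preserving because it is holomorphic with nonvanishing derivative. For $z \notin \bigcup_k \Gamma_k$ the integrand is the logarithmic derivative of $f(\zeta) - z$, so I would evaluate this contour integral by the argument principle on the Riemann sphere. The zeros of $f(\zeta) - z$ in $\confD_\rho$ are precisely the preimages of $z$; by Lemma~\ref{lem basic nondeg f_zeta} we have $\partial_\zeta f \neq 0$ throughout $\confD_\rho$, so each such zero is simple and contributes $+1$. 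The only subtlety is the behavior at $\infty$: the normalization $f(\zeta) = \zeta + O(1/\zeta)$ means $f - z$ has a simple pole at infinity on the sphere, contributing a residue of $-1$. Hence the contour integral equals $N(z) - 1$, where $N(z)$ is the number of preimages of $z$ in $\confD_\rho$.

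Combining the two expressions, $z \in \fluidD$ if and only if $N(z) = 1$, while for $z$ interior to some $\Gamma_k$ the winding number is nonzero and $N(z) = 0$. Thus $f$ maps $\confD_\rho$ bijectively onto $\fluidD$, which is exactly global injectivity. The main obstacle I anticipate is bookkeeping rather than conceptual: pinning down the orientations so that the winding numbers and the residue at infinity carry the correct signs, and confirming that the hypothesis ruling out one curve enclosing another is precisely what forces the winding-zero set to be the single unbounded component, with no cancellations arising from nested curves. Once the orientation conventions are fixed and the residue at infinity is correctly attributed to the near-identity normalization, the count closes at once.
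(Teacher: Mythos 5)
Your argument is correct and follows essentially the same route as the paper's proof: identify the unbounded component via the vanishing of the total winding number, change variables to pull the integral back to $\partial\confD_\rho$, and apply the argument principle with the residue $-1$ at infinity coming from the normalization $f(\zeta)=\zeta+O(1/\zeta)$, yielding $N(z)=1$ exactly on $\fluidD$. The orientation bookkeeping you flag is indeed the only delicate point, and it resolves exactly as you describe.
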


\subsection{Layer-potential representations}
\label{layer potential section}

As outlined in Section~\ref{introduction section}, our strategy will be to reformulate the hollow vortex system \eqref{intro hollow vortex problem} in terms of boundary traces of our unknowns in the conformal domain.  Because we wish to smoothly collapse the vortices to points, it will be advantageous to work with quantities defined on $M$ copies of $\mathbb{T}$ rather than $\partial\confD_\rho$.

With that in mind, for $(\rho, \fullparam) \in \mathcal{U}$ and real-valued densities $ \mu = (\mu_1, \ldots, \mu_M) \in \mathring{C}^{\ell+\alpha}(\mathbb{T})^M$, we define the layer-potential operator
\begin{equation}
  \label{definition Z operator}
   \mathcal{Z}^\rho(\fullparam)[\mu](\zeta) \colonequals  \sum_{k=1}^M \frac{1}{2\pi i} \int_{\mathbb{T}} \frac{\mu_k(\sigma)}{\rho \sigma + \zeta_k - \zeta} \, \rho d\sigma.
\end{equation}
It is clear that the right-hand side of \eqref{definition Z operator} defines a single-valued holomorphic function in $\confD_\rho$ that vanishes at infinity.  Moreover, for fixed $(\rho,\fullparam) \in \mathcal{U}$ with $\rho > 0$, we have by Privalov's theorem that 
\[
	\mathcal{Z}^\rho(\fullparam) \quad \textrm{is bounded} \quad C^{\ell+\alpha}(\mathbb{T})^M \to C^{\ell+\alpha}(\overline{\confD_\rho},\mathbb C).
\]
Due to the commutation identity $\partial_\zeta \left( \mathcal{Z}^\rho \mu \right) = \frac{1}{\rho} \mathcal{Z}^\rho\mu^\prime$, which holds for $0 < |\rho| \ll 1$, these bounds will be highly $(\rho,\Lambda)$ dependent, though they are uniform on compact subsets of $\mathcal{U} \setminus \{ \rho = 0\}$.  Of course, the right-hand side of \eqref{definition Z operator} also gives a single-valued holomorphic function on $\mathbb{C} \setminus \overline{\confD_\rho}$, but we will use $\mathcal{Z}^\rho$ exclusively to denote the function defined on the (unbounded) external domain.  We also note that the formula \eqref{definition Z operator} remains well-defined for $-1 \ll \rho < 0$, and indeed 
\[
	\mathcal{Z}^\rho\mu = \mathcal{Z}^{-\rho}[\mu(-\placeholder)] \qquad \textrm{for all } \mu \in C^{\ell+\alpha}(\mathbb{T})^M.
\]

Both the kinematic and dynamic conditions are posed on the boundary of $\fluidD$, and hence we will need to be able to evaluate the traces of $\mathcal{Z}^\rho(\fullparam) \mu$ on the components of $\partial\confD_\rho$.  Towards that end, we introduce the operators
\begin{equation}
 \label{trace Z operator}
  \begin{aligned}
    \mathcal{Z}_k^\rho[\mu](\tau) & \colonequals  \mathcal{Z}^\rho[\mu](\zeta_k + \rho \tau) \\ 
    & = \frac{1}{2\pi i} \int_{\mathbb{T}} \frac{\mu_k(\sigma) - \mu_k(\tau)}{\sigma - \tau} \, d\sigma + \sum_{j \neq k} \frac{1}{2\pi i} \int_{\mathbb{T}} \frac{\mu_j(\sigma)}{\rho (\sigma - \tau) + \zeta_j - \zeta_k} \rho \, d\sigma.
  \end{aligned}
\end{equation}
The second line results from the Sokhotski--Plemelj formula.  Here we are continuing the convention of suppressing dependence on $\fullparam$ when there is no risk of confusion.  It follows from standard potential theory that for fixed $\rho$ and $\Lambda$, 
\[
	\mathcal{Z}^\rho_k(\Lambda) \quad \textrm{is bounded} \quad C^{\ell+\alpha}(\mathbb{T})^M \to C^{\ell+\alpha}(\mathbb{T},\mathbb C).
\]
Moreover, because these trace operators in fact commute with differentiation,
\begin{equation}
  \label{Zk derivative commutation}
  \partial_\tau \mathcal{Z}_k^\rho = \mathcal{Z}_k^\rho[ \partial_\tau \placeholder],
\end{equation}
it can be easily verified that 
\[
	(\rho,\fullparam) \mapsto \mathcal{Z}_k^\rho(\fullparam) \quad \textrm{is real analytic} \quad \mathcal{U} \to \mathcal{L}(C^{\ell+\alpha}(\mathbb{T})^M, C^{\ell+\alpha}(\mathbb{T},\mathbb C)).
\]
 In particular, $\mathcal{Z}_k^0 \mu = \mathcal{C} \mu_k$, where $\mathcal{C}$ is the Cauchy-type integral operator
\begin{equation}
  \label{definition C operator}
  \mathcal{C}[\mu_k](\tau) \colonequals  \frac{1}{2 \pi i} \int_{\mathbb{T}} \frac{\mu_k(\sigma) - \mu_k(\tau)}{\sigma - \tau} \, d\sigma.
\end{equation}
One can verify that $\mathcal{C} \tau^m = 0$ for $m \ge 0$ and $\mathcal{C} \tau^m = -\tau^m$ for $m < 0$. Equivalently, $2\mathcal{C} = i \mathcal{H} + P_0 -1$, where $\mathcal{H}$ is the Hilbert transform on the circle.

Finally, it is useful to note that one can invert the layer-potential representation \eqref{trace Z operator} to express $\mu_k$ in terms of $\mathcal{Z}_k^\rho \mu$.    In particular, a function $g \in C^{\alpha}(\mathbb{T},\mathbb{C})$ is the trace of a holomorphic function on $B_1(0)$ if and only if $g \in \ker{\mathcal{C}}$. Assuming that $\mu_k \in \mathring{C}^{\alpha}(\mathbb{T})$, we then find from \eqref{trace Z operator} that
\[
	 \mathcal{Z}_k^\rho \mu =  \mathcal{C} \mu_k  + g_k,
\]
for an explicit function $g_k$ that is holomorphic on $B_1(0)$, whence 
\[
	\frac{1-i\mathcal{H}}{2} \mu_k = \left( \frac{1-i\mathcal{H}-\proj_0}{2} \right)^2 \mu_k = \mathcal{C} \mathcal{Z}_k^\rho \mu.
\]
Because $\mu_k$ is real valued, this leads to the following inversion formula
\[
\begin{aligned}
	\mu_k & =  \frac{1-i\mathcal{H}}{2} \mu_k +  \frac{1+i\mathcal{H}}{2} \mu_k  =   \frac{1-i\mathcal{H}}{2} \mu_k + \overline{ \frac{1-i\mathcal{H}}{2} \mu_k} = 2 \realpart{\left(\mathcal{C}  \mathcal{Z}_k^\rho \mu \right)}.  
\end{aligned}
\]
Thanks to the commutation identity \eqref{Zk derivative commutation}, the same argument applied to $\partial_\tau^\ell \mu_k$ yields
\begin{equation}
  \label{S-1 mu identity}
  \partial_\tau^\ell \mu_k = 2 \realpart{\left( \mathcal{C}\partial_\tau^\ell \mathcal{Z}_k^\rho \mu \right)} \qquad \textrm{for all } \ell \geq 0.
\end{equation}
The next lemma is an immediate consequence of \eqref{S-1 mu identity} and the boundedness of $\mathcal{C}$.

\begin{lemma}
\label{layer potential bounds lemma}
Suppose that $\mu_k \in \mathring{C}^\alpha(\mathbb{T})$.  For all $\ell \geq 0$ and $1 < p < \infty$, it holds that
  \begin{equation}
    \label{layer potential schauder}
    \| \partial_\tau^\ell \mu_k \|_{C^\alpha(\mathbb{T})} \leq C_\alpha \| \partial_\tau^\ell \mathcal{Z}_k^\rho \mu \|_{C^\alpha(\mathbb{T})}, \qquad \| \partial_\tau^\ell \mu_k \|_{L^p(\mathbb{T})} \leq C_p \| \partial_\tau^\ell \mathcal{Z}_k^\rho \mu \|_{L^p(\mathbb{T})}
  \end{equation}
for constants $C_\alpha, C_p > 0$ depending only on $\alpha$ and $p$ respectively.  
\end{lemma}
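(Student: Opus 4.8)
The plan is to deduce both estimates directly from the inversion identity \eqref{S-1 mu identity} together with the mapping properties of the Cauchy-type operator $\mathcal{C}$ on H\"older and Lebesgue spaces. Since $\mu_k \in \mathring{C}^\alpha(\mathbb{T})$ by hypothesis, \eqref{S-1 mu identity} applies and gives $\partial_\tau^\ell \mu_k = 2\realpart(\mathcal{C}\partial_\tau^\ell \mathcal{Z}_k^\rho \mu)$ for every $\ell \geq 0$. Because taking real parts is a contraction on both $C^\alpha(\mathbb{T})$ and $L^p(\mathbb{T})$ (pointwise $|\realpart g| \leq |g|$, and likewise for the H\"older seminorm), it suffices to show that $\mathcal{C}$ is bounded on each of these spaces with operator norm depending only on $\alpha$ and $p$; the constants $C_\alpha$ and $C_p$ in the statement are then just twice the respective operator norms.

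For the boundedness of $\mathcal{C}$, I would appeal to the decomposition $2\mathcal{C} = i\mathcal{H} + P_0 - 1$ recorded after \eqref{definition C operator}, where $\mathcal{H}$ is the Hilbert transform on $\mathbb{T}$. The zeroth Fourier projection $P_0$ and the identity are trivially bounded on both spaces, so the whole matter reduces to the classical continuity of $\mathcal{H}$. On $C^\alpha(\mathbb{T})$ this is Privalov's theorem, while on $L^p(\mathbb{T})$ with $1 < p < \infty$ it is the M.~Riesz theorem; in each case the bound depends only on $\alpha$ or on $p$. Combining these mapping properties with the contraction estimate of the previous paragraph yields the two claimed inequalities.

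There is essentially no genuine obstacle here: once \eqref{S-1 mu identity} is in hand, the lemma is immediate. The one point requiring care is that the constants be independent of $\rho$, $\fullparam$, and $\ell$. This is automatic because the operator $\mathcal{C}$ that appears on the right is \emph{fixed} --- all of the $\rho$- and $\fullparam$-dependence has been absorbed into the argument $\partial_\tau^\ell \mathcal{Z}_k^\rho \mu$ --- and the commutation identity \eqref{Zk derivative commutation} ensures that the same operator $\mathcal{C}$ governs every derivative order $\ell$. Consequently the bounds hold uniformly in $\ell$ and over the whole parameter set $\mathcal{U}$, with $C_\alpha$ and $C_p$ depending on nothing but $\alpha$ and $p$.
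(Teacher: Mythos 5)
Your argument is correct and is exactly the paper's: the lemma is stated there as an immediate consequence of the inversion identity \eqref{S-1 mu identity} and the boundedness of $\mathcal{C}$ on $C^\alpha(\mathbb{T})$ and $L^p(\mathbb{T})$, which is precisely what you establish via $2\mathcal{C} = i\mathcal{H} + P_0 - 1$, Privalov, and M.~Riesz. Your added remarks on why the constants are uniform in $\ell$, $\rho$, and $\fullparam$ are accurate and consistent with the paper's intent.
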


\subsection{Symmetries with respect to the axes} \label{symmetries subsection}

We say a function $f : \mathcal{U} \subset \mathbb{C} \to \mathbb{C}$ is \emph{real on real} provided $\imagpart{f}$ vanishes identically on $\mathbb{R} \cap \mathcal{U}$.  By the Schwarz reflection principle, if $\mathcal{U}$ is symmetric with respect to the real axis and $f$ is holomorphic, then $f$ is real on real if and only if $f = f^*$, where
\[
	f^*(\zeta) \colonequals  \overline{f(\overline{\zeta})}
\]
is the \emph{Schwarz conjugate} of $f$.  Similarly, we say $f$ is \emph{imaginary on imaginary} if $\realpart{f}$ vanishes on $i\mathbb{R} \cap \mathcal{U}$ and \emph{real on imaginary} if $\imagpart{f}$ vanishes there.  When $\mathcal{U}$ is symmetric with respect to the imaginary axis and $f$ is holomorphic, these are equivalent to having $f^* = -f(-\placeholder)$ and $f^* = f(-\placeholder)$, respectively. 

In our later analysis, it will be useful to understand what assumptions on the densities $\mu$ cause the layer-potential $\mathcal{Z}^\rho \mu$ given by \eqref{definition Z operator} to fall into one or more of these categories.  The following elementary lemma answers this question.

\begin{lemma} \label{symmetry over axes lemma}
Consider the conformal mapping $\mathcal{Z}^\rho(\Lambda) \mu$ given by \eqref{definition Z operator}.
\begin{enumerate}[label=\rm(\alph*\rm)]
	\item If for every $1 \leq k \leq M$ there exists $1 \leq k^\prime \leq M$ such that
	\[
		\overline{\zeta_k} = \zeta_{k^\prime} \qquad \textrm{and} \qquad \mu_k^* =  \mu_{k^\prime},
	\] 
	then $\mathcal{Z}^\rho \mu$ is real on real.  
	\item If for every $1 \leq k \leq M$ there exists $1 \leq k^{\prime\prime} \leq M$ such that
	\[
		-\overline{\zeta_k} = \zeta_{k^{\prime\prime}} \qquad \textrm{and} \qquad \mu_k^* = -\mu_{k^{\prime\prime}}(-\placeholder),
	\]
	then $\mathcal{Z}^\rho \mu$ is imaginary on imaginary.
	\item If for every $1 \leq k \leq M$ there exists $1 \leq k^{\prime\prime} \leq M$ such that
	\[
		-\overline{\zeta_k} = \zeta_{k^{\prime\prime}} \qquad \textrm{and} \qquad \mu_k^* = \mu_{k^{\prime\prime}}(-\placeholder),
	\]
	then $\mathcal{Z}^\rho \mu$ is real on imaginary.
\end{enumerate}
\end{lemma}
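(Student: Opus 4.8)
The plan is to reduce all three statements to the Schwarz-reflection characterizations recorded just above the lemma. Under the hypotheses of (a) the set of centers $\{\zeta_1,\dots,\zeta_M\}$ is invariant under $\zeta\mapsto\overline\zeta$, and under those of (b)--(c) it is invariant under $\zeta\mapsto-\overline\zeta$; since all the radii equal $\rho$, the disks are permuted accordingly and $\confD_\rho$ is symmetric with respect to the real, respectively imaginary, axis. As $\mathcal{Z}^\rho\mu$ is holomorphic on $\confD_\rho$, the characterizations apply, so it suffices to compute the Schwarz conjugate $(\mathcal{Z}^\rho\mu)^*(\zeta)=\overline{(\mathcal{Z}^\rho\mu)(\overline\zeta)}$ and verify that it equals $\mathcal{Z}^\rho\mu$, $-(\mathcal{Z}^\rho\mu)(-\placeholder)$, and $(\mathcal{Z}^\rho\mu)(-\placeholder)$ in the three respective cases.

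The heart of the argument is a single \emph{master formula} for the conjugate of the layer potential, namely
\[
  (\mathcal{Z}^\rho\mu)^*(\zeta) = \sum_{k=1}^M \frac{1}{2\pi i}\int_{\mathbb{T}}\frac{\mu_k^*(\sigma)}{\rho\sigma + \overline{\zeta_k}-\zeta}\,\rho\,d\sigma,
\]
which asserts that conjugation merely replaces each density $\mu_k$ by its Schwarz conjugate $\mu_k^*$ and each center $\zeta_k$ by $\overline{\zeta_k}$. To derive it I would first conjugate under the integral sign in $\overline{(\mathcal{Z}^\rho\mu)(\overline\zeta)}$, using that $\rho$ and $\mu_k$ are real to get the integrand $\mu_k(\sigma)/(\rho\overline\sigma+\overline{\zeta_k}-\zeta)$, the prefactor $\overline{1/(2\pi i)}=-1/(2\pi i)$, and the conjugated differential $\overline{d\sigma}=d\overline\sigma$. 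The key step is then the substitution $\sigma\mapsto\overline\sigma=1/\sigma$: it converts $\mu_k(\sigma)$ into $\mu_k^*$ (again because $\mu_k$ is real), fixes $\mathbb{T}$ as a set but reverses its orientation, and this orientation reversal exactly cancels the sign carried by $\overline{2\pi i}$.

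With the master formula in hand, the three conclusions follow by substituting the hypotheses and reindexing the finite sum. In case (a), $\overline{\zeta_k}=\zeta_{k'}$ and $\mu_k^*=\mu_{k'}$ turn the $k$-th summand above into the $k'$-th summand of $\mathcal{Z}^\rho\mu$ itself; since $\overline{\overline{\zeta_k}}=\zeta_k$ and the centers are distinct, $k\mapsto k'$ is an involution, hence a bijection of $\{1,\dots,M\}$, and the sum reindexes to give $(\mathcal{Z}^\rho\mu)^*=\mathcal{Z}^\rho\mu$, i.e.\ real on real. Cases (b) and (c) run identically, except that $\zeta_{k''}=-\overline{\zeta_k}$ calls for one further substitution $\sigma\mapsto-\sigma$ to rewrite $\rho\sigma-\zeta_{k''}-\zeta$ as $-(\rho\sigma+\zeta_{k''}+\zeta)$ and thereby produce evaluation at $-\zeta$; the two sign choices $\mu_k^*=\mp\mu_{k''}(-\placeholder)$ then yield $(\mathcal{Z}^\rho\mu)^*=\mp(\mathcal{Z}^\rho\mu)(-\placeholder)$, which are precisely imaginary on imaginary and real on imaginary.

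The only obstacle worth flagging is the orientation-and-sign accounting in the master formula—keeping track of the cancellation between $\overline{2\pi i}$ and the orientation reversal induced by $\sigma\mapsto1/\sigma$—together with the elementary verification that $k\mapsto k'$ and $k\mapsto k''$ are genuine involutions so that the finite sums may legitimately be reindexed. Everything past that point is direct substitution.
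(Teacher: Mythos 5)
Your proposal is correct and follows essentially the same route as the paper: the paper's proof consists precisely of your ``master formula'' $\mathcal{Z}^\rho[\mu]^*(\zeta) = \sum_k \frac{1}{2\pi i}\int_{\mathbb{T}} \mu_k^*(\sigma)\,\rho\,d\sigma/(\rho\sigma+\overline{\zeta_k}-\zeta)$, after which it declares all three parts immediate. Your additional bookkeeping (the orientation cancellation against $\overline{2\pi i}$, the substitution $\sigma\mapsto-\sigma$ in parts (b)--(c), and the reindexing over the induced permutation of the centers) simply fills in the details the paper leaves to the reader.
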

\begin{proof}
From \eqref{definition Z operator} we compute that
\[
	\mathcal{Z}^\rho[\mu]^*(\zeta) = \sum_{k=1}^M \frac{1}{2\pi i} \int_{\mathbb{T}} \frac{\mu_k(\overline{\sigma})}{\rho \sigma + \overline{\zeta_k} - \zeta} \rho \, d\sigma = \sum_{k=1}^M \frac{1}{2\pi i} \int_{\mathbb{T}} \frac{\mu_k^*(\sigma)}{\rho \sigma + \overline{\zeta_k} - \zeta} \rho \, d\sigma.
\]
The statements in all three parts are immediate consequences.
\end{proof}

A brute force way to impose these types of symmetry requirements in the bifurcation theory is to work with densities that lie in certain subspaces derived from  Lemma~\ref{symmetry over axes lemma}.  For example, suppose that $\mathcal{Z}^\rho \mu$ is imaginary on imaginary and there is a center $\zeta_k \in i\mathbb{R} \setminus \{0\}$ so that by Lemma~\ref{symmetry over axes lemma} the corresponding density obeys $\mu_k^* = -\mu_k(-\placeholder)$.  Expanding, we find 
\[
	-\mu_k(-\sigma) = \sum_{m \in \mathbb{Z}} (-1)^{m+1} \widehat{\mu}_k^m \sigma^m, \qquad \mu_k^*(\sigma) = \overline{\mu_k(\overline{\sigma})} = \sum_{m \in \mathbb{Z}} \overline{\widehat{\mu}_k^m} \sigma^{m}.
\]
Equating the two, we infer that  
\begin{equation}
  \label{mu even imaginary}
  \mu_k \in C_{\imagimag}^{\ell+\alpha}(\mathbb{T})  \colonequals   \left\{ \varphi \in C^{\ell+\alpha}(\mathbb{T}) : i^m \widehat\varphi_m \in i\mathbb{R} \textrm{ for all } m \in \mathbb{Z} \right\}.
\end{equation} 
On the other hand, to have $\mathcal{Z}^\rho \mu$ be real on imaginary requires instead that for every $\zeta_k \in i \mathbb{R} \setminus \{ 0\}$, the corresponding density satisfies $\mu_k^* = \mu_k(-\placeholder)$, that is 
\begin{equation}
  \label{mu odd imaginary}
  \mu_k \in C_{\realimag}^{\ell+\alpha}(\mathbb{T})  \colonequals   \left\{ \varphi \in C^{\ell+\alpha}(\mathbb{T}) : i^m \widehat\varphi_m \in \mathbb{R}   \textrm{ for all } m \in \mathbb{Z} \right\}.
\end{equation}
By the same reasoning, in order to be real on real, we must have that for every center $\zeta_k \in \mathbb{R} \setminus \{ 0\}$, the density satisfies
\begin{equation}
  \label{mu even real}
  \mu_k \in C_{\realreal}^{\ell+\alpha}(\mathbb{T}) \colonequals   \left\{ \varphi \in C^{\ell+\alpha}(\mathbb{T}) :  \widehat\varphi_m \in \mathbb{R} \textrm{ for all } m \in \mathbb{Z} \right\},
\end{equation}
whereas to be imaginary on real requires membership in the space
\begin{equation}
  \label{nu odd imaginary}
  \mu_k \in C_{\imagreal}^{\ell+\alpha}(\mathbb{T})  \colonequals   \left\{ \varphi \in C^{\ell+\alpha}(\mathbb{T}) :  \widehat\varphi_m \in i\mathbb{R}   \textrm{ for all } m \in \mathbb{Z} \right\}.
\end{equation}
In the special case $\zeta_k = 0$, then $\mathcal{Z}^\rho \mu$ is real on real and imaginary on imaginary only if 
\begin{equation}
  \label{mu even real and imaginary}
  \mu_k \in C_{\realreal,\imagimag}^{\ell+\alpha}(\mathbb{T}) \colonequals  C_{\realreal}^{\ell+\alpha}(\mathbb{T}) \cap C_{\imagimag}^{\ell+\alpha}(\mathbb{T}),
\end{equation}
while to be real on real and real on imaginary we must have that 
\[
  \mu_k \in C_{\realreal,\realimag}^{\ell+\alpha}(\mathbb{T})  \colonequals  C_{\realreal}^{\ell+\alpha}(\mathbb{T}) \cap C_{\realimag}^{\ell+\alpha}(\mathbb{T}),
\]
and to be imaginary on both real and imaginary requires 
\begin{equation}
  \label{mu odd real and imaginary}
  \mu_k \in C_{\imagreal,\imagimag}^{\ell+\alpha}(\mathbb{T})  \colonequals  C_{\imagreal}^{\ell+\alpha}(\mathbb{T}) \cap C_{\imagimag}^{\ell+\alpha}(\mathbb{T}),
\end{equation}

\section{Formulation of the hollow vortex problem} 
\label{formulation section}

In this section, we present a formulation of the hollow vortex problem that is amenable to desingularization via the (global) implicit function theorem.  The main difficulty is to find a way to describe the fluid velocity and the geometry of the domain so that (i) the equation smoothly collapses to the point vortex system at the point of bifurcation, and (ii) the unknowns are elements of a fixed function space.  

\subsection{Layer-potentials for the conformal mapping and complex potential}
\label{conformal map complex potential section}

Recall that our basic strategy begins by resetting the hollow vortex problem \eqref{intro hollow vortex problem} on a circular domain
\[
	\confD_\rho = \confD_\rho(\fullparam) \colonequals  \mathbb{C} \setminus \overline{B_\rho(\zeta_{1}) \cup \cdots \cup B_\rho(\zeta_{M})},
\]
in the $\zeta$-plane.  The task becomes then to construct a conformal mapping $f \in C^{\ell+\alpha}(\overline{\confD_\rho})$
defining the geometry of the hollow vortices, and a complex velocity potential $w \in C^{\ell+\alpha}(\overline{\confD_\rho})$
describing the flow. The physical domain in the $z$-plane will be $\fluidD \colonequals  f(\confD_\rho)$, and the velocity field 
\[
	\overline{\partial_z w}= \frac{\overline{\partial_\zeta w}}{\overline{\partial_\zeta f}} \in C^{\ell-1+\alpha}(\overline{\confD_\rho}).
\]
Observe that in order for this reformulation to be valid, it is necessary that $f$ is both conformal and globally injective on $\overline{\confD_\rho}$. We further require that $f$ is asymptotic to the identity map as $\zeta \to \infty$.  The geometric parameters $(\rho,\fullparam)$ must also lie in the neighborhood $\mathcal{U}$ introduced in \eqref{definition of U neighborhood}.   

It is advantageous to use a double-layer potential representation for both $f$ and $w$, so that they can be written in terms of their traces on the boundary components of $\confD_\rho$.  For the conformal mapping, we impose the ansatz
\begin{equation}
  \label{f ansatz}
  f = \id + \rho^2 \mathcal{Z}^\rho \mu,
\end{equation}
where recall the operator $\mathcal{Z}^\rho = \mathcal{Z}^\rho(\fullparam)$ is defined by \eqref{definition Z operator}, and the real-valued densities $\mu = (\mu_1, \ldots, \mu_M) \in \mathring{C}^{\ell+\alpha}(\mathbb{T})^M$ are to be determined.  The factor of $\rho^2$ anticipates the scaling of the governing equations as $\rho \searrow 0$.  In the interest of readability, the dependence of $\mathcal{Z}^\rho$ on $(\param, \param^\prime)$ will be suppressed when there is no risk of confusion.

We also wish to arrange that $w$ converges (in a sufficiently smooth way) to the velocity potential $w^0$ for the point vortex problem in the plane
\begin{equation}\label{w0 formula}
  w^0 = w^0(\fullparam)(\zeta) \colonequals  \sum_{k=1}^M \frac{\gamma_k}{2\pi i} \log{\left(\zeta - \zeta_k \right)}.
\end{equation}
An important feature of the problem is that when $\Omega \neq 0$, the relative (complex) velocity field will no longer be holomorphic, as viewed in the rotating frame, it has a constant vorticity $\Omega$.  To make this distinction explicit, we denote the relative complex velocity potential by
\[
	W = W(\fullparam) =  w + i \frac{\Omega}{2} |f|^2 -cf 
\]
and likewise for $W^0$.

In accordance with the Helmholtz--Kirchhoff model, the vortex centers and strengths must satisfy the algebraic constraint \eqref{dWdzeta point vortex algebra}, which implies $W^0$ exhibits the asymptotics
\begin{equation}
  \label{dWdzeta point vortex}
  \partial_\zeta W^0 = \frac{\gamma_k}{2\pi i} \frac{1}{\zeta -\zeta_k} - \sum_{j \neq k} \frac{\gamma_j}{2\pi i} \frac{\zeta - \zeta_k}{(\zeta_j - \zeta_k)^2} + O(|\zeta - \zeta_k|^2) \qquad \textrm{as } \zeta \to \zeta_k.
\end{equation}
Thus, the circulation $\gamma_k$ for the velocity field derived from $w$ around $\partial B_\rho(\zeta_k)$ is independent of $\rho$.  

As with $f$, our approach will be to use a layer-potential representation for $w$ that can be expanded to arbitrary order near $\rho = 0$.  With that in mind, we impose the ansatz
\begin{equation}
  \label{w ansatz}
  w = w^0 + \rho \mathcal{Z}^\rho \nu 
\end{equation}
for real densities $\nu = (\nu_1, \ldots, \nu_M) \in \mathring{C}^{\ell+\alpha}(\mathbb{T})^M$.  Thus the ``trivial'' point vortex solution $w^0$ corresponds to $(\mu,\nu, \rho) = (0,0,0)$.  Notice that this choice implies that $w-w^0$ is single-valued and holomorphic in $\confD_\rho$.

\begin{remark}
\label{negative rho remark}
Observe that from the layer-potential representations for $f$ and $w$ in \eqref{f ansatz} and \eqref{w ansatz}, it follows that $(\mu,\nu,\rho)$ and $(\mu(-\placeholder), -\nu(-\placeholder), -\rho)$ give identical conformal mappings and complex potentials.  Physically, they represent the same hollow vortex configuration, continuing our convention that the corresponding conformal domain $\confD_\rho = \confD_{|\rho|}$ for $|\rho| > 0$ and $\confD_0 \colonequals  \mathbb{C} \setminus \{ \zeta_1, \ldots, \zeta_M\}$.
\end{remark}

\subsection{Kinematic condition}

The kinematic condition \eqref{intro kinematic condition} states that the vortex boundaries are (relative) streamlines, meaning that the relative velocity field is purely tangential along each connected component of $\partial\fluidD$.  Written in the conformal domain, this requirement takes the form
\begin{equation}
  \label{local kinematic condition}
  0 = \realpart{ \left( \tau f_\zeta \left( \frac{w_\zeta}{f_\zeta} + i \Omega \overline{f} -c \right) \right)\Big|_{\zeta = \rho \tau + \zeta_k}} \qquad \textrm{for all } \tau \in \mathbb{T}, \quad k = 1, \ldots, M,
\end{equation}
which follows from the fact that $\tau f_\zeta$ is the normal vector along the vortex boundary, while $w_\zeta/f_\zeta + i \Omega \overline f - c$ is the conjugate of the relative velocity field there.  Using the layer-potential representations for $f$ and $w$, and supposing that $\rho \neq 0$, we then arrive at 
\begin{equation}
  \label{preliminary layer potential kinematic}
  0 = \realpart{\left(  \tau \left( w_\zeta^0(\rho \tau + \zeta_k) + \mathcal{Z}_k^\rho\nu^\prime  +  (1+ \rho \mathcal{Z}_k^\rho\mu^\prime) \left(  i \Omega\overline{(\zeta_k + \rho \tau + \rho^2 \mathcal{Z}_k^\rho\mu)} -c \right) \right) \right)}
\end{equation}
for all $\tau \in \mathbb{T}$ and $k = 1, \ldots, M$.  Recall that $\mathcal{Z}_k^\rho$ is the integral operator defined in \eqref{trace Z operator}. Using \eqref{w0 formula} and the asymptotics for $W_\zeta^0$ in \eqref{dWdzeta point vortex}, the terms in \eqref{preliminary layer potential kinematic} can be regrouped as
\begin{equation}
 \label{kinematic condition}
  \begin{aligned}
    0 & = \realpart{\left( \tau \left( \mathcal{Z}_k^\rho \nu^\prime + \Vrho_k^\rho + ( i \Omega \overline{( \zeta_k + \rho \tau)} -c) \rho \mathcal{Z}_k^\rho \mu^\prime + i\Omega \rho^2 \overline{\mathcal{Z}_k^\rho \mu}  + i \Omega \rho^3 \mathcal{Z}_k^\rho [\mu^\prime] \overline{\mathcal{Z}_k^\rho\mu} \right) \right)} 
  \end{aligned}
\end{equation}
where
\begin{equation}
 \label{definition Lambda}
  \begin{aligned}
    \Vrho_k^\rho = \Vrho_k^\rho(\fullparam)(\tau) & \colonequals   \left( \partial_\zeta W^0 - \frac{\gamma_k}{2 \pi i} \frac{1}{\zeta - \zeta_k} \right) \Big|_{\zeta = \zeta_k + \rho \tau}  \\
    &= \mathcal{V}_k( \fullparam) + \sum_{m=1}^\infty \sum_{j \neq k} \frac{(-1)^m}{m!} \frac{\gamma_j}{2 \pi i} \frac{\rho^m \tau^m}{(\zeta_j - \zeta_k)^{m+1}}.
  \end{aligned}
\end{equation}
Observe that $(\rho,\fullparam) \mapsto \Vrho_k^\rho(\fullparam)$ is real analytic $\mathcal{U} \to C^{\ell+\alpha}(\mathbb{T})$ for any $\ell \geq 0$.  
As it must be, $(\nu,\mu,\rho) = (0,0,0)$ is a trivial solution to \eqref{kinematic condition} when $(\param,\param^\prime)$ is steady vortex configuration.  Lastly, we note that the right-hand side of \eqref{kinematic condition} necessarily lies in $\mathring{C}^{\ell-1+\alpha}$ when $\mu, \nu \in \mathring{C}^{\ell+\alpha}$.

\subsection{Bernoulli condition}

The dynamic condition \eqref{intro dynamic condition} requires that the pressure along each hollow vortex boundary is constant. In the absence of body forces and surface tension, this is equivalent by Bernoulli's principle to the relative velocity field having constant modulus on each connected component of $\partial \fluidD$.  Thus
\begin{equation}
  \label{bernoulli condition patch}
   \left| \frac{w_\zeta}{f_\zeta} + i \Omega \overline{f} -c \right|^2 = q_k^2 \qquad \textrm{on } \partial B_\rho(\zeta_k) 
\end{equation}
for some constant vector $q = (q_1, \ldots, q_M) \in \mathbb{R}^M$. 

Recall that if $\gamma_k \neq 0$, then the trace of $|w_\zeta|^2$ (and hence $q_k^2$) should diverge like $O(1/\rho^2)$ as $\rho \searrow 0$ due to \eqref{dWdzeta point vortex}.    With that in mind, we multiply the left-hand side of \eqref{bernoulli condition patch} by $\rho^2$, evaluate at $\zeta = \rho \tau + \zeta_k$, and then expand to find
\begin{equation}
 \label{definition Bernoulli operator}
  \begin{aligned}
    & \rho^2 \left| \frac{ w_\zeta}{f_\zeta} + i \Omega \overline{f} -c \right|^2  \\
    & \quad =  \frac{\left|\dfrac{\gamma_k}{2\pi i \tau} + \rho \left( \mathcal{Z}_k^\rho \nu^\prime + \Vrho_k^\rho \right) + \rho^2    \left(  i \Omega \overline{( \zeta_k + \rho \tau)} -c\right)   \mathcal{Z}_k^\rho \mu^\prime + \rho^3  i \Omega \left( \overline{\mathcal{Z}_k^\rho \mu}  + \rho \mathcal{Z}_k^\rho [\mu^\prime] \overline{\mathcal{Z}_k^\rho\mu} \right) \right|^2}{\left|1+ \rho \mathcal{Z}_k^\rho \mu^\prime\right|^2}   \\
    & \quad \equalscolon  \frac{\gamma_k^2}{4\pi^2} + \rho \mathcal{B}_k^\rho.
  \end{aligned}
\end{equation}
This defines an explicit operator $\mathcal{B}_k^\rho = \mathcal{B}_k^\rho(\mu, \nu, \fullparam)$ that is real analytic in a neighborhood of $(\mu,\nu,\rho) = (0,0,0)$.  It follows that the Bernoulli condition can be formulated in terms of the densities via
the requirement
\begin{equation}
  \label{reformulated bernoulli condition} 
  \mathcal{B}^\rho(\mu, \nu, \fullparam) = Q,
\end{equation}
where $Q = (Q_1,\ldots, Q_M) \in \mathbb{R}^M$ are unknown constants.  Notice that \eqref{reformulated bernoulli condition} is equivalent to \eqref{bernoulli condition patch} for $\rho > 0$, but it is well-defined even for $|\rho| \ll 1$.  Moreover,
\begin{equation}
  \label{linear Bernoulli operator}
  \mathcal{B}_k^\rho(\mu,\nu,\fullparam) = \frac{\gamma_k^2}{2  \pi^2} \realpart{\left( \frac{2\pi i \tau}{\gamma_k} \left( \mathcal{V}_k(\fullparam) + \mathcal{C}\nu^\prime \right) - \mathcal{C} \mu^\prime\right)} + O(\rho) \qquad \textrm{in } C^{\ell-1+\alpha}(\mathbb{T}) 
\end{equation}
for any $\mu, \nu \in C^{\ell+\alpha}(\mathbb{T})$ and $\ell \geq 1$.

\subsection{Abstract operator equation}

Assume now that $\fullparam_0$ is a vortex configuration that is non-degenerate in the sense of Definition~\ref{definition nondegenerate configuration}.  We can then decompose the parameter space $\mathbb{P} = \Pspace \times \Pspace^\prime$ so that $\fullparam_0 = (\param_0,\param_0^\prime)$ and \eqref{definition non-degnerate subspace} holds.  For the local and global hollow vortex families we construct, we fix the parameter values $\param^\prime = \param_0^\prime$ and allow $\param$ to vary.  For notational simplicity, the functions $\Vrho^\rho$ and $\mathcal{B}$ will now be considered to have domain $\Pspace$.  Likewise, the neighborhoods $\mathcal{U}$ and $\mathcal{U}_\delta$ are redefined in the obvious way.    

Because $(\mu_k, \nu_k, Q_k)$ describes the behavior near the $k$-th vortex center, it will be convenient to introduce the space
\[  
	(\mu, \nu, Q) \in \Wspace \colonequals   \mathring{C}^{\ell+\alpha}(\mathbb{T})^M \times  \mathring{C}^{\ell+\alpha}(\mathbb{T})^M \times \mathbb{R}^M.
\]
Here $\ell \geq 1$ is fixed.  The regularity is of little importance to the local bifurcation argument, but it will enter into the global analysis.  The hollow vortex problem \eqref{kinematic condition} and \eqref{reformulated bernoulli condition} can be written as the abstract operator equation
\[
	\F( u;\, \rho) = 0,
\]
for 
\[
	\F = (\A, \B) \colon \mathcal{O} \subset \Xspace \times \mathbb{R} \to \Yspace
\]
the real-analytic mapping between the spaces
\begin{align*}
	\Xspace   \colonequals  \Wspace \times \Pspace, \qquad
	\Yspace  \colonequals   \mathring{C}^{\ell-1+\alpha}(\mathbb{T})^M \times C^{\ell-1+\alpha}(\mathbb{T})^M,
\end{align*}
whose components $\A_k$ and $\B_k$ enforce the kinematic and Bernoulli conditions on the $k$-th vortex boundary, respectively.  Explicitly, they are     
\begin{equation}
 \label{abstract operator}
  \begin{aligned}
    \mathscr{A}_k(u, \rho) 
    & \colonequals  \realpart{\left( \tau \left( \mathcal{Z}_k^\rho \nu^\prime + \Vrho_k^\rho + ( i \Omega \overline{( \zeta_k + \rho \tau)} -c) \rho \mathcal{Z}_k^\rho \mu^\prime + i\Omega \rho^2 \overline{\mathcal{Z}_k^\rho \mu}  + i \Omega \rho^3 \mathcal{Z}_k^\rho [\mu^\prime] \overline{\mathcal{Z}_k^\rho\mu} \right) \right)}   \\
    \mathscr{B}_k(u, \rho)	& \colonequals  \mathcal{B}_k^\rho(\mu,\nu,\param) - Q_k,
  \end{aligned}
\end{equation}
where recall that the function $\Vrho^\rho$ is given by \eqref{definition Lambda} and the operator $\mathcal{B}^\rho$ is defined in \eqref{definition Bernoulli operator}.  
To define the domain $\mathcal{O}$ of $\F$, we first define $\widetilde{\mathcal{O}}$ by
\begin{equation}
  \label{definition O delta}
  \mathcal{O}_\delta \colonequals  \left\{ (\mu,\nu,Q,\param,\rho) \in \Wspace \times \mathcal{U}_\delta : \inf_k \inf_{\mathbb{T}}{ |1+ \rho \mathcal{Z}_k^\rho \mu_k^\prime |} > \delta \right\}, \quad \widetilde{\mathcal{O}} \colonequals  \bigcup_{\delta > 0} \mathcal{O}_\delta,
\end{equation}
where $\mathcal{U}_\delta$ was defined in \eqref{definition of U neighborhood}.  Membership in $\mathcal{O}$ ensures that the corresponding mapping $f$ constructed via \eqref{f ansatz} has nonvanishing derivative on $\partial \confD_\rho$. The trivial solution corresponding to the point vortex configuration now takes the form
\[
	(\mu,\nu,Q,\param,\rho)  = (0,0,0,\param_0,0) \equalscolon  (u^0,0) \in \mathcal{O}.
\]
In order for $(u,\rho) \in \F^{-1}(0)$ to represent a physical solution to the problem, it is additionally necessary that $f$ is univalent on $\overline{\confD_\rho}$.  This will follow for the small hollow vortex solutions by construction, as $f$ will be near identity.  For the global solutions, we will use a continuity argument based on the argument principle.

\begin{remark}
\label{negative rho operator equation remark}
When $(u, \rho) = (u^0,0)$, the operator equation coincides exactly with the point vortex problem $\mathcal{V}(\param, \param^\prime) = 0$.  It is not hard hard to verify, moreover, that $\F$ exhibits the symmetry 
  \begin{equation}
    \label{symmetries abstract operator}
    \F(\mu, \, \nu, \, Q, \,  \param; \,  \rho) = \F(\mu(-\placeholder), \, -\nu(-\placeholder),  \, Q, \, \param; \, -\rho).
  \end{equation}
 Recalling Remark~\ref{negative rho remark}, we see that these represent the same physical solution.  
\end{remark}

\begin{lemma}[Uniqueness]
\label{uniqueness lemma}
Suppose that $(u,0) = (\mu,\nu,Q,\param,0) \in \mathcal{O}$ satisfies $\F(u; 0) = 0$.  Then $\mu,\nu,Q = 0$ and $\mathcal{V}(\param,\param_0^\prime) = 0$.  That is, $(u,0)$ represents a steady point vortex configuration.
\end{lemma}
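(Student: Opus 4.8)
The plan is to exploit the fact that at $\rho = 0$ the operator $\F(\,\cdot\,;0)$ collapses to something completely explicit: every term in \eqref{abstract operator} carrying an explicit power of $\rho$ drops out, and the trace operators degenerate via $\mathcal{Z}_k^0\mu = \mathcal{C}\mu_k$ and $\Vrho_k^0 = \mathcal{V}_k(\param,\param_0')$ (from \eqref{definition Lambda}). Crucially, this decouples the $M$ vortices from one another---the cross terms in \eqref{trace Z operator} all vanish at $\rho = 0$---and renders the system affine in $(\mu,\nu,Q)$. Concretely, the equations $\A_k(u;0) = 0$ and $\B_k(u;0) = 0$ become, for each $k$,
\[
\realpart\!\big(\tau(\mathcal{C}\nu_k' + \mathcal{V}_k)\big) = 0, \qquad \frac{\gamma_k^2}{2\pi^2}\realpart\!\Big(\tfrac{2\pi i\tau}{\gamma_k}(\mathcal{V}_k + \mathcal{C}\nu_k') - \mathcal{C}\mu_k'\Big) - Q_k = 0,
\]
the second being exactly \eqref{linear Bernoulli operator} with its $O(\rho)$ remainder absent. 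The strategy is then to read off $\mu_k$, $\nu_k$, $Q_k$, and $\mathcal{V}_k$ by matching Fourier modes on $\mathbb{T}$, using the explicit action $\mathcal{C}\tau^m = 0$ for $m \geq 0$ and $\mathcal{C}\tau^m = -\tau^m$ for $m < 0$.

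First I would dispatch the kinematic equation. Writing $\nu_k = \sum_{m\neq 0}\widehat\nu_{k,m}\tau^m$ (real and mean-zero, so $\widehat\nu_{k,-m} = \overline{\widehat\nu_{k,m}}$), the formulas above give $\mathcal{C}\nu_k'$ supported in modes $\le -2$, so that $\tau(\mathcal{C}\nu_k' + \mathcal{V}_k)$ has a single positive mode $\mathcal{V}_k\tau$ together with modes $\le -1$. Imposing that its real part vanish---equivalently that the mode-$p$ coefficient cancel the conjugate of the mode-$(-p)$ coefficient for every $p$---forces $\widehat\nu_{k,-1} = -\overline{\mathcal{V}_k}$ and $\widehat\nu_{k,m} = 0$ for $|m|\ge 2$. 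Thus the kinematic condition pins down $\nu_k = -2\realpart(\mathcal{V}_k\tau)$ entirely in terms of the point-vortex residual $\mathcal{V}_k$; in particular $\nu_k = 0$ once $\mathcal{V}_k = 0$ is established.

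Next I would substitute this $\nu_k$ into the Bernoulli equation, where now $\mathcal{C}\nu_k' = -\overline{\mathcal{V}_k}\tau^{-2}$, and project onto the subspaces $\proj_0$, $\proj_1$, and $\proj_{>1}$ from \eqref{definition projection operator}. The $\mathcal{V}_k$-contribution reduces to a multiple of $\imagpart(\mathcal{V}_k\tau)$, which lives purely in modes $\pm 1$, while $\realpart(\mathcal{C}\mu_k')$ lives purely in modes $|m|\ge 2$; hence the three projections separate cleanly. The mean ($\proj_0$) gives $Q_k = 0$; the first mode ($\proj_1$) gives a nonzero multiple of $\gamma_k\,\imagpart(\mathcal{V}_k\tau)$, which forces $\mathcal{V}_k = 0$; and the high modes ($\proj_{>1}$) give $\realpart(\mathcal{C}\mu_k') = 0$, which by the same Fourier argument (now applied to the real, mean-zero $\mu_k$) forces $\mu_k = 0$. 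Feeding $\mathcal{V}_k = 0$ back into the kinematic relation yields $\nu_k = 0$, and assembling over $k$ gives $\mu = \nu = 0$, $Q = 0$, and $\mathcal{V}(\param,\param_0') = 0$, as claimed.

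The one genuine input---and the place I expect to have to be careful---is the factor $\gamma_k^2$ multiplying the Bernoulli operator: the mode-$1$ conclusion $\mathcal{V}_k = 0$ and the high-mode conclusion $\mu_k = 0$ both rely on dividing by $\gamma_k$, so the argument requires $\gamma_k \neq 0$ for every $k$. This holds in the regime of interest, since we operate near the non-degenerate configuration $\fullparam_0$, all of whose circulations are nonzero (nonvanishing of the strengths being tied to non-degeneracy, cf.\ the proof of Lemma~\ref{Dphi pv range lemma}). The remaining care is purely bookkeeping: tracking which Fourier modes each term occupies so that the projections $\proj_0,\proj_1,\proj_{>1}$ genuinely decouple, and keeping the reality constraints $\widehat\varphi_{-m} = \overline{\widehat\varphi_m}$ consistent throughout.
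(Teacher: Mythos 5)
Your proposal is correct and follows essentially the same route as the paper's proof: solve the kinematic condition for $\nu_k$ explicitly in terms of $\mathcal{V}_k$ via the multiplier formulas, substitute into the Bernoulli condition, and apply the projections $\proj_0$, $\proj_1$, $\proj_{>1}$ to conclude that $Q_k$, $\mathcal{V}_k$, and $\mu_k$ vanish in turn. Your remark that the argument needs $\gamma_k \neq 0$ is a fair observation, but the paper's proof relies on the same implicit assumption, so this is not a point of divergence.
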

\begin{proof}
Let $(u,0) \in \F^{-1}(0)$ be given.  From the kinematic condition $\A_k(u;0) = 0$ we have
\[
	\realpart{( \tau \mathcal{C}\nu_k^\prime )} = -\realpart{\left( \tau \Vrho_k(\param,\param_0^\prime)\right)} \qquad \textrm{for } k = 1, \ldots, M,
\]
which can be inverted using \eqref{multiplier formulas} to find 
\[
	\nu_k(\tau) =  \tau \Vrho_k(\param,\param_0^\prime) + \frac{1}{\tau} \overline{\Vrho_k(\param,\param_0^\prime)}.
\]
Similarly, the Bernoulli condition $\B_k(u;0) = 0$ takes the explicit form
\[
	0 = \frac{\gamma_k^2}{2\pi^2} \realpart{\left( \frac{2\pi i \tau}{\gamma_k} \left( \mathcal{C} \nu_k^\prime + \Vrho_k(\param,\param_0^\prime) \right) - \mathcal{C} \mu_k^\prime \right)} - Q_k.
\]
From our formula for $\nu_k$, we then find that
\[
	\realpart{\left( i \tau \mathcal{C} \nu_k^\prime \right)} = \frac{i \tau}{2} \Vrho_k(\param,\param_0^\prime) - \frac{i}{2\tau} \overline{\Vrho_k(\param,\param_0^\prime)} = \realpart{\left( i \tau \Vrho_k(\param,\param_0^\prime) \right)},
\]
and thus the Bernoulli condition can be reexpressed as
\[
	0 = \frac{2 \gamma_k}{\pi} \realpart{\left( i \tau \Vrho_k(\param,\param_0^\prime)\right)} - \frac{\gamma_k^2}{2\pi^2} \realpart{\mathcal{C} \mu_k^\prime} - Q_k.
\]
Finally, applying the projections $\proj_0$, $\proj_1$, and $\proj_{>1}$, we conclude that each of the terms above must vanish, which completes the proof.
\end{proof}

\section{Identities for the hollow vortex problem}
\label{hollow vortex identities section}

The purpose of this section is to find a set of identities for the hollow vortex problem that generalize \eqref{pv identities} for point vortices.  As in the proof of Proposition~\ref{point vortex manifold proposition}, these will allow us to overcome the lack of surjectivity of the linearized operator $\F_u$ at the point vortex, and thereby infer the existence of neighboring hollow vortex solutions.  

One way to derive \eqref{pv identities} on physical grounds is to exploit the Hamiltonian formulation of the dynamical point vortex problem.  To find corresponding identities for hollow vortices, it will also be helpful to reframe the system in Hamiltonian terms.  It is sufficient to do this rather formally, and then verify the resulting identities at the end of the analysis.   In fact, the identities that we find recover \eqref{pv identities} in the point vortex limit; see Appendix~\ref{identities appendix}.

\subsection{Formal variational arguments}

Suppose that the conformal mapping $f$ is injective, so that we can work in the physical domain $\fluidD \colonequals  f(\confD_\rho)$. We consider the Hamiltonian $H$ to formally consist of the kinetic energy $E$, the linear momentum $p$, and the angular momentum $L$, where
\begin{align*}
E \colonequals  & \frac12 \int_{\fluidD} |w_z|^2\,dz = \frac12 \int_{\confD_\rho} \left| w_z(f(\zeta)) \right|^2 |f_\zeta|^2 \,d\zeta, \\
p \colonequals  & \realpart \int_{\fluidD} w_z \,dz = \realpart\int_{\confD_\rho} w_z(f(\zeta)) |f_\zeta|^2 \,d\zeta, \\
L \colonequals  & \imagpart \int_{\fluidD} zw_z \,dz = \imagpart\int_{\confD_\rho} f(\zeta) w_z(f(\zeta)) |f_\zeta|^2 \,d\zeta.
\end{align*}
As we will be taking variations, for the moment we do not concern ourselves with whether these integrals are convergent, and we do not distinguish between the angular momentum and the excess angular momentum defined in \eqref{definition angular momentum}. We also depart temporarily from the convention elsewhere in the paper and view $w$ as a function of $z$ rather than $\zeta$. It is also worth noting that this is one of the only places in the paper where the absence of a body force such as gravity simplifies the analysis in a nontrivial way.    

The Hamiltonian depends on $w$ and $f$ and reads
\begin{equation*}
H = E - cp - \Omega L.
\end{equation*}
Using dots to denote variations and $\delta$ the Gateaux derivative, one formally computes 
\begin{equation}
\label{formal variations E p L}
  \begin{aligned}
    \delta E(w,f)[(\dot w, \dot f)] = & -\frac12 \imagpart \int_{\p \confD_\rho} \LC w_\zeta \overline{\dot w} + |w_z|^2 f_\zeta \overline{\dot f} \RC \,d\zeta, \\
    \delta p(w,f)[(\dot w, \dot f)] = & -\frac12 \imagpart \int_{\p \confD_\rho} \LC f_\zeta \overline{\dot w} + \LC w_z + \overline{w_z} \RC f_\zeta \overline{\dot f} \RC \,d\zeta, \\
    \delta L(w,f)[(\dot w, \dot f)] = & \imagpart \int_{\confD_\rho} \left( f |f_\zeta|^2 \dot w_z - f_\zeta \overline{(f w_z \dot f)_\zeta} + f w_z f_\zeta \overline{\dot f_\zeta} \right) \,d\zeta.
  \end{aligned}
\end{equation}
Set $U \colonequals  w_z + i \Omega \overline f - c$ to be the conjugate of the relative velocity. The dynamic boundary conditions \eqref{bernoulli condition patch} and kinematic boundary conditions \eqref{local kinematic condition} on $\p B_\rho(\zeta_k)$ become
\begin{equation*}
\left\{
\begin{aligned}
	\Func_1 &\colonequals  \realpart\LB (\zeta - \zeta_k) f_\zeta U \RB = 0 \\
	\Func_2 & \colonequals  |U|^2 - q_k^2 = 0.
\end{aligned} \right.
\qquad \textrm{on } \partial B_\rho(\zeta_k).
\end{equation*}
Observe, moreover, that from the first of these equations it follows that
\begin{equation}\label{bar xi}
	U = \frac{2\Func_1}{(\zeta - \zeta_k) f_\zeta} - \frac{\overline{(\zeta - \zeta_k) f_\zeta}}{(\zeta - \zeta_k) f_\zeta} \overline U \qquad \textrm{on } \partial B_\rho(\zeta_k).
\end{equation}
Recalling the analysis in Section~\ref{point vortex section}, we should think of $\mathcal{F} = (\mathcal{F}_1, \mathcal{F}_2)$ as playing an analogous role to the components of $\mathcal{V}$.  The main task is thus to find the correct generalization of \eqref{pv identities}, which should naturally be expressed in terms of $\mathcal{F}$.  

\subsection{Translating equilibria}\label{subsec transl id}
Consider first the case when $\Omega = 0$ and $c \in \R$. We study variations of the form 
\begin{equation*}
\dot f  = \sigma, \qquad \dot w = -w_z \dot f = -\sigma w_z,
\end{equation*}
where $\sigma \in \mathbb C$ is constant.  Observe that these corresponds to the invariance of the system under translations group $z \mapsto z + s \sigma$ for $s \in \mathbb{R}$. We then find that
\begin{align*}
\left\{\begin{aligned}
\delta E(w,f)[(\dot w, \dot f)] = & -\frac12 \imagpart \int_{\p \confD_\rho} \LB -\overline\sigma \overline{w_z} w_\zeta + \overline\sigma |w_z|^2 f_\zeta \RB\,d\zeta, \\
\delta p(w,f)[(\dot w, \dot f)] = & -\frac12 \imagpart \int_{\p \confD_\rho} \LB -\overline\sigma \overline{w_z} f_\zeta + \overline\sigma \LC w_z + \overline{w_z} \RC f_\zeta \RB \,d\zeta.
\end{aligned}\right.
\end{align*}

Taking $\sigma = 1$, we compute the corresponding directional derivative of the Hamiltonian  $H$ to be
\begin{align*}
\delta_f H(w,f)[1]  = & -\frac12 \imagpart \int_{\p \confD_\rho} \LC |U|^2 - c^2 \RC f_\zeta \,d\zeta
= -\frac12 \imagpart \LC \int_{\p \confD_\rho} \Func_2 f_\zeta \,d\zeta + \sum_k \int_{\p B_\rho(\zeta_k)} (c^2 + q_k^2) f_\zeta \,d\zeta \RC \\
= & -\frac12 \imagpart \int_{\p \confD_\rho} \Func_2 f_\zeta \,d\zeta.
\end{align*}
Note here that because $\confD_\rho$ is an exterior domain, the positive orientation of $\partial \confD_\rho$ is clockwise, while as usual we take the positive orientation of $\partial B_\rho(\zeta_k)$ to be counter clockwise.  Recalling the definition of $\Func_1$ and using \eqref{bar xi}, we then find
\begin{align*}
\int_{\p B_\rho(\zeta_k)} |U|^2 f_\zeta \,d\zeta = & \int_{\p B_\rho(\zeta_k)} U \overline U f_\zeta \,d\zeta = \int_{\p B_\rho(\zeta_k)} \LB \frac{2 \overline\xi \Func_1}{\zeta - \zeta_k} - \frac{\overline{(\zeta - \zeta_k) f_\zeta U^2}}{\zeta - \zeta_k} \RB \,d\zeta.
\end{align*}
Since on $\p B_\rho(\zeta_k)$ we have $d\overline\zeta = - \frac{\overline{\zeta - \zeta_k}}{\zeta - \zeta_k}\,d\zeta$, it follows that
\begin{align*}
-\int_{\p B_\rho(\zeta_k)} \frac{\overline{(\zeta - \zeta_k) f_\zeta U^2}}{\zeta - \zeta_k} \,d\zeta = &  \int_{\p B_\rho(\zeta_k)} \overline{U^2 f_\zeta} \,d\overline\zeta 
= \overline{\int_{\p B_\rho(\zeta_k)} \frac{w_\zeta^2}{f_\zeta}  \,d\zeta} - 2c \overline{\int_{\p B_\rho(\zeta_k)} f_\zeta U \,d\zeta}.
\end{align*}
Using \eqref{bar xi} again yields
\begin{align*}
\overline{\int_{\p B_\rho(\zeta_k)} f_\zeta U \,d\zeta} = \overline{\int_{\p B_\rho(\zeta_k)} \frac{2\Func_1}{\zeta - \zeta_k}\,d\zeta} + \int_{\p B_\rho(\zeta_k)} f_\zeta U \,d\zeta = -4\pi i \Func_1 + \int_{\p B_\rho(\zeta_k)} f_\zeta U \,d\zeta,
\end{align*}
and hence
\[
\imagpart {\int_{\p B_\rho(\zeta_k)} f_\zeta U \,d\zeta} = 2\pi \Func_1.
\]

Combining these facts, we find
\begin{align*}
\imagpart \int_{\p B_\rho(\zeta_k)} |U|^2 f_\zeta \,d\zeta = \imagpart \int_{\p B_\rho(\zeta_k)} \frac{2 \overline{w_z} \Func_1}{\zeta - \zeta_k} \,d\zeta + \imagpart \overline{\int_{\p B_\rho(\zeta_k)} \frac{w_\zeta^2}{f_\zeta}  \,d\zeta}.
\end{align*}
The asymptotics of $f$ in \eqref{f ansatz} and $w$ in \eqref{w ansatz} imply that as $\zeta \to \infty$
\begin{equation}\label{fw asymptotics}
f(\zeta) = \zeta + O\LC \tfrac{1}{\zeta} \RC, \quad f_\zeta(\zeta)= 1 + O\LC \tfrac{1}{\zeta^2} \RC, \quad w_\zeta(\zeta)= w^0_\zeta(\zeta) + O\LC \tfrac{1}{\zeta^2} \RC,
\end{equation}
and thus
\[
\int_{\p \confD_\rho} \frac{w_\zeta^2}{f_\zeta}  \,d\zeta = 0.
\]
Therefore, we arrive at the following identity
\begin{equation}\label{id transl}
{\frac12 \imagpart \int_{\p \confD_\rho} \Func_2 f_\zeta \,d\zeta - \imagpart \int_{\p \confD_\rho} \frac{\overline{w_\zeta} \Func_1}{(\zeta - \zeta_k) \overline{f_\zeta}} \,d\zeta = 0.}
\end{equation}

\subsection{Rotating equilibria}\label{subsec rot id}

Similarly, for rotating configurations ($c = 0$, $\Omega \neq 0$), we invoke the invariance of the system under rotations and scalings $z \mapsto e^{s \sigma} z$ with $\sigma \in \mathbb{C}$ constant and $s \in \mathbb{R}$.  This suggests we consider variations of the form
\begin{equation*}
\dot f  = \sigma f, \qquad \dot w = -w_z \dot f = -\sigma w_z f.
\end{equation*}
Evaluating \eqref{formal variations E p L} with this choice of $(\dot w, \dot f)$ and applying the complex Green's theorem leads to
\begin{align*}
\left\{\begin{aligned}
\delta E(w,f)[(\dot w, \dot f)] = & -\frac12 \imagpart \int_{\p \confD_\rho} \LB -\overline\sigma |w_z|^2 f_\zeta \overline f + \overline\sigma |w_z|^2 f_\zeta \overline f \RB\,d\zeta, \\
\delta L(w,f)[(\dot w, \dot f)] 
= & -\imagpart \int_{\confD_\rho} \sigma f \overline{f_\zeta} \LC w_z f \RC_\zeta \,d\zeta + \frac12 \realpart \int_{\p \confD_\rho} \overline f f_\zeta \LC \overline{\sigma w_z f} + \sigma w_z f \RC \,d\zeta.
\end{aligned}\right.
\end{align*}

Next we compute the variation of the Hamiltonian in the direction $\sigma = i$ and re-express it in terms of $\Func_2$.  First, observe that
\begin{align*}
\delta_f H(w,f)[i f]  & = \frac12 \realpart \int_{\p \confD_\rho} \LB |U|^2 - \Omega^2|f|^2 \RB \overline f f_\zeta \,d\zeta \\
& = \frac12 \realpart \LC \int_{\p \confD_\rho} \Func_2 \overline f f_\zeta \,d\zeta - \sum_k \int_{\p B_\rho(\zeta_k)} \LC \Omega^2|f|^2 - q_k^2 \RC \overline f f_\zeta \,d\zeta \RC,
\end{align*}
where again the orientation of $\partial B_\rho(\zeta_k)$ is counter clockwise.  Consider now the two last terms on the right-hand side above.  Using the complex Green's theorem, we compute that
\begin{align*}
  \int_{\p B_\rho(\zeta_k)} \LC \Omega^2|f|^2 - q_k^2 \RC \overline f f_\zeta \,d\zeta 
  = 
  \int_{\Gamma_k} \LC \Omega^2 z \overline z^2 - q_k^2 \overline z \RC  \,dz
  = 2i \int_{R_k} \LC 2\Omega^2 |z|^2 - q_k^2 \RC \,dz
\end{align*}
is purely imaginary, where here $R_k$ is the region enclosed by $\Gamma_k$. Thus 
\[
\delta_f H(w,f)[i f] = \frac12 \realpart \int_{\p \confD_\rho} |U|^2 \overline f f_\zeta \,d\zeta = \frac12 \realpart \int_{\p \confD_\rho} \Func_2 \overline f f_\zeta \,d\zeta.
\]

As in the previous subsection, we will rewrite this identity using $\Func_1$.  Observe that
\begin{align*}
\frac12 \realpart \int_{\p B_\rho(\zeta_k)} |U|^2 \overline f f_\zeta \,d\zeta & = \frac12 \realpart \int_{\p B_\rho(\zeta_k)} \LB \frac{2 \overline{U f} \Func_1}{\zeta - \zeta_k} - \frac{\overline{(\zeta - \zeta_k) f f_\zeta U^2}}{\zeta - \zeta_k} \RB \,d\zeta \\
& = \realpart \int_{\p B_\rho(\zeta_k)} \frac{\overline{w_z f} \Func_1}{\zeta - \zeta_k} \,d\zeta + \frac12 \realpart \overline{\int_{\p B_\rho(\zeta_k)} \frac{w_\zeta^2 f}{f_\zeta} \,d\zeta}.
\end{align*}
From this we derive the corresponding identity
\begin{equation}\label{id rot 1}
\frac12 \realpart \int_{\p \confD_\rho} \Func_2 \overline f f_\zeta \,d\zeta - \realpart \int_{\p \confD_\rho} \frac{\overline{w_\zeta f} \Func_1}{(\zeta - \zeta_k) \overline{f_\zeta}} \,d\zeta = \frac12 \realpart \overline{\int_{\p \confD_\rho} \frac{w_\zeta^2 f}{f_\zeta} \,d\zeta}.
\end{equation}
To compute the right-hand side above, recall the asymptotics \eqref{fw asymptotics}, which implies that as $\zeta \to \infty$,
\begin{align*}
\frac{w_\zeta^2 f}{f_\zeta} & = (w^0_\zeta)^2 \zeta + O \LC \tfrac{1}{\zeta^2} \RC = \LC \sum^M_{k=1} \frac{\gamma_k}{2\pi i (\zeta - \zeta_k)} \RC^2 \zeta + O \LC \tfrac{1}{\zeta^2} \RC 
= \sum^M_{k, j} \frac{\gamma_k \gamma_j}{(2\pi i)^2 (\zeta - \zeta_j)} + O \LC \tfrac{1}{\zeta^2} \RC.
\end{align*}
We can then use the residual theorem to compute
\[
\realpart{ \int_{\p \confD_\rho} \frac{w_\zeta^2 f}{f_\zeta} \,d\zeta} = -\sum_k \realpart{ \int_{\partial B_\rho(\zeta_k)} \frac{w_\zeta^2 f}{f_\zeta} \,d\zeta } =  \realpart{ \frac{1}{2\pi i} \LC \sum_k \gamma_k \RC^2} = 0.
\]
Therefore \eqref{id rot 1} becomes
\begin{equation}\label{id rot}
{\frac12 \realpart \int_{\p \confD_\rho} \Func_2 \overline f f_\zeta \,d\zeta - \realpart \int_{\p \confD_\rho} \frac{\overline{w_\zeta f} \Func_1}{(\zeta - \zeta_k) \overline{f_\zeta}} \,d\zeta = 0.}
\end{equation}

\subsection{Stationary equilibria}\label{subsec stat id}
Finally we consider the case for stationary equilibria ($c = \Omega = 0$). From similar arguments to the previous two subsections, we are ultimately able to obtain the following identities
\begin{equation}\label{id stationary1-2}
{\frac12 \int_{\p \confD_\rho} \Func_2 f_\zeta \,d\zeta - \int_{\p \confD_\rho} \frac{\overline{w_\zeta} \Func_1}{(\zeta - \zeta_k) \overline{f_\zeta}} \,d\zeta = 0,}
\quad
{\frac12 \realpart \int_{\p \confD_\rho} \Func_2 \overline f f_\zeta \,d\zeta - \realpart \int_{\p \confD_\rho} \frac{\overline{w_\zeta f} \Func_1}{(\zeta - \zeta_k) \overline{f_\zeta}} \,d\zeta = 0.}
\end{equation}

\subsection{The mapping \texorpdfstring{$\phi$}{phi}}
\label{hollow vortex phi section}
What we have showed in Section \ref{subsec transl id}--\ref{subsec stat id} is that a hollow vortex solution $(u, \rho) \in \Xspace \times \mathbb{R}$ would automatically satisfy certain identities, which causes degeneracy of $D_u \F$. For this, we introduce the mappings
\begin{align*}
	\phi_{\rm t} &= \phi_{\rm t}(A,B, u, \rho) : \Yspace \times \Xspace \times \mathbb{R} \to \mathbb{R}, \\
	\phi_{\rm r} &= \phi_{\rm r}(A,B, u, \rho) : \Yspace \times \Xspace \times \mathbb{R} \to \mathbb{R}, \\
	 \phi_{\rm s} & = \phi_{\rm s} (A,B,u,\rho) : \Yspace \times \Xspace \times \mathbb{R} \to \mathbb{C} \times \mathbb{R},
\end{align*}
given by
\begin{equation}
\label{hv phi functions}
  \begin{aligned}
    \phi_{\rm t} & \colonequals  \imagpart \sum_k \int_{\mathbb T} \LC  \frac12 B_k (1 + \rho \mathcal{Z}^\rho_k \mu^\prime)- \frac{\overline{\rho\tau (w^0_\zeta + \mathcal{Z}^\rho_k \nu^\prime)}}{\overline{(1 + \rho \mathcal{Z}^\rho_k \mu^\prime)}} A_k \RC \overline{(\zeta_k + \rho\tau + \rho^2 \mathcal{Z}^\rho_k \mu)} \,d\tau,\\
    \phi_{\rm r} &\colonequals  \realpart \sum_k \int_{\mathbb T} \LC  \frac12 B_k (1 + \rho \mathcal{Z}^\rho_k \mu^\prime)- \frac{\overline{\rho\tau (w^0_\zeta + \mathcal{Z}^\rho_k \nu^\prime)}}{\overline{(1 + \rho \mathcal{Z}^\rho_k \mu^\prime)}} A_k \RC \overline{(\zeta_k + \rho\tau + \rho^2 \mathcal{Z}^\rho_k \mu)} \,d\tau, \\
    \phi_{\rm s}  &\colonequals  \begin{pmatrix}
      \displaystyle \sum_k \int_{\mathbb T} \LC  \frac12 B_k (1 + \rho \mathcal{Z}^\rho_k \mu^\prime)- \frac{\overline{\rho\tau (w^0_\zeta + \mathcal{Z}^\rho_k \nu^\prime)}}{\overline{(1 + \rho \mathcal{Z}^\rho_k \mu^\prime)}} A_k \RC \,d\tau \\
      \displaystyle \realpart \sum_k \int_{\mathbb T} \LC  \frac12 B_k (1 + \rho \mathcal{Z}^\rho_k \mu^\prime)- \frac{\overline{\rho\tau (w^0_\zeta + \mathcal{Z}^\rho_k \nu^\prime)}}{\overline{(1 + \rho \mathcal{Z}^\rho_k \mu^\prime)}} A_k \RC \overline{(\zeta_k + \rho\tau + \rho^2 \mathcal{Z}^\rho_k \mu)} \,d\tau
    \end{pmatrix}.
  \end{aligned}
\end{equation}
Notice that these are found writing \eqref{id transl}, \eqref{id rot}, and \eqref{id stationary1-2} in terms of the densities $(\mu,\nu)$.  They will serve an identical purpose to the mappings $\phi_{\rm t}^0$, $\phi_{\rm r}^0$, and $\phi_{\rm s}^0$ from \eqref{pv phi functions}.  The main result of this section is then the following.

\begin{theorem}[Hollow vortex identities]
\label{hollow vortex identities theorem}
	Fix $(c,\Omega)$ and let $\phi$ stand for $\phi_{\rm t}$, $\phi_{\rm r}$, or $\phi_{\rm s}$ depending on whether $c \neq 0, \Omega = 0$, or $c = 0, \Omega \neq 0$, or $c = \Omega = 0$, respectively.  Then
	\begin{equation}
	\label{hollow vortex phi properties}
    \begin{aligned}
      \phi(\A(u,\rho), \B(u,\rho), u, \rho) & = 0 \\
      \phi(0, u, \rho) & = 0  \\
      D_{(A,B)} \phi(0,u,\rho) & \textrm{ is surjective} 
    \end{aligned}
\end{equation}
	for all $(u,\rho) \in \mathcal{O}$ sufficiently small with the same value of $(c,\Omega)$.
\end{theorem}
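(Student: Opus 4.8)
The three displayed conditions in \eqref{hollow vortex phi properties} are exactly the hypotheses \eqref{phi assumptions} needed to apply the implicit-function-theorem-with-identities, Lemma~\ref{local ift phi theorem}, with the choice $\mathscr{G} = \F$; so the task is to verify each in turn. The structural observation that drives everything is that, reading off \eqref{hv phi functions}, each of $\phi_{\rm t}$, $\phi_{\rm r}$, $\phi_{\rm s}$ is \emph{linear} in the pair $(A,B)$, since every summand carries precisely one factor of $A_k$ or $B_k$. Two of the three conclusions are then essentially formal: the middle identity $\phi(0,u,\rho)=0$ holds because substituting $A=B=0$ kills every term, and $D_{(A,B)}\phi(0,u,\rho)$ is nothing more than the coefficient map $(A,B)\mapsto\phi(A,B,u,\rho)$ itself.

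The substantive content is the first identity $\phi(\A(u,\rho),\B(u,\rho),u,\rho)=0$. Here the plan is to recognize that the maps in \eqref{hv phi functions} were built to be the transcription, into the density unknowns $(\mu,\nu)$, of the left-hand sides of the variational identities \eqref{id transl}, \eqref{id rot}, \eqref{id stationary1-2}. Concretely, I would first establish the dictionary between the two formulations: under the ansatz \eqref{f ansatz}--\eqref{w ansatz} and the substitution $\zeta=\zeta_k+\rho\tau$, the operator $\A_k(u,\rho)$ of \eqref{abstract operator} agrees with $\Func_1=\realpart[(\zeta-\zeta_k)f_\zeta U]$ up to an explicit power of $\rho$, while $\B_k(u,\rho)=\mathcal{B}_k^\rho-Q_k$ agrees with $\Func_2=|U|^2-q_k^2$ after the rescaling \eqref{q_k Q_k relation} relating $Q_k$ and $q_k$; and the contour integrals $\int_{\mathbb T}(\,\cdot\,)\,d\tau$ reproduce the boundary integrals $\int_{\partial\confD_\rho}(\,\cdot\,)\,d\zeta$ once one accounts for $d\zeta=\rho\,d\tau$ on each $\partial B_\rho(\zeta_k)$ and the clockwise orientation of $\partial\confD_\rho$. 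Granting the dictionary, $\phi(\A,\B,u,\rho)$ equals the left-hand side of the corresponding identity, so it remains only to confirm that \eqref{id transl}, \eqref{id rot}, \eqref{id stationary1-2} hold for \emph{every} admissible $(f,w)$, not merely on the solution set. This is true because their derivation in Sections~\ref{subsec transl id}--\ref{subsec stat id} uses only the purely algebraic relation \eqref{bar xi} (which holds the moment $\Func_1$ is defined), the holomorphicity and single-valuedness of $w-w^0$, the asymptotics \eqref{fw asymptotics}, and residue calculus; at no stage is $\Func_1=0$ or $\Func_2=q_k^2$ invoked. For $(u,\rho)$ small the mapping $f$ is a near-identity diffeomorphism, so the physical-domain manipulations of Section~\ref{hollow vortex identities section} are legitimate and the formal computation can be read as a direct verification of the final boundary-integral identities.

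For the surjectivity of $D_{(A,B)}\phi(0,u,\rho)$ I would exploit linearity a second time. Since the derivative is the coefficient map into a finite-dimensional space ($\mathbb{R}$ for $\phi_{\rm t},\phi_{\rm r}$, and $\mathbb{C}\times\mathbb{R}$ for $\phi_{\rm s}$), surjectivity is equivalent to maximal rank, an open condition that moreover varies real-analytically with $(u,\rho)$; it therefore suffices to check it at the base point $(u,\rho)=(u^0,0)$ and appeal to openness. At that point every term carrying the prefactor $\overline{\rho\tau(\,\cdot\,)}$—that is, all the $A$-dependent contributions—drops out, the operators $\mathcal{Z}_k^0$ trivialize, and $\phi$ collapses to an explicit linear functional of the single Fourier coefficients $\widehat{(B_k)}_{-1}=(2\pi i)^{-1}\int_{\mathbb T}B_k\,d\tau$, with coefficients assembled from the centers $\zeta_k$. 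As $B$ ranges freely over $C^{\ell-1+\alpha}(\mathbb{T})^M$ each $\widehat{(B_k)}_{-1}$ is an arbitrary complex number, and surjectivity then reduces to precisely the finite-dimensional linear-algebra computation already carried out for $\phi^0$ in Lemma~\ref{Dphi pv range lemma}: for $\phi_{\rm t}$ and $\phi_{\rm r}$ one needs only that some $\zeta_k\neq0$, while for $\phi_{\rm s}$ one uses the distinctness of $\zeta_1,\dots,\zeta_M$.

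I expect the main obstacle to be the first identity, and specifically the bookkeeping hidden inside the dictionary: matching the abstract operators $(\A,\B)$ of \eqref{abstract operator} to the geometric quantities $(\Func_1,\Func_2)$ with the correct powers of $\rho$, keeping track of the orientation of $\partial\confD_\rho$, pairing each of $\phi_{\rm t},\phi_{\rm r},\phi_{\rm s}$ with the translation- versus rotation-type identity together with the attendant $\overline{f}$ factors, and confirming that the formally derived identities are genuine off-solution identities. Once this matching is secured, the vanishing at $(A,B)=0$ and the surjectivity follow with comparatively little effort.
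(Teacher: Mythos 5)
Your overall architecture tracks the paper's: the first identity is obtained by recognizing $\phi(\A,\B,u,\rho)$ as the transcription into the densities of the off-shell variational identities \eqref{id transl}, \eqref{id rot}, \eqref{id stationary1-2} (and you correctly note that their derivations use only the algebraic relation \eqref{bar xi}, holomorphicity, the asymptotics \eqref{fw asymptotics}, and residue calculus, never $\Func_1=0$ or $\Func_2=0$); the second identity is immediate from linearity in $(A,B)$; and surjectivity is checked at the base point and propagated by openness. This all matches the paper.

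The surjectivity step, however, contains a concrete error. You assert that at $(u,\rho)=(u^0,0)$ every term carrying the prefactor $\overline{\rho\tau\,(w^0_\zeta+\mathcal{Z}^\rho_k\nu^\prime)}$ drops out, so that $\phi$ collapses to a functional of the coefficients $\widehat{(B_k)}_{-1}$ alone. This is false: $w^0_\zeta$ has a simple pole at $\zeta_k$, so on the $k$-th boundary circle $\rho\tau\, w^0_\zeta(\zeta_k+\rho\tau)=\tfrac{\gamma_k}{2\pi i}+O(\rho)$, and the coefficient of $A_k$ tends to a nonzero multiple of $\gamma_k$ rather than to $0$. The paper's proof rests on precisely these surviving coefficients: it computes $D_{A_j}\phi_{\rm t}=\gamma_j\imagpart\widehat a_{j,-1}+O(\rho)$ and $D_{A_j}\phi_{\rm r}=\gamma_j\realpart\bigl(\overline{\zeta_j}\,\widehat a_{j,-1}\bigr)+O(\rho)$, together with the analogous $\mathbb{C}\times\mathbb{R}$-valued expression for $\phi_{\rm s}$, and deduces surjectivity from $\gamma_j\neq 0$ and $\zeta_1\neq\zeta_2$ via the same $3\times 4$ matrix as in Lemma~\ref{Dphi pv range lemma}. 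Your conclusion can be salvaged, since surjectivity of the restriction to the $B$-variables alone already implies surjectivity of $D_{(A,B)}\phi$, and the $B$-block does reduce to the pairing of $\pi i\,\widehat{(\dot B_k)}_{-1}$ against $1$ and $\overline{\zeta_k}$ that you describe; but as written your identification of the linearization at the base point is wrong, and the claim that the $A$-contributions vanish must be deleted and replaced either by the paper's computation of them or by the remark that they are simply not needed once the $B$-block is surjective.
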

\begin{proof}
The first equality in \eqref{hollow vortex phi properties} follows from the formal identities \eqref{id transl}, \eqref{id rot}, and \eqref{id stationary1-2}; it can be verified directly from the equation as well.  The second is immediate, so it remains only to prove the surjectivity of $D_{(A,B)} \phi$.

For $\dot A_j \in \mathring{C}^{\ell+\alpha}(\mathbb{T})$, we can write out its Fourier series representation
\[
\dot A_j(\tau) = \sum_{m \ne 0} \widehat{a}_{j,m} \tau^m.
\]
It is easy to check that
\begin{align*}
D_{A_j}\phi_{\rm t}(A, B, u, \rho)\begin{bmatrix} \dot A, \dot B, \dot u, \dot \rho \end{bmatrix} & = -\imagpart \int_{\mathbb T} \LC \frac{i\gamma_j}{2\pi} + O(\rho) \RC \dot A_j \,d\tau = \gamma_j \imagpart \widehat{a}_{j,-1} + O(\rho), \\
D_{A_j}\phi_{\rm r}(A, B, u, \rho)\begin{bmatrix} \dot A, \dot B, \dot u, \dot \rho \end{bmatrix} & = -\realpart \int_{\mathbb T} \LC \frac{i\gamma_j}{2\pi} \overline{\zeta_j} + O(\rho) \RC \dot A_j \,d\tau = \gamma_j \realpart \LC \overline{\zeta_j} \widehat{a}_{j,-1} \RC + O(\rho).
\end{align*}
Therefore for $\rho$ sufficiently small we know that $D_{(A, B)}\phi_{\rm t}(A, B, u, \rho)$ and $D_{(A, B)}\phi_{\rm r}(A, B, u, \rho)$ are both surjective.  Moreover,
\[
D_{A_j}\phi_{\rm s}(A, B, u, \rho)\begin{bmatrix} \dot A, \dot B, \dot u, \dot \rho \end{bmatrix} = \begin{pmatrix}
\displaystyle \gamma_j \widehat{a}_{j,-1} \\ \displaystyle \gamma_j \realpart \LC \overline{\zeta_j} \widehat{a}_{j,-1} \RC \end{pmatrix} + O(\rho).
\]
Identifying the codomain $\mathbb{C} \times \mathbb{R} \cong \mathbb{R}^3$ we find that
\begin{align*}
&D_{(A_1, A_2)}\phi_{\rm s}(A, B, u, \rho)\begin{bmatrix} \dot A, \dot B, \dot u, \dot \rho \end{bmatrix} \\
	 & \qquad = \LB \begin{pmatrix}
		\gamma_1 & 0  & \gamma_2 & 0 \\
		0 & \gamma_1 & 0 & \gamma_2 \\
		\gamma_1 \realpart \zeta_1 & \gamma_1 \imagpart \zeta_1   & \gamma_2 \realpart \zeta_2 & \gamma_2 \imagpart \zeta_2 
	\end{pmatrix} + O(\rho) \RB
	\begin{pmatrix}
		\realpart{\widehat{a}_{1,-1}} \\
		\imagpart{\widehat{a}_{1,-1}}\\
		\realpart{\widehat{a}_{2,-1}} \\
		\imagpart{\widehat{a}_{2,-1}}
	\end{pmatrix}.
\end{align*}
From this computation we see that when $\zeta_1 \ne \zeta_2$ and $\rho$ is sufficiently small, then $D_{(A, B)}\phi_{\rm s}(A, B, u, \rho)$ is surjective. 
\end{proof}

\section{Vortex desingularization}
\label{local section}

Having derived the necessary identities in the previous section, we are now prepared to prove the existence of solutions to the steady hollow vortex problem in a neighborhood of a non-degenerate steady point vortex configuration.  Recall from the discussion in Section~\ref{statement section} the quantity
\[
	S_k \colonequals  -\frac{1}{2} \sum_{j \neq k} \frac{\gamma_j^0}{2 \pi i} \frac{1}{(\zeta_j^0 - \zeta_k^0)^2},
\]
which we will show is the effective straining velocity field experienced by the $k$-th hollow vortex due to the influence of the other vortices.  Our main result on vortex desingularization is the following.

\begin{theorem}[Local desingularization] \label{hollow vortex local bifurcation theorem}
Let $\fullparam_0 = (\param_0, \param_0^\prime)$ be a non-degenerate steady vortex configuration.  There exists $\rho_1 > 0$ and a curve $\km_\loc$ of solutions to the hollow vortex problem that admits the parameterization 
\[
	\km_{\loc} = \left\{  (u^\rho,\rho) = (\mu^\rho, \nu^\rho, Q^\rho, \param^\rho, \rho)  \in \mathcal{O} : \quad |\rho| < \rho_1 \right\} \subset \F^{-1}(0),
\]
with $\rho \mapsto u^\rho$ real analytic as a mapping $(-\rho_1, \rho_1) \mapsto \Xspace$.  Moreover, $\km_\loc$ satisfies the following.
\begin{enumerate}[label=\rm(\alph*)]
	\item \label{local asymptotics part} 
	$\km_\loc$ bifurcates from the point vortex configuration $\fullparam_0$ in that $u^0 = (0, \param_0)$, and to leading order is given by
	\begin{equation}
	\label{leading order hollow vortex}
    \begin{aligned}
      \mu_k^\rho(\tau) & =   \frac{16\pi}{\gamma_k} \rho \realpart{ \left( i S_k \tau \right)}  + O(\rho^2) &  \textrm{in } \mathring{C}^{\ell+\alpha}(\mathbb{T}) \\
      \nu_k^\rho(\tau) & = -2 \rho   \realpart {\left( S_k \tau^2 \right)}  +O(\rho^2) &  \textrm{in } \mathring{C}^{\ell+\alpha}(\mathbb{T})\\
      Q^\rho & = O(\rho^2) \\ 
      \param^\rho & = \param_0 + O(\rho^2).
    \end{aligned}
\end{equation} 
	
	\item \label{local symmetry part} The parameterization exhibits the symmetry
    \begin{equation}
      \label{parameterization symmetry}
       (\mu^\rho(\tau), \, \nu^\rho(\tau), \, Q^\rho, \, \param^\rho) = (\mu^{-\rho}(-\tau), \, -\nu^{-\rho}(-\tau), \, Q^{-\rho}, \, \param^{-\rho} ).
    \end{equation}
		For $0 < |\rho| <\rho_1$, it follows that $(u^\rho,\rho)$ represents a steady hollow vortex configuration, with $(u^\rho, \rho)$ and $(u^{-\rho}, -\rho)$ giving rise to the same physical solution.
	\item \label{local uniqueness part} In a neighborhood of $(u,\rho) = (0,\param_0,0)$ in $\mathcal{O}$,  the curve $\km_\loc$ comprises all solutions to the steady point vortex and steady hollow vortex problems with $\param^\prime = \param_0^\prime$ and conformally equivalent to a circular domain with equal radii.
	\end{enumerate}
\end{theorem}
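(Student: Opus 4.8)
The plan is to apply the implicit function theorem with identities, Lemma~\ref{local ift phi theorem}, taking the solved-for variable to be $w = u = (\mu,\nu,Q,\param) \in \Xspace$, the free variable $v = \rho \in \mathbb{R}$, the base point $(u^0,0) = (0,\param_0,0)$, and $\mathscr{G} = \F$. The identity mapping $\phi$ demanded by \eqref{phi 0 at G}--\eqref{phi surjective} is supplied verbatim by Theorem~\ref{hollow vortex identities theorem}, with $Z = (A,B) \in \Yspace$. Since $\F(u^0,0) = 0$ holds by construction, the only remaining hypothesis to check is that $\F_u(u^0,0) \colon \Xspace \to \Yspace$ has trivial kernel and finite codimension equal to $n \colonequals \cod\range D\mathcal{V}(\fullparam_0)\big|_{\Pspace}$. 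Everything thus reduces to the linearized operator, and the conclusion of Lemma~\ref{local ift phi theorem} will immediately give the real-analytic curve $\km_\loc$ together with the local uniqueness in part~\ref{local uniqueness part}.

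Computing this linearization is the core of the argument, and where I expect the real work to lie. At $\rho = 0$ the operators collapse to $\mathcal{Z}_k^0 = \mathcal{C}$, and using \eqref{linear Bernoulli operator} together with $\mathcal{V}_k(\fullparam_0) = 0$ one finds, for $\dot u = (\dot\mu,\dot\nu,\dot Q,\dot\param)$,
\begin{align*}
	D_u\A_k(u^0,0)[\dot u] &= \realpart{\big( \tau(\mathcal{C}\dot\nu_k^\prime + D_\param\mathcal{V}_k[\dot\param]) \big)}, \\
	D_u\B_k(u^0,0)[\dot u] &= \frac{\gamma_k}{\pi}\realpart{\big( i\tau(\mathcal{C}\dot\nu_k^\prime + D_\param\mathcal{V}_k[\dot\param]) \big)} - \frac{\gamma_k^2}{2\pi^2}\realpart{\big( \mathcal{C}\dot\mu_k^\prime \big)} - \dot Q_k,
\end{align*}
with $\gamma_k$ evaluated at $\fullparam_0$. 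These are Fourier multipliers in the densities, coupled to $\param$ only through the constants $D_\param\mathcal{V}_k[\dot\param]$. Decomposing by the projections $\proj_0$, $\proj_1$, $\proj_{>1}$ and invoking $\mathcal{C}\tau^m = 0$ for $m \ge 0$, $\mathcal{C}\tau^m = -\tau^m$ for $m < 0$, the system is block triangular in frequency: the $\proj_{>1}$ parts decouple vortex-by-vortex and are inverted by $\dot\mu_k$ and the modes $\le -2$ of $\dot\nu_k$; the $\proj_0$ part of $\B_k$ is absorbed by $\dot Q_k$; and the $\proj_1$ parts of $(\A_k,\B_k)$ reduce to a single relation between the coefficient $\widehat{(\dot\nu_k)}_{-1}$ and the constant $D_\param\mathcal{V}_k[\dot\param]$. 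Solving this relation shows that $\F_u(u^0,0)\dot u = 0$ forces $\dot\mu = \dot\nu = 0$, $\dot Q = 0$, and $D\mathcal{V}(\fullparam_0)[\dot\param] = 0$; the kernel is trivial precisely because $\kernel D\mathcal{V}(\fullparam_0)\big|_{\Pspace} = \{0\}$. Dually, the only obstruction to surjectivity is that the vector $(D_\param\mathcal{V}_k[\dot\param])_k$ needed to hit the $\proj_1$ target must lie in $\range D\mathcal{V}(\fullparam_0)\big|_{\Pspace}$, so $\cod\range\F_u(u^0,0) = n$. Disentangling this mode-$\pm 1$ coupling is the crux: it is exactly what makes the kernel and cokernel of $\F_u(u^0,0)$ coincide with those of the point-vortex Jacobian, and it is where non-degeneracy \eqref{definition non-degnerate subspace} closes the argument.

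With these two facts verified, Lemma~\ref{local ift phi theorem} yields $\rho_1 > 0$ and a real-analytic map $\rho \mapsto u^\rho$ on $(-\rho_1,\rho_1)$ with $\F(u^\rho,\rho) = 0$; that the associated $f$ is conformal and globally injective for $0 < |\rho| < \rho_1$ follows from Lemmas~\ref{lem basic nondeg f_zeta} and~\ref{lem basic injective}, as $f = \id + O(\rho^2)$ is a near-identity perturbation. For the asymptotics \ref{local asymptotics part}, I would differentiate $\F(u^\rho,\rho) = 0$ in $\rho$ and read off the $O(\rho)$ balance: by \eqref{definition Lambda} and the definition of $S_k$ one has $\Vrho_k^\rho = \mathcal{V}_k + 2\rho S_k\tau + O(\rho^2)$, and matching modes through the multiplier identities for $\mathcal{C}$ produces $\nu_k^\rho = -2\rho\,\realpart{(S_k\tau^2)} + O(\rho^2)$ and $\mu_k^\rho = \frac{16\pi}{\gamma_k}\rho\,\realpart{(iS_k\tau)} + O(\rho^2)$; the $\proj_1$ balance forces $D\mathcal{V}(\fullparam_0)[\param^{(1)}] = 0$, hence $\param^\rho = \param_0 + O(\rho^2)$ and $Q^\rho = O(\rho^2)$. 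These steps are routine once the curve is known to be analytic.

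The symmetry \ref{local symmetry part} follows from the invariance \eqref{symmetries abstract operator} of $\F$: the curve $\rho \mapsto (\mu^{-\rho}(-\placeholder), -\nu^{-\rho}(-\placeholder), Q^{-\rho}, \param^{-\rho})$ also solves $\F(\,\cdot\,,\rho) = 0$ near the base point, so the local uniqueness from Lemma~\ref{local ift phi theorem} forces \eqref{parameterization symmetry}, and Remark~\ref{negative rho remark} identifies $\pm\rho$ as the same physical hollow vortex. Finally, part~\ref{local uniqueness part} combines that same local uniqueness with Lemma~\ref{uniqueness lemma}, which pins down the $\rho = 0$ solutions as steady point vortex configurations, while Koebe's Theorem~\ref{koebe theorem} guarantees that the equal-radii circular description is canonical, so that the stated uniqueness among configurations conformally equivalent to such a domain is well-posed.
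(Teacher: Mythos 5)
Your proposal is correct and follows essentially the same route as the paper: Lemma~\ref{local ift phi theorem} applied with the identities of Theorem~\ref{hollow vortex identities theorem}, a frequency-by-frequency analysis of $D_u\F(u^0,0)$ via $\proj_0,\proj_1,\proj_{>1}$ and the multiplier formulas \eqref{multiplier formulas} to match its kernel and cokernel with those of $D_\param\mathcal V(\fullparam_0)$ (the paper packages this as the row-reduced operator $\mathscr L$ in Lemma~\ref{kernel and range lemma}, but the computation is the same), followed by the $O(\rho)$ expansion for the asymptotics and the invariance \eqref{symmetries abstract operator} plus local uniqueness for the symmetry. The only cosmetic difference is that you explicitly invoke Lemmas~\ref{lem basic nondeg f_zeta}--\ref{lem basic injective}, Lemma~\ref{uniqueness lemma}, and Koebe's theorem for the conformality and uniqueness statements, which the paper leaves implicit at this stage.
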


\begin{remark}
By an identical argument, we can see that in fact there is a real-analytic manifold of steady hollow vortices $\mathscr{M}_\loc$ taking the form of a graph over the variables $(\param^\prime, \rho) \in \mathcal{N}$, for some neighborhood $\mathcal{N}$ of $(\param_0^\prime, 0)$. The section of $\mathscr{M}_\loc$ with $\rho = 0$ coincides with the manifold $\mathscr{M}_\loc^0$ of point vortex configurations from Proposition~\ref{point vortex manifold proposition}.
\end{remark}

\subsection{Linear analysis} \label{local linear section}

Recall from Section~\ref{formulation section} that we have formulated the system as the abstract operator equation
\[
	\F(u,\rho) = (\A(u,\rho), \B(u,\rho)) = 0
\]
where $\mathscr{A} = (\mathscr{A}_1, \ldots, \mathscr{A}_M)$ and $\mathscr{B} = (\mathscr{B}_1, \ldots, \mathscr{B}_M)$ are given by
\[
\begin{aligned}
	\mathscr{A}_k(u, \rho) 
		& \colonequals  \realpart{\left( \tau \left( \mathcal{Z}_k^\rho \nu^\prime + \Vrho_k^\rho + ( i \Omega \overline{( \zeta_k + \rho \tau)} -c) \rho \mathcal{Z}_k^\rho \mu^\prime + i\Omega \rho^2 \overline{\mathcal{Z}_k^\rho \mu}  + i \Omega \rho^3 \mathcal{Z}_k^\rho [\mu^\prime] \overline{\mathcal{Z}_k^\rho\mu} \right) \right)}   \\
	\mathscr{B}_k(u, \rho)	& \colonequals  \mathcal{B}_k^\rho(\mu,\nu,\param) - Q_k,
\end{aligned}
\]
for $k = 1, \ldots, M$.  Let $\fullparam_0 = (\param_0,\param_0^\prime)$ be a non-degenerate steady point vortex configuration and consider the trivial solution $(u^0,\rho) = (0,0,\param_0, \rho)$. A simple calculation reveals that the Fr\'echet derivative there is
\[
	 D_u \F(u^0,0) \dot u = 
				\begingroup
				\renewcommand\arraystretch{1.5}
				\begin{pmatrix} 
					 0  & \A_\nu^0 & 0 &  \A_\param^0 \\
					 \B_\mu^0 & \B_\nu^0 & \B_Q^0 &  \B_\param^0  \\
				\end{pmatrix} \endgroup
				\begin{pmatrix} \dot \mu \\ \dot \nu \\ \dot Q \\ \dot \param \end{pmatrix},
\]
where we are abbreviating $\A_\nu^0 \colonequals  D_\nu \A(u^0,0)$ and so on.  Importantly, from \eqref{linear Bernoulli operator} and \eqref{abstract operator}, it follows that $\A_\nu^0$ and $\B_w^0$ have have block diagonal form 
\begin{equation}
\label{DA DB formulas}
\begin{aligned}
 	\A_{k\nu}^0 \dot\nu & = \realpart{\left( \tau \mathcal{C} \dot\nu_k^\prime \right)}, \\ 
  	\B_{k(\mu,\nu,Q)}^0\begin{pmatrix} \dot \mu \\  \dot\nu \\ \dot Q\end{pmatrix} & = \frac{(\gamma_k^0)^2}{2 \pi^2} \realpart{ \left(  \frac{2\pi i}{\gamma_k^0} \tau \mathcal{C} \dot\nu_k^\prime - \mathcal{C}\dot\mu_k^\prime \right)} -\dot Q_k,
\end{aligned}
\qquad \textrm{for } k = 1, \ldots, M.
\end{equation}

Recall that $\mathcal{C} \tau^m = 0$ for $m \ge 0$ and $\mathcal{C} \tau^m = -\tau^m$ for $m < 0$. Thus if
$\varphi \in C^{\ell+\alpha}(\mathbb{T})$ for $\ell \geq 1$ and has the Fourier series representation
\[
	\varphi(\tau) = \sum_{m \in \mathbb{Z}} \widehat\varphi_m \tau^m,
\]
then the following multiplier formulas hold
\begin{equation}
 \label{multiplier formulas}
  \begin{aligned}
    \realpart{ \mathcal{C}[\varphi^\prime](\tau)} & =  \frac{1}{2} \left( \sum_{m = -\infty}^{-1} |m| \widehat\varphi_m \tau^{m-1}  + \sum_{m = 1}^{\infty} |m| \widehat\varphi_m \tau^{m+1}   \right)  \\
    \realpart{\left(\tau \mathcal{C}[\varphi^\prime](\tau)\right)} & = \frac{1}{2} \sum_{m \in \mathbb{Z}} |m|   \widehat\varphi_m \tau^m \\
    \realpart{\left( i \tau \mathcal{C}[\varphi^\prime](\tau) \right)}  & = -\frac{1}{2} \sum_{m \in \mathbb{Z}} i m \widehat\varphi_m \tau^{m} .
  \end{aligned}
\end{equation}
As an immediate corollary, the operators appearing in \eqref{multiplier formulas} are injective as mappings  $\mathring{C}^{\ell+\alpha}(\mathbb{T}) \to C^{\ell-1+\alpha}(\mathbb{T})$.  Using the projection operators \eqref{definition projection operator}, their ranges can be characterized as
\begin{equation}
 \label{multiplier cokernel} 
  \begin{aligned}
    \range{\realpart{(\tau \mathcal{C} \partial_\tau)}} & = \range{\realpart{( i {\tau} \mathcal{C} \partial_\tau)}} = \proj_{> 0} C^{\ell-1+\alpha}(\mathbb{T}) = \mathring{C}^{\ell-1+\alpha}(\mathbb{T}) \\ 
     \range{\realpart{( \mathcal{C} \partial_\tau)}} & = \proj_{> 1} C^{\ell-1+\alpha}(\mathbb{T}). 
  \end{aligned}
\end{equation}
Note that because these are spaces of real-valued functions, the above sets have codimension $1$ and $2$, respectively.   In particular, $\realpart{(\tau \mathcal{C} \partial_\tau)}$ is invertible $\mathring{C}^{\ell+\alpha} \to \mathring{C}^{\ell-1+\alpha}$.  Because $\A_\nu^0$ is diagonal with precisely this operator along its diagonal, we infer that $\A_\nu^0$ is likewise an isomorphism $\mathring{C}^{\ell+\alpha}(\mathbb{T})^M \to \mathring{C}^{\ell-1+\alpha}(\mathbb{T})^M$.

\begin{lemma}[Kernel and range]
\label{kernel and range lemma}
Suppose that $\fullparam_0 = (\param_0, \param_0^\prime)$ represents a steady vortex configuration and let $u^0 \colonequals  (0,\param_0) \in \Xspace$.  Then the Fr\'echet derivative $D_u \F(u^0,0) \colon \Xspace \to \Yspace$ is Fredholm with
\begin{equation}
\label{Fu dimension counts}
  \begin{aligned}
    \dim\kernel{D_u \F(u^0,0)} & = \dim\kernel{D_{\param} \mathcal{V}(\fullparam_0)} \\ 
    \cod\range{D_u\F (u^0, 0)} &= \cod\range{D_{\param} \mathcal{V}(\fullparam_0)}. 
  \end{aligned}
\end{equation}  
\end{lemma}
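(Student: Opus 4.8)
The plan is to exploit the block structure of $D_u\F(u^0,0)$ together with the explicit multiplier formulas \eqref{multiplier formulas} to collapse the entire computation onto a single Fourier mode, where the point vortex Jacobian $D_\param\mathcal{V}(\fullparam_0)$ reappears. Since $\A_\nu^0$ is already known to be an isomorphism $\mathring C^{\ell+\alpha}(\mathbb T)^M \to \mathring C^{\ell-1+\alpha}(\mathbb T)^M$, the kinematic row $\A_\nu^0\dot\nu + \A_\param^0\dot\param = A$ can always be solved uniquely for $\dot\nu$ in terms of $\dot\param$ and the target $A$. The whole problem therefore reduces to the Bernoulli row, and the task becomes to understand the mode structure of $\B_\mu^0\dot\mu + \B_\nu^0\dot\nu + \B_Q^0\dot Q + \B_\param^0\dot\param = B$ once $\dot\nu$ has been eliminated.

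First I would record the explicit form of each block at the trivial solution. From \eqref{abstract operator}, at $\rho=0$ the kinematic operator is $\A_k(u,0)=\realpart(\tau(\mathcal C\nu_k' + \mathcal{V}_k(\param,\param_0^\prime)))$, so $\A_{k\param}^0\dot\param = \realpart(\tau\, D_\param\mathcal{V}_k(\fullparam_0)\dot\param)$, a pure $\proj_1$ (mode $\pm1$) object. Likewise $\B_{kQ}^0\dot Q = -\dot Q_k$ lives in $\proj_0$, while $\B_{k\mu}^0\dot\mu = -\tfrac{(\gamma_k^0)^2}{2\pi^2}\realpart(\mathcal C\dot\mu_k')$ has range exactly $\proj_{>1}C^{\ell-1+\alpha}(\mathbb T)$ by \eqref{multiplier cokernel}. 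The one nontrivial point is the computation of $\B_\param^0$: differentiating the leading Bernoulli operator \eqref{linear Bernoulli operator} in $\param$ produces a term proportional to $\realpart(i\tau\,\mathcal{V}_k)$, which vanishes at $\fullparam_0$ precisely because the configuration is steady, leaving $\B_{k\param}^0\dot\param = \tfrac{\gamma_k^0}{\pi}\realpart(i\tau\, D_\param\mathcal{V}_k(\fullparam_0)\dot\param)$, again a pure $\proj_1$ object. This use of $\mathcal{V}(\fullparam_0)=0$ is what ties the hollow vortex linearization to the point vortex Jacobian.

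With these formulas in hand, I would solve the system mode by mode, writing $c_k \colonequals D_\param\mathcal{V}_k(\fullparam_0)\dot\param$ and applying \eqref{multiplier formulas}. The modes $|m|\ge 2$ of the Bernoulli equation only involve $\B_\mu^0$, which is injective with range $\proj_{>1}$; hence they determine $\dot\mu$ uniquely, contributing nothing to the kernel and no constraint to the range. The mode $0$ part only sees $\dot Q_k$, which is likewise free. Everything therefore concentrates at mode $1$, where the eliminated $\dot\nu$ and the parameter $\dot\param$ enter together. A short calculation shows their combined $\proj_1$ contribution reduces to a clean expression in $c_k$: for the kernel ($A=B=0$) it forces $\realpart(i\tau c_k)\equiv 0$, i.e. $c_k=0$ for every $k$, so that $\dot\param\in\kernel D_\param\mathcal{V}(\fullparam_0)$ and then $\dot\mu,\dot\nu,\dot Q$ all vanish; for the range, solvability is equivalent to the single requirement that the vector $\big(\widehat{A}_{k,1} - \tfrac{i\pi}{\gamma_k^0}\widehat{B}_{k,1}\big)_{k=1}^M\in\mathbb C^M$ lie in $\range D_\param\mathcal{V}(\fullparam_0)$, where $\widehat{A}_{k,1},\widehat{B}_{k,1}$ denote the mode-one Fourier coefficients of the targets.

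These two facts give the lemma directly. The kernel computation yields a linear isomorphism $\kernel D_u\F(u^0,0)\cong\kernel D_\param\mathcal{V}(\fullparam_0)$, hence the first dimension count. For the second, the assignment $(A,B)\mapsto\big(\widehat{A}_{k,1}-\tfrac{i\pi}{\gamma_k^0}\widehat{B}_{k,1}\big)_k$ is a bounded surjection $\Yspace\to\mathbb C^M$, and $\range D_u\F(u^0,0)$ is precisely its preimage of $\range D_\param\mathcal{V}(\fullparam_0)$; since the latter is finite dimensional and therefore closed, the range is closed with $\cod\range D_u\F(u^0,0)=\cod\range D_\param\mathcal{V}(\fullparam_0)$, and $D_u\F(u^0,0)$ is Fredholm. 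The main obstacle is the bookkeeping at mode $1$: one must track how the eliminated density $\dot\nu$ feeds back through $\B_\nu^0$ and verify that it combines with $\B_\param^0$ to reproduce $D_\param\mathcal{V}$ exactly. It is essential here both that $\gamma_k^0\neq0$ and that $\mathcal{V}(\fullparam_0)=0$.
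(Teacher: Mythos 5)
Your proposal is correct and follows essentially the same route as the paper: it row-reduces using the invertibility of $\A_\nu^0$, computes the resulting operator explicitly via the multiplier formulas \eqref{multiplier formulas}, and separates the modes with $\proj_0$, $\proj_1$, $\proj_{>1}$ so that the kernel and range conditions collapse onto $D_\param\mathcal{V}(\fullparam_0)$ at mode one (your explicit solvability condition $\bigl(\widehat{A}_{k,1}-\tfrac{i\pi}{\gamma_k^0}\widehat{B}_{k,1}\bigr)_k\in\range D_\param\mathcal{V}(\fullparam_0)$ is just the unwound form of the paper's \eqref{range characterization}).
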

\begin{proof}
Inverting $\A_\nu^0$, we consider the ``row-reduced'' operator
  \begin{equation}
    \label{definition row reduced operator}
    \mathscr{L} \begin{pmatrix} \dot \mu \\ \dot Q \\ \dot \param \end{pmatrix} \colonequals  \B_\mu^0\dot\mu + \B_Q^0 \dot Q + \left( \B_\param^0 - \B_\nu^0 (\A_\nu^0)^{-1} \A_\param^0\right) \dot\param. 
  \end{equation}
Clearly, $\F_u(u^0,0)$ is Fredholm if and only if
\[
	\mathscr{L} :  \mathring C^{\ell+\alpha}(\mathbb{T})^M \times \mathbb{R}^M  \times \Pspace \to C^{\ell-1+\alpha}(\mathbb{T})^M \quad \textrm{is Fredholm}.
\]
Moreover, the dimension of  $\kernel\mathscr{L}$ and $\kernel{D_u \F(u^0,0)}$ coincide, as do the codimension of their ranges.

Now, $\A_\param^0 \dot \param= \realpart{( \tau  \mathcal{V}_\param^0 \dot \param)}$, and hence \eqref{multiplier formulas} and \eqref{DA DB formulas} together give the explicit formulas
\begin{align*}
	(\A_\nu^0)^{-1} \A_\param^0 \dot \param & = - \realpart{(\tau \mathcal{V}_\param^0 \dot \param)} = -\frac{1}{2} \left( \tau \mathcal{V}_{\param}^0 \dot \param + \frac{1}{\tau} \overline{\mathcal{V}_{\param}^0 \dot \param} \right) \\
	 \B_\nu^0 (\A_\nu^0)^{-1} \A_{k\param}^0  \dot\param & = \frac{\gamma_k^0}{\pi} \realpart{\left( i \tau \mathcal{C} \partial_\tau (\A_\nu^0)^{-1} (\A_{k\param}^0 \dot \param) \right)} = -\frac{\gamma_k^0}{2\pi} {\left( i \tau  \mathcal{V}_{k\param}^0 \dot \param - i \frac{1}{\tau} \overline{\mathcal{V}_{k\param}^0 \dot \param} \right)} \\
	 & = -\frac{\gamma_k^0}{\pi} \realpart{\left( i \tau \Vrho_{k\lambda}^0 \dot\lambda \right)}.
\end{align*}
Using these facts, \eqref{linear Bernoulli operator}, and \eqref{DA DB formulas}, we find that $\mathscr{L}$ has $k$-th component taking the form
  \begin{equation}
    \label{Lk formula}
    \mathscr{L}_k \begin{pmatrix} \dot \mu \\ \dot Q \\ \dot \param \end{pmatrix} = -\frac{(\gamma_k^0)^2}{2\pi^2} \realpart{\mathcal{C}\dot\mu_k^\prime} - \dot Q_k +   \frac{2\gamma_k^0}{\pi} \realpart{\left( i \tau  \mathcal{V}_{k\param} \dot \param \right)}.
  \end{equation}
Written this way, the dimension counts in \eqref{Fu dimension counts} become clear.  Suppose first that $\dot u \in \kernel{\mathscr{L}}$.  Projecting \eqref{Lk formula} to the subspace of constants by applying $\proj_0$, we see that $\dot Q = 0$.  Likewise, applying $\proj_1$, we find that $\dot \param \in \kernel{D_\param \mathcal{V}(\fullparam_0)}$, and thus by \eqref{multiplier cokernel} we have $\dot\mu = 0$.  Therefore
  \begin{equation}
    \label{kernel characterization}
    \kernel{D_u \F(u^0,0)} = \{ 0 \} \times  \kernel{D_{\param} \mathcal{V}(\fullparam_0)} \subset \Xspace,
  \end{equation}
and so in particular the dimension of $\kernel{D_u \F(u^0,0)}$ agrees with that of $\kernel{D_\param \mathcal{V}(\fullparam_0)}$.

The same reasoning allows us to characterize the range of $\mathscr{L}$.   Indeed, we see that for $\varphi \in C^{\ell-1+\alpha}(\mathbb{T})^M$,
  \begin{equation}
    \label{range characterization}
    \varphi \in \range{\mathscr{L}} \quad \textrm{if and only if} \quad \int_{\mathbb{T}} \varphi \overline{\tau} \, d\theta \in \range{D_\param \mathcal{V}(\fullparam_0)}. 
  \end{equation}
Thus, the codimensions of $\range{D_u \F(u^0,0)}$ and $\range{D_\param \mathcal{V}(\fullparam_0)}$ coincide, which completes the proof. 
\end{proof}

\subsection{Proof of the local desingularization theorem}

Combining the results of Section~\ref{local linear section} with the identities we derived in Section~\ref{hollow vortex identities section}, we now prove the main result on the local desingularization of point vortices. 

\begin{proof}[Proof of Theorem~\ref{hollow vortex local bifurcation theorem}]
The existence and regularity of the curve $\km_\loc$ follows from the abstract implicit function theorem result Lemma~\ref{local ift phi theorem}, along with our characterization of the kernel and cokernel in  Lemma~\ref{kernel and range lemma}, and the identities derived in Theorem~\ref{hollow vortex identities theorem}.  Local uniqueness and the invariance of the equation observed in Remark~\ref{negative rho operator equation remark} ensure that the parameterization obeys the claimed symmetry.  It remains only to calculate the leading-order asymptotics \eqref{leading order hollow vortex}.  

Writing $u^\rho \equalscolon  \rho \dot u + O(\rho^2)$, we see that
\[
	\A_u^0 \dot u = -\A_\rho^0, \qquad \B_u^0 \dot u = -\B_\rho^0.
\] 
The formulas for $\A_w^0$ and $\B_w^0$ are already given in \eqref{DA DB formulas}.  For the remaining derivatives, we compute that
\begin{align*}
	 \A_{k\lambda}^0 &= \realpart{ \left( \tau  \mathcal{V}_{k\param} \right)} \\
	 \A_{k\rho}^0 & = \realpart{ \left(  \tau  \Vrho_{k\rho}^0\right)} = -\realpart{\left( \frac{1}{2\pi i} \tau^2 \sum_{j \neq k} \frac{\gamma_j^{0}}{(\zeta_j^0 - \zeta_k^0)^2}\right) } = 2\realpart{(S_k \tau^2)}.
\end{align*}
Then, from
  \begin{equation}
    \label{asymptotic kinematic}
    \A_{k\nu}^0 \dot\nu + \A_{k \param}^0 \dot \param = - \A_{k\rho}^0,
  \end{equation}
we see that
\[
	0 = \proj_1 \A_{k\param}^0 \dot\param = \frac{\tau}{2} \Vrho_{k\lambda} \dot\lambda+  \frac{1}{2\tau} \overline{\Vrho_{k\lambda} \dot\lambda}.
\]
But, recall that by construction $\Vrho_\lambda$ has a trivial kernel, and thus $\dot\lambda = 0$. Applying $\proj_{>2}$ to \eqref{asymptotic kinematic} yields 
\[
	\proj_{> 2}  \A_{k\nu}^0 \dot\nu = 0,
\]
and hence $\proj_{>2} \dot\nu = 0$.  Moreover, using \eqref{multiplier formulas} we can invert $\realpart{(\tau \mathcal{C} \partial_\tau \placeholder)}$ to obtain
\begin{align*}
	 \dot\nu_k &=   -\left( S_k \tau^2 + \overline{S_k} \frac{1}{\tau^2} \right) 
	 =-2 \realpart{\left(  S_k \tau^2 \right)}.
\end{align*}

Turning to the dynamic condition, we see that
  \begin{equation}
    \label{asymptotics bernoulli}
    -\frac{(\gamma_k^0)^2}{2 \pi^2} \realpart{ \mathcal{C} \dot\mu_k^\prime} + \frac{\gamma_k^0}{\pi} \realpart{(i \tau \mathcal{C} \dot\nu_k^\prime)} - \dot Q_k + \B_{k\param}^0 \dot\param = -\B_{k\rho}^0.
  \end{equation}
Expanding the operator $\mathcal{B}_k$ gives
\[ 
	\rho \mathcal{B}_k^\rho(0,0,\lambda) = \left| \frac{\gamma_k}{2\pi i \tau} + \rho \Vrho_k^\rho\right|^2 - \frac{\gamma_k^2}{4 \pi^2} = \rho^2 |\Vrho_k^\rho|^2 + 2 \rho \realpart{\left( \overline{\frac{\gamma_k}{2\pi i \tau}} \Vrho_k^\rho \right)},
\]
and thus
\begin{align*}
	\B_{k\rho}^0 &=  \frac{\gamma_k^0}{\pi } \realpart{\left(  i\tau \Vrho_{k\rho}^0 \right)}  = \frac{2\gamma_k^0}{\pi} \realpart{ (i S_k \tau^2  )}.
\end{align*}
In particular, applying the projection $\proj_0$ to \eqref{asymptotics bernoulli} reveals that $\dot Q_k = 0$.  Using \eqref{multiplier formulas} and our previous expression for $\dot\nu$, we then find that
\begin{align*}
	\frac{\gamma_k^0}{\pi} \realpart{(i \tau \mathcal{C} \dot\nu_k^\prime)} &=  \frac{2\gamma_k^0}{\pi} \realpart{\left( i S_k \tau^2 \right)}.
\end{align*}
Applying $\proj_2$ and $1-\proj_2$ to \eqref{asymptotics bernoulli}, we therefore obtain
\[
	\left\{
	\begin{aligned}
		(1-\proj_2) \realpart{ \mathcal{C} \dot\mu_k^\prime} & =  0 \\
		\proj_2 \realpart{ \mathcal{C} \dot\mu_k^\prime} & = \proj_2 \left(  \frac{2\pi^2}{(\gamma_k^0)^2} \B_{k\rho}^0 + \frac{2 \pi}{\gamma_k^0} \realpart{\left( i  \tau \mathcal{C}\dot\nu_k^\prime\right)} \right) = \frac{ 8 \pi}{\gamma_k^0} \realpart{ \left(  i S_k \tau^2 \right)}.
	\end{aligned}
	\right.
\]
Using \eqref{multiplier formulas}, these readily lead us to
\begin{align*}
	\dot\mu_k & = \frac{16 \pi}{\gamma_k^0} \realpart{ \left(  i S_k \tau \right)}.
\end{align*}
Thus, the asymptotics in \eqref{leading order hollow vortex} have been confirmed and the proof is complete.
\end{proof}

\subsection{Desingularization in the presence of symmetry}
\label{desingularization with symmetry section}

We next present a number of corollaries to Theorem~\ref{hollow vortex local bifurcation theorem} that demonstrate how the general vortex desingularization result can be strengthened in specific cases where additional symmetries are available.  In what follows, a subset of $\mathbb{C}$ is called \emph{symmetric} if it is invariant under even reflection over the real and imaginary axes.  For the fluid domain $\fluidD$, this holds provided the corresponding conformal domain $\confD_\rho$ is symmetric and the mapping $f$ is real on real and imaginary on imaginary.  We will consider two symmetry classes for the relative velocity field $\overline{\partial_z W}$: one for the rotating pair and stationary tripole examples, and another for translating pairs.  

The next lemma shows how these assumptions can be integrated into the formulation of the hollow vortex problem introduced in Section~\ref{formulation section}.  It can be easily generalized to other situations, but to keep the statement as simple as possible, we only present those necessary for our three applications.  

\begin{lemma}[Nonlinear symmetries] 
\label{nonlinear symmetries lemma}
Let densities $(\mu,\nu) \in \mathring C^{\ell+\alpha}(\mathbb{T})^M \times \mathring C^{\ell+\alpha}(\mathbb{T})^M$ be given and define the conformal mapping $f$ and velocity potential $w$ by \eqref{f ansatz} and \eqref{w ansatz}, respectively.  
\begin{enumerate}[label=\rm(\alph*)]
\item \label{rotating and tripole symmetries part} Suppose that $\confD_\rho$ is symmetric and for each $1 \leq k \leq M$ it holds that $\zeta_k \in \mathbb{R}$, and 
\[
	(\mu_k,\nu_k) \in \left\{ 
	\begin{aligned}
	 	\mathring C_\realreal^{\ell+\alpha}(\mathbb{T}) \times \mathring C_\imagimag^{\ell+\alpha}(\mathbb{T}) & \qquad   \textrm{if } \zeta_k \neq 0 \\
		\mathring C_{\realreal,\imagimag}^{\ell+\alpha}(\mathbb{T}) \times \mathring C_{\imagreal,\imagimag}^{\ell+\alpha}(\mathbb{T}) & \qquad  \textrm{if } \zeta_k = 0.
	\end{aligned} \right.
\]
Then $\fluidD$ is symmetric, the relative velocity $\overline{\partial_z W}$ is imaginary on real and real on imaginary, and
\begin{equation}
\label{codomain rotating and tripole}
	\F_k(u,\rho) \in \left\{ 	
  \begin{aligned}
    \mathring C_\imagreal^{\ell-1+\alpha}(\mathbb{T}) \times  C_\realreal^{\ell-1+\alpha}(\mathbb{T}) & \qquad   \textrm{if } \zeta_k \neq 0 \\
    \mathring C_{\imagreal,\imagimag}^{\ell-1+\alpha}(\mathbb{T}) \times  C_{\realreal,\realimag}^{\ell-1+\alpha}(\mathbb{T}) & \qquad  \textrm{if } \zeta_k = 0.
  \end{aligned} \right.
\end{equation}
\item \label{translating symmetry part} Suppose that $\confD_\rho$ is symmetric and for each $1 \leq k \leq M$ it holds that $\zeta_k \in i\mathbb{R} \setminus \{0\}$, and
\[
	(\mu_k,\nu_k) \in \mathring C_\imagimag^{\ell+\alpha}(\mathbb{T}) \times \mathring C_\realimag^{\ell+\alpha}(\mathbb{T}).
\]
Then $\fluidD$ is symmetric, the relative velocity $\overline{\partial_z W}$ is real on real and real on imaginary, and
\[
	\F_k(u,\rho) \in \mathring{C}_\realimag^{\ell-1+\alpha}(\mathbb{T}) \times  C_\realimag^{\ell-1+\alpha}(\mathbb{T}).
\]
\end{enumerate}
\end{lemma}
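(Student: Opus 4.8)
The proof is a bookkeeping exercise built on the linear symmetry dictionary of Lemma~\ref{symmetry over axes lemma} together with the Schwarz-reflection characterizations of the classes $C_\realreal^{\ell+\alpha}(\mathbb{T}),\, C_\imagimag^{\ell+\alpha}(\mathbb{T}),\ldots$ recorded in \eqref{mu even imaginary}--\eqref{mu odd real and imaginary}. The plan is to (i) convert the membership hypotheses on $(\mu,\nu)$ into reflection symmetries of the holomorphic functions $\mathcal{Z}^\rho\mu$ and $\mathcal{Z}^\rho\nu$; (ii) transfer these through the ans\"atze \eqref{f ansatz} and \eqref{w ansatz} to obtain the symmetry type of $f$, $w$, and hence of $\fluidD$; (iii) propagate the symmetries through the algebraic expression $U = \partial_z W = w_\zeta/f_\zeta + i\Omega\overline f - c$; and (iv) read off the resulting Fourier-coefficient parities of the components $\A_k$ and $\B_k$ of $\F_k$ from the explicit formulas \eqref{abstract operator}.

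For steps (i)--(ii) in case \ref{rotating and tripole symmetries part}, reflection in the real axis fixes every center $\zeta_k \in \mathbb{R}$, so the relation $\mu_k^* = \mu_k$ encoded by $\mu_k \in C_\realreal^{\ell+\alpha}(\mathbb{T})$ is exactly the hypothesis of Lemma~\ref{symmetry over axes lemma}(a) with $k' = k$, giving $\mathcal{Z}^\rho\mu$ real on real. Reflection in the imaginary axis instead pairs $\zeta_k$ with the center at $-\zeta_k$ furnished by the symmetry of $\confD_\rho$ (or fixes $\zeta_k = 0$), and one verifies that in the symmetric configurations at hand the class memberships supply the matching relation $\mu_k^* = -\mu_{k''}(-\placeholder)$ required by Lemma~\ref{symmetry over axes lemma}(b); hence $\mathcal{Z}^\rho\mu$ is imaginary on imaginary, so that $f = \id + \rho^2\mathcal{Z}^\rho\mu$ is simultaneously real on real and imaginary on imaginary and $\fluidD = f(\confD_\rho)$ is symmetric. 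The same two applications of Lemma~\ref{symmetry over axes lemma} to $\nu_k \in C_\imagimag^{\ell+\alpha}(\mathbb{T})$, combined with the manifest symmetries of the point-vortex potential $w^0$ in \eqref{w0 formula}, pin down the type of $w = w^0 + \rho\mathcal{Z}^\rho\nu$. Case \ref{translating symmetry part} is identical except that the centers lie on the imaginary axis, so now it is the imaginary-axis reflection that fixes each $\zeta_k$; the classes $C_\imagimag$ and $C_\realimag$ are chosen precisely so that $f$ is again real on real and imaginary on imaginary there.

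Step (iii) is routine once one records how the four reflection types interact with the elementary operations: they are preserved under products and quotients (the corresponding multiplicative characters multiply), they commute with complex conjugation up to a predictable sign, and under $\partial_\zeta$ the real-on-real type is preserved while imaginary-on-imaginary and real-on-imaginary are interchanged, since $(g')^* = (g^*)'$. Applying this to $w_\zeta/f_\zeta$, to the rotation term $i\Omega\overline f$ (with $\Omega \in \mathbb{R}$ and $f$ of the type just found), and to the constant $c \in \mathbb{R}$, one finds in case \ref{rotating and tripole symmetries part} (where $c = 0$) that $\overline{\partial_z W}$ is imaginary on real and real on imaginary, and in case \ref{translating symmetry part} (where $\Omega = 0$) that it is real on real and real on imaginary. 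The differing roles of the terms carrying $c$ versus $\Omega$ are exactly what distinguish the two velocity statements.

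Finally, for step (iv) I would substitute these symmetries into \eqref{abstract operator}. The essential point, and the main obstacle, is to pass from the global reflection symmetry of $\mathcal{Z}^\rho\mu$ to the correct trace-level parities of $\mathcal{Z}^\rho_k\mu$ and $\mathcal{Z}^\rho_k\nu$ defined in \eqref{trace Z operator}: since $\mathcal{Z}^\rho_k\mu(\tau) = \mathcal{Z}^\rho\mu(\zeta_k+\rho\tau)$ with $\zeta_k,\rho$ real, real-on-real descends to $(\mathcal{Z}^\rho_k\mu)^* = \mathcal{Z}^\rho_k\mu$, whereas imaginary-on-imaginary descends to the cross relation $(\mathcal{Z}^\rho_k\mu)^*(\tau) = -\mathcal{Z}^\rho_{k''}\mu(-\tau)$, which collapses to a self-relation exactly when $\zeta_k = 0$. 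Feeding these, together with the parities of $\Vrho_k^\rho$ read off from \eqref{definition Lambda} and of $w^0_\zeta$, into the real part $\realpart(\tau(\,\cdots))$ defining $\A_k$ and into the modulus appearing in $\B_k$, a direct inspection of Fourier coefficients shows that $\A_k$ is odd and mean-zero, hence in $\mathring C_\imagreal^{\ell-1+\alpha}(\mathbb{T})$, while $\B_k$ is even, hence in $C_\realreal^{\ell-1+\alpha}(\mathbb{T})$, with the additional imaginary-axis constraints emerging precisely at $\zeta_k = 0$ to yield the refined classes in \eqref{codomain rotating and tripole}; case \ref{translating symmetry part} follows the same route. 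The only place where genuine care is required is this translation between global and trace-level symmetries, and the verification that the prefactor $\tau$ and the operations $\realpart,\imagpart$ convert them into exactly the claimed coefficient conditions.
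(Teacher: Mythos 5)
Your proposal is correct and follows essentially the same route as the paper's proof: establish the reflection symmetries of $f$, $w$, and the relative velocity from Lemma~\ref{symmetry over axes lemma} and the explicit formulas \eqref{f ansatz}, \eqref{w ansatz}, and then convert those symmetries into the Fourier-coefficient conditions defining the target subspaces for $\mathscr{A}_k$ and $\mathscr{B}_k$. The only cosmetic difference is in your step (iv): rather than substituting term by term into \eqref{abstract operator}, the paper reads the reflection relations $\mathscr{A}_k^* = \mathscr{A}_k$ and $\mathscr{B}_k^* = \mathscr{B}_k$ (with the extra relation $\mathscr{A}_k = \mathscr{A}_k(-\placeholder)$, $\mathscr{B}_k = \mathscr{B}_k(-\placeholder)$ when $\zeta_k = 0$) directly from the physical meaning of the operators --- the normal component of the relative velocity and $|U|^2$ at $\zeta = \zeta_k + \rho\tau$ --- for $\rho \neq 0$, and then extends to $\rho = 0$ by real analyticity, which sidesteps the trace-level bookkeeping you flag as the delicate point.
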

\begin{proof}
Consider first the situation in part~\ref{rotating and tripole symmetries part}.  The stated symmetry of $\fluidD$ and $\overline{\partial_z W}$ follow from the explicit formulas \eqref{f ansatz} and \eqref{w ansatz} together with Lemma~\ref{symmetry over axes lemma} and the surrounding discussion.  Verifying that the nonlinear operator $\F_k = (\mathscr{A}_k, \mathscr{B}_k)$ satisfies \eqref{codomain rotating and tripole} is best done through the local formulation.  Recall that for $|\rho| > 0$ and $\tau \in \mathbb{T}$, physically $\mathscr{A}_k = \mathscr{A}_k(u,\rho)(\tau)$ corresponds to the normal component of the relative velocity field at $\zeta = \zeta_k + \rho \tau$.  Based on the symmetry properties of $f$ and $w$, a careful examination of \eqref{local kinematic condition} shows that
\[
	\mathscr{A}_k^* = \mathscr{A}_k \quad \textrm{if } \zeta_k \neq 0, \qquad \mathscr{A}_k^* = \mathscr{A}_k = \mathscr{A}_k(-\placeholder) \quad \textrm{if } \zeta_k = 0.
\]
Expanding $\mathscr{A}_k$ as a Fourier series and arguing as in Section~\ref{symmetries subsection}, we find that necessarily $\mathscr{A}_k$ lies in the subspace indicated in \eqref{codomain rotating and tripole}. By real analyticity, this extends to $\rho = 0$ as well.  

In the same vein, observe that when $|\rho| > 0$, the second component of $\F_k$ satisfies 
\[ 
	\rho \mathscr{B}_k(u,\rho)(\tau) = \rho^2 | U(\zeta_k + \rho \tau) |^2 - \frac{\gamma_k}{4 \pi^2} - \rho Q_k,
\]
where, as before, $U$ is the conjugate of the relative velocity field.  We can again infer from this that
\[
	\mathscr{B}_k^* = \mathscr{B}_k \quad \textrm{if } \zeta_k \neq 0, \qquad \mathscr{B}_k^* = \mathscr{B}_k = \mathscr{B}_k(-\placeholder) \quad \textrm{if } \zeta_k = 0,
\]
and the same will hold for $\rho = 0$ by real analyticity.  Expanding as a Fourier series and following the same argument as in Section~\ref{symmetries subsection}, we infer that  $\mathscr{B}_k$ belongs to the subspace indicated in \eqref{codomain rotating and tripole}.  The proof of part~\ref{translating symmetry part} is an easy adaptation of these ideas, and hence we omit it.
\end{proof}
 
 First, consider the co-rotating vortex pair given by 
\[
	\fullparam^0 = \fullparam_{\textup{r}}^0 \colonequals  \Big(\gamma_1 = 1, \, \gamma_2=1,\,  \zeta_1 = 1, \, \zeta_2 = -1, \, c = 0, \, \Omega = \frac{1}{4\pi}\Big).
\]
In Example~\ref{rotating pair example}, we identified a non-degenerate splitting of the parameters with $\lambda = (\realpart{\zeta_1}, \zeta_2)$.  A direct application of Theorem~\ref{hollow vortex local bifurcation theorem} would then furnish a local curve of nearby hollow vortices along which the pre-image of the centers vary.  However, this scenario produces some unwanted analytical complications for the global continuation argument taken up in the next section.   Here we give an alternative construction that, by exploiting the symmetry of the configuration, permits us to fix the vortex centers and circulations taking instead $\lambda = \Omega$.  

\begin{corollary}[Small rotating pair]
\label{local rotating pair corollary}
Let $\fullparam_{\textup{r}}^0$ represent the pair of co-rotating point vortices given above.  There exists $\rho_1 > 0$ and a curve $\km_{\mathrm{r},\loc}$ of co-rotating hollow vortices that admits the parameterization 
\[
	\km_{\mathrm{r},\loc} = \left\{  (u^\rho,\rho) = (\mu^\rho, \nu^\rho, Q^\rho, \Omega^\rho, \rho)  \in \mathcal{O} : \quad |\rho| < \rho_1 \right\} \subset \F^{-1}(0),
\]
with $\rho \mapsto u^\rho$ real analytic as a mapping $(-\rho_1, \rho_1) \mapsto \Xspace$ and the remaining components of $\fullparam$ fixed to their values in $\fullparam_{\textup{r}}^0$.  
\begin{enumerate}[label=\rm(\alph*)]
\item $\km_{\mathrm{r},\loc}$ bifurcates from the point vortex configuration $\fullparam_{\textup{r}}^0$ and to leading order is given by \eqref{leading order hollow vortex} with $\lambda^\rho = \Omega^\rho$ and $\lambda_0 = 1/(4\pi)$.
\item The parameterization exhibits the symmetry \eqref{parameterization symmetry}.
\item Each solution on $\km_{\mathrm{r},\loc}$ is symmetric in that the fluid domain is even over both axes while the relative velocity is real on imaginary and imaginary on real.  In a neighborhood of $(0,\param_0,0)$ in $\mathcal{O}$,  the curve $\km_{\mathrm{r},\loc}$ comprises all solutions with this symmetry and such that $\param^\prime = \param_0^\prime$ and conformally equivalent to a circular domain with equal radii 
\end{enumerate}
\end{corollary}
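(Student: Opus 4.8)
The plan is to rerun the implicit-function-theorem argument behind Theorem~\ref{hollow vortex local bifurcation theorem}, but with the operator equation $\F=0$ restricted to a closed linear subspace of $\Xspace$ encoding invariance of the configuration under reflection across \emph{both} coordinate axes. The conceptual point is that, within this symmetric subspace, the rotation identity \eqref{pv rotation identity} responsible for the codimension in the full rotating problem is automatically satisfied. This collapses the effective cokernel of the linearized operator to zero and thereby lets $\Omega$ alone serve as the free parameter $\param$, rather than the larger splitting $\param=(\realpart\zeta_1,\zeta_2)$ used in Example~\ref{rotating pair example}.

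Concretely, I would first introduce $\Xspace_{\rm sym}\subset\Xspace$ consisting of those $(\mu,\nu,Q,\Omega)$ for which $\gamma_1=\gamma_2=1$, $\zeta_1=1$, $\zeta_2=-1$, $c=0$ are held fixed and the densities satisfy the parity conditions of Lemma~\ref{nonlinear symmetries lemma}\ref{rotating and tripole symmetries part}, namely $\mu_k\in\mathring C_\realreal^{\ell+\alpha}(\mathbb{T})$ and $\nu_k\in\mathring C_\imagimag^{\ell+\alpha}(\mathbb{T})$ (reflection across the real axis, which fixes each center), \emph{together with} the inter-vortex swap relations $\mu_2=-\mu_1(-\placeholder)$ and $\nu_2=-\nu_1(-\placeholder)$ furnished by Lemma~\ref{symmetry over axes lemma} (reflection across the imaginary axis, which interchanges the two centers). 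By Lemma~\ref{nonlinear symmetries lemma}\ref{rotating and tripole symmetries part}, these conditions force $\fluidD$ to be even over both axes with relative velocity real on imaginary and imaginary on real, and they guarantee that $\F$ maps $\Xspace_{\rm sym}\times\mathbb{R}$ into the corresponding symmetric subspace $\Yspace_{\rm sym}\subset\Yspace$ cut out by \eqref{codomain rotating and tripole} and the analogous swap. Since the trivial point-vortex solution $u^0=(0,\param_0)$ lies in $\Xspace_{\rm sym}$, the restriction $\F\colon\Xspace_{\rm sym}\times\mathbb{R}\to\Yspace_{\rm sym}$ is a well-defined real-analytic map vanishing at $(u^0,0)$.

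The heart of the argument is the linearized analysis of $D_u\F(u^0,0)$ viewed as a map $\Xspace_{\rm sym}\to\Yspace_{\rm sym}$. Here I would reuse the block reduction of Lemma~\ref{kernel and range lemma}: inverting the isomorphism $\A_\nu^0$ reduces matters to the operator $\mathscr L$ of \eqref{Lk formula}, whose kernel and cokernel are governed through \eqref{kernel characterization} and \eqref{range characterization} by those of $D_\Omega\mathcal V(\fullparam_0)$. The key observation is that for this symmetric pair the steadiness defect obeys $\mathcal V_2=-\mathcal V_1$ with $\mathcal V_1\in i\mathbb{R}$, so the symmetric target $\Yspace_{\rm sym}$ confines the constant-mode obstruction to the single real line $\{(it,-it):t\in\mathbb{R}\}\subset\mathbb{C}^M$; on this line the rotation-identity constraint $\realpart\sum_k\gamma_k\zeta_k\mathcal V_k=0$ underlying $\phi_{\rm r}$ holds automatically, since $\sum_k\gamma_k\zeta_k\mathcal V_k=2\mathcal V_1\in i\mathbb{R}$, and hence imposes no further restriction. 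Because $D_\Omega\mathcal V(\fullparam_0)=(i\overline{\zeta_1},i\overline{\zeta_2})=(i,-i)$ maps $\mathbb{R}$ isomorphically onto this line, the restricted $D_\Omega\mathcal V$ has trivial kernel and trivial cokernel. The symmetric version of the dimension counts \eqref{Fu dimension counts}—valid because the block reduction and the multiplier formulas \eqref{multiplier formulas} respect the parities—then shows $D_u\F(u^0,0)\colon\Xspace_{\rm sym}\to\Yspace_{\rm sym}$ is an isomorphism, and the analytic implicit function theorem yields the real-analytic curve $\rho\mapsto u^\rho=(\mu^\rho,\nu^\rho,Q^\rho,\Omega^\rho)$ in $\Xspace_{\rm sym}$ with $u^0=(0,\param_0)$.

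Finally, the remaining assertions are inherited from the general theory. The expansion \eqref{leading order hollow vortex} with $\param^\rho=\Omega^\rho$ follows from the identical computation in the proof of Theorem~\ref{hollow vortex local bifurcation theorem}, now read off inside $\Xspace_{\rm sym}$; in particular $\dot\Omega=0$, whence $\Omega^\rho=1/(4\pi)+O(\rho^2)$. The parameterization symmetry \eqref{parameterization symmetry} comes from the invariance \eqref{symmetries abstract operator} of Remark~\ref{negative rho operator equation remark} together with local uniqueness, exactly as before. The symmetry claims in part~(c) hold by construction, since every solution lies in $\Xspace_{\rm sym}$ so that Lemma~\ref{nonlinear symmetries lemma}\ref{rotating and tripole symmetries part} applies, while the local uniqueness statement is precisely the uniqueness clause of the implicit function theorem applied in $\Xspace_{\rm sym}$. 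I expect the main obstacle to be the bookkeeping in the linearized step: one must check carefully that the parity constraints defining $\Yspace_{\rm sym}$ coincide exactly with the symmetric reduction of the point-vortex identities so that no residual cokernel survives, and that the swap relations are genuinely preserved under the block reduction—this is where the interplay between Lemma~\ref{symmetry over axes lemma}, the range characterization \eqref{range characterization}, and the multiplier formulas \eqref{multiplier formulas} must be tracked with care.
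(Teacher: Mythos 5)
Your proposal takes essentially the same route as the paper: restrict to function spaces encoding evenness over both axes so that the constant and first Fourier modes of the Bernoulli condition are absorbed by $\dot Q$ and $\dot \Omega$, the linearized operator becomes an isomorphism, and the plain analytic implicit function theorem applies with $\Omega$ as the lone varying parameter; the asymptotics, the $\rho \mapsto -\rho$ symmetry, and local uniqueness then follow exactly as you describe. The only substantive difference is organizational---the paper uses the swap relations to eliminate the second boundary entirely and works with the single operator $\F_1$ acting on $(\mu_1,\nu_1,Q_1,\Omega)$, whereas you keep both boundaries inside an invariant subspace---together with one bookkeeping slip: with $\nu_1 \in \mathring C_\imagimag^{\ell+\alpha}(\mathbb{T})$ the swap relation coming from Lemma~\ref{symmetry over axes lemma} is $\nu_2 = \nu_1^*(-\placeholder) = -\nu_1$, not $\nu_2 = -\nu_1(-\placeholder)$; your version agrees with the correct one only when $\nu_1$ is even (true for the leading-order term $-2\rho\realpart{(S_1\tau^2)}$ but not in general), so as written $\Xspace_{\rm sym}$ does not encode the intended reflection of the velocity field across the imaginary axis. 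This is easily repaired and does not affect the structure of the argument.
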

\begin{proof}
As this configuration is symmetric with respect to both the imaginary and real axis, and the centers both lie on the real axis, it suffices to determine $u_1 \colonequals  (\mu_1,\nu_1,Q_1,\Omega)$ setting 
\[
	\mu_2 \colonequals  -\mu_1^*(-\placeholder), \qquad \nu_2 \colonequals  \nu_1^*(-\placeholder), \qquad Q_2 = Q_1
\]
and leaving the remaining parameters fixed.  By the symmetries of the nonlinear problem, moreover, we need only solve the kinematic and dynamic conditions on the first vortex boundary, assuming that $\mu_1$ and $\nu_1$ satisfy \eqref{mu even real} and \eqref{mu even imaginary}, respectively.  
Lemma~\ref{nonlinear symmetries lemma}\ref{rotating and tripole symmetries part} therefore justifies reformulating the problem as $\F_1(u_1,\rho) = 0$, where
\[
	\F_1 = (\mathscr{A}_1, \mathscr{B}_1) \colon \mathring C_\realreal^{\ell+\alpha}(\mathbb{T}) \times \mathring C_\imagimag^{\ell+\alpha}(\mathbb{T}) \times \mathbb{R}^3 \longrightarrow \mathring{C}_{\imagreal}^{\ell-1+\alpha}(\mathbb{T}) \times  C_\realreal^{\ell-1+\alpha}(\mathbb{T})
\]
encodes the Bernoulli and kinematic conditions on the first vortex boundary.  

The proof parallels that of Theorem~\ref{hollow vortex local bifurcation theorem} with only a few small modifications.  As in \eqref{DA DB formulas}, we compute that
\[
	D_{u_1} \F_1(u_1^0,0) = 
		\begin{pmatrix} 
			0 & \mathscr{A}_{1\nu_1}^0 & 0 & \mathscr{A}_{1\Omega}^0 \\ 
			\mathscr{B}_{1\mu_1}^0 & \mathscr{B}_{1\nu_1}^0 & \mathscr{B}_{1Q_1}^0 & \mathscr{B}_{1\Omega}^0
		\end{pmatrix} 
		\begin{pmatrix}
			\dot \mu_1 \\
			\dot \nu_1 \\
			\dot Q_1 \\
			\dot \Omega
		\end{pmatrix},
\]
with 
\[
	\mathscr{A}_{1\nu_1}^0 \dot \nu_1 = \realpart{\left( \tau \mathcal{C} \partial_\tau \dot\nu_1 \right)} \qquad 
	\mathscr{B}_{1 (\mu_1, \nu_1, Q_1)}^0 \begin{pmatrix} \dot \mu_1 \\ \dot \nu_1 \\ \dot Q_1 \end{pmatrix} = \frac{1}{2\pi^2}  \realpart{ \left( 2\pi i \tau \mathcal{C} \dot \nu_1^\prime - \mathcal{C} \dot \mu_1^\prime \right)} - \dot Q_1,
\]
the domain and codomain being changed as indicated above.  The characterization of the kernel \eqref{kernel characterization} still holds, and since $\mathcal{V}_{1\Omega} = i$ defines an injective mapping, we see that so too is $D_{u_1} \F_1(u_1,0)$.   Likewise, from the multiplier formulas \eqref{multiplier formulas}, we have that $\mathscr{A}_{1\nu_1}^0$ is an isomorphism $\mathring{C}_{\realreal}^{\ell+\alpha}(\mathbb{T}) \to \mathring{C}_{\realreal}^{\ell-1+\alpha}(\mathbb{T})$. Thus to establish surjectivity it suffices to consider the  row-reduced operator $\mathscr{L}_1$ given now by 
  \begin{equation}
    \label{row reduced operator rotating}
    \mathscr{L}_1 
    \begin{pmatrix} 
      \dot \mu_1 \\ 
      \dot Q_1 \\ 
      \dot \Omega 
    \end{pmatrix}
     \colonequals  -\frac{1}{2\pi^2} \realpart{\mathcal{C} \dot \mu_1^\prime} - \dot Q_1 - \frac{2 }{\pi} \realpart{ \left( \dot \Omega \tau \right)}
  \end{equation}
and viewed as a mapping $C_{\realreal}^{\ell+\alpha}(\mathbb{T}) \times \mathbb{R}^2 \to C_{\realreal}^{\ell-1+\alpha}(\mathbb{T})$. From \eqref{multiplier formulas} one can easily verify that
\[
    \range{\realpart{( \mathcal{C} \partial_\tau)}|_{\mathring{C}_{\realreal}^{\ell+\alpha}}}   = P_{>1} {C}_\realreal^{\ell-1+\alpha}(\mathbb{T}).
\]
Hence $\mathscr{L}_1$ is an isomorphism and the implicit function theorem can be be applied, giving the existence and local uniqueness of the real-analytic curve.  The leading-order asymptotics follow exactly as in the proof of Theorem~\ref{hollow vortex local bifurcation theorem}.  Finally, the claimed symmetry properties follow from Lemma~\ref{nonlinear symmetries lemma}\ref{rotating and tripole symmetries part}
\end{proof}

Next we investigate stationary tripoles of hollow vortices bifurcating from the point vortex configuration
\[ 
	\fullparam = \fullparam_{\textup{s}}^0 \colonequals  \Big(\gamma_1 = 2,\,  \gamma_2 = -1, \,  \gamma_3=2, \, \zeta_1 = 1,\,  \zeta_2=0, \, \zeta_3=-1, \, c=0,\, \Omega=0\Big).
\]
One non-degenerate splitting is presented in Example~\ref{stationary tripole example}.  The next corollary uses symmetry to find a different curve of nearby stationary hollow vortices for which the only varying parameters is the circulation $\gamma_2$ about the central vortex.  

\begin{corollary}[Small stationary tripole]\label{small stationary tripole corollary}
Let $\fullparam_{\textup{s}}^0$ represent the stationary tripole of point vortices given above.  There exists $\rho_1 > 0$ and a curve $\km_{\mathrm{s},\loc}$ of stationary hollow vortices that admits the parameterization 
\[
	\km_{\mathrm{s},\loc} = \left\{  (u^\rho,\rho) = (\mu^\rho, \nu^\rho, Q^\rho, \gamma_2^\rho, \rho)  \in \mathcal{O} : \quad |\rho| < \rho_1 \right\} \subset \F^{-1}(0),
\]
with $\rho \mapsto u^\rho$ real analytic as a mapping $(-\rho_1, \rho_1) \mapsto \Xspace$ and the remaining components of $\fullparam$ fixed to their values in $\fullparam_{\textup{s}}^0$.  
\begin{enumerate}[label=\rm(\alph*)]
\item $\km_{\mathrm{s},\loc}$ bifurcates from the point vortex configuration $\fullparam_{\textup{s}}^0$ and to leading order is given by \eqref{leading order hollow vortex} with $\lambda^\rho = \gamma_2^\rho$ and $\lambda_0 = -1$.
\item The parameterization exhibits the symmetry \eqref{parameterization symmetry}.
\item Each solution on $\km_{\mathrm{s},\loc}$ is symmetric in that the fluid domain is even over both axes while the relative velocity is real on imaginary and imaginary on real.  In a neighborhood of $(0,\param_0,0)$ in $\mathcal{O}$,  the curve $\km_{\mathrm{s},\loc}$ comprises all solutions with this symmetry and such that $\param^\prime = \param_0^\prime$ and conformally equivalent to a circular domain with equal radii 
\end{enumerate}
\end{corollary}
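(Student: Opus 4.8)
The plan is to follow the proof of Corollary~\ref{local rotating pair corollary} almost verbatim, the only genuinely new feature being the central vortex at $\zeta_2 = 0$. Since $\fullparam_{\textup{s}}^0$ is invariant under reflection across both axes, with $\zeta_1 = 1$ and $\zeta_3 = -1$ interchanged by $\zeta \mapsto -\overline\zeta$ and $\zeta_2 = 0$ fixed, I would seek a solution respecting this symmetry. Concretely, I would slave vortex $3$ to vortex $1$ by setting $\mu_3 \colonequals -\mu_1^*(-\placeholder)$, $\nu_3 \colonequals \nu_1^*(-\placeholder)$, $Q_3 \colonequals Q_1$, and confine the remaining densities to the symmetric subspaces furnished by Lemma~\ref{nonlinear symmetries lemma}\ref{rotating and tripole symmetries part}, namely $(\mu_1,\nu_1)\in\mathring C_\realreal^{\ell+\alpha}(\mathbb{T})\times\mathring C_\imagimag^{\ell+\alpha}(\mathbb{T})$ and, for the central vortex, $(\mu_2,\nu_2)\in\mathring C_{\realreal,\imagimag}^{\ell+\alpha}(\mathbb{T})\times\mathring C_{\imagreal,\imagimag}^{\ell+\alpha}(\mathbb{T})$. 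That lemma guarantees the resulting fluid domain is symmetric, that $\overline{\partial_z W}$ has the claimed parity, and that the residuals on the first and central boundaries take values in the subspaces recorded in \eqref{codomain rotating and tripole}; the reflection symmetry forces the residuals on the third boundary to vanish once those on the first do. The problem then collapses to the reduced system $\mathscr F_1(u_1,u_2,\gamma_2,\rho)=0$, $\mathscr F_2(u_1,u_2,\gamma_2,\rho)=0$ for $u_1=(\mu_1,\nu_1,Q_1)$, $u_2=(\mu_2,\nu_2,Q_2)$, and the single scalar $\gamma_2$, all other components of $\fullparam$ held fixed at $\fullparam_{\textup{s}}^0$.

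For the linear analysis I would compute $D_{(u_1,u_2,\gamma_2)}\mathscr F$ at the trivial solution $(0,\gamma_2^0=-1,0)$. As in \eqref{DA DB formulas}, each $\mathscr A_{k\nu}^0=\realpart(\tau\,\mathcal C\,\partial_\tau\,\placeholder)$ is an isomorphism on the relevant symmetric subspace by the multiplier formulas \eqref{multiplier formulas}, so I would eliminate $\dot\nu_1,\dot\nu_2$ and pass to the row-reduced operator $\mathscr L=(\mathscr L_1,\mathscr L_2)$ exactly as in the proof of Lemma~\ref{kernel and range lemma}, obtaining
\[
  \mathscr L_1=-\frac{(\gamma_1^0)^2}{2\pi^2}\realpart\mathcal C\dot\mu_1'-\dot Q_1+\frac{2\gamma_1^0}{\pi}\realpart\!\big(i\tau\,\mathcal V_{1\gamma_2}\,\dot\gamma_2\big),
  \qquad
  \mathscr L_2=-\frac{(\gamma_2^0)^2}{2\pi^2}\realpart\mathcal C\dot\mu_2'-\dot Q_2.
\]
The vortex-$2$ component carries \emph{no} $\gamma_2$-term because a direct computation from \eqref{dWdzeta point vortex algebra} gives $\mathcal V_{2\gamma_2}(\fullparam_{\textup{s}}^0)=0$, whereas $\mathcal V_{1\gamma_2}(\fullparam_{\textup{s}}^0)=\tfrac{1}{2\pi i}\neq 0$. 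Thus the two blocks decouple, $\mathscr L_1$ acting on $(\dot\mu_1,\dot Q_1,\dot\gamma_2)$ and $\mathscr L_2$ on $(\dot\mu_2,\dot Q_2)$.

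The heart of the argument is surjectivity of $\mathscr L_2$ despite the absence of any bifurcation parameter in that block. For vortex $1$ the situation is identical to Corollary~\ref{local rotating pair corollary}: $\realpart\mathcal C\dot\mu_1'$ sweeps out $\proj_{>1}$, $\dot Q_1$ the constants $\proj_0$, and $\frac{2\gamma_1^0}{\pi}\realpart(i\tau\,\mathcal V_{1\gamma_2}\dot\gamma_2)$ is a nonzero first harmonic, so $\dot\gamma_2$ supplies $\proj_1$ and $\mathscr L_1$ is an isomorphism onto $C_\realreal^{\ell-1+\alpha}(\mathbb{T})$. For the central vortex, the extra parity $\mathscr B_2=\mathscr B_2(-\placeholder)$ established in Lemma~\ref{nonlinear symmetries lemma}\ref{rotating and tripole symmetries part}---equivalently, membership of its codomain in $C_{\realreal,\realimag}^{\ell-1+\alpha}(\mathbb{T})$---means that $\mathscr B_2$ contains \emph{only even harmonics}. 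In particular the first harmonic, which in the general theory is exactly the component that must be furnished through $D_\lambda\mathcal V$, is simply absent from the range requirement for vortex $2$. Hence $\dot Q_2$ and $\realpart\mathcal C\dot\mu_2'$ already exhaust $\proj_0\oplus\proj_{>1}=C_{\realreal,\realimag}^{\ell-1+\alpha}(\mathbb{T})$, and $\mathscr L_2$ is an isomorphism with no parameter needed. I expect this even-harmonic cancellation---tracking the parity of $\mathscr B_2$ through the reduced spaces---to be the main obstacle, since it is precisely what rescues the otherwise degenerate central block.

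With both blocks isomorphisms, $\mathscr L$, and therefore $D_{(u_1,u_2,\gamma_2)}\mathscr F$ at the trivial solution, is an isomorphism, and the analytic implicit function theorem produces the real-analytic curve $\km_{\mathrm s,\loc}$ parameterized by $\rho$. The leading-order asymptotics \eqref{leading order hollow vortex} with $\lambda^\rho=\gamma_2^\rho$ and $\lambda_0=-1$ follow by repeating the expansion from the proof of Theorem~\ref{hollow vortex local bifurcation theorem} on the first and central boundaries; the symmetry \eqref{parameterization symmetry} is inherited from Remark~\ref{negative rho operator equation remark}; and the geometric symmetry of each solution, together with local uniqueness within the symmetry class, is the content of Lemma~\ref{nonlinear symmetries lemma}\ref{rotating and tripole symmetries part}.
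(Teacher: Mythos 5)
Your proposal is correct and follows essentially the same route as the paper: slaving vortex $3$ to vortex $1$, restricting to the symmetric subspaces of Lemma~\ref{nonlinear symmetries lemma}\ref{rotating and tripole symmetries part}, row-reducing to $(\mathscr L_1,\mathscr L_2)$, and observing that $\mathcal V_{2\gamma_2}=0$ is harmless because the codomain $C_{\realreal,\realimag}^{\ell-1+\alpha}(\mathbb{T})$ for the central vortex contains only even harmonics, so $\realpart\mathcal C\dot\mu_2'$ and $\dot Q_2$ already exhaust it. Your explicit articulation of that even-harmonic cancellation is exactly the (unstated) reason the paper's $\mathscr L_2$ is an isomorphism without a parameter.
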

\begin{proof}
In this case, it suffices to consider only the unknown $(\mu_1, \mu_2, \nu_1, \nu_2, Q_1, Q_2,\gamma_2)$, taking 
\[
	\mu_3 \colonequals  -\mu_1^*(-\placeholder), \qquad \nu_3 \colonequals  \nu_1^*(-\placeholder), \qquad Q_3 = Q_1
\]
and leaving the remaining parameters at their starting values.  The kinematic and dynamic conditions must then be satisfied on both the right and center vortex boundary.  The requirements on $\mu_1$ and $\nu_1$ are exactly the same as in the rotating pair case.  Because $\zeta_2 = 0$, however, we must have that $\mu_2$ satisfies \eqref{mu even real and imaginary} while $\nu_2$ satisfies \eqref{mu odd real and imaginary}.  Thanks to Lemma~\ref{nonlinear symmetries lemma}\ref{rotating and tripole symmetries part}, it is enough to study the zero-set of the operator $\F = (\F_1, \F_2)$ in a neighborhood of the point vortex tripole, where for $k = 1$ or $2$, $\F_k = (\mathscr{A}_k, \mathscr{B}_k) \colon  \Xspace_{\mathrm{s}} \to \Yspace_{k \mathrm{s}}$ is a real-analytic mapping between the spaces
\[
\begin{aligned}
	\Xspace_{\mathrm{s}} & \colonequals  \mathring C_\realreal^{\ell+\alpha}(\mathbb{T}) \times \mathring C_{\realreal,\imagimag}^{\ell+\alpha}(\mathbb{T})  \times \mathring C_\imagreal^{\ell+\alpha}(\mathbb{T}) \times  \mathring C_{\imagreal,\imagimag}^{\ell+\alpha}(\mathbb{T}) \times \mathbb{R}^2  \times \mathbb{R} \\
	\Yspace_{1\mathrm{s}} &\colonequals  \mathring{C}_{\imagreal}^{\ell-1+\alpha}(\mathbb{T}) \times  C_\realreal^{\ell-1+\alpha}(\mathbb{T})\\
	\Yspace_{2\mathrm{s}} & \colonequals  \mathring{C}_{\imagreal,\imagimag}^{\ell-1+\alpha}(\mathbb{T}) \times  C_{\realreal,\realimag}^{\ell-1+\alpha}(\mathbb{T}).
\end{aligned}
\]

The analysis is much the same as Theorem~\ref{hollow vortex local bifurcation theorem} and Corollary~\ref{local rotating pair corollary}.  An identical argument shows that the linearized operator $\F_{u}$ is injective upper triangular, thus we need only establish the surjectivity of the row-reduced operators $\mathscr{L}_1$ and $\mathscr{L}_2$.  Direct computation gives 
\[
	\mathscr{L}_1 
		\begin{pmatrix} 
			\dot \mu_1 \\ 
			\dot Q_1 \\ 
			\dot \gamma_2
		\end{pmatrix}
		 \colonequals  -\frac{\gamma_1^2}{2\pi^2} \realpart{\mathcal{C} \dot \mu_1^\prime} - \dot Q_1 - \frac{2 }{\pi^2} \realpart{ \left(  \tau \dot \gamma_2 \right)},
\]
which is an isomorphism $\mathring{C}_{\realreal}^{\ell+\alpha}(\mathbb{T}) \times \mathbb{R}^2 \to C_{\realreal}^{\ell-1+\alpha}(\mathbb{T})$.  On the other hand, $\mathcal{V}_{2 \gamma_2} = 0$, and hence the row-reduced operator for the second vortex boundary is 
\[
	\mathscr{L}_2
		\begin{pmatrix} 
			\dot \mu_2 \\ 
			\dot Q_2 \\ 
			\dot \gamma_2
		\end{pmatrix}
		 \colonequals  -\frac{\gamma_1^2}{2\pi^2} \realpart{\mathcal{C} \dot \mu_2^\prime} - \dot Q_2.
\]
This is independent of $\dot\gamma_2$ and we can easily verify that $(\dot\mu_2, \dot Q_n) \mapsto -(\gamma_1^2/2\pi^2) \realpart{\mathcal{C} \dot \mu_2^\prime} - \dot Q_2$  constitutes an isomorphism $\mathring{C}_{\realreal,\imagimag}(\mathbb{T}) \times \mathbb{R} \to C_{\realreal,\realimag}^{\ell-1+\alpha}(\mathbb{T})$. Combined with the previous line, this shows $(\mathscr{L}_1, \mathscr{L}_2)$ is an isomorphism, and hence the proof follows by applying the implicit function theorem.  
\end{proof}

As the last examples, let us consider a pair of co-translating point vortices:
\[
	\fullparam = \fullparam_{\textup{t}}^0 \colonequals \Big(\gamma_1 = 1,\,  \gamma_2 = -1, \, \zeta_1 = i, \, \zeta_2 = -i, \, c = \frac{1}{4\pi}, \, \Omega = 0  \Big).
\]
Again, one can use the parameter splitting from Example~\ref{translating pair example} in conjunction with Theorem~\ref{hollow vortex local bifurcation theorem} to generate one family of nearby hollow vortex solutions.  The next corollary uses a variant of this argument to recover the classical family of Pocklington vortex pairs \cite{pocklington1896} for which the sole varying parameter is the wave speed $c$.

\begin{corollary}[Small translating pairs]\label{small translating pair corollary}
Let $\fullparam_{\textup{t}}^0$ represent the pair of co-rotating point vortices given above.  There exists $\rho_1 > 0$ and a curve $\km_{\mathrm{t},\loc}$ of translating hollow vortices that admits the parameterization 
\[
	\km_{\mathrm{t},\loc} = \left\{  (u^\rho,\rho) = (\mu^\rho, \nu^\rho, Q^\rho, c^\rho, \rho)  \in \mathcal{O} : \quad |\rho| < \rho_1 \right\} \subset \F^{-1}(0),
\]
with $\rho \mapsto u^\rho$ real analytic as a mapping $(-\rho_1, \rho_1) \mapsto \Xspace$ and the remaining components of $\fullparam$ fixed to their values in $\fullparam_{\textup{t}}^0$.  
\begin{enumerate}[label=\rm(\alph*)]
\item $\km_{\mathrm{t},\loc}$ bifurcates from the point vortex configuration $\fullparam_{\textup{t}}^0$ and to leading order is given by \eqref{leading order hollow vortex} with $\lambda^\rho = c^\rho$ and $\lambda_0 = 1/(4\pi)$.
\item The parameterization exhibits the symmetry \eqref{parameterization symmetry}.
\item Each solution on $\km_{\mathrm{t},\loc}$ is symmetric in that the fluid domain is even over both axes while the relative velocity is real on real and real on imaginary.  In a neighborhood of $(0,\param_0,0)$ in $\mathcal{O}$,  the curve $\km_{\mathrm{t},\loc}$ comprises all solutions with this symmetry and such that $\param^\prime = \param_0^\prime$ and conformally equivalent to a circular domain with equal radii 
\end{enumerate}
\end{corollary}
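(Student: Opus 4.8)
The plan is to mirror the proofs of Corollaries~\ref{local rotating pair corollary} and~\ref{small stationary tripole corollary}, now exploiting the fact that the centers $\zeta_1 = i$, $\zeta_2 = -i$ of $\fullparam_{\textup{t}}^0$ lie on the imaginary axis and that the configuration is invariant under reflection across both coordinate axes. First I would use the reflection over the real axis, which \emph{swaps} the two vortices since $\overline{\zeta_1} = -i = \zeta_2$, to slave the second vortex to the first. Concretely, I would seek a solution in which $(\mu_2,\nu_2,Q_2)$ is determined from $(\mu_1,\nu_1,Q_1)$ by the Schwarz conjugation dictated by Lemma~\ref{symmetry over axes lemma}(a), namely
\[
	\mu_2 = \mu_1^*, \qquad \nu_2 = \nu_1^*, \qquad Q_2 = Q_1,
\]
which is cleaner than in the rotating case because $w^0 = \tfrac{1}{2\pi i}\log\tfrac{\placeholder-i}{\placeholder+i}$ is itself real on real. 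The reflection over the imaginary axis fixes each center, and enforcing it amounts to the within-vortex constraints $\mu_1 \in \mathring C_\imagimag^{\ell+\alpha}(\mathbb{T})$ and $\nu_1 \in \mathring C_\realimag^{\ell+\alpha}(\mathbb{T})$. One checks these are compatible (e.g.\ $\mu_1^* \in \mathring C_\imagimag$ when $\mu_1 \in \mathring C_\imagimag$), and Lemma~\ref{nonlinear symmetries lemma}\ref{translating symmetry part} then guarantees that $\fluidD$ is symmetric, that $\overline{\partial_z W}$ is real on real and real on imaginary, and --- crucially --- that the residuals land in the prescribed symmetry subspaces. This justifies recasting the problem as $\F_1(u_1,\rho) = 0$ for a real-analytic map
\[
	\F_1 = (\mathscr{A}_1, \mathscr{B}_1) \colon \mathring C_\imagimag^{\ell+\alpha}(\mathbb{T}) \times \mathring C_\realimag^{\ell+\alpha}(\mathbb{T}) \times \mathbb{R}^2 \longrightarrow \mathring C_\realimag^{\ell-1+\alpha}(\mathbb{T}) \times C_\realimag^{\ell-1+\alpha}(\mathbb{T}),
\]
where the final $\mathbb{R}^2$ records the two remaining unknowns $(Q_1, c)$.

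Next I would linearize at the point vortex $(u_1^0,0)$. Exactly as in \eqref{DA DB formulas}, the derivative $D_{u_1}\F_1(u_1^0,0)$ is upper triangular, with $\mathscr{A}_{1\mu_1}^0 = 0$ and $\mathscr{A}_{1\nu_1}^0\dot\nu_1 = \realpart{(\tau \mathcal{C}\dot\nu_1^\prime)}$; by the multiplier formulas \eqref{multiplier formulas} this last operator acts diagonally on Fourier modes, preserves the class $\realimag$, and hence restricts to an isomorphism $\mathring C_\realimag^{\ell+\alpha}(\mathbb{T}) \to \mathring C_\realimag^{\ell-1+\alpha}(\mathbb{T})$. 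It therefore suffices to study the row-reduced operator obtained by eliminating $\dot\nu_1$, which by the general formula \eqref{Lk formula} (with $k=1$, $\gamma_1^0 = 1$, and $\mathcal{V}_{1c} = -1$) reads
\[
	\mathscr{L}_1\begin{pmatrix}\dot\mu_1 \\ \dot Q_1 \\ \dot c\end{pmatrix} = -\frac{1}{2\pi^2}\realpart{\mathcal{C}\dot\mu_1^\prime} - \dot Q_1 - \frac{2}{\pi}\realpart{(i\tau\dot c)}.
\]
The heart of the argument is then to verify that $\mathscr{L}_1$ is an isomorphism $\mathring C_\imagimag^{\ell+\alpha}(\mathbb{T}) \times \mathbb{R}^2 \to C_\realimag^{\ell-1+\alpha}(\mathbb{T})$. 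I would do so by decomposing the target via $\proj_0$, $\proj_1$, $\proj_{>1}$: the free constant $\dot Q_1$ covers $\proj_0 C_\realimag^{\ell-1+\alpha}$; the term $-\tfrac{2}{\pi}\realpart{(i\tau\dot c)} = \tfrac{2}{\pi}\dot c\sin\theta$ lies in and spans $\proj_1 C_\realimag^{\ell-1+\alpha}$; and, again by \eqref{multiplier formulas}, $\realpart{(\mathcal{C}\partial_\tau)}$ carries $\mathring C_\imagimag^{\ell+\alpha}$ isomorphically onto $\proj_{>1}C_\realimag^{\ell-1+\alpha}$, shifting mode $m$ to $m+1$ while sending the class $\imagimag$ into $\realimag$. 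Since the three contributions occupy complementary modes, $\mathscr{L}_1$ is bijective and hence so is $D_{u_1}\F_1(u_1^0,0)$.

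I expect the main obstacle to be precisely this mode-by-mode bookkeeping, and in particular confirming that the wave-speed direction is \emph{transverse} in the reduced sense: here $\mathcal{V}_{1c} = -1$ enters $\mathscr{L}_1$ through the factor $i\tau$, producing the $\sin\theta$ (rather than the $\cos\theta$) first harmonic, which is exactly the generator missing from $\range{\realpart{(\mathcal{C}\partial_\tau)}}$ inside $C_\realimag$. This is the translating analogue of the role played by $\Omega$ (via $\mathcal{V}_{1\Omega}=i$) in Corollary~\ref{local rotating pair corollary}, but it lives in a different symmetry space and so must be checked independently; verifying that $\sin\theta \in C_\realimag$ while the constant and higher modes are accounted for separately is the one genuinely case-specific computation. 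Once $\mathscr{L}_1$ is known to be an isomorphism, the analytic implicit function theorem produces the real-analytic curve $\km_{\mathrm{t},\loc}$ together with its local uniqueness among symmetric solutions. The leading-order asymptotics \eqref{leading order hollow vortex} with $\lambda^\rho = c^\rho$ and $\lambda_0 = 1/(4\pi)$ follow verbatim from the computation in the proof of Theorem~\ref{hollow vortex local bifurcation theorem}, the parameterization symmetry \eqref{parameterization symmetry} is inherited from Remark~\ref{negative rho operator equation remark}, and the asserted symmetry of each solution is the content of Lemma~\ref{nonlinear symmetries lemma}\ref{translating symmetry part}.
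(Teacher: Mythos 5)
Your proposal is correct and follows essentially the same route as the paper: the same slaving $(\mu_2,\nu_2,Q_2)=(\mu_1^*,\nu_1^*,Q_1)$, the same symmetry subspaces $\mathring C_\imagimag^{\ell+\alpha}\times\mathring C_\realimag^{\ell+\alpha}$ justified by Lemma~\ref{nonlinear symmetries lemma}\ref{translating symmetry part}, the same row-reduced operator $\mathscr{L}_1$, and the same $\proj_0$/$\proj_1$/$\proj_{>1}$ mode decomposition showing that $\dot Q_1$ and the $\sin\theta$ harmonic generated by $\dot c$ fill the complement of $\range{\realpart{(\mathcal{C}\partial_\tau)}|_{\mathring C_\imagimag^{\ell+\alpha}}}=P_{>1}C_\realimag^{\ell-1+\alpha}(\mathbb{T})$. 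The only (immaterial) discrepancies are sign/bookkeeping conventions for $\mathcal{V}_{1c}$ and the dimension of the finite-dimensional factor, neither of which affects the argument.
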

\begin{proof}
For the translating vortices, we may work with a reduced unknown $u_1 = (\mu_1,\nu_1,Q_1,c)$ taking 
\[
	(\mu_2, \nu_2) = (\mu_1^*, \nu_1^*), \qquad Q_2 = Q_1
\]
and leaving the other components of $\fullparam$ fixed to their values in $\fullparam_{\mathrm{t}}^0$.  We need only ensure that the Bernoulli condition and dynamic condition hold on the first vortex boundary while requiring $\mu_1$ satisfies \eqref{mu even imaginary} and $\nu_1$ satisfies \eqref{mu odd imaginary}.   In view of Lemma~\ref{nonlinear symmetries lemma}\ref{translating symmetry part}, it is therefore enough to solve  $\F_1(u_1,\rho) = 0$, with the nonlinear operator now thought of as a mapping
\[
	\F_1 = (\mathscr{A}_1, \mathscr{B}_1) \colon 
	\mathring C_\imagimag^{\ell+\alpha}(\mathbb{T}) \times \mathring C_\realimag^{\ell+\alpha}(\mathbb{T}) \times \mathbb{R}^3
	\longrightarrow
	\mathring{C}_\realimag^{\ell-1+\alpha}(\mathbb{T}) \times  C_\realimag^{\ell-1+\alpha}(\mathbb{T}).
\]

The linearized operator at the point vortex configuration is once more given by \eqref{DA DB formulas}.  Because $\mathcal{V}_{1c} = 1$, we likewise see from \eqref{kernel characterization} that it is injective.  The multiplier formulas \eqref{multiplier formulas}, moreover, imply that $\mathscr{A}_{1\nu_1}^0$ is an isomorphism $\mathring{C}_\realimag^{\ell+\alpha}(\mathbb{T}) \to C_\realimag^{\ell-1+\alpha}(\mathbb{T})$, and hence to establish surjectivity it is enough to study the row-reduced operator
  \begin{equation}
    \label{row reduced operator}
    \mathscr{L}_1 
    \begin{pmatrix} 
      \dot \mu_1 \\ 
      \dot Q_1 \\ 
      \dot c 
    \end{pmatrix}
     \colonequals  -\frac{\gamma_1^2}{2\pi^2} \realpart{\mathcal{C} \dot \mu_1^\prime} - \dot Q_1 + \frac{2 \gamma_1}{\pi} \realpart{ \left( i \tau \dot c \right)}
  \end{equation}
now viewed as a mapping $\mathring{C}_{\imagimag}^{\ell+\alpha}(\mathbb{T}) \times \mathbb{R} \times \mathbb{R} \to C_{\imagimag}^{\ell+1-\alpha}(\mathbb{T})$.  But as a consequence of \eqref{multiplier formulas}, it is easy to see that
\[
    \range{\realpart{( \mathcal{C} \partial_\tau)}|_{\mathring{C}_{\imagimag}^{\ell+\alpha}}}   = P_{>1} {C}_\realimag^{\ell-1+\alpha}(\mathbb{T}),
\]
and $P_{\leq 1} {C}_\realimag^{\ell-1+\alpha}(\mathbb{T})$ is a subset of the range of $(\dot Q_1, \dot c) \mapsto -\dot Q_1 + \frac{2\gamma_1}{\pi} \realpart{(i \tau \dot c)}$.  Thus $\mathscr{L}_1$ is surjective and the proof is complete.  
\end{proof}

\section{Global continuation of hollow vortices}
\label{global section}

\subsection{Linear estimates}
In this section we prove some basic Schauder estimates for the linearized operators $D_{(\mu,\nu)} \F(u,\rho)$. Among other things, this will allow us to relate the abstract alternative \ref{K loss of fredholmness alternative} in Theorem~\ref{homoclinic global ift} to the velocity degeneracy alternative \eqref{velocity degeneracy alternative} in Theorem~\ref{intro global continuation theorem}. The main idea is to use the relative velocity
\begin{align}
  \label{U definition}
  U \colonequals  \frac{w_\zeta}{f_\zeta} + i\Omega\overline{f} - c
  = \frac{\mathcal{Z}^\rho \nu'}{1+\rho\mathcal{Z}^\rho \mu'} + i\Omega(\overline{\zeta+\rho^2 \mathcal{Z}^\rho \mu}) - c
\end{align}
as an intermediate variable so that the linearized equations become local.

\begin{lemma}\label{schauder lemma}
  Let $(u,\rho)=(\mu,\nu,Q,\lambda,\rho) \in \mathcal{O}_\delta$ with $\rho > 0$, and suppose that the function $U$ in \eqref{U definition} satisfies
  $\inf_{\partial \confD_{\rho}} |U| > 0$. Then the linearized operator $D_{(\mu,\nu)} \F(u,\rho)$ enjoys a Schauder-type estimate
  \begin{align}
    \label{schauder lemma estimate}
    \n{(\dot \mu,\dot \nu)}_{C^{\ell+\alpha}(\mathbb{T})^{2M}}
    \le C\Big(\big\|D_{(\mu,\nu)} \F(u,\rho) [(\dot \mu,\dot\nu)]\big\|_{C^{\ell-1+\alpha}(\mathbb{T})^{2M}}+
    \n{(\dot \mu,\dot \nu)}_{C^0(\mathbb{T})^{2M}}
    \Big),
  \end{align}
  where the constant $C$ depends only on lower bounds for $\rho$, $\delta$, and $\inf_{\partial \confD_{\rho}} |U|$, and upper bounds for $\rho, Q ,\lambda$, and $\n{(\mu,\nu)}_{C^{\ell+\alpha}}$.
  \begin{proof}
    Throughout the proof $\rho,Q,\lambda$ are fixed, and so we suppress dependence on them to simplify the notation. We also denote by $C$ a constant which depends only on $\rho$, $\delta$, $\n u_\Xspace$, and $\inf_{\partial \confD_{\rho}} |U|$, but whose value may change from line to line.
   
    As usual, we define $(f,w)$ by \eqref{f ansatz} and \eqref{w ansatz}, and $U$ by \eqref{U definition}. We then denote the associated traces by $f_k,w_k \in C^{\ell+\alpha}(\mathbb T,\mathbb C)$ and $U_k \in C^{\ell-1+\alpha}(\mathbb T,\mathbb C)$, i.e.~$f_k(\tau)=f(\zeta_k+\rho\tau)$ and so on, which allows us to write the nonlinear operator $\F=(\A,\B)$ as
    \begin{align*}
      \A_k(\mu,\nu) &= \rho \realpart[\tau f_k' U_k] \equalscolon  \widetilde\A_k(f_k,U_k),\\
      \B_k(\mu,\nu) &= \rho \abs{U_k}^2 - \frac 1\rho \Big(\frac{\gamma_k}{2\pi}\Big)^2 - Q_k \equalscolon  \widetilde\B_k(U_k).
    \end{align*}
    As the mappings $f_k \equalscolon  \mathscr Z_k(\mu)$ and $U_k \equalscolon  \mathscr U_k(\mu,\nu)$ are analytic, we can therefore analyze the inhomogeneous linearized equation
    \begin{align}
      \label{linearized equation}
      D_{(\mu,\nu)} \F(u,\rho) [(\dot \mu,\dot\nu)] = (\dot a, \dot b)  
    \end{align}
    using the chain rule. Indeed, after some manipulations we find that \eqref{linearized equation} is equivalent to the system
    \begin{align}
      \label{fdotbc}
      \rho \realpart[\tau U_k\dot f_k' ] &= \dot a_k - \rho \realpart[\tau f_k' \dot U_k], \\
      \label{Udotbc}
      2\rho \realpart[U_k \dot U_k] &= \dot b_k,
    \end{align}
    for $k=1,\ldots,M$, where here
    \begin{align}
      \label{linearizing fk}
      \begin{aligned}
        \dot f_k &\colonequals  D_\mu \mathscr Z_k(\mu) \dot \mu = \rho^2 \mathcal{Z}^\rho_k \dot \mu,\\
        \dot U_k &\colonequals  
        D_\mu \mathscr U_k(\mu,\nu) \dot \mu
        +
        D_\nu \mathscr U_k(\mu,\nu) \dot \nu
        = \frac{\mathcal{Z}^\rho_k \dot \nu'}{f_k'} - \frac{\mathcal{Z}^\rho_k \nu'}{(f_k')^2} \dot f_k' + i\Omega\overline{\dot f_k}.
      \end{aligned}
    \end{align}
    
    Our next goal is to interpret \eqref{fdotbc}--\eqref{Udotbc} as boundary conditions for a local problem. Using the definitions of $f$ and $U$ instance, in \eqref{f ansatz} and \eqref{U definition}, we easily observe that 
    \begin{align}
      \label{dbar equations}
      \partial_{\bar \zeta} f = 0,
      \qquad \partial_{\bar \zeta} U = i\Omega\overline{\partial_\zeta f}.
    \end{align}
    in $\confD_\rho$. Similarly using \eqref{linearizing fk} and the properties of the layer-potential operator $\mathcal{Z}^\rho_k$, we see that $\dot f_k, \dot U_k$ are traces of functions $\dot f \in C^{\ell+\alpha}(\overline{\confD_\rho},\mathbb C)$ and $\dot U \in C^{\ell-1+\alpha}(\overline{\confD_\rho},\mathbb C)$ satisfying 
    \begin{align}
      \label{linearized dbar dotf}
      \partial_{\bar \zeta} \dot f &= 0,\\
      \label{linearized dbar dotU}
      \partial_{\bar \zeta} \dot U &= i\Omega\overline{\partial_\zeta \dot f}.
    \end{align}
    in $\confD_\rho$. Unsurprisingly, \eqref{linearized dbar dotf}--\eqref{linearized dbar dotU} agree with the formal linearization of \eqref{dbar equations}. Applying Privalov's theorem to \eqref{linearizing fk} we deduce
    \begin{align}
      \label{fdot controlled by mudot}
      \n{\dot f}_{C^{1+\alpha/2}(\confD_\rho)} 
      \le C \n{\dot \mu}_{C^{1+\alpha/2}(\mathbb T)^M},
      \qquad 
      \n{\dot U}_{C^{\alpha/2}(\confD_\rho)} 
      \le C 
      \n{(\dot \mu,\dot \nu)}_{C^{1+\alpha/2}(\mathbb T)^{2M}},
    \end{align}
    while solving \eqref{linearizing fk} for $\mathcal{Z}_k^\rho \dot \mu$ and $\mathcal{Z}_k^\rho \dot \nu'$ and appealing to the basic estimate \eqref{layer potential schauder} yields
    \begin{align}
      \label{mudot controlled by fdot}
      \n{\dot \mu}_{C^{\ell+\alpha}(\mathbb T)^M}
      \le \n{\dot f}_{C^{\ell+\alpha}(\partial\confD_\rho)},
      \qquad 
      \n{\dot \nu}_{C^{\ell+\alpha}(\mathbb T)^{2M}}
      \le C\big(\n{\dot f}_{C^{\ell+\alpha}(\partial\confD_\rho)} 
      + \n{\dot U}_{C^{\ell-1+\alpha}(\partial\confD_\rho)}\big).
    \end{align}

    We now consider \eqref{Udotbc} and \eqref{linearized dbar dotU} as an elliptic problem for $\dot U$. By assumption the coefficients $U_k$ in \eqref{fdotbc} are bounded away from zero, and so standard elliptic theory (see for example \cite[Chapter 1, Section 2]{volpert2011book1}) yields a Schauder estimate
    \begin{align}
      \label{basic dotU estimate}
      \n{\dot U}_{C^{\ell-1+\alpha}(\confD_\rho)}
      &\le C \big(\n{\dot f}_{C^{\ell-1+\alpha}(\confD_\rho)}
      + \n{\dot b}_{C^{\ell-1+\alpha}(\mathbb T)^M}
      + \n{\dot U}_{C^0(\confD_\rho)}\big).
    \end{align}
    Arguing similarly for \eqref{fdotbc} and \eqref{linearized dbar dotf}, we find
    \begin{align}
      \notag
      \n{\dot f}_{C^{\ell+\alpha}(\confD_\rho)}
      &\le C \big(\n{\dot a}_{C^{\ell-1+\alpha}(\mathbb T)^M}
      + \n{\dot U}_{C^{\ell-1+\alpha}(\partial\confD_\rho)}
      + \n{\dot f}_{C^0(\confD_\rho)}\big)\\
      \label{basic dotf estimate}
      &\le C \big(\n{(\dot a, \dot b)}_{C^{\ell-1+\alpha}(\mathbb T)^{2M}}
      + \n{\dot U}_{C^0(\confD_\rho)}
      + \n{\dot f}_{C^0(\confD_\rho)}\big),
    \end{align}
    where in the second inequality we have used \eqref{basic dotU estimate} and interpolation. Combining \eqref{basic dotU estimate}--\eqref{basic dotf estimate} with \eqref{mudot controlled by fdot} and using \eqref{mudot controlled by fdot} to estimate the $C^0$ norms on the right hand side, we find
    \begin{align*}
      \n{(\dot \mu,\dot \nu)}_{C^{\ell+\alpha}(\mathbb T)^{2M}}
      &\le C \big(\n{(\dot a, \dot b)}_{C^{\ell-1+\alpha}(\mathbb T)^{2M}}
      + \n{(\dot \mu,\dot \nu)}_{C^{1+\alpha/2}(\mathbb T)^{2M}} \big).
    \end{align*}
    The desired inequality \eqref{schauder lemma estimate} now follows by interpolation.  
  \end{proof}
\end{lemma}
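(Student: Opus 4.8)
The plan is to use the relative velocity $U$ from \eqref{U definition} as an intermediate unknown, so as to convert the highly nonlocal linearized operator into a genuinely local elliptic boundary value problem on the fixed domain $\confD_\rho$. First I would record that, in terms of the boundary traces $f_k(\tau) = f(\zeta_k + \rho\tau)$ and $U_k(\tau) = U(\zeta_k + \rho\tau)$, the components of $\F = (\A,\B)$ take the clean form $\A_k = \rho\,\realpart[\tau f_k' U_k]$ and $\B_k = \rho\abs{U_k}^2 - \rho^{-1}(\gamma_k/2\pi)^2 - Q_k$, so that both boundary conditions depend on $(\mu,\nu)$ only through $f$ and $U$. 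Differentiating, the inhomogeneous linearized equation $D_{(\mu,\nu)}\F(u,\rho)[(\dot\mu,\dot\nu)] = (\dot a,\dot b)$ reduces to the pair of real boundary relations $\rho\,\realpart[\tau U_k\dot f_k'] = \dot a_k - \rho\,\realpart[\tau f_k' \dot U_k]$ and $2\rho\,\realpart[U_k\dot U_k] = \dot b_k$. The key structural observation is that $\dot f$ extends holomorphically into $\confD_\rho$, while $\dot U$ satisfies the inhomogeneous Cauchy--Riemann equation $\partial_{\bar\zeta}\dot U = i\Omega\,\overline{\partial_\zeta\dot f}$; both are therefore (generalized) analytic functions on a \emph{fixed} domain.

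Second, I would treat these as two elliptic problems that are decoupled at top order. The relation $2\rho\,\realpart[U_k\dot U_k] = \dot b_k$ is an oblique-derivative/Riemann--Hilbert boundary condition for $\dot U$ whose coefficient $U_k$ is, by hypothesis, bounded away from zero on $\partial\confD_\rho$; this nondegeneracy is exactly what secures the complementing (Lopatinski--Shapiro) condition and hence an elliptic Schauder estimate $\n{\dot U}_{C^{\ell-1+\alpha}(\confD_\rho)} \le C\big(\n{\dot f}_{C^{\ell-1+\alpha}(\confD_\rho)} + \n{\dot b}_{C^{\ell-1+\alpha}} + \n{\dot U}_{C^0}\big)$, where $\dot f$ enters both through the inhomogeneity of the $\bar\partial$ equation and the right-hand side. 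Handling $\rho\,\realpart[\tau U_k\dot f_k'] = \dot a_k - \rho\,\realpart[\tau f_k'\dot U_k]$ in the same way (again nondegenerate since $\abs{U_k}$ is bounded below) yields $\n{\dot f}_{C^{\ell+\alpha}(\confD_\rho)} \le C\big(\n{(\dot a,\dot b)}_{C^{\ell-1+\alpha}} + \n{\dot U}_{C^0} + \n{\dot f}_{C^0}\big)$ after substituting the previous bound and interpolating. The requisite theory for such first-order elliptic systems with these boundary conditions is standard, e.g.\ \cite{volpert2011book1}.

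Third, I would transfer everything back to the densities. Privalov's theorem applied to the layer-potential representations \eqref{f ansatz} and \eqref{w ansatz} controls $\dot f,\dot U$ on $\confD_\rho$ by low-order $C^{1+\alpha/2}$ norms of $(\dot\mu,\dot\nu)$, while solving the defining relations for $\mathcal{Z}_k^\rho\dot\mu$ and $\mathcal{Z}_k^\rho\dot\nu'$ and invoking the inversion estimate \eqref{layer potential schauder} of Lemma~\ref{layer potential bounds lemma} bounds $\n{(\dot\mu,\dot\nu)}_{C^{\ell+\alpha}}$ by the boundary norms of $\dot f$ and $\dot U$. Chaining these with the two elliptic estimates gives $\n{(\dot\mu,\dot\nu)}_{C^{\ell+\alpha}} \le C\big(\n{(\dot a,\dot b)}_{C^{\ell-1+\alpha}} + \n{(\dot\mu,\dot\nu)}_{C^{1+\alpha/2}}\big)$, and a final interpolation between $C^{1+\alpha/2}$ and the $C^0$/$C^{\ell+\alpha}$ endpoints absorbs the intermediate norm, leaving precisely \eqref{schauder lemma estimate}.

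The hard part will be verifying that the two scalar boundary relations genuinely define a well-posed elliptic Riemann--Hilbert problem for the coupled pair $(\dot f,\dot U)$ on the multiply connected exterior domain $\confD_\rho$, and tracking the constants so that they depend only on the quantities listed in the statement --- uniformly in the coupling term $i\Omega\,\overline{\partial_\zeta\dot f}$ and down to the assumed lower bound on $\abs{U}$. Equally delicate is arranging the cross-estimates so they \emph{close} rather than circle; this is exactly where the separation of regularity orders (with $\dot U$ at order $\ell-1+\alpha$ and $\dot f$ at order $\ell+\alpha$) together with the interpolation inequalities does the work.
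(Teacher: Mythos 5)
Your proposal is correct and follows essentially the same route as the paper's proof: the same reduction to the intermediate variable $U$, the same linearized Riemann--Hilbert system for $(\dot f,\dot U)$ with the $\bar\partial$ relations, the same elliptic Schauder estimates keyed to the lower bound on $|U_k|$, and the same transfer back to the densities via Privalov's theorem and the layer-potential inversion followed by interpolation. No substantive differences to report.
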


\subsection{Uniform regularity}

In this section we will establish uniform $C^{1+\alpha}$ control for the conformal mapping $f$. Anticipating their use in the proof of theorem Theorems~\ref{intro rotating theorem}, \ref{intro tripole theorem}, \ref{intro pocklington theorem}, and \ref{intro global continuation theorem}, we seek estimates depending only on upper bounds for $\Nconf$ and $\Nvel$ and the various parameters.  In particular, recall that the (non-normalized) Bernoulli constant $q_k$ is the magnitude of the relative velocity on the boundary component $\Gamma_k$, and hence controlled from above and below by $\Nvel$.  

As an important preliminary result, we first show that the definition of the set $\mathcal{O}$ in \eqref{definition O delta} ensures that solutions to the abstract operator equation do indeed give rise to globally conformal mappings.  

\begin{lemma}\label{lem nondeg f_zeta}
Let $(u,\rho)=(\mu,\nu,Q,\lambda,\rho)$ be given that is in the connected component of $\F^{-1}(0) \cap \mathcal{O}$ containing a point vortex solution $(u^0,0)$. Then $\partial_\zeta f \ne 0$ in $\overline{\confD_\rho}$ and 
  \begin{equation}
    \label{winding number condition}
    \frac{1}{2\pi i } \int_{\p \confD_\rho} \frac{\partial_\zeta^2 f }{\partial_\zeta f} \,d\zeta = 0.
  \end{equation}
\end{lemma}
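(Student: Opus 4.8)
The plan is to reduce everything to a single winding-number computation and then invoke Lemma~\ref{lem basic nondeg f_zeta}. First I would record that for any $(u,\rho)\in\mathcal{O}$ the map $f=\id+\rho^2\mathcal{Z}^\rho\mu$ from \eqref{f ansatz} is holomorphic in $\confD_\rho$ and satisfies the normalization \eqref{koebe theorem normalization}, since $\mathcal{Z}^\rho\mu$ is single-valued, holomorphic, and vanishes at infinity. Differentiating the ansatz and using the commutation identity $\partial_\zeta(\mathcal{Z}^\rho\mu)=\rho^{-1}\mathcal{Z}^\rho\mu'$ gives $\partial_\zeta f=1+\rho\mathcal{Z}^\rho\mu'$, whose trace on the $k$-th boundary circle is $g_k:=1+\rho\mathcal{Z}_k^\rho\mu'$ in the notation of \eqref{trace Z operator}. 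The very definition of $\mathcal{O}_\delta$ in \eqref{definition O delta} forces $\inf_{\mathbb{T}}|g_k|>\delta>0$ for each $k$, so $\partial_\zeta f\ne0$ on $\partial\confD_\rho$. Lemma~\ref{lem basic nondeg f_zeta}, whose argument-principle proof only uses holomorphy of $\partial_\zeta f$ in the interior together with continuity and nonvanishing of its boundary trace, then shows that once \eqref{winding number condition} is established we get $\partial_\zeta f\ne0$ throughout $\overline{\confD_\rho}$. Thus the whole lemma reduces to proving \eqref{winding number condition}.

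Second, I would interpret \eqref{winding number condition} as a sum of winding numbers and show it is a homotopy invariant. Changing variables $\zeta=\zeta_k+\rho\tau$ on each circle and using $\partial_\tau g_k=\rho\,(\partial_\zeta^2 f)(\zeta_k+\rho\tau)$, the contribution of $\partial B_\rho(\zeta_k)$ to the left-hand side of \eqref{winding number condition} becomes $\pm\frac{1}{2\pi i}\int_{\mathbb{T}}\frac{g_k'}{g_k}\,d\tau$, that is (up to the orientation sign coming from $\confD_\rho$ being an exterior domain) the winding number $n_k(u,\rho)$ of the nonvanishing loop $g_k\colon\mathbb{T}\to\mathbb{C}^\times$. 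Since $(u,\rho)\mapsto\mathcal{Z}_k^\rho$ is real analytic into $\mathcal{L}(C^{\ell+\alpha}(\mathbb{T})^M,C^{\ell+\alpha}(\mathbb{T},\mathbb{C}))$, the map $(u,\rho)\mapsto g_k\in C^0(\mathbb{T})$ is continuous, and on $\mathcal{O}$ it takes values in the nonvanishing functions; hence $n_k$ is a continuous $\mathbb{Z}$-valued function on $\mathcal{O}$, and therefore locally constant. (Where $\ell=1$ and $g_k'$ need not be classically defined, I would read $n_k$ as the degree of $g_k/|g_k|$ and interpret the integral in \eqref{winding number condition} via an inward contour deformation into the annular region where $\partial_\zeta f$ is analytic and nonvanishing; the two agree.)

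Finally, I would evaluate $n_k$ at the base point and propagate by connectedness. At the point-vortex solution $(u^0,0)$ we have $\mu=0$ and $\rho=0$, so $g_k\equiv1$ and $n_k=0$. Because $n_k$ is locally constant on $\mathcal{O}$ and $(u,\rho)$ lies in the connected component of $\F^{-1}(0)\cap\mathcal{O}$ containing $(u^0,0)$, we conclude $n_k\equiv0$, whence the left-hand side of \eqref{winding number condition} vanishes. I expect the main obstacle to be precisely the continuity of $n_k$ across $\rho=0$: there the conformal domain $\confD_0=\mathbb{C}\setminus\{\zeta_1,\dots,\zeta_M\}$ degenerates and $\partial\confD_\rho$ collapses, so the winding integral over the physical contours has no naive meaning. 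Recasting it on the fixed circle $\mathbb{T}$ through $g_k=1+\rho\mathcal{Z}_k^\rho\mu'$ removes this degeneracy, since $g_k$ depends real-analytically on $(u,\rho)$ on all of $\mathcal{O}$ — including $\rho=0$, where it is identically $1$ — and this is exactly what lets the base point control the integers $n_k$ along the entire curve. For $\rho\ne0$ this yields \eqref{winding number condition} and hence the claim via Lemma~\ref{lem basic nondeg f_zeta}, while for $\rho=0$ the conclusion is immediate from Lemma~\ref{uniqueness lemma}, which forces $\mu=0$ and $f=\id$.
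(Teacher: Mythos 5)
Your proof is correct and follows the same strategy as the paper: both reduce the lemma to showing that the winding number of $\partial_\zeta f$ along each boundary circle vanishes, observe that this integer is locally constant on the zero set, anchor it at the point vortex solution, and then conclude via the argument principle as in Lemma~\ref{lem basic nondeg f_zeta}. The one place you diverge is the anchoring step: the paper uses the asymptotics \eqref{informal asymptotics} of the local bifurcation curve to show the winding number is $O(\rho^2)$, hence zero, for small $\rho>0$, whereas you pull the integral back to the fixed circle $\mathbb{T}$ via $g_k = 1+\rho\,\mathcal{Z}_k^\rho\mu'$ and evaluate directly at $(u^0,0)$, where $g_k\equiv 1$. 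Your version is arguably cleaner, since it makes continuity across the degenerate limit $\rho=0$ manifest using only the real analyticity of $(u,\rho)\mapsto\mathcal{Z}_k^\rho$ from Section~\ref{layer potential section}, rather than relying on the expansion of the local solution curve; you also explicitly flag (and adequately handle) the low-regularity issue when $\ell=1$, which the paper passes over.
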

\begin{proof}
First note that if $\rho = 0$, then $f$ is simply the identity function due to \eqref{f ansatz}, and so the statements above hold trivially.  Assume now that $\rho \neq 0$.  By definition we know that $f_\zeta$ is holomorphic in $\confD_\rho$ and $f_\zeta \ne 0$ on $\p \confD_\rho$. From \eqref{f ansatz} and \eqref{definition Z operator} it follows that
\begin{equation*}
\partial_\zeta f  = O(1), \quad \partial_\zeta^2 f = O(1/\zeta^3) \quad \textrm{as } \ |\zeta| \to \infty.
\end{equation*}
Hence
\begin{equation*}
\lim_{r\to \infty} \left.\ar \partial_\zeta f \left(r e^{it}\right) \right|^{t = 2\pi}_{t = 0} = \lim_{r\to \infty} \frac{1}{2\pi i } \int_{|\zeta| = r} \frac{\partial_\zeta^2 f}{\partial_\zeta f} \,d\zeta = 0.
\end{equation*}

On the other hand, recall that $(u, \rho)$ lies in the connected component containing the point vortex configuration $(u^0, 0)$. From the asymptotic formula \eqref{informal asymptotics} we see that as $\rho \to 0$,
\[
\partial_\zeta f = 1 + O(\rho^2), \quad \partial_\zeta^2 f = O(\rho) \quad \text{on } \p B_\rho(\zeta_k).
\]
Therefore for $\rho$ sufficiently small we can estimate the winding number of the path $\partial_\zeta f(\p B_\rho(\zeta_k))$ around the origin as
\[
\frac{1}{2\pi i } \int_{\p B_\rho(\zeta_k)} \frac{\partial_\zeta^2 f }{\partial_\zeta f} \,d\zeta = O(\rho^2).
\]
Since the winding number takes integer values, the above integral must be zero.  By the continuity of the winding number it follows that for any $(u,\rho) \in \F^{-1}(0) \cap \mathcal{O}$ 
\[
\frac{1}{2\pi i } \int_{\p B_\rho(\zeta_k)} \frac{\partial_\zeta^2 f }{\partial_\zeta f} \,d\zeta = 0,
\]
which implies that \eqref{winding number condition} holds.  Finally, since $\p_\zeta f$ is bounded, it has no poles in $\confD_\rho$, and so we may conclude from the Cauchy's argument principle that $\partial_\zeta f \ne 0$ in $\confD_\rho$.
\end{proof}

Define the relative stream function to be 
\begin{equation*}
\Psi \colonequals  \imagpart{W} - m_1,
\end{equation*}
where $m_1$ is introduced in \eqref{intro kinematic condition}. Notice that from the physical point of view, changing the stream function by a constant does not change the velocity field. Here we use the above definition to normalize one of the boundary conditions for $\Psi$.

Thus the relative velocity $U = 2i \p_z \Psi$. We know that $\Psi$ satisfies
\begin{equation}
\label{Psi elliptic pde}
\left\{\begin{aligned}
\Delta \Psi & = 2 \Omega |f_\zeta|^2 \ & \textrm{ on } & \ \confD_\rho, \\
\Psi & = m_k - m_1 & \textrm{ on } & \ \p B_\rho(\zeta_k). 
\end{aligned}\right.
\end{equation}

Take $R = R(\zeta_1, \ldots, \zeta_M) >0$ sufficiently large so that 
\begin{equation}
  \label{big ball}
  \bigcup_{k=1}^M B_\rho(\zeta_k) \subset B_R(0) \equalscolon  B_R. 
\end{equation}
We would like to establish Schauder-type estimates on $\Psi$ in $\confD_\rho \cap B_R$. This would require bounds on the relative fluxes $m_k - m_1$ for $k=1, 2, \ldots, M$, as well as $\Psi|_{\p B_R}$, which amounts to asking that the relative velocity $U$ be bounded. 
\begin{lemma}\label{lem control on U}
Let $(u,\rho)=(\mu,\nu,Q,\lambda,\rho) \in \F^{-1}(0) \cap  \mathcal{O}_\delta$ with $\rho \ne 0$. Then  for any $R >0$ satisfying \eqref{big ball},
  \begin{equation}
    \label{bounds on U}
    \|U\|_{L^\infty({\confD_\rho \cap B_R})} \le C,
  \end{equation}
for a constant $C > 0$ depending on upper bounds for $|\rho|$, $|q|$, $|c|$, $|\Omega|$, and $\|f\|_{L^\infty({\confD_\rho \cap B_R})}$. 
\end{lemma}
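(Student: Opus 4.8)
The plan is to work entirely in the conformal $\zeta$-plane and to exploit the fact that the only non-holomorphic part of $U$ is the \emph{explicit} term $i\Omega\overline{f}$. Concretely, I would introduce
\[
  g \colonequals  \frac{w_\zeta}{f_\zeta} - c = U - i\Omega\overline{f},
\]
and observe that, since $f_\zeta\neq 0$ in $\overline{\confD_\rho}$ (which for the solutions of interest holds by Lemma~\ref{lem nondeg f_zeta}), $g$ is a single-valued \emph{holomorphic} function on $\confD_\rho$. Bounding $U$ then reduces to bounding $g$ together with the trivial estimate $|i\Omega\overline{f}| \le |\Omega|\,\|f\|_{L^\infty(\confD_\rho\cap B_R)}$ valid on $\confD_\rho\cap B_R$. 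This reformulation is the crux of the argument: the antiholomorphic rotational contribution is peeled off and controlled by hand, leaving a genuinely holomorphic object to which the maximum principle applies.

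The key step is a maximum-modulus argument for $g$ on the \emph{unbounded} domain $\confD_\rho$. First I would record, from the ansatz \eqref{f ansatz}--\eqref{w ansatz} and the decay of $\mathcal{Z}^\rho$, that $f_\zeta\to 1$ and $w_\zeta\to 0$ as $\zeta\to\infty$, so that $g$ is bounded near infinity and in fact extends holomorphically across $\infty$ with $g(\infty)=-c$. Viewing $\confD_\rho\cup\{\infty\}$ as a domain on the Riemann sphere whose boundary is $\bigcup_k\partial B_\rho(\zeta_k)$, the maximum modulus principle yields
\[
  \sup_{\confD_\rho}|g| = \max_k\ \max_{\partial B_\rho(\zeta_k)}|g|.
\]

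To estimate $|g|$ on each boundary circle I would use the Bernoulli condition. Since $(u,\rho)\in\F^{-1}(0)$, combining $\B_k=0$ with the definition \eqref{definition Bernoulli operator} and the relation \eqref{q_k Q_k relation} gives $|U|=|q_k|$ on $\partial B_\rho(\zeta_k)$ (here $\rho\neq 0$ is used to divide out the factor $\rho^2$). Hence on each boundary circle
\[
  |g| \le |U| + |\Omega|\,|f| \le |q| + |\Omega|\,\|f\|_{L^\infty(\confD_\rho\cap B_R)},
\]
using $\partial B_\rho(\zeta_k)\subset B_R$ from \eqref{big ball}. Propagating this through the maximum principle and adding back the $i\Omega\overline{f}$ term gives the desired
\[
  \|U\|_{L^\infty(\confD_\rho\cap B_R)} \le |q| + 2|\Omega|\,\|f\|_{L^\infty(\confD_\rho\cap B_R)}.
\]

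The one point requiring care is the justification of the maximum principle on an unbounded domain, which is precisely why I first verify that $g$ has a removable singularity at infinity; this also explains why the artificial outer circle $\{|\zeta|=R\}$ never enters the estimate, even though the norm is taken over the truncated domain $\confD_\rho\cap B_R$. I expect no serious obstacle beyond the bookkeeping of constants: the approach deliberately sidesteps the elliptic problem \eqref{Psi elliptic pde} for $\Psi$, and in particular avoids having to control the relative fluxes $m_k-m_1$, by passing directly to the holomorphic quantity $g=w_\zeta/f_\zeta-c$.
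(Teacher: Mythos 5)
Your proposal is correct and follows essentially the same route as the paper: both peel off the explicit rotational term, apply the maximum modulus principle to the single-valued holomorphic quantity $w_\zeta/f_\zeta$ (your $g$ merely absorbs the constant $-c$ as well) on the unbounded domain, and then use the Bernoulli condition $|U|=q_k$ on each $\partial B_\rho(\zeta_k)$ to bound the boundary values. The only difference is cosmetic bookkeeping of the $c$ term in the final constant.
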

\begin{proof}
From the definition \eqref{U definition} of $U$ we have 
\[
|U| \le \LV \frac{w_\zeta}{f_\zeta} \RV + |\Omega| |f| + |c|.
\]
Note from Lemma \ref{lem nondeg f_zeta} that $\partial_\zeta w/\partial_\zeta f$ is single-valued and holomorphic in $\confD_\rho$. Moreover the asymptotics of $\partial_\zeta f$ and $\partial_\zeta w$ indicates that 
\[
\frac{\partial_\zeta w}{\partial_\zeta f} = O(1/\zeta) \quad \text{as} \quad |\zeta| \to \infty.
\]
From maximum modulus theorem we see that
\[
\max_{\overline{\confD_\rho}} \LV \frac{\partial_\zeta w}{\partial_\zeta f} \RV = \max_k \max_{\p B_\rho(\zeta_k)} \LV \frac{\partial_\zeta w}{\partial_\zeta f} \RV.
\]
Applying \eqref{U definition} again and recalling the Bernoulli condition \eqref{bernoulli condition patch} yields
\[
\max_{\p B_\rho(\zeta_k)} \LV \frac{\partial_\zeta w}{\partial_\zeta f} \RV \le |q_k| + |\Omega| \max_{\p B_\rho(\zeta_k)} |f| + |c|.
\]
Therefore 
\[
\max_{\overline{\confD_\rho \cap B_R}} |U| \le \max_k |q_k| + 2 |\Omega| \max_{\overline{\confD_\rho \cap B_R}} |f| + 2 |c|,
\]
which completes the proof.
\end{proof}

We are now ready to establish the desired bound for $\|f\|_{C^{1+\alpha}}$.
\begin{lemma}\label{lem unif reg}
Let $(u,\rho)=(\mu,\nu,Q,\lambda,\rho) \in \F^{-1}(0) \cap \mathcal{O}_\delta$ with $\rho \ne 0$, and $q_k \ne 0$ for all $k = 1, \ldots, M$. Then for any $R >0$ satisfying \eqref{big ball} we have
  \begin{equation}
    \label{improved reg on f}
    \|f\|_{C^{1+\alpha}({\confD_\rho \cap B_R})} \le C,
  \end{equation}
for a constant $C > 0$ depending on lower bounds for $|\rho|$ and $\delta$, and upper bounds for $|\rho|, R, \Nvel$, $|c|$, $|\Omega|$, and $\|f\|_{C^1({\confD_\rho \cap B_R})}$. 
\end{lemma}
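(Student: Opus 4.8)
The plan is to bootstrap from the $L^\infty$ velocity bound of Lemma~\ref{lem control on U} up to $C^{\alpha}$ control of $\partial_\zeta f$, first using the elliptic problem \eqref{Psi elliptic pde} to gain regularity for the relative velocity and then using a Riemann--Hilbert argument to transfer that regularity onto the conformal map. First I would record that, since $|U| = q_k \le \Nvel$ on each $\partial B_\rho(\zeta_k)$ while $\n{f}_{C^1}$, $|c|$, $|\Omega|$ are all bounded, Lemma~\ref{lem control on U} gives $\n{U}_{L^\infty(\confD_\rho\cap B_R)}\le C$. Because $U = 2i\,\partial_z\Psi$, this is a gradient bound for $\Psi$; together with the normalization $\Psi = 0$ on $\partial B_\rho(\zeta_1)$ and the connectedness of $\confD_\rho\cap B_R$, integrating $\nabla\Psi$ along paths of controlled length shows $\n{\Psi}_{C^{0,1}(\confD_\rho\cap B_R)}\le C$ and in particular bounds the relative fluxes $|m_k - m_1|\le C$.

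Next I would view \eqref{Psi elliptic pde} as a Dirichlet problem on the bounded domain $\confD_\rho\cap B_R$ (enlarging $R$ if necessary, which only weakens the conclusion). The inner boundaries $\partial B_\rho(\zeta_k)$ are smooth circles carrying the constant data $m_k - m_1$, and the right-hand side obeys $\n{2\Omega|f_\zeta|^2}_{L^p} \le 2|\Omega|\,|\confD_\rho\cap B_R|^{1/p}\,\n{f}_{C^1}^2 \le C$ for every $p<\infty$. Calder\'on--Zygmund $L^p$ estimates up to the smooth inner boundary, combined with the $L^\infty$ bound on $\Psi$, then yield $\n{\Psi}_{W^{2,p}}\le C$ near each $\partial B_\rho(\zeta_k)$ (with interior estimates handling the part of the domain near $\partial B_R$); choosing $p$ large and using the two-dimensional embedding $W^{2,p}\hookrightarrow C^{1+\alpha}$ gives $\nabla\Psi\in C^\alpha$, hence $U\in C^\alpha$, up to each vortex boundary with $\n{U_k}_{C^\alpha(\mathbb{T})}\le C$, where $U_k(\tau)\colonequals  U(\zeta_k+\rho\tau)$.

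I would then exploit the boundary conditions to convert velocity regularity into regularity of $\partial_\zeta f$. On $\Gamma_k$ the tangent is proportional to $i\tau f_\zeta$, and the kinematic condition \eqref{local kinematic condition} forces the conjugate relative velocity $\overline U$ to be parallel to it; combined with the dynamic condition $|U| = q_k$ from \eqref{bernoulli condition patch}, this yields the pointwise identity
\[
  \frac{f_\zeta}{|f_\zeta|}\Big|_{\zeta=\zeta_k+\rho\tau} = \mp\,\frac{i\,\overline{U_k(\tau)}}{q_k\,\tau}\qquad\text{on }\mathbb{T},
\]
whose right-hand side is unimodular, lies in $C^\alpha(\mathbb{T})$ with norm $\le C$ (using $q_k\ge 1/\Nvel$), and whose sign is constant on each component (fixed by continuity from the near-identity regime). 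Thus the boundary argument $\arg\partial_\zeta f$ is prescribed in $C^\alpha$ with controlled norm.

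Finally, since Lemma~\ref{lem nondeg f_zeta} guarantees $\partial_\zeta f\neq 0$ together with the vanishing winding number \eqref{winding number condition}, $\log\partial_\zeta f$ is a single-valued holomorphic function on $\confD_\rho$ that decays at infinity (as $\partial_\zeta f\to1$). Its boundary imaginary part is the $C^\alpha$ datum just obtained, so solving the associated Schwarz problem --- inverting the harmonic-conjugation operator on the circular domain, which is bounded $C^\alpha\to C^\alpha$ --- recovers $\log|\partial_\zeta f|$, and hence $\log\partial_\zeta f$, in $C^\alpha(\overline{\confD_\rho})$; the additive real constant is pinned down by the decay at infinity and the a priori bound $|\partial_\zeta f|\le\n{f}_{C^1}$. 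Exponentiating and combining with $\n{f}_{C^0}$ then yields \eqref{improved reg on f}. I expect this last step to be the main obstacle: making the Riemann--Hilbert inversion quantitative and uniform on the multiply connected exterior domain --- in particular anchoring $\log|\partial_\zeta f|$ and bounding the conjugation operator independently of the vortex separation, which is precisely where the lower bound $\delta$ and the single-valuedness furnished by Lemma~\ref{lem nondeg f_zeta} enter.
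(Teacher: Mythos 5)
Your overall architecture (flux bounds, elliptic estimate for $\Psi$, then a Riemann--Hilbert/Schwarz problem for $\log\partial_\zeta f$ anchored by Lemma~\ref{lem nondeg f_zeta}) matches the paper's, but there is a genuine gap at the hinge of the argument: the claim that the elliptic estimate gives $U \in C^\alpha$ up to each vortex boundary. The problem \eqref{Psi elliptic pde} lives in the \emph{conformal} domain, where the boundary components are smooth circles and the Schauder (or $W^{2,p}$) estimate is indeed uniform; but there the chain rule gives $2i\,\partial_\zeta\Psi = U\,\partial_\zeta f$, not $2i\,\partial_\zeta \Psi = U$. So the estimate controls the \emph{product} $U\partial_\zeta f$ in $C^\alpha$, and extracting $U$ alone would require $1/\partial_\zeta f \in C^\alpha$ on $\partial B_\rho(\zeta_k)$ --- which is precisely what you are trying to prove. (If instead you run the $W^{2,p}$ estimate in the physical domain, where $U = 2i\partial_z\Psi$ does hold, the boundary $\Gamma_k$ is only $C^1$ with norm controlled by $\|f\|_{C^1}$, which is not enough regularity for uniform up-to-the-boundary Calder\'on--Zygmund or Schauder estimates.) This gap propagates: your boundary identity $f_\zeta/|f_\zeta| = \mp i\,\overline{U_k}/(q_k\tau)$ is correct, but it shows that prescribing $\arg\partial_\zeta f$ is \emph{equivalent} to knowing $U_k$ in $C^\alpha$, so feeding it into the Schwarz problem is circular. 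Indeed, the kinematic condition already forces $\arg(U\partial_\zeta f) = \pm\tfrac{\pi}{2}-\theta$ on $\partial B_\rho(\zeta_k)$, so all of the information carried by $\partial_\zeta\Psi\in C^\alpha$ sits in its \emph{modulus}; none of it reaches $\arg \partial_\zeta f$ directly.

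The repair is the paper's route: from $U\partial_\zeta f = 2i\partial_\zeta\Psi \in C^\alpha$ and the dynamic condition $|U| = q_k$ one reads off $|\partial_\zeta f| = 2|\partial_\zeta\Psi|/q_k \in C^\alpha(\partial B_\rho(\zeta_k))$, with a positive lower bound supplied by membership in $\mathcal{O}_\delta$ (so that $\log|\partial_\zeta f|$ is $C^\alpha$ with controlled norm). One then solves the Schwarz problem with the \emph{real} part of $\log\partial_\zeta f$ as Dirichlet data --- single-valuedness being guaranteed by the winding-number condition \eqref{winding number condition} of Lemma~\ref{lem nondeg f_zeta}, exactly as you anticipated --- and recovers $\arg\partial_\zeta f$ as the harmonic conjugate. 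Everything downstream of that (exponentiation, combination with the $C^0$ bound, the role of $\delta$ and the decay at infinity) is as you describe.
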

\begin{proof}
It is easy to see that any two points in $\overline{\confD_\rho \cap B_R}$ can be connected by a curve lying entirely within $\confD_\rho \cap B_R$ with length uniformly bounded, for example, by $2R + 2 \pi M \rho$. Therefore from the previous lemma one gets
\[
  \label{bounds on flux}
    \max_{1\le k\le M}|m_k - m_1| \le \max_{\overline{\confD_\rho \cap B_R}} |\Psi| \le C_1 
\]
for a constant $C_1 > 0$ depending only on a lower bound for $|\rho|$ and upper bounds for  $|\rho|, |q|, R$, and $\|f\|_{L^\infty(\overline{\confD_\rho \cap B_R})}$. Since $\Psi$ solves the elliptic PDE \eqref{Psi elliptic pde} on ${\confD_\rho \cap B_R}$, standard Schauder estimates yield
  \begin{equation}
    \label{Holder Psi}
    \|\Psi\|_{C^{1+\alpha}({\confD_\rho \cap B_R})} \le C \LC \|\partial_\zeta f\|^2_{L^\infty({\confD_\rho \cap B_R})} + \|\Psi\|_{L^\infty({\confD_\rho \cap B_R})} \RC \le C \LC \| \partial_\zeta f \|^2_{L^\infty({\confD_\rho \cap B_R})} + 1 \RC,
  \end{equation}
where the constant $C_2 > 0$ depends on lower bounds for $|\rho|$ and $\delta$, and upper bounds for $|\rho|, |q|, R$, and $\|f\|_{L^\infty({\confD_\rho \cap B_R})}$. 
In particular, this shows that $U \partial_\zeta f = 2i\partial_\zeta \Psi \in C^{\alpha}(\overline{\confD_\rho \cap B_R})$. Now, since $\rho, q_k \ne 0$, combining the Bernoulli condition \eqref{bernoulli condition patch} with \eqref{Holder Psi} gives
  \begin{equation}
    \label{bdrd reg on f-deriv}
    \| \partial_\zeta f\|_{C^\alpha(\partial B_\rho(\zeta_k))} = \frac{1}{q_k} \| \partial_\zeta \Psi\|_{C^\alpha(\partial B_\rho(\zeta_k)}  \leq C,
  \end{equation}
for a constant $C > 0$ that, as in the lemma statement, depends on lower bounds for $|\rho|$ and $\delta$, and upper bounds for $|\rho|, R, \Nvel$, and $\|f\|_{C^1({\confD_\rho \cap B_R})}$.  (Note that control of $\Nvel$ bounds $|q_k|$ and $1/|q_k|$ from above.)
From Lemma \ref{lem nondeg f_zeta} we know that $\log \partial_\zeta f$ is a well-defined single-valued holomorphic function in $\overline{\confD_\rho}$. By \eqref{bdrd reg on f-deriv} above, its real part is a holomorphic function in $\confD_\rho$ with Dirichlet data 
\[
\realpart{\log \partial_\zeta f} =  \log |\partial_\zeta f| = \log \frac{|\partial_\zeta \Psi|}{q_k} \in C^{\alpha}(\p B_\rho(\zeta_k)).
\]
Therefore, by elliptic regularity theory both $\realpart{\log \partial_\zeta f}$ and its harmonic conjugate $\imagpart{\log \partial_\zeta f}$ are $C^{\alpha}(\overline{\confD_\rho \cap B_R})$. Again, since $\partial_\zeta f \ne 0$, it follows that $\partial_\zeta f \in C^{\alpha}(\overline{\confD_\rho \cap B_R})$.  Appealing once more to Schauder theory and the above estimates, we conclude that
\[
\|\partial_\zeta f\|_{C^\alpha(\overline{\confD_\rho \cap B_R})} \le C (\|\partial_\zeta f\|_{L^\infty(\overline{\confD_\rho \cap B_R})} + \|\partial_\zeta f\|_{C^\alpha(\p \confD_\rho)}),
\]
where $C > 0$ depends on the same quantities as the lemma statement. The claimed bound \eqref{improved reg on f} now follows from \eqref{Holder Psi} and \eqref{bdrd reg on f-deriv}.
\end{proof}

Finally, we provide a variant of the above estimate that controls the relevant densities in terms of the quantity $\Nvel$, whose blowup characterizes conformal degeneracy, and $\| \partial_\zeta f \|_{L^\infty}$ which is controlled by the quantity $\Nconf$ that characterizes conformal degeneracy.

\begin{lemma}[Uniform regularity]
\label{uniform regularity lemma}
Let $(u,\rho)=(\mu,\nu,Q,\lambda,\rho) \in \F^{-1}(0) \cap \mathcal{O}_\delta$ be given with $\rho \neq 0$ and $q_k \ne 0$ for all $k = 1, \ldots, M$.  Then
\[
	\| \mu \|_{C^{\ell+\alpha}(\mathbb{T})^M} + \| \nu \|_{C^{\ell+\alpha}(\mathbb{T})^M} \leq C , 
\]
for a constant $C > 0$ depending on lower bounds for $|\rho|$ and $\delta$ and upper bounds for $|\rho|, |\lambda|, \Nvel$, and $\| \partial_\zeta f \|_{L^\infty(\partial \confD_\rho)}$.
\end{lemma}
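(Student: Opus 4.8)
The plan is to bootstrap the $C^{1+\alpha}$ control of the conformal map supplied by Lemma~\ref{lem unif reg} up to $C^{\ell+\alpha}$, and then to recover the densities by inverting the layer-potential representation through Lemma~\ref{layer potential bounds lemma}. Fix $R$ as in \eqref{big ball}; all estimates are carried out on $\confD_\rho \cap B_R$, the auxiliary circle $\partial B_R$ being handled by interior estimates (Lemma~\ref{lem control on U} bounds $U$, hence $\Psi$, there). For the base case, I would first use \eqref{f ansatz}, \eqref{definition Z operator} and the maximum modulus principle---recall $\partial_\zeta f$ is holomorphic on $\confD_\rho$ with $\partial_\zeta f \to 1$ at infinity, and $f-\id$ is holomorphic and decaying---to control $\|f\|_{C^1(\confD_\rho\cap B_R)}$ in terms of $\|\partial_\zeta f\|_{L^\infty(\partial\confD_\rho)}$ and the geometric data. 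Lemma~\ref{lem unif reg} then gives $\|f\|_{C^{1+\alpha}(\confD_\rho\cap B_R)} \le C$, while Lemma~\ref{lem nondeg f_zeta} guarantees $\partial_\zeta f \ne 0$ on $\overline{\confD_\rho}$ and that $\log \partial_\zeta f$ is single-valued, holomorphic, and vanishing at infinity.

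The heart of the argument is an inductive gain of one derivative. Suppose $f \in C^{m+\alpha}$ with controlled norm for some $1 \le m \le \ell-1$. Then $|f_\zeta|^2 \in C^{m-1+\alpha}$, so the stream function $\Psi$ of \eqref{Psi elliptic pde}, whose Dirichlet data $m_k-m_1$ on each $\partial B_\rho(\zeta_k)$ is constant (and with fluxes bounded as in Lemma~\ref{lem unif reg}), satisfies $\Psi \in C^{m+1+\alpha}$ by boundary Schauder theory; hence $\partial_\zeta\Psi \in C^{m+\alpha}$. On each $\partial B_\rho(\zeta_k)$ the identity $U\partial_\zeta f = 2i\partial_\zeta\Psi$ (from $U = 2i\partial_z\Psi$) together with the Bernoulli condition $|U| = q_k$ gives $|\partial_\zeta f| = 2|\partial_\zeta\Psi|/q_k$, so $\realpart \log\partial_\zeta f = \log|\partial_\zeta f|$ is controlled in $C^{m+\alpha}(\partial\confD_\rho)$ (using $1/\Nvel \le q_k \le \Nvel$). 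Solving the resulting Dirichlet problem for the single-valued holomorphic function $\log\partial_\zeta f$ yields $\log\partial_\zeta f \in C^{m+\alpha}(\overline{\confD_\rho\cap B_R})$, and therefore $\partial_\zeta f = \exp(\log\partial_\zeta f) \in C^{m+\alpha}$, i.e.\ $f \in C^{m+1+\alpha}$. Iterating from $m=1$ produces $f \in C^{\ell+\alpha}$.

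To finish, I would recover $w$ on the boundary: since $f \in C^{\ell+\alpha}$ and $\partial_\zeta\Psi \in C^{\ell-1+\alpha}$, and $\partial_\zeta f$ is bounded below on $\partial\confD_\rho$, the quotient $U = 2i\partial_\zeta\Psi/\partial_\zeta f \in C^{\ell-1+\alpha}(\partial\confD_\rho)$, whence $\partial_\zeta w = \partial_\zeta f\,(U - i\Omega\overline{f} + c) \in C^{\ell-1+\alpha}(\partial\confD_\rho)$. Because $\partial_\tau^\ell \mathcal{Z}_k^\rho\mu$ and $\partial_\tau^\ell \mathcal{Z}_k^\rho\nu$ are expressed through the boundary traces of $\partial_\zeta^\ell f$ and $\partial_\zeta^{\ell-1}(\partial_\zeta w - \partial_\zeta w^0)$ respectively (with $\rho$-factors controlled since $|\rho|$ is bounded above and below, and $\partial_\zeta w^0$ smooth on $\partial\confD_\rho$), Lemma~\ref{layer potential bounds lemma} converts these into the claimed $C^{\ell+\alpha}$ bounds on $\mu$ and $\nu$. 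A routine bookkeeping check confirms that every constant introduced depends only on lower bounds for $|\rho|,\delta$ and upper bounds for $|\rho|,|\lambda|,\Nvel$, and $\|\partial_\zeta f\|_{L^\infty(\partial\confD_\rho)}$, the parameters $c,\Omega,\zeta_k$ and the radius $R$ all being controlled by $|\lambda|$ (or fixed in $\lambda'$).

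The main obstacle is the Riemann--Hilbert step, and specifically extracting a \emph{quantitative} lower bound $\inf_{\partial\confD_\rho}|\partial_\zeta f| \ge c_0 > 0$: the $C^{m+\alpha}$ norm of $\log|\partial_\zeta f| = \log(2|\partial_\zeta\Psi|/q_k)$ degrades as $|\partial_\zeta f|$ approaches zero, which corresponds geometrically to the incipient formation of a boundary cusp. Nonvanishing is automatic from Lemma~\ref{lem nondeg f_zeta}, but it is the uniform lower bound---the regularity/non-degeneracy of the conformal parametrization monitored by $\Nconf$---that is the genuinely delicate input and must be propagated alongside the $C^{1+\alpha}$ control rather than assumed. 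A related technical subtlety is that the division $U = 2i\partial_\zeta\Psi/\partial_\zeta f$ is legitimate only on the boundary: in the interior $U$ may vanish at stagnation points, where $\partial_\zeta\Psi$ also vanishes, so the regularity of the velocity field must be carried through $f$ and $\Psi$ rather than by dividing, which is precisely why the bootstrap is organized around $f$ and $\Psi$ as the primary unknowns.
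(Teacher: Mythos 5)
Your proposal is essentially correct, but the bootstrap at its core is genuinely different from the paper's. You iterate the \emph{nonlinear} elliptic argument: from $f \in C^{m+\alpha}$ you get $\Psi \in C^{m+1+\alpha}$ by Schauder for \eqref{Psi elliptic pde}, then $\log|\partial_\zeta f| = \log\bigl(|\partial_\zeta \Psi|/q_k\bigr) \in C^{m+\alpha}(\partial\confD_\rho)$ via the Bernoulli condition, and finally $f \in C^{m+1+\alpha}$ by solving the Dirichlet problem for the single-valued function $\log \partial_\zeta f$ — repeating $\ell-1$ times. The paper instead stops this nonlinear machinery at $C^{1+\alpha}$ (that is the content of Lemma~\ref{lem unif reg}) and then switches to the \emph{linear} Schauder estimate of Lemma~\ref{schauder lemma}: differentiating $\F(u,\rho)=0$ in $\tau$ gives $D_{(\mu,\nu)}\F(u,\rho)[(\mu',\nu')]=(a,b)$ with smooth right-hand side, and \eqref{schauder lemma estimate} plus iteration upgrades $(\mu,\nu)$ to $C^{\ell+\alpha}$ in one stroke. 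Your route avoids Lemma~\ref{schauder lemma} entirely here at the cost of re-running the Poisson/Riemann--Hilbert step at every level; the paper's route reuses an estimate it needs anyway to rule out the loss-of-Fredholmness and loss-of-compactness alternatives. Two remarks on your caveats. First, the ``genuinely delicate'' lower bound $\inf_{\partial\confD_\rho}|\partial_\zeta f| \geq c_0$ that you propose to propagate is in fact already a hypothesis: membership in $\mathcal{O}_\delta$ means precisely $|1+\rho\mathcal{Z}_k^\rho\mu_k'| = |\partial_\zeta f| > \delta$ on $\partial\confD_\rho$, and the constant is permitted to depend on a lower bound for $\delta$ (the maximum modulus principle applied to $1/\partial_\zeta f$, legitimate by Lemma~\ref{lem nondeg f_zeta}, then extends this to the interior). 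Second, your base step glosses over how $\|f\|_{C^1(\confD_\rho\cap B_R)}$ is pinned down: the maximum modulus principle controls $\partial_\zeta f$ but not the additive constant in $f-\id$; the paper fixes this by first deriving $\|\mu\|_{C^\alpha} \lesssim \|\partial_\zeta f\|_{L^\infty(\partial\confD_\rho)}$ from Lemma~\ref{layer potential bounds lemma}, Morrey's inequality, and the normalization $\proj_0\mu_k=0$, and only then bounding $\|f-\id\|_{L^\infty} = \rho^2\|\mathcal{Z}^\rho\mu\|_{L^\infty}$. That detail needs to be supplied, since Lemma~\ref{lem unif reg} cannot be invoked before $\|f\|_{C^1}$ is under control.
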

\begin{proof}
Throughout the argument, we denote by $C > 0$ a generic constant depending on the quantities in the statement of the lemma.  Choosing $R \colonequals  2 \max\{ |\zeta_1|, \ldots, |\zeta_M| \} + 2|\rho|$ so that \eqref{big ball} holds, we may apply Lemma~\ref{lem unif reg} to infer the additional H\"older regularity of the corresponding conformal mapping $f$:
\[
	\partial_\zeta f(\zeta_k + \rho \placeholder) = \mathcal{Z}_k^\rho \mu^\prime \in C^\alpha(\mathbb{T}).
\]
Note that we cannot directly apply the estimate \eqref{improved reg on f} because it introduces a constant depending on $\| f \|_{L^\infty}$, which we do not yet control.  However, 
appealing to Lemma~\ref{layer potential bounds lemma} and Morrey's inequality leads to a uniform upper bound on the H\"older seminorm
\[
	[\mu_k]_{\alpha; \mathbb{T}} \leq C_\alpha \| \mu_k^\prime \|_{L^{\frac{1}{1-\alpha}}(\mathbb{T})} \leq C_\alpha \| \mathcal{Z}_k^\rho \mu^\prime \|_{L^{\frac{1}{1-\alpha}}(\mathbb{T})} \leq C_\alpha \| \partial_\zeta f \|_{L^\infty(\partial \confD_\rho)},
\]
where each $C_\alpha$ above is a positive constant depending only on $\alpha$. 
Now, because $P_0 \mu_k = 0$, we have that
\[
	 \mu_k(\tau) = \frac{1}{2\pi i} \int_{\mathbb{T}} \frac{\mu_k(\tau)}{\sigma} \, d\sigma =  \frac{1}{2\pi i} \int_{\mathbb{T}} \frac{ \mu_k(\tau) - \mu_k(\sigma)}{\sigma} \, d\sigma.
\]
Thus 
  \begin{equation}
    \label{first density holder bound}
    \| \mu \|_{C^\alpha(\mathbb{T})^M} \lesssim \sum_k [\mu_k]_{\alpha;\mathbb{T}} \leq C_\alpha \|\partial_\zeta f \|_{L^\infty(\partial \confD_\rho)}.
  \end{equation}

In particular, this gives a uniform $L^\infty$ bound on $\mu$, which in turn furnishes the estimate
\[
	\| f - \id \|_{L^\infty(\confD_\rho)} = \rho^2 \| \mathcal{Z}^\rho \mu \|_{L^\infty(\confD_\rho)} \leq C_1 \| \partial_\zeta f \|_{L^\infty(\partial \confD_\rho)}
\]
for a constant $C_1 > 0$ depending on a lower bound for $\delta$ and an upper bound for $|\rho|$.  As $f - \id$ is holomorphic and vanishes at infinity, the maximum modulus principle then implies that
\[
	\| f - \id \|_{C^1(\confD_\rho)}  \leq C_1 \left( \| \partial_\zeta f \|_{L^\infty(\partial \confD_\rho)} + 1 \right),
\]
where the constant $C_1 > 0$ depends on the same quantities as the previous line.  It follows that 
\[
	\| f \|_{C^1(\confD_\rho \cap B_R)} \leq C_1 \left( \| \partial_\zeta f \|_{L^\infty(\partial \confD_\rho)} + 1 \right).
\]
By the above reasoning, the $C^{1+\alpha}$ bound \eqref{improved reg on f} holds for a constant $C_2 > 0$ that depends now on lower bounds for $|\rho|$ and $\delta$ and upper bounds for $|\rho|, \zeta_1, \ldots, \zeta_M, \Nvel$, and $\| \partial_\zeta f \|_{L^\infty(\partial \confD_\rho)}$.  Finally, Lemma~\ref{layer potential bounds lemma} translates this to uniform control of $\mu^\prime$ in $C^\alpha(\mathbb{T})^M$, which combined with \eqref{first density holder bound} gives  
\[
	\| \mu \|_{C^{1+\alpha}} \leq C_2,
\]
for a constant $C_2 > 0$ with the same dependencies.  On the other hand, rearranging terms from the kinematic condition the kinematic condition \eqref{kinematic condition} leads to
\[
	\realpart{\left( \tau \mathcal{Z}_k^\rho \nu^\prime \right)} = -\realpart{\left( \tau \left( \Vrho_k^\rho + ( i \Omega \overline{( \zeta_k + \rho \tau)} -c) \rho \mathcal{Z}_k^\rho \mu^\prime + i\Omega \rho^2 \overline{\mathcal{Z}_k^\rho \mu}  + i \Omega \rho^3 \mathcal{Z}_k^\rho [\mu^\prime] \overline{\mathcal{Z}_k^\rho\mu} \right) \right)}  \equalscolon  g_k.
\]
The bounds on $\mu$ ensure that $\| g_k \|_{C^\alpha} < C$ (note that the added dependence on $\lambda$ derives from the presence of $c$, $\Omega$, and the circulations).    Since $\proj_0 \nu = 0$, we may therefore conclude from Lemma~\ref{layer potential bounds lemma} that 
\[
	\| \mu \|_{C^{1+\alpha}(\mathbb{T})^M} + \| \nu \|_{C^{1+\alpha}(\mathbb{T})^M} \leq C.
\]

Finally, to upgrade this to control of the $C^{\ell+\alpha}$ norms of $\mu$ and $\nu$, we use a standard bootstrapping scheme based on the linear Schauder-type estimates from Lemma~\ref{schauder lemma}.  Indeed, differentiating the equation $\F(u,\rho) = 0$ in $\tau$ yields
\[
	D_{(\mu,\nu)} \F(u,\rho)[ (\mu^\prime, \nu^\prime)] = (a,b)
\]
for explicit functions $a, b \in C^{\infty}(\mathbb{T})^M$ arising from the forcing terms in the kinematic and Bernoulli conditions.  Applying \eqref{schauder lemma estimate}, we then conclude that
\[
	\| \mu^\prime \|_{C^{1+\alpha}(\mathbb{T})^M} + \| \nu^\prime \|_{C^{1+\alpha}(\mathbb{T})^M}  \leq C \left( 1+ \| \mu^\prime \|_{C^{\alpha}(\mathbb{T})^M} + \| \nu^\prime \|_{C^{\alpha}(\mathbb{T})^M} \right) \leq C.
\]
The lemma now follows by iteration.
\end{proof}

\subsection{Proof of the general continuation theorem}

With the uniform estimates derived in the previous subsection now in hand, we are now able to prove the result on continuation of hollow vortices in the general setting. 

\begin{proof}[Proof of Theorem~\ref{intro global continuation theorem}] 
We established in Theorem~\ref{hollow vortex local bifurcation theorem} the existence of a real-analytic local curve $\km_\loc$ of solutions to the hollow vortex problem bifurcating from a given non-degenerate point vortex configuration.  Applying Corollary~\ref{global ift with identities corollary}, we can extend $\km_\loc$ to a curve $\km$ that admits a $C^0$ parameterization
\[
	\km = \{ (u(s), \rho(s)) : s \in \mathbb{R} \} \subset \F^{-1}(0) \cap \mathcal{O},
\]
where $u(s) = (\mu(s), \nu(s), Q(s), \lambda(s)) \in \Xspace$.  Let $\confD(s)$ denote the conformal domain at the parameter value $s$.  In light of Lemma~\ref{lem nondeg f_zeta}, at each $s \in \mathbb{R}$, the corresponding mapping $f(s)$ is conformal on $\overline{\confD(s)}$ and satisfies the winding number condition \eqref{winding number condition}.   The local real analyticity of $\km$ follows from Corollary~\ref{global ift with identities corollary}~\ref{K reparam with identities}.  As a consequence of Corollary~\ref{global ift with identities corollary}~\ref{K alternatives with identities}, moreover, we know that in the limits $s \to \infty$ and $s \to -\infty$, one of the alternatives \ref{K blowup alternative}, \ref{K loss of compactness alternative}, \ref{K loss of fredholmness alternative}, or \ref{K loop} occurs.  In particular, the quantity 
  \begin{equation}
    \label{general N quantity definition}
    N(s) \colonequals  \| \mu(s)  \|_{C^{\ell+\alpha}(\mathbb{T})^M} + \| \nu(s)  \|_{C^{\ell+\alpha}(\mathbb{T})^M} + |Q(s)|  + |\rho(s)| + |\lambda(s)| + \frac 1{\dist((u(s),\rho(s)), \, \dell \mathcal{O})}
  \end{equation}
is finite for all $s \in \mathbb{R}$.  The definition of $\mathcal{O}$ therefore ensures that $\confD(s)$ is a circular domain in that its boundary is composed of $M$ non-intersecting circles. To simplify notation, let $\km_+$ denote the portion of $\km$ corresponding to $s \geq 0$.  Without loss of generality, we can take $\rho(s) > 0$ on $( \km_+ \cap \km_\loc) \setminus \{ (u^0,0) \}$. In particular, this implies the existence of $s_1 > 0$ such that $ \rho(s) > 0$ on $(0,s_1]$. 

Seeking a contradiction, suppose now that 
  \begin{equation}
    \label{global additional assumption}
    \sup_{s \geq s_1} \left( \Nconf(s) + \Nvel(s) + |\param(s)| \right) < \infty,
  \end{equation}
where recall $\Nconf$ and $\Nvel$ were defined in \eqref{definition conformal blowup norm} and \eqref{definition velocity blowup norm}, respectively.  We will prove that this would exclude \emph{all} of the alternatives \ref{K blowup alternative}--\ref{K loop}, which is impossible.  First, observe that because $\partial_\zeta f(s)$ is non-vanishing on $\overline{\confD(s)}$ and limits to $1$ as $\zeta \to \infty$, the maximum modulus principle implies
\[
	\sup_{s \geq s_1} \sup_{\confD(s)} \left(  |\partial_\zeta f(s)|  + \frac{1}{|\partial_\zeta f(s)|} \right) \leq \sup_{s \geq s_1} \Nconf(s) < \infty. 
\]
Thus distances in the conformal domain $\confD(s)$ and physical domain $\fluidD(s) \colonequals  f(s)(\confD(s))$ are uniformly comparable.  The uniform boundedness of $\Nconf(s)$ further guarantees that the boundary component $\Gamma_k(s) \colonequals  f(s)(\partial B_{\rho(s)}(\zeta_k(s)))$ are $C^1$ Jordan curves, and a simple continuity argument then shows that $\Gamma_1(s), \ldots, \Gamma_M(s)$ are uniformly separated with no curve enclosing another.  By Lemma~\ref{lem basic injective}, each mapping $f(s)$ is therefore globally injective, hence the solutions on $\km_+$ are physical.  Observe, moreover, that $\Nconf(s)$ controls the distance between the physical vortex boundaries $\Gamma_1(s), \ldots, \Gamma_M(s)$, which is comparable to the distances between the boundary components in $\confD(s)$.  Thus there exists $\delta > 0$ such that $\km_+ \subset \F^{-1}(0) \cap \mathcal{O}_\delta$. 

By assumption, one of the circulations, say $\gamma_j$, is fixed along $\km_+$.  Then, because the boundary trace of the relative velocity field $U(s)$ is purely tangential, we have that
\begin{align*}
	q_j(s) & =  \frac{1}{|\Gamma_j(s)|} \left| \int_{\Gamma_j(s)} U(s) \, dz \right|  
		 = \frac{1}{|\Gamma_j(s)|} \left|  \int_{\Gamma_j(s)} \partial_z w(s) \, dz + i \Omega \int_{\Gamma_j(s)} \overline{z}  \, dz - \int_{\Gamma_j(s)} c \, dz  \right| \\
		 & = \frac{1}{|\Gamma_j(s)|} \left|  \gamma_j - 2 \Omega A_j(s) \right|,
\end{align*}
where the second line follows from the complex Green's formula and $A_j(s)$ is the area of the region enclosed by $\Gamma_j(s)$.  The isoperimetric inequality then gives the bound
  \begin{equation}
    \label{general rho lower bound}
    q_j(s) \geq \frac{|\gamma_j|}{|\Gamma_j(s)|} - 2 |\Omega| \frac{A_j(s)}{|\Gamma_j(s)|} \geq \frac{|\gamma_j|}{|\rho(s) |} \Nconf(s) - |\Omega| |\rho(s)| \Nconf(s).
  \end{equation}
As $q_j(s) \leq \Nvel(s)$, the above inequality provides a uniform lower bound on $|\rho(s)|$ along $\km_+$.  Indeed, because $\rho(s_1) > 0$, we have that $\rho(s)$ is uniformly positive for all $s \geq s_1$.  On the other hand, thanks to \eqref{global additional assumption}, the magnitude the vortex centers $\zeta_1(s), \ldots, \zeta_M(s)$ are uniformly bounded, which implies a uniform upper bound on $\rho(s)$.

Next, we claim that \eqref{global additional assumption} forces a uniform bound on $N(s)$ ruling out the blowup alternative \ref{K blowup alternative}.  The above paragraph shows the last three terms on the right-hand side of \eqref{general N quantity definition} are controlled uniformly.   Likewise, from \eqref{q_k Q_k relation}, we have the upper and lower bounds on $Q_k(s)$:
\[
	-\frac{\gamma_k(s)^2}{4\pi^2 \rho(s)} < Q_k(s) < \rho(s)  q_k(s)^2 \leq \rho(s) \Nvel(s)^2,
\] 
both of which are uniformly controlled according to \eqref{global additional assumption}.  Lemma~\ref{uniform regularity lemma} then gives uniform bounds on $\| \mu(s) \|_{C^{\ell+\alpha}}$ and $\| \nu(s) \|_{C^{\ell+\alpha}}$, and hence we have succeeded in proving that \eqref{global additional assumption} implies $\sup_{s \geq s_1} N(s) < \infty$.  

However, \eqref{global additional assumption}, Lemma~\ref{schauder lemma}, and Lemma~\ref{uniform regularity lemma} also exclude the loss of compactness alternative \ref{K loss of compactness alternative} and loss of Fredholmness alternative \ref{K loss of fredholmness alternative}.  Lastly, were the closed loop alternative~\ref{K loop} to occur, then $\rho(T) = 0$ for some $T > s_1$, which contradicts the positive lower bound on $\rho(s)$ we derived above.  Thus \eqref{global additional assumption} precludes all of the alternatives, meaning it cannot hold.

In total, then, we have proven that there exists $s_2 \in (s_1, \infty]$ such that 
  \begin{equation}
    \label{blowup along km+}
    \limsup_{s \to s_2} \left( \Nvel(s) + \Nconf(s) + |\lambda(s)| \right) = \infty.
  \end{equation}
Without loss of generality, take $s_2$ to be the smallest parameter value in this interval for which \eqref{blowup along km+} holds.  Now, let $\cm$ be the curve in the original variables $(f,w,Q,\lambda,\rho)$ corresponding to the portion of $\km_+$ for which $s \in [0,s_2)$.  One can then reparameterize $\cm$ as claimed in \eqref{global curve parameterization}.  By construction, each of the solutions on $\cm$ is physical, and the curve inherits the local real analyticity of $\km_+$.  Finally,  \eqref{blowup along km+} ensures that in the limit along $\cm$, either conformal degeneracy, velocity degeneracy, or blowup of $|\lambda(s)|$ occurs.  The proof is therefore complete. 
\end{proof}

\subsection{Proofs for the examples with symmetry}

We now prove Theorems~\ref{intro rotating theorem}--\ref{intro pocklington theorem}.  The main task in each case is to further refine the limiting behavior \eqref{general continuation blowup alternative} given by the general theory in Theorem~\ref{intro global continuation theorem} using the symmetry of solutions and additional a priori bounds on the wave speed, angular momentum, or circulations.  
We begin with the construction of the large hollow vortex tripoles, which is the simplest of the three.  

\begin{proof}[Proof of Theorem~\ref{intro tripole theorem}]
  Arguing as in the proof of Corollary~\ref{small stationary tripole corollary}, Lemma~\ref{nonlinear symmetries lemma}\ref{rotating and tripole symmetries part} allows us to formulate the problem in terms of an analytic operator $\F = (\F_1, \F_2) \maps \mathcal{O}_{\mathrm{s}} \to \Yspace_{\mathrm{s}}$ where $\mathcal O_{\mathrm s}$ is an open subset of $\Xspace_{\mathrm{s}}$. Following the proof of Theorem~\ref{intro global continuation theorem}, but with Theorem~\ref{hollow vortex local bifurcation theorem} replaced by Corollary~\ref{small stationary tripole corollary} and Corollary~\ref{global ift with identities corollary} replaced by Theorem~\ref{homoclinic global ift}, we can then construct a continuous global curve $\km_{\mathrm s}$ of physical solutions with $\rho(s) > 0$ and the limiting behavior
  \begin{align}
    \label{tripole basic limit}
    \limsup_{s \to \infty} \left( \Nconf(s) + \Nvel(s) + |\gamma_2(s)| \right) = \infty
  \end{align}
  as the parameter $s \to \infty$. Note that here we have used the fact that the sole varying parameter for $\km_{\mathrm{s},\loc}$ is $\lambda = \gamma_2$.  By the choice of spaces and Lemma~\ref{nonlinear symmetries lemma}, all of the solutions on $\km_{\mathrm{s}}$ exhibit the symmetry properties claimed in part~\ref{intro stationary symmetry part}.
  
  It remains to show that the third term in \eqref{tripole basic limit} can be eliminated. To this end, suppose for the sake of contradiction that $\limsup_{s\to\infty} (\Nconf(s)+\Nvel(s)) < \infty$. Arguing as in the proof of Theorem~\ref{intro global continuation theorem}, the fact that the conformal vortex centers $\zeta_1=1$, $\zeta_2=0$, and $\zeta_3=-1$ are fixed along $\cm_{\mathrm s}$ implies the basic upper bound $\rho(s) < 1/2$. But then
  \begin{align*}
    \abs{\gamma_2(s)} &= \left|\int_{\Gamma_2(s)} U(s)\, dz\right|
    \le 2\Nvel(s) \abs{\Gamma_2}
    \le 2\pi \rho(s) \Nconf(s) \Nvel(s)
  \end{align*}
  is also uniformly bounded as $s \to \infty$, contradicting \eqref{tripole basic limit}.
\end{proof}

Now, consider the co-translating hollow vortex family.  From Theorem~\ref{intro global continuation theorem}, blowup of the wave speed is one possibility as we follow the global bifurcation curve.  The next result shows that this cannot happen without an accompanying blowup up the relative velocity or degeneracy of the vortex boundary regularity.  It provides an estimate that is in fact of some independent interest as it applies not just to the Pocklington case but to any collection of translating hollow vortices. 

\begin{proposition}[Wave speed bound] \label{wave speed proposition} 
  For any translating hollow vortex configuration, the wave speed $c$ satisfies
  \begin{align*}
    |c| \le M\sup_{\p \fluidD} |U| \le M\Nvel.
  \end{align*}
  \begin{proof}
    By Theorem~\ref{koebe theorem}, we can assume that the hollow vortex configuration is described as in Section~\ref{conformal map complex potential section}, except that the conformal domain $\confD_\rho$ is replaced by a more general circular domain
    \[
      \confD_{\rho_1,\ldots, \rho_M}(\zeta_1, \ldots, \zeta_M) \colonequals  \mathbb{C} \setminus \overline{B_{\rho_1}(\zeta_{1}) \cup \cdots \cup B_{\rho_M}(\zeta_{M})}.
    \]
    Choosing $k$ so that $\rho_k = \max\{\rho_1,\ldots,\rho_M\}$, the calculus of residues gives
    \begin{align*}
      \frac{1}{2\pi i} \int_{\partial \confD_{\rho_1,\ldots,\rho_M}}\frac{U(\zeta)}{\zeta-\zeta_k} \, d\zeta 
      = \frac{1}{2\pi i} \int_{\partial \confD_{\rho_1,\ldots,\rho_M}}\frac{\partial_\zeta w(\zeta)/\partial_\zeta f(\zeta) - c}{\zeta-\zeta_k} \, d\zeta 
      = c,
    \end{align*}
    where here we have used that $\partial_\zeta w/\partial_\zeta f$ is holomorphic and $O(1/\zeta)$ as $\zeta \to \infty$. Estimating the integral we conclude that
    \begin{align*}
      |c|
      \le 
      \frac{1}{2\pi} (2\pi \rho_1 + \cdots + 2\pi \rho_M) \frac 1{\rho_k} \sup_{\partial\fluidD} |U|
      \le M\sup_{\partial\fluidD} |U|
    \end{align*}
    as desired.
  \end{proof}
\end{proposition}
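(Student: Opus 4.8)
The plan is to exploit the holomorphy of the conjugate relative velocity in the translating regime and to extract $c$ as a single residue. First I would invoke Koebe's theorem (Theorem~\ref{koebe theorem}) to place an arbitrary translating configuration into the normalized conformal setting of Section~\ref{conformal map complex potential section}, allowing the radii $\rho_1,\ldots,\rho_M$ of the excised disks to differ. Since $\Omega = 0$ for a translating configuration, the conjugate relative velocity reduces to $U = \partial_\zeta w/\partial_\zeta f - c$. By Lemma~\ref{lem nondeg f_zeta}, the derivative $\partial_\zeta f$ is nonvanishing and $\partial_\zeta w/\partial_\zeta f$ is single-valued, so $U$ is holomorphic on the exterior domain $\confD_{\rho_1,\ldots,\rho_M}$; moreover the normalizations force $\partial_\zeta w/\partial_\zeta f = O(1/\zeta)$ as $\zeta \to \infty$, whence $U \to -c$ there.

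The central step is a residue identity. I would select an index $k$ with $\rho_k = \max_j \rho_j$ and consider the contour integral $\frac{1}{2\pi i}\int_{\p \confD_{\rho_1,\ldots,\rho_M}} U(\zeta)/(\zeta-\zeta_k)\,d\zeta$. The integrand is holomorphic throughout the exterior domain, since the pole of $1/(\zeta-\zeta_k)$ sits at the center $\zeta_k$, which lies inside the excised disk $B_{\rho_k}(\zeta_k)$ and hence outside $\confD$. Because $\partial_\zeta w/\partial_\zeta f$ decays like $1/\zeta$, the quotient $(\partial_\zeta w/\partial_\zeta f)/(\zeta-\zeta_k)$ is $O(1/\zeta^2)$ and contributes nothing at infinity, so evaluating by the calculus of residues (accounting for the residue at infinity of the constant piece $-c/(\zeta-\zeta_k)$) collapses the integral to exactly $c$.

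From here the bound is a direct estimate. On the circle $\p B_{\rho_k}(\zeta_k)$ one has $|\zeta-\zeta_k| = \rho_k$, while on each other circle $\p B_{\rho_j}(\zeta_j)$ the disjointness of the excised disks forces $|\zeta_j - \zeta_k| \ge \rho_j + \rho_k$ and hence $|\zeta - \zeta_k| \ge |\zeta_j - \zeta_k| - \rho_j \ge \rho_k$; thus $|\zeta-\zeta_k|^{-1} \le \rho_k^{-1}$ everywhere on $\p\confD$. Combining this with the total perimeter $2\pi(\rho_1 + \cdots + \rho_M)$ and the inequalities $\rho_j \le \rho_k$ yields $|c| \le \rho_k^{-1}(\rho_1 + \cdots + \rho_M)\sup_{\p\fluidD}|U| \le M\sup_{\p\fluidD}|U|$, and the final inequality $\sup_{\p\fluidD}|U| \le \Nvel$ is immediate from the definition \eqref{definition velocity blowup norm}.

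The main obstacle I anticipate is the bookkeeping in the residue computation: one must correctly track the orientation of $\p\confD$ (clockwise about each hole, which is the positive orientation for the exterior domain) and properly incorporate the residue at infinity, so that the constant part $-c$ of $U$ produces $+c$ rather than a spurious sign or numerical factor. The remaining ingredients---the $O(1/\zeta)$ decay of $\partial_\zeta w/\partial_\zeta f$, the holomorphy and single-valuedness supplied by Lemma~\ref{lem nondeg f_zeta}, and the elementary geometric bound $|\zeta-\zeta_k|\ge\rho_k$---are routine once the conformal representation is in place.
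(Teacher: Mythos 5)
Your proposal is correct and follows essentially the same route as the paper's own proof: Koebe normalization to a circular domain with possibly unequal radii, the residue identity $\frac{1}{2\pi i}\int_{\partial\confD}U(\zeta)/(\zeta-\zeta_k)\,d\zeta = c$ for $\rho_k$ maximal, and the perimeter estimate. Your explicit justification that $|\zeta-\zeta_k|\ge\rho_k$ on every boundary component (via disjointness of the excised disks) is a detail the paper leaves implicit, but the argument is the same.
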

  
\begin{proof}[Proof of Theorem~\ref{intro pocklington theorem}]
  Most of the arguments are very similar to the proof of Theorem~\ref{intro tripole theorem}, except that now we follow the proof of Corollary~\ref{small translating pair corollary} rather than Corollary~\ref{small stationary tripole corollary} to formulate the nonlinear problem and verify some of the hypotheses of Theorem~\ref{homoclinic global ift}. The analogue of \eqref{general continuation blowup alternative} in the Pocklington vortex setting is 
  \begin{align}
    \label{pocklington basic limit}
    \limsup_{s \to \infty} \left( \Nconf(s) + \Nvel(s) + |c(s)| \right) = \infty.
  \end{align}
  Again, the choice of spaces and Lemma~\ref{nonlinear symmetries lemma} guarantee the desired symmetry properties hold for the solutions along the global curve.  The final term in \eqref{pocklington basic limit} can then be eliminated by appealing to Proposition~\ref{wave speed proposition}.  
\end{proof}

In preparation for the proof of Theorem~\ref{intro rotating theorem}, we first derive a useful identity for rotating hollow vortex configurations whose conformal description satisfies $\sum_k \gamma_k \zeta_k = 0$.  Note that this holds for steady rotating point vortices as a consequence of \eqref{pv translation identity}, and hence it will be true for hollow vortices constructed via bifurcation theoretic arguments that fix the vortex centers and circulations.  

\begin{proposition}[Momentum identity]
\label{momentum identity proposition}
For any rotating hollow vortex configuration with conformal description $(f,w,Q,\fullparam,\rho)$ satisfying $\sum_k \gamma_k {\zeta}_k = 0$, it holds that
  \begin{equation}
    \label{more id rot}
    L(f,w) - \Omega I(f)  + \frac{ |\fluidD^c|   }{2 \pi }\sum_k \gamma_k  +  \frac{1}{4\Omega} \imagpart{ \int_{\partial \fluidD} z U^2 \, dz} - \frac{1} {8 \pi \Omega } \left( \sum_k \gamma_k \right)^2 = 0,
  \end{equation}
where $L$ is the excess angular momentum \eqref{definition angular momentum} and $I(f) \colonequals  \int_{\fluidD^c} |z|^2 \, dz$ is the moment of inertia for the vacuum region $\fluidD^c$.
\end{proposition}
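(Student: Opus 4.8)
The plan is to prove the identity by directly evaluating $\imagpart \int_{\p\fluidD} z U^2\,dz$ and showing that it reconstructs the remaining four terms. Since the configuration is rotating we set $c=0$, so that $U = w_z + i\Omega\overline z$ and, crucially, $\p_{\overline z} U = i\Omega$ while $w_z$ is holomorphic on $\fluidD$. Expanding
\[
 z U^2 = z w_z^2 + 2i\Omega\, z\overline z\, w_z - \Omega^2 z \overline z^2,
\]
I would split $\int_{\p\fluidD} zU^2\,dz$ into a holomorphic, a mixed, and an antiholomorphic contour integral and treat each separately, keeping careful track of the (clockwise) orientation of $\p\fluidD$ around the vacuum components and passing freely between the $z$- and $\zeta$-planes via $dz = f_\zeta\,d\zeta$.

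The holomorphic piece $\int_{\p\fluidD} z w_z^2\,dz = \int_{\p\confD_\rho} f w_\zeta^2/f_\zeta\,d\zeta$ is evaluated by the residue theorem: since $f\sim\zeta$, $f_\zeta\to1$, and $w_\zeta \sim (2\pi i\zeta)^{-1}\sum_k\gamma_k$ at infinity, the integrand has a single residue (at infinity) and the integral equals $\tfrac{i}{2\pi}(\sum_k\gamma_k)^2$, contributing $\tfrac1{2\pi}(\sum_k\gamma_k)^2$ to the imaginary part; here the hypothesis $\sum_k\gamma_k\zeta_k=0$ is exactly what kills the $O(1/\zeta^2)$ term of $w^0_\zeta$ and leaves a clean Laurent coefficient. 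The antiholomorphic piece is purely a boundary integral over the \emph{bounded} vacuum region, so complex Green's theorem applies without regularization: $\int_{\p\fluidD} z\overline z^2\,dz = -4i\int_{\fluidD^{c}}|z|^2\,dz = -4i\,I(f)$, whence $-\Omega^2\int_{\p\fluidD} z\overline z^2\,dz$ contributes $4\Omega^2 I$.

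The mixed term $2i\Omega\int_{\p\fluidD}|z|^2 w_z\,dz$ is the heart of the matter and where I expect the real work to lie. Writing $w = w^0 + (w-w^0)$ and using $\p_{\overline z}(|z|^2 w_z) = z w_z$ on $\fluidD\cap B_R$, the excess $w-w^0$ produces a convergent bulk integral that is precisely the excess angular momentum $L$ of \eqref{definition angular momentum}, while the point-vortex part $w^0$ and the circle $|z|=R$ must be regularized. The leading divergences cancel — again because $\sum_k\gamma_k\zeta_k=0$ forces $zw_z$ to approach a constant with no $1/z$ correction — leaving a finite geometric remainder. To identify this remainder with $-\tfrac1{2\pi}\sum_k\gamma_k\,|\fluidD^{c}|$ I would invoke the kinematic boundary condition $\imagpart W=\text{const}$ on each $\Gamma_k$, which converts the boundary values of $|z|^2$ into boundary data for $w$ and ties the geometric contribution to the vacuum area. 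Collecting the three pieces and multiplying by $1/(4\Omega)$ then yields the stated identity, the $L$, $I$, and $(\sum_k\gamma_k)^2$ terms matching by construction.

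The principal obstacle is precisely this mixed term: the relevant integrals are only conditionally (logarithmically) convergent at infinity, so the regularization must be executed with care and the point-vortex subtraction matched exactly to the definition of $L$; it is also here that the kinematic condition and the normalization $\sum_k\gamma_k\zeta_k=0$ appear to be indispensable — a pure Green's-theorem computation leaves a geometric boundary term that must be reshaped by the boundary condition. As a sanity check I would first verify the whole identity on the explicit circular single rotating vortex ($f=\id$, $w=w^0$, $\zeta_1=0$), where every term is elementary and the identity collapses to a clean algebraic cancellation. Conceptually, this identity is the scaling (dilation) counterpart of the rotation identity \eqref{id rot}: whereas the rotation variation $\dot f = if$ is a symmetry of the rotating Hamiltonian and yields only the trivial relation $\Func_1=\Func_2=0$ on solutions, the dilation variation $\dot f = f$ is \emph{not} a symmetry and should generate exactly the nontrivial balance between $L$, $I$, and the boundary momentum flux $\int_{\p\fluidD} zU^2\,dz$.
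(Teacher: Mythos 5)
Your decomposition of $zU^2$ into holomorphic, mixed, and antiholomorphic pieces is the same one the paper uses, and your handling of the two outer pieces is correct: the residue-at-infinity evaluation $\int_{\partial\fluidD} zw_z^2\,dz=\tfrac{i}{2\pi}(\sum_k\gamma_k)^2$ and the Green's-theorem evaluation $\int_{\partial\fluidD}z\overline{z}^2\,dz=-4iI(f)$ over the bounded vacuum region (the latter is actually tidier than the paper's route, which integrates over $\fluidD\cap B_R$ and cancels an $R^4$ divergence against a $\partial B_R$ term). Two attributions are off, though harmlessly: the hypothesis $\sum_k\gamma_k\zeta_k=0$ plays no role in the holomorphic piece, since only the $1/\zeta$ Laurent coefficient of $fw_\zeta^2/f_\zeta$ enters the residue at infinity and the $O(1/\zeta^2)$ correction is irrelevant; and the leading $O(R^2)$ divergence of the mixed piece cancels against $\int_{\partial B_R}|z|^2w_z\,dz=R^2\sum_k\gamma_k$ with no help from that hypothesis either.

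The genuine gap is in the mixed term, exactly where you flag uncertainty: the kinematic condition $\imagpart W=m_k$ is neither needed nor the right mechanism for producing $-\frac{|\fluidD^c|}{2\pi}\sum_k\gamma_k$, and the paper's proof never touches the boundary conditions. After the Green's-theorem step $2i\Omega\int_{\partial(\fluidD\cap B_R)}|z|^2w_z\,dz=-4\Omega\int_{\fluidD\cap B_R}zw_z\,dA$ and the subtraction of $w^0$ to form $L$, what remains is $-4\Omega\int_{\fluidD\cap B_R}z\,\partial_zw^0\,dA$, and the paper evaluates this by pure area bookkeeping in \eqref{point vortex angular momentum}: using $\sum_k\gamma_k\zeta_k=0$, the integrand reduces at leading order to the constant $\frac{1}{2\pi i}\sum_k\gamma_k$, so the integral equals $\frac{|\fluidD\cap B_R|}{2\pi i}\sum_k\gamma_k=\frac{\pi R^2-|\fluidD^c|}{2\pi i}\sum_k\gamma_k$; the $R^2$ part cancels the $\partial B_R$ contour contribution and the $|\fluidD^c|$ part is precisely the geometric remainder you are after. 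The vacuum area enters only because the vacuum regions are excised from $B_R$ --- there is no ``reshaping by the boundary condition'' to perform, and attempting to route the argument through $\imagpart W=m_k$ would lead you astray. Replacing that step with this elementary computation closes your argument and makes it coincide with the paper's.
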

\begin{proof}
For $R \gg 1$, consider the cut-off fluid region $\fluidD^R \colonequals  \fluidD \cap B_R$.  First, observe that
  \begin{equation}
    \label{point vortex angular momentum}
    \int_{\fluidD^R} z \partial_z w^0 \, dz = \frac{1}{2\pi i} \sum_k \gamma_k \int_{\fluidD^R} \frac{z}{z-\zeta_k} \, dz = \frac{ | \fluidD^R| }{2\pi i} \sum_k \gamma_k = \frac{R^2}{2 i} \sum_k \gamma_k - \frac{|\fluidD^c|}{2 \pi i } \sum_k \gamma_k.
  \end{equation}
Now, the heart of the proof is to evaluate the integral
\begin{equation}\label{rot id}
\begin{split}
\int_{\p \fluidD^R} U^2 z \,dz & = \int_{\p \fluidD^R} \LC (\partial_z w)^2 + 2i\Omega \overline{z} \partial_z w  - \Omega^2 \overline{z}^2 \RC z \,dz
\end{split}
\end{equation}
two ways.  On the one hand, using the complex Green's theorem, we see that
\begin{equation}
\label{identity integrate Green way}
  \begin{aligned}
    \int_{\p \fluidD^R} U^2 z \,dz & = 4 i \int_{\fluidD^R} U (\partial_{\bar z} U) z \, dz = -4 \Omega \int_{\fluidD^R} z w_z \, dz - 4 i \Omega^2 \int_{\fluidD^R} |z|^2 \, dz \\
    &=  -4 \Omega \int_{\fluidD^R} z \partial_z w \, dz - 2i \pi \Omega^2 R^4 + 4 i \Omega^2 I(f).
  \end{aligned}
\end{equation}
Next, we evaluate \eqref{rot id} by directly computing the boundary integrals.  The integral over $\partial \fluidD$ is bounded and therefore can be left alone.  On the other component of $\partial \fluidD^R$, we find that
\begin{align*}
	\int_{\p B_R} z (\partial_z w)^2   \,dz & = -\frac{1}{2\pi i} \LC \sum_k  \gamma_k \RC^2 + O\left( \frac{1}{R} \right) \\
	2 i \Omega \int_{\partial B_R}  |z|^2 \partial_z w \, dz & = 2i \Omega R^2 \sum_k \gamma_k = -4 \Omega \left( \int_{\fluidD^R} z \partial_z w^0 \, dz + \frac{|\fluidD^c|}{2\pi i} \sum_k \gamma_k \right) \\
	-\Omega^2 \int_{\partial B_R} \overline{z} |z|^2 \, dz & = -\Omega^2 R^2 \int_{\partial B_R} \overline{z} \, dz = -2i \pi \Omega^2  R^4,
\end{align*}
where the first line follows from the asymptotics of $f$ and $w$ as $\zeta \to \infty$ and in the second we used \eqref{point vortex angular momentum}. Finally, combining the above calculations with \eqref{identity integrate Green way}, we obtain 
\[
	-4 \Omega \int_{\fluidD^R} z \partial_z (w-w^0)  \, dz + 4 i \Omega^2 I(f) = \int_{\partial \fluidD} U^2 z \, dz  -\frac{1}{2\pi i} \LC \sum_k  \gamma_k \RC^2  - \frac{2 \Omega |\fluidD^c|   }{\pi i}\sum_k \gamma_k + O\left( \frac{1}{R} \right),
\]
which becomes \eqref{more id rot} upon taking the imaginary part and sending $R \to \infty$.  
\end{proof}

\begin{proof}[Proof of Theorem~\ref{intro rotating theorem}]
Similar to the proof of Theorem \ref{intro tripole theorem} and Theorem~\ref{intro pocklington theorem}, now we appeal to Corollary~\ref{local rotating pair corollary} to formulate the nonlinear problem and establish the existence of the local solution curve. Then a use of Theorem~\ref{homoclinic global ift} and an argument in the spirit of Theorem~\ref{intro global continuation theorem} allows us to continue the local curve to a global one with $\rho(s) > 0$ and the limiting behavior
  \begin{align}
    \label{rotating basic limit}
    \limsup_{s \to \infty} \left( \Nconf(s) + \Nvel(s) + |\Omega(s)| \right) = \infty.
  \end{align}
 Assume now that $\Nconf(s)$ and $\Nvel(s)$ are uniformly bounded, and hence $\Omega(s)$ is unbounded.  Thus the basic estimate 
  \begin{align*}
    \left|\int_{\p \fluidD(s)} U^2(s) z\, dz\right|
    \lesssim \rho(s) \Nconf(s)^2 \Nvel(s)^2.
  \end{align*} 
  and the angular momentum identity \eqref{more id rot} together give
  \[
  	 {L(s)} = \Omega(s) I(s) -  \frac{|\fluidD^c(s)|}{\pi} + O\left( \frac{1}{|\Omega(s)|} \right),
  \]
  where $L(s) \colonequals  L(f(s), w(s))$, and likewise for $I(s)$ and $\fluidD^c(s)$.  Since $I(s)$ and $|\fluidD^c(s)|$ are controlled uniformly by assumption, the excess angular momentum must exhibit the claimed blowup \eqref{angular momentum alternative}.  
\end{proof}

\section*{Acknowledgments} 

The research of RMC is supported in part by the NSF through DMS-1907584 and DMS-2205910.  The research of SW is supported in part by the NSF through DMS-1812436 and the Simons Foundation through award 960210.

\appendix

\section{From hollow vortex to point vortex identities}
\label{identities appendix}

In this appendix, we demonstrate how the classical point vortex identities \eqref{pv identities} can be recovered formally from the hollow vortices identities \eqref{id transl}, \eqref{id rot}, and \eqref{id stationary1-2} obtained in Section~\ref{hollow vortex identities section}.  Indeed, from \eqref{definition Bernoulli operator} and \eqref{abstract operator} we see that 
\[
	\Func_1 = \rho \mathscr{A}_k(u, \rho), 
	\quad 
	 \Func_2 = \frac{\mathcal{B}_k^\rho}{\rho} + \frac{\gamma_k^2}{4 \pi^2 \rho^2} - q_k^2 \qquad \textrm{on } \p B_\rho(\zeta_k),
\]
and hence 
\begin{equation}
\label{appendix F1 F2 identity}
\int_{\p B_\rho(\zeta_k)} \frac{\overline{w_\zeta} \Func_1}{(\zeta - \zeta_k) \overline{f_\zeta}} \,d\zeta  
= \int_{\mathbb{T}} \frac{\overline{w_\zeta (\zeta - \zeta_k)} \mathscr{A}_k}{\overline{f_\zeta}} \,d\tau,
\quad
\frac12 \int_{\p B_\rho(\zeta_k)} \Func_2 f_\zeta \,d\zeta
= \frac12\int_{\mathbb{T}} \mathcal{B}_k^\rho f_\zeta \,d\tau.
\end{equation}
From the ansatz \eqref{f ansatz} for $f$  and the definition of the leading-order part of the Bernoulli operator \eqref{linear Bernoulli operator}, we have
\begin{align*}
\frac12\int_{\mathbb{T}} \mathcal{B}_k^\rho f_\zeta \,d\tau = \frac{\gamma_k^2}{4\pi^2} \int_{\mathbb{T}} \realpart{\left(  \frac{2\pi i \tau}{\gamma_k} \left( \mathcal{V}_k(\fullparam) + \mathcal{C}\nu^\prime \right) - \mathcal{C} \mu^\prime\right)}\,d\tau + O(\rho).
\end{align*}
On the other hand, \eqref{f ansatz}, \eqref{w ansatz}, and the definition of $\A$ in \eqref{abstract operator} give
\begin{align*}
\int_{\mathbb{T}} \frac{\overline{w_\zeta (\zeta - \zeta_k)} \mathscr{A}_k}{\overline{f_\zeta}} \,d\tau &= \int_{\mathbb{T}} \overline{w^0_\zeta \rho\tau} \realpart{\LC \tau \LC \mathcal{Z}_k^\rho \nu^\prime + \Vrho_k^\rho \RC \RC} \,d\tau  + O(\rho) \\
	& =  \int_{\mathbb{T}} \overline{\LC \frac{\gamma_k}{2\pi i} \RC} \realpart{\LC \tau \LC \mathcal{C}\nu^\prime +\mathcal{V}_k(\fullparam) \RC \RC} \,d\tau  + O(\rho),
\end{align*}
where in the second line we have used the asymptotic form of $W_\zeta^0$ in \eqref{dWdzeta point vortex} and the definition of $\mathcal{V}_k^\rho$ \eqref{definition Lambda}.  Evaluating \eqref{appendix F1 F2 identity} using the last two calculations and formally taking the limit as $(\mu,\nu,\rho)$ approach the point vortex configuration $(0,0,0)$, we find that 
\begin{equation}\label{id pv 1}
\frac12 \int_{\p \confD_\rho} \Func_2 f_\zeta \,d\zeta - \int_{\p \confD_\rho} \frac{\overline{w_\zeta} \Func_1}{(\zeta - \zeta_k) \overline{f_\zeta}} \,d\zeta \longrightarrow  -\sum_k \gamma_k \overline{\mathcal{V}_k(\fullparam)}.
\end{equation}
Note that the minus sign above is due to the orientation of $\partial\confD_\rho$.  We proved in \eqref{id transl} and \eqref{id stationary1-2} that for translating hollow vortex configurations, the left-hand side of \eqref{id pv 1} is purely real while it vanishes identically for stationary hollow vortices.  The limit \eqref{id pv 1} then gives $\imagpart{ \sum_k \gamma_k \mathcal{V}_k} = 0$ and $\sum_k \gamma_k \mathcal{V}_k = 0$ for the translating and stationary case, respectively.  This recovers the point vortex identity \eqref{pv translation identity} in the corresponding regime.  

A similar computation can be performed on the hollow vortex identity \eqref{id rot} for rotating configurations and the second identity in \eqref{id stationary1-2} for the stationary case.  Arguing as in \eqref{appendix F1 F2 identity} leads to
\begin{align*}
\frac12 \realpart \int_{\p B_\rho(\zeta_k)} \Func_2 \overline f f_\zeta \,d\zeta 
 = \frac{\gamma_k^2}{4\pi^2} \realpart \int_{\mathbb{T}} \realpart{\left( \frac{2\pi i \tau}{\gamma_k} \left( \mathcal{V}_k(\fullparam) + \mathcal{C}\nu^\prime \right)  - \mathcal{C} \mu^\prime \right)} \overline{\zeta_k} \,d\tau + O(\rho),
\end{align*}
and
\begin{align*}
\realpart \int_{\p B_\rho(\zeta_k)} \frac{\overline{w_\zeta f} \Func_1}{(\zeta - \zeta_k) \overline{f_\zeta}} \,d\zeta 
& = \realpart \int_{\mathbb{T}} \overline{\LC \frac{\gamma_k \zeta_k}{2\pi i} \RC} \realpart{\LC \tau \LC \mathcal{C}\nu^\prime +\mathcal{V}_k(\fullparam) \RC \RC} \,d\tau  + O(\rho).
\end{align*}
In the point vortex limit, together these yield
\begin{equation}\label{id pv 2}
\frac12 \realpart \int_{\p \confD_\rho} \Func_2 \overline f f_\zeta \,d\zeta - \realpart \int_{\p \confD_\rho} \frac{\overline{w_\zeta f} \Func_1}{(\zeta - \zeta_k) \overline{f_\zeta}} \,d\zeta \longrightarrow -\realpart \LC \sum_k \gamma_k \zeta_k \mathcal{V}_k(\fullparam) \RC.
\end{equation}
Now, for rotating or stationary hollow vortex configuration, \eqref{id rot} and \eqref{id stationary1-2} state that the left-hand side above vanishes. Thus from \eqref{id pv 2} we conclude $\realpart{\sum_k \gamma_k \zeta_k \mathcal{V}_k} = 0$, which recovers the real part of the point vortex identity \eqref{pv rotation identity} in the corresponding regime. 

\bibliographystyle{siam}
\bibliography{projectdescription}

\end{document}